\numberwithin{equation}{section}
\newtheorem{Proposition}[equation]{Proposition}
\newtheorem{Lemma}[equation]{Lemma}
\newtheorem{Theorem}[equation]{Theorem}
\newtheorem{Corollary}[equation]{Corollary}
\theoremstyle{definition}  
\newtheorem{Remark}[equation]{Remark}
\newcommand\Comment[2][\relax]{\space\par\medskip\noindent%
   \fbox{\begin{minipage}{\textwidth}\textbf{Comment\ifx\relax#1\else---#1\fi}\newline%
        #2\end{minipage}}\medskip
}
\newcommand{\da}{{\downarrow}}
\def\bmu{\text{\boldmath$\mu$}}
\def\bnu{\text{\boldmath$\nu$}}
\def\pmod#1{\text{ }(\text{\rm mod } #1)\,}
\newcommand{\I}{{\mathcal I}}
\newcommand{\End}{\operatorname{End}}
\newcommand{\ind}{\operatorname{ind}}
\newcommand{\im}{\operatorname{im}}
\def\sgn{\mathtt{sgn}}
\newcommand{\res}{\operatorname{res}}
\newcommand{\soc}{\operatorname{soc}\,}
\newcommand{\Z}{\mathbb{Z}}
\newcommand{\bmax}{\mathfrak{b}}
\def\eps{{\varepsilon}}
\def\phi{{\varphi}}
\newcommand{\F}{{\mathbb F}}
\newcommand{\la}{\lambda}
\newcommand{\al}{\alpha}
\newcommand{\be}{\beta}
\def\Si{\mathfrak{S}}
\newcommand{\si}{\sigma}
\newcommand{\Om}{\Omega}
\newcommand{\de}{\delta}
\newcommand{\Mull}{{\tt M}}
\newcommand{\Ker}{\operatorname{Ker}}
\newcommand{\Aut}{{\mathrm {Aut}}}
\newcommand{\Out}{{\mathrm {Out}}}
\newcommand{\Irr}{{\mathrm {Irr}}}
\newcommand{\IBR}{{\mathrm {IBr}}}
\newcommand{\Syl}{{\mathrm {Syl}}}
\newcommand{\Sym}{\operatorname{{\mathtt{Sym}}}}
\newcommand{\C}{{\mathbb C}}
\newcommand{\ZZ}{{\mathbb Z}}
\newcommand{\ZB}{{\mathbf Z}}
\newcommand{\NB}{{\mathbf N}}
\newcommand{\OB}{{\mathbf O}}
\newcommand{\EE}{{\mathcal E}}
\newcommand{\PP}{{\mathbb P}}
\newcommand{\FF}{{\mathbb F}}
\newcommand{\SSS}{{\sf S}}
\newcommand{\AAA}{{\sf A}}
\newcommand{\dar}{{\downarrow}}
\renewcommand{\mod}{\bmod \,}
\newcommand{\edit}[1]{{\color{black} #1}}
\newcommand{\IBr}{{\mathrm {IBr}}}
\def\Par{{\mathscr P}}
\def\Parinv{{\mathscr P}^\AAA}
\def\b{\mathfrak{b}}
\def\k{\Bbbk}
\def\T{{\mathtt T}}
\def\N{{\mathtt N}}
\def\NT{{\mathtt{NT}}}
\def\im{{\mathrm{im}\,}}
\def\into{{\hookrightarrow}}
\def\mod#1{#1\!\operatorname{-mod}}
\renewcommand\L{\mathscr L}
\def\col{{\tt col}}
\def\row{{\tt row}}
\newcommand{\CCC}{{\sf C}}
  \gdef\set#1{\mathinner{\lbrace\,{\mathcode`\|"8000%
  \let|\midvert #1}\,\rbrace}}
\def\midvert{\egroup\mid\bgroup}
\colorlet{darkgreen}{green!50!black}
\tikzset{dots/.style={very thick,loosely dotted},
         greendot/.style={fill,circle,color=darkgreen,inner sep=1.5pt,outer sep=0},
         blackdot/.style={fill,circle,color=black,inner sep=1.5pt,outer sep=0},
         graydot/.style={fill,circle,color=gray,inner sep=1.1pt,outer sep=0}
}
\def\greendot(#1,#2){\node[greendot] at(#1,#2){}}
\def\blackdot(#1,#2){\node[blackdot] at(#1,#2){}}
\def\graydot(#1,#2){\node[graydot] at(#1,#2){}}
\newenvironment{braid}{
  \begin{tikzpicture}[baseline=6mm,black,line width=1pt, scale=0.32,
                      draw/.append style={rounded corners},
                      every node/.append style={font=\fontsize{5}{5}\selectfont}]%
  }{\end{tikzpicture}
}
\def\Grid(#1,#2){
  \draw[very thin,gray,step=2mm] (0,0)grid(#1,#2);
  \draw[very thin,darkgreen,step=10mm] (0,0)grid(#1,#2);
}
\newcommand\Tableau[2][\relax]{
  \begin{tikzpicture}[scale=0.5,draw/.append style={thick,black}]
    \ifx\relax#1\relax%
    \else 
      \foreach\box in {#1} { \filldraw[blue!30]\box+(-.5,-.5)rectangle++(.5,.5); }
    \fi
    \newcount\row\newcount\col
    \row=0
    \foreach \Row in {#2} {
       \col=1
       \foreach\k in \Row {
          \draw(\the\col,\the\row)+(-.5,-.5)rectangle++(.5,.5);
          \draw(\the\col,\the\row)node{\k};
          \global\advance\col by 1
       }
       \global\advance\row by -1
    }
  \end{tikzpicture}
}
\newcommand\YoungDiagram[2][\relax]{
  \begin{tikzpicture}[scale=0.5,draw/.append style={thick,black}]
    \ifx\relax#1\relax%
    \else 
    \foreach\box in {#1} {
      \filldraw[blue!30]\box rectangle ++(1,1);
    }
    \fi
    \newcount\row
    \row=0
    \foreach \col in {#2} {
       \draw(1,\the\row)grid ++(\col,1);
       \global\advance\row by -1
    }
  \end{tikzpicture}
}
\newdimen\hoogte    \hoogte=12pt    
\newdimen\breedte   \breedte=14pt  
\newdimen\dikte     \dikte=0.5pt 
\newenvironment{Young}{\begingroup
       \def\vr{\vrule height0.89\hoogte width\dikte depth 0.2\hoogte}
       \def\fbox##1{\vbox{\offinterlineskip
                    \hrule height\dikte
                    \hbox to \breedte{\vr\hfill##1\hfill\vr}
                    \hrule height\dikte}}
       \vbox\bgroup \offinterlineskip \tabskip=-\dikte \lineskip=-\dikte
            \halign\bgroup &\fbox{##\unskip}\unskip  \crcr }
       {\egroup\egroup\endgroup}
\def\Youngdiagram#1{\relax\ifmmode\vcenter{\,\begin{Young}#1\end{Young}\,}\else%
              $\vcenter{\,\begin{Young}#1\end{Young}\,}$\fi}
\newcommand{\twn}[1]{{}^#1\!}
\newcommand\bigDiamond{\mathop{\mathpalette\bigDi@mond\relax}}
\newcommand\bigDi@mond[2]{%
  \vcenter{\hbox{\m@th
    \scalebox{\ifx#1\displaystyle 2\else1.2\fi}{$#1\Diamond$}%
  }}%
}
\newcommand\bigLozenge{\mathop{\mathpalette\bigL@zenge\relax}}
\newcommand\bigL@zenge[2]{%
  \vcenter{\hbox{\m@th
    \scalebox{\ifx#1\displaystyle 2\else1.2\fi}{$#1\blacklozenge$}%
  }}%
}
\begin{document}

\title[Irreducible restrictions in small characteristics]{{\bf Irreducible restrictions of representations of symmetric and alternating groups in small characteristics}}

\author{\sc Alexander Kleshchev}
\address{Department of Mathematics\\ University of Oregon\\Eugene\\ OR 97403, USA}
\email{klesh@uoregon.edu}

\author{\sc Lucia Morotti}
\address
{Institut f\"{u}r Algebra, Zahlentheorie und Diskrete Mathematik\\ Leibniz Universit\"{a}t Hannover\\ 30167 Hannover\\ Germany} 
\email{morotti@math.uni-hannover.de}

\author{\sc Pham Huu Tiep}
\address
{Department of Mathematics\\ Rutgers University\\ Piscataway\\ NJ 08854, USA} 
\email{tiep@math.rutgers.edu}

\subjclass[2010]{20C20, 20C30, 20C33, 20D06, 20B35}

\thanks{The first author was supported by the NSF grant DMS-1700905 and the DFG Mercator program through the University of Stuttgart. The second author was supported by the DFG grant MO 3377/1-1 and the DFG Mercator program through the University of Stuttgart. The third author was supported by the NSF (grants DMS-1839351 and DMS-1840702), and the Joshua Barlaz Chair in Mathematics.
This work was also supported by the NSF grant DMS-1440140 and Simons Foundation while all three authors were in residence at the MSRI during the Spring 2018 semester.}

\thanks{The authors are grateful to the referee for careful reading and helpful comments on the paper.}

\begin{abstract}
Building on reduction theorems and  
dimension bounds for symmetric groups obtained in our earlier work, we classify the irreducible restrictions of representations of the symmetric and alternating groups to proper subgroups. Such a classification is known when the characteristic of the ground field is greater than $3$, but the small characteristics cases require a substantially more delicate analysis and new ideas. 
Our results fit into the Aschbacher-Scott program on maximal subgroups of finite classical groups. 
\end{abstract}

\maketitle


\section{Introduction}
\label{SIntro}

Let $\F$ be an algebraically closed field of characteristic $p\geq 0$. In this paper we consider  the following 

\vspace{3mm}
\noindent
{\bf Problem 1.}
{\em
Let $H$ be the symmetric group $\SSS_n$ or the alternating group $\AAA_n$. Classify the pairs $(G,V)$, where $G$ is a subgroup of $H$ and $V$ is an $\F H$-module of dimension greater than $1$ such that the restriction $V\da_G$ is irreducible.
}
\vspace{3mm}


A major application of Problem 1 is to the Aschbacher-Scott program on maximal subgroups of finite classical groups, see \cite{Asch,Scott,Magaard,KlL,BDR} for more details on this.  We point out that for the purposes of these applications, Problem 1 needs to be solved for all almost quasi-simple groups $H$ and $G$, but we do not make any additional assumptions on $G$.

For $p=0$, Problem 1 has been solved in \cite{S}. For $p\geq 5$ and $H=\SSS_n$ (resp. $H=\AAA_n$), Problem 1 has been solved in \cite{BK} (resp. \cite{KS}). But the small characteristics cases $p=2$ and $3$ require a substantially more delicate analysis as well as new ideas, and remained open for a long time. \edit{The first major difficulty is that the submodule structure of certain permutation modules over symmetric groups gets very complicated, making the proof of reduction theorems in \cite{BK} and \cite{KS} much harder for $p=2$ or $3$. The task 
of proving new reduction theorems has now been accomplished in \cite{KMT1,KMT3}, which allows one to mostly reduce the problem to doubly transitive subgroups of $\SSS_n$. The second major difficulty is that   
the techniques employed in \cite{BK} for dealing with doubly transitive subgroups are also inefficient for small $p$. 
So in this paper we develop a new approach, which iteratively pitches the dimension bounds against the shape of the labeling partition $\la$ of
the $\F\SSS_n$-module $D^\la$ in question, relying particularly on dimension bounds obtained recently in \cite{KMT2} and
internal structure of doubly transitive subgroups. This allows us to 
finally extend the above results to all characteristics.}

From now on we assume that $p>0$. We point out that it is the positive characteristic case that is important for the Aschbacher-Scott program, and that the characteristic $0$ case is equivalent to $p >n$. For the reader's convenience, we will formulate our main results for all characteristics, although they are only new for $p=2, 3$.

Recall that the irreducible $\F \SSS_n$-modules are labeled by the set $\Par_p(n)$ of {\em $p$-regular partitions of $n$}.  If $\la\in \Par_p(n)$, we denote by $D^\la$ the corresponding irreducible $\F\SSS_n$-module. 

The {\em Mullineux involution} 
$$\Par_p(n)\to \Par_p(n),\  \la\mapsto \la^\Mull$$ is defined from $D^{\la^\Mull}\cong D^\la\otimes \sgn$, where $\sgn$ is the $1$-dimensional sign representation. Of course the Mullineux involution is trivial when $p=2$, while for odd $p$ it has several explicit combinatorial descriptions, see \cite{Mull, KBrIII, FK, BO}.

We denote by $\Parinv_p(n)$ the set of all $p$-regular partitions of $n$ such that $D^\la\da_{\AAA_n}$ is reducible. The set of partitions $\Parinv_p(n)$ is well understood---if $p=2$ it is described explicitly in \cite{Benson} (see Lemma~\ref{LBenson} below), while for $p>2$ these are exactly the partitions which are fixed by the Mullineux involution. 

If $\la\in \Parinv_p(n)$ we have 
$$D^\la\da_{\AAA_n}\cong E^\la_+\oplus E^\la_-$$ for irreducible $\F\AAA_n$-modules $E^\la_+\not\cong E^\la_-$. If $\la\not\in\Parinv_2(n)$, we denote 
$$E^\la:=D^\la\da_{\AAA_n}.$$ 
Now, $$
\{E^\la\mid \la\in\Par_p(n)\setminus \Parinv_p(n)\}\cup\{E^\la_\pm\mid \la\in\Parinv_p(n)\}
$$
is a complete set of irreducible $\F\AAA_n$-modules, and 
the only non-trivial isomorphisms among these
are 
$E^\la\cong E^{\la^\Mull}$ for $p>2$ and $\la\in\Par_p(n)\setminus \Parinv_p(n)$. For $\la\in\Par_p(n)$, we will interpret the notation $E^\la_{(\pm)}$ as $E^\la_{\pm}$ if $\la\in\Parinv_p(n)$ and as $E^\la$ otherwise. 


We set $I:=\Z/p\Z$
identified with $\{0,1,\dots,p-1\}$. A {\em node} is an element 
$(r,s)\in\Z_{>0}^2$ (pictorially, the $x$-axis goes down and the $y$-axis goes to the right). We always identify a partition $\la = (\la_1 \geq \la_2 \geq \ldots)$ with its Young diagram $\{(r,s)\in\Z_{>0}^2\mid s\leq \la_r\}$. 

Given a node $A=(r,s)$, we define its {\em residue}
$
\res A:=s-r\pmod{p}\in I.
$
Let $i \in I$ and $\la\in\Par(n)$. 
A node $A \in \la$ (resp. $B\not\in\la$) is called {\em removable} (resp. {\em addable}) for $\la$ if 
$\la\setminus\{A\}$ (resp. $\la\cup\{B\}$) is a Young diagram of a partition. 
A removable (resp. addable) node is called {\em $i$-removable} (resp. {\em $i$-addable}) if it has residue $i$.

Labeling the $i$-addable
nodes of $\la$ by $+$ and the $i$-removable nodes of $\la$ by $-$, the {\em $i$-signature} of 
$\la$ is the sequence of pluses and minuses obtained by going along the 
rim of the Young diagram from bottom left to top right and reading off
all the signs.
The {\em reduced $i$-signature} of $\la$ is obtained 
from the $i$-signature
by successively erasing all neighbouring 
pairs of the form $-+$. 
The nodes corresponding to  $-$'s in the reduced $i$-signature are
called {\em $i$-normal} for $\la$ (or {\em normal nodes of residue $i$}).

A partition $\la\in\Par_p(n)$ is called {\em Jantzen-Seitz} (or {\em JS}) if its top removable node is its only normal node. Equivalently, writing $\la$ in the form $\la=(l_1^{a_1},\dots,l_m^{a_m})$ with $l_1>\dots>l_m$ and $a_1,\dots,a_m>0$, $\la$ is JS of and only if $p$ divides $l_k-l_{k+1}+a_k+a_{k+1}$ for all $1\leq k<m$. 
It is known that the restriction $D^\la\da_{\SSS_{n-1}}$ is  irreducible  if and only if $\la$ is {\em JS}, see \cite{JS,k2}. 

Define the partition
\begin{equation}
\label{ESpin}
\be_n:=
\left\{
\begin{array}{ll}
(n/2+1,n/2-1) &\hbox{if $n$ is even,}\\
((n+1)/2,(n-1)/2) &\hbox{if $n$ is odd.}
\end{array}
\right.
\end{equation}

When $p=2$, the irreducible $\F\SSS_n$-module $D^{\be_n}$ is called the {\em basic spin} module, cf. \cite{Wales}. The irreducible $\F\AAA_n$-module $E^{\be_n}_{(\pm)}$ is also called {\em basic spin}. 
Basic spin modules often play a special role, see for example \cite[Theorem 3.9]{KS2Tran} and \cite[Theorem A(vi)]{KMT1}. 
In particular, in Theorems~A,A$'$,B,B$'$ below, we exclude the basic spin case, and then consider it separately in Theorems~C and C$'$. 

For $m\leq n$ we identify $\SSS_m$ as the subgroup of $\SSS_n$ permuting the first $m$ letters. We also have standard subgroups  
$$\SSS_{m_1,\dots,m_t}\cong \SSS_{m_1}\times\dots\times \SSS_{m_t}\leq \SSS_{m_1+\dots+m_t}\leq 
\SSS_n\quad\text{and}\quad 
\AAA_{m_1,\dots,m_t}:=\SSS_{m_1,\dots,m_t}\cap\AAA_n.
$$ 
whenever $m_1+\dots+m_t\leq n$.

Before stating the main results, in Table I we list the dimensions of the modules which give rise to special cases of irreducible restrictions and indicate when such modules split upon restriction to $\AAA_n$. This table is obtained using \cite[Table 1]{JamesDim}, Lemma \ref{LL2}, \cite[Lemma 2.2]{bkz}, \cite{Benson}, \cite[Lemma 1.21]{BK}, \cite[Theorems 24.1, 24.15, Tables]{JamesBook} and \cite[Lemma 1.8]{KS}. In the table we will always assume $n\geq 5$.

{\small
\[\begin{array}{|c|c|c|c|}
\hline
\la&\text{ Assumptions on $p$ and $n$}&\dim D^\la&\la\in \Parinv_p(n)\\
\hline\hline
\be_n&  p=2&2^{\lfloor (n-1)/2 \rfloor}&\text{iff }n\not\equiv 2\pmod{4}\\
\hline
(n-1,1)&  p\nmid n&n-1&\text{no}\\
\hline
(n-1,1)&  p\mid n&n-2&\text{no}\\
\hline
(n-2,2)&
\begin{array}{l}
p>2,\,\,n\not\equiv 1,2\pmod{p}\\
\text{or }p=2,\,\,n\equiv 3\pmod{4}
\end{array}
&(n^2-3n)/2&\text{no}\\
\hline
(n-2,2)&
\begin{array}{l}
p>2,\,\,n\equiv 1\pmod{p}
\\
\text{or }p=2,\,\,n\equiv 1\pmod{4}
\end{array}
&(n^2-3n-2)/2&\text{iff }p=2\text{ and }n=5\\
\hline
(n-2,2)&
\begin{array}{l}
p>2,\,\,n\equiv 2\pmod{p}
\\
\text{or }p=2,\,\,n\equiv 2\pmod{4}
\end{array}
&(n^2-5n+2)/2&\text{no}\\
\hline
(n-2,2)& p=2,\,\,n\equiv 0\pmod{4}&(n^2-5n+4)/2&\text{no}\\
\hline
(n-2,1^2)& p>2,\,\,p\nmid n&(n^2-3n+2)/2&\text{iff }n=5\\
\hline
(n-2,1^2)& p>2,\,\,p\mid n&(n^2-5n+6)/2&\text{iff }p=3\text{ and }n=6\\
\hline
(5,3)& p=5&21&\text{no}\\
\hline
(6,3)& p=5&21&\text{no}\\
\hline
(3^2,2)& p>5&42&\text{yes}\\
\hline
(3^3)& p>5&42&\text{yes}\\
\hline
(6,5,1)& p=2&288&\text{yes}\\
\hline
(7,5,1)& p=2&288&\text{yes}\\
\hline
(21,2,1)& p\not=2,3,7,23&3520&\text{no}\\
\hline
(21,2,1)& p=7&3267&\text{no}\\
\hline
(21,2,1)& p=23&3269&\text{no}\\
\hline
(21,1^3)& p>3&1771&\text{no}\\
\hline
(22,1^3)& p=5&1771&\text{no}\\
\hline
\end{array}\]

\vspace{1mm}
\centerline{\sc Table I: Certain special modules and their dimensions}
}

\vspace{3mm}
{\it Doubly transitive} subgroups $G$ occupy a central place in the solution of Problem~1. Mortimer \cite{Mortimer} studied the problem for the heart $D^{(n-1,1)}$ of 
the natural module of $\SSS_n$ and listed the results in \cite[Table I]{Mortimer}, although leaving two 
unsettled instances. These instances can now be completely analyzed, using \cite[Satz D.2.5]{Hiss} for Ree groups
and \cite{GAP} for $Co_3$. 
We record the updated version of \cite[Table I]{Mortimer} (with $n \geq 5$) in Table II below, where 
the last column describes the conditions on $p$ (if needed) for $D^{(n-1,1)}\da_G$ to be 
irreducible. 

We point out for the purposes of Theorems~B and B$'$ that, except the third line marked with $(\dagger)$, all listed groups are almost simple. Moreover, not all subgroups $G$ satisfying $\CCC^m_r \unlhd G \leq AGL_m(r)$ are doubly transitive, but the list of such doubly transitive groups is known by Hering's Theorem, see \cite{Liebeck}. On the other hand, the subgroups from all other lines are indeed doubly transitive. 

{\small
\[\begin{array}{|c|c|c|c|}
\hline
G&\text{Degree $n$}&\text{Transitivity}&\text{Conditions on $p$}\\
\hline\hline
\SSS_n & n & n & \\ \hline
\AAA_n & n & n-2 & \\ \hline
(\dagger)\ \ \ 
\begin{array}{c} 
\CCC^m_r \unlhd G \leq AGL_m(r),
\\
r\text{ prime }
\end{array}
 & r^m & 2\text{ or }3 & p \neq r \\ \hline
\begin{array}{c}PSL_d(q) \unlhd G \leq P\Gamma L_d(q),\\ d \geq 3 \end{array} & \dfrac{q^d-1}{q-1} & 2 & p \nmid q \\ \hline
\AAA_7 \cong G < GL_4(2) & 15 & 2 & p \neq 2 \\ \hline
Sp_{2m}(2),~m \geq 3 & 2^{m-1}(2^m \pm 1) & 2 & p \neq 2 \\ \hline
\begin{array}{c}
SL_2(q) \unlhd G \leq \Sigma L_2(q), 
\\
2|q
\end{array}
& q+1 & 3 &  \\ \hline
\begin{array}{c}
PSL_2(q) \unlhd G \leq P\Sigma L_2(q),\\
2\nmid q 
\end{array}
& q+1 & 2 & p\neq 2 \\ \hline
\begin{array}{c}PSL_2(q) \unlhd G \leq P\Gamma L_2(q),\\ G \not\leq P\Sigma L_2(q),~2\nmid q 
\end{array} & q+1& 3 &  \\ \hline
\begin{array}{c}
^2B_2(q) \unlhd G \leq \Aut(^2B_2(q)),
\\q>2
\end{array}
 & q^2+1 & 2 & p \nmid (q+1+\sqrt{2q})\\ \hline
\begin{array}{c}
PSU_3(q) \unlhd G \leq P\Gamma U_3(q),
\\q>2
\end{array}
 & q^3+1 & 2 & p \nmid (q+1)\\ \hline
^2G_2(q) \unlhd G \leq \Aut(^2G_2(q)) & q^3+1 & 2 & p \nmid (q+1)(q+1+\sqrt{3q})\\ \hline
M_{24} & 24 & 5 & p \neq 2\\ \hline
M_{23} & 23 & 4 & p \neq 2\\ \hline
M_{22} & 22 & 3 & p \neq 2\\ \hline
M_{12} & 12 & 5 & \\ \hline
M_{11} & 11 & 4 & \\ \hline
M_{11} & 12 & 3 & p \neq 3\\ \hline
PSL_2(11) & 11 & 2 & p \neq 3\\ \hline
HS & 176 & 2 & p \neq 2,3\\ \hline
Co_3 & 276 & 2 & p \neq 2,3\\ \hline
\end{array}\]
\vspace{1mm}
\centerline{\sc Table II: Irreducibility of $D^{(n-1,1)}$ over doubly transitive subgroups}
}

\begin{Remark} \label{R180319} 
{\rm 
In \cite[Theorem~B]{KMT1}, we have discovered a new  exceptional family of imprimitive subgroups $G$ for which $D^{(n-1,1)}\da_G$ is irreducible in characteristic $2$. Let $p=2$, $n$ be even, and 
$G \leq \SSS_{n/2} \wr \SSS_2$. Let $B := \SSS_{n/2} \times \SSS_{n/2}$ be the base subgroup of $\SSS_{n/2} \wr \SSS_2$, 
and $G_1$ (resp. $G_2$) be the projection of $G \cap B$ onto the first (resp. second) factor 
$\SSS_{n/2}$ of $B$. Then $D^{(n-1,1)}\da_G$ is irreducible if and only if $n\equiv 2\pmod{4}$, $G$ is transitive on $\{1,2 \ldots,n\}$, $G_1,G_2$ are $2$-transitive subgroups of $\SSS_{n/2}$ over which $D^{(n/2-1,1)}$ is irreducible, and $(D^{(n/2-1,1)} \boxtimes D^{(n/2)})\da_{G \cap B}\not\cong (D^{(n/2)} \boxtimes D^{(n/2-1,1)})_{G \cap B}$. We refer the reader to 
\cite[Section 7]{KMT1}, especially 
\cite[Example 7.24]{KMT1}, for more on this. 
}
\end{Remark}

\vspace{3mm}
For future reference, in Table III, we now list some additional ``non-serial'' (in the sense that they exist only in a finite 
number of degrees $n$) examples of irreducible restrictions of $\F\SSS_n$-modules $D^\la$ to subgroups 
$G < \SSS_n$. In all the cases $G$ acts (at least) $2$-transitively on $\{1,2, \ldots,n\}$ or $\{1,\dots,n-1\}$ as indicated in the table, and when $\{1,\dots,n-1\}$ is indicated we have that $G$ fixes $n$. 
The fact that the cases listed in Table III do yield irreducible restrictions $D^\la\da_G$ is part of the statements of Theorems~A and C.

{\small
\[\begin{array}{|c|c|c|c|c|c|c|}
\hline
\text{Case} &\la \text{ or }\la^\Mull&G& n & \text{$2$-transitive on}&\text{$p$}\\
\hline\hline
\text{(S1)} &(n-2,2) & 
\begin{array}{c}
SL_3(2)\\
P\Gamma L_2(8)\\ 
M_{11}\\
M_{11}\\
M_{12}\\ 
M_{23}\\ 
M_{24}
\end{array} 
 & 
 \begin{array}{c}
 7\\
9\\ 
11\\
12\\
12\\ 
23\\ 
24
\end{array} 
&\{1,\dots,n\}
  & 
  \begin{array}{c}p=5\\
p\not=2,7\\ 
p\not=3,5\\
p=2\\
p\not=5\\ 
p\not=2,3\\ 
p\not=2 
\end{array} 
   \\ \hline

 \text{(S2)} &(n-2,2) & 
 \begin{array}{c}
 M_{11} \\ 
 M_{12}\\ 
 M_{23} \\
 M_{24} 
 \end{array} 
 & 
 \begin{array}{c}
12\\ 
13\\ 
24\\
25
\end{array} 
&
\{1,\dots,n-1\}
  & 
 \begin{array}{c}
 p=2\\ 
 p=11\\ 
 p=11\\
 p=23 
 \end{array} 
   \\ \hline
   
\text{(S3)} &(n-2,1^2) &
\begin{array}{c} 
\SSS_5\\
M_{11}\\
M_{11}\\ 
M_{12}\\
M_{22}, \Aut(M_{22})\\
M_{23}\\ 
M_{24}
\end{array}
 & 
\begin{array}{c} 
6\\
11\\
12\\ 
12\\
22\\
23\\
24
\end{array}
&\{1,\dots,n\}
  & 
 \begin{array}{c}
 p=3\\
p\not=2,11\\
p \neq 2,3\\ 
p \neq 2\\
p \neq 2\\
p \neq 2\\
p \neq 2
\end{array}
   \\ \hline

\text{(S4)} &(n-2,1^2) & 
\begin{array}{c} 
M_{11}\\ 
M_{11}\\
M_{12}\\ 
M_{22}, \Aut(M_{22}) \\
M_{23}\\ 
M_{24}
\end{array}
 & 
\begin{array}{c} 
12\\ 
13\\ 
13\\ 
23\\
24\\ 
25
\end{array} 
&
\{1,\dots,n-1\}
& \begin{array}{c} 
p=3\\ 
p=13\\ 
p=13\\
p=23\\
p=3\\ 
p=5\end{array}
   \\ \hline
   
\text{(S5)}&(14,1^2)&\CCC_2^4 \rtimes \AAA_7&16&
\{1,\dots,16\}&p\neq 2   \\ \hline

\text{(S6)} &(15,1^2) & 
\CCC_2^4 \rtimes \AAA_7&17&
\{1,\dots,16\} & p=17
   \\ \hline

 \text{(S7)} &(5,3) & 
AGL_3(2)&8
&\{1,\dots,8\} & p=5
   \\ \hline

\text{(S8)} &(6,3) & 
AGL_3(2)&9&
\{1,\dots,8\} & p=5
   \\ \hline

\text{(S9)}&(21,2,1)&M_{24}&24&
\{1,\dots,24\}&p \neq 2,3   \\ \hline

\text{(S10)}&(21,1^3)&M_{24}&24&
\{1,\dots,24\}&p \neq 2,3   \\ \hline

\text{(S11)} &(22,1^3) & 
M_{24}&25&
\{1,\dots,24\} & p=5
   \\ \hline

   \text{(S12)}&(3,2)&\CCC_5 \rtimes \CCC_4&5&
\{1,\dots,5\}&p= 2 \\ \hline

\text{(S13)}&(4,2)&\SSS_5&6&
\{1,\dots,6\}&p= 2 \\ \hline

\text{(S14)}&(6,4)&
 \SSS_6, M_{10}, \Aut(\AAA_6)
 &10&
\{1,\dots,10\}&p= 2 \\ \hline


\end{array}\]

\vspace{1 mm}
\centerline{\sc Table III: Non-serial examples of irreducible restrictions from $\SSS_n$}
}

\vspace{3mm}

Note that in the cases (S12)-(S14), we have $(\la,p)=(\be_n,2)$, i.e. these cases are concerned with restrictions of basic spin modules.

For future reference, in Table IV, we now list some ``non-serial'' examples of irreducible restrictions of $\F\AAA_n$-modules $E^\la_\pm$ with $\la\in\Parinv_p(n)$ to subgroups 
$G < \AAA_n$. In all but the case (A17), $G$ acts (at least) $2$-transitively on $\{1,2, \ldots,m\}$ as indicated in the table (and fixes $n$ if
$m=n-1$).
The two additional  conditions in Table IV are as follows:
$$
\begin{array}{rl}
(\spadesuit) &\begin{array}{l}\text{only one of $E^{(5,4)}_{\pm}$, namely the one whose Brauer  character takes}\\ \text{value $-1$ at elements of order $9$ in 
$SL_2(8)$, is irreducible over $G$.}
\end{array}
\\
& 
\\
(\spadesuit\spadesuit)&
\begin{array}{l}
\soc(G)\ \text{acts on $\{1,2, \ldots,6\}$ and $\{7,8, \ldots,12\}$ via two inequivalent}
\\
 \text{$2$-transitive actions.} 
\end{array}
\end{array}
$$


{\small
\[\begin{array}{|c|c|c|c|c|c|c|c|}
\hline
\text{Case} &\la &G& n &\text{$2$-transitive\,\,on}&\text{ $p$}&\begin{array}{c}\text{Additional}\\ \text{conditions}
\end{array}
\\
\hline\hline
\text{(A1)} &(6,5,1) & M_{12} & 12 &  \{1,\dots,12\} & p=2 
& \\ \hline

\text{(A2)} &(7,5,1) & M_{12} & 13 &  \{1,\dots,12\} & p=2 
& \\ \hline

\text{(A3)} &(4,1^2) & \AAA_5 & 6 &  \{1,\dots,6\} & p=3 
& \\ \hline

\text{(A4)} &(3^3) & P\Gamma L_2(8) & 9 &  \{1,\dots,9\} 
& p > 5 & \\ \hline 

\text{(A5)} &(3^2,2) & AGL_3(2) & 8 &  \{1,\dots,8\} & p>5 
& \\ \hline

\text{(A6)} &(3^3) & AGL_3(2) & 9 &  \{1,\dots,8\} & p> 5 
& \\ \hline

\text{(A7)} &(3,2) & 
\CCC_5 \rtimes \CCC_2 & 5 &  \{1,\dots,5\} & p=2 
& \\ \hline

\text{(A8)} &(5,4) &
ASL_2(3),\CCC_3^2 \rtimes {\mathsf Q}_8 & 9 &  \{1,\dots,9\} & p=2 
& \\ \hline

\text{(A9)} &(5,4) & 
SL_2(8),P\Gamma L_2(8) & 9 &  \{1,\dots,9\} & p=2 
& (\spadesuit)
\\ \hline

\text{(A10)} &(6,4) & M_{10} & 10 &  \{1,\dots,10\} & p=2 & \\ \hline

\text{(A11)} &(6,5) & 
M_{11} & 11 &  \{1,\dots,11\} & p=2 
& \\ \hline

\text{(A12)} &(7,5) & 

M_{11}, 
M_{12} & 12 &  \{1,\dots,12\} & p=2 
& \\ \hline



\text{(A13)} &(4,3) & \AAA_5 & 7 &  \{1,\dots,6\} 
& p =2 & \\ \hline 

\text{(A14)} &(5,3) & \AAA_5,\SSS_5 & 8 &  \{1,\dots,6\} 
& p =2 & \\ \hline  

\text{(A15)} &(6,5) & M_{10} & 11 &  \{1,\dots,10\} 
& p =2 & \\ \hline  

\text{(A16)} &(7,5) & \SSS_6,M_{10},\Aut(\AAA_6) & 12 &  \{1,\dots,10\} 
& p =2 & \\ \hline 

\text{(A17)} &(7,5) & \SSS_6,M_{10},\Aut(\AAA_6) & 12 & 
& p =2 & 
(\spadesuit\spadesuit)
\\ \hline 
 
\text{(A18)} &(7,5) & M_{11} & 12 &  \{1,\dots,11\} 
& p =2 & \\ \hline  

\end{array}\]

\vspace{1mm}
\centerline{\sc Table IV: Non-serial examples of irreducible restrictions from $\AAA_n$}
}

\vspace{3mm}

Note that in the cases (A7)-(A18), we have $(\la,p)=(\be_n,2)$, i.e. these cases are concerned with restrictions of basic spin modules.

We now describe the main results of the paper. In all theorems, the subgroups $G$ are listed up to $\SSS_n$-conjugation. We note that
$\SSS_n$-conjugate subgroups of $\AAA_n$ need not be $\AAA_n$-conjugate, and it may happen, as it does in case (A9) listed in 
Table IV, that one conjugate acts irreducibly while the other does not on an $\F\AAA_n$-module; such instances are specified explicitly
in our results. 
The case of the basic spin module\footnote{As pointed out by the anonymous referee, incidentally, the phenomenon of spin modules in 
characteristic $2$ giving rise to long chains of subgroups with irreducible restriction has also been observed in the context of symplectic groups over algebraically closed fields of characteristic $2$ in \cite{BMT}.},
excluded in Theorems~A and A$'$, will be considered separately in 
Theorems~C and C$'$. 


\vspace{3mm}
\noindent
{\bf Theorem A.} {\em 
Let $n \geq 5$, 
$G < \SSS_n$, and $\la\in\Par_p(n)$ be such that $\dim D^\la>1$. Exclude the basic spin case $(p,\la)=(2,\be_n)$. 
Then $D^\la\da_G$ is irreducible if and only if one of the following holds:
\begin{enumerate}[\rm(i)]

\item $\la\not\in\Parinv_p(n)$ and $G=\AAA_n$.

\item $\la$ or $\la^\Mull$ equals $(n-1,1)$, $G$ is $2$-transitive,  and $(G,n,p)$ is as in Table II. 

\item $p=2$, $n\equiv 2\pmod{4}$, $\la=(n-1,1)$, and $G\leq\SSS_{n/2}\wr\SSS_2$ is as in Remark ~\ref{R180319}.


\item $p\not=2$, $\la$ or $\la^\Mull$ equals $(n-2,1^2)$, $n=2^m$ for some $m\geq 3$ and $G=AGL_m(2)<\SSS_n$ via its natural action on the points of $\F_2^m$. 




\item $\la$ is JS and $G=\SSS_{n-1}$.

\item $\la$ is JS, $\la\not\in\Parinv_p(n)$, and $G=\AAA_{n-1}$.

\item $n\equiv 0\pmod{p}$, $\la$ or $\la^\Mull$ equals $(n-1,1)$, $G$ is a $2$-transitive subgroup of $\SSS_{n-1}$, and $(G,n-1,p)$ is as described in Table II.


\item $p\not=2$, $\la$ or $\la^\Mull$ equals $(n-2,1^2)$, $n=2^m+1\equiv 0\pmod{p}$ for some $m\geq 2$, and $G=AGL_m(2)<\SSS_{n-1}$ embedded 
via its natural action on the points of $\F_2^m$.

\item $(\la,G,n,p)$ is as in one of the cases (S1)-(S11) in Table III.
\end{enumerate}
}


\vspace{3mm}
\noindent
{\bf Theorem A$'$.} 
{\em
Let $n \geq 5$, $G < \AAA_n$, and $V$ be a non-trivial irreducible $\F\AAA_n$-module. If $p=2$ assume that $V$ is not basic spin. 
Then $V\da_G$ is irreducible if and only if one of the following holds:
\begin{enumerate}[\rm(i)]
\item $V\cong E^\la$ with $\la\not\in\Parinv_p(n)$ and $(\la,G,n,p)$ is as in Theorem A.

\item $V\cong E^\la_\pm$ with  $\la\in\Parinv_p(n)$ and one of the following holds:
\begin{enumerate}[\rm(a)]
\item $G=\AAA_{n-1}$ and $\la$ is JS or it has exactly two normal nodes, both of residue different from $0$.

\item $G=\AAA_{n-2}$ or $\AAA_{n-2,2}$ and $\la$ is JS.

\item $(\la,G,n,p)$ is as in one of the cases (A1)-(A6) in Table IV. 
\end{enumerate}
\end{enumerate}
}

\vspace{3mm}
A group $G$ is called {\em almost quasisimple} if $S \,\unlhd\, G/\ZB(G) \leq \Aut(S)$ for some non-abelian simple group $S$. 
In a number of applications, irreducible restrictions to quasisimple subgroups $G$ 
are of most interest. In the next two theorems we deal just with  
this  important special case. 


\vspace{3mm}
\noindent
{\bf Theorem B.} 
{\em 
Let $n \geq 5$, $G < \SSS_n$ be an almost quasisimple subgroup, and $\la\in\Par_p(n)$ be such that $\dim D^\la >1$. 
Exclude the basic spin case $(p,\la)=(2,\be_n)$. 
Then $D^\la\da_G$ is irreducible if and only if one of the following holds:
\begin{enumerate}[\rm(i)]

\item $\la\not\in\Parinv_p(n)$ and $G=\AAA_n$.

\item $\la$ or $\la^\Mull$ equals $(n-1,1)$, $G$ is $2$-transitive, and $(G,n,p)$ is as described in Table II, excluding the third line marked with $(\dagger)$.


\item $\la$ is JS and $G=\SSS_{n-1}$.

\item $\la$ is JS, $\la\not\in\Parinv_p(n)$ with $\la\not=\be_n$ if $p=2$ and $n\equiv 2\pmod{4}$, and $G=\AAA_{n-1}$.

\item $n\equiv 0\pmod{p}$, $\la$ or $\la^\Mull$ equals $(n-1,1)$, $G$ a $2$-transitive subgroup of $\SSS_{n-1}$, and $(G,n-1,p)$ is as described in Table II, excluding the third line marked with~$(\dagger)$.

\item $(\la,G,n,p)$ is as in one of the cases (S1)-(S4) or (S9)-(S11)
 in Table III.


\end{enumerate}
}

\vspace{3mm}
\noindent
{\bf Theorem B$'$.}
{\em 
Let $n \geq 5$, $H=\AAA_n$, $G < H$ be almost quasisimple, and $V$ be a non-trivial irreducible $\F\AAA_n$-module. 
If $p=2$ assume that $V$ is not basic spin. 
Then $V\da_G$ is irreducible if and only if one of the following holds:
\begin{enumerate}[\rm(i)]
\item $V\cong E^\la$ with $\la\not\in\Parinv_p(n)$ and $(\la,G,n,p)$ is as in Theorem B. 

\item $V\cong E^\la_\pm$ with  $\la\in\Parinv_p(n)$ and one of the following holds:
\begin{enumerate}[\rm(a)]
\item $G=\AAA_{n-1}$ and $\la$ is JS or it has exactly two normal nodes, both of residue different from 0.

\item $G=\AAA_{n-2}$ or $\AAA_{n-2,2}$ and $\la$ is JS.

\item $(\la,G,n,p)$ is as in one of the cases (A1)-(A4)
 in Table IV.


\end{enumerate}
\end{enumerate}
}

\vspace{3mm}
For basic spin modules in characteristic 2 we have the following two results.

\vspace{3mm}
\noindent
{\bf Theorem C.} 
{\em 
Let $n\geq 5$, $p=2$, 
and $G< \SSS_n$ be a proper subgroup of $\SSS_n$ such that $D^{\be_n}\da_G$ is irreducible. Then one of the following happens:

\begin{enumerate}[{\rm (i)}]
\item
$G\leq \SSS_{n-k}\times\SSS_{k}$ with $n-k$ and $k$ odd. In fact, 
$$D^{\be_n}\da_{\SSS_{n-k}\times\SSS_{k}}\cong D^{\be_{n-k}}\boxtimes D^{\be_{k}}$$
is indeed irreducible.

\item
$G\leq \SSS_a\wr\SSS_b$ with $n=ab$, $a,b\in\Z_{>1}$ and $a$ is odd. Moreover if $b>2$ then $G\not\leq\SSS_{a}\times\dots\times \SSS_{a}$. In fact,   
$$D^{\be_n}\da_{\SSS_a\wr\SSS_b}\cong D^{\be_a}\wr D^{\be_b}
$$
is indeed irreducible.

\item
$G$ is primitive, in which case $D^{\be_n}\da_G$ is irreducible if and only if one of the following happens:
\begin{enumerate}[{\rm (a)}]
\item $n\equiv 2\pmod{4}$ and $G=\AAA_n$;

\item $(G,n)$ is as in one of the cases (S12)-(S14) in Table III.
\end{enumerate}
\end{enumerate}
Moreover, if $G$ is almost quasi-simple then $D^{\be_n}\da_G$ is irreducible if and only if one of the following holds:
\begin{enumerate}[{\rm (1)}]
\item $n$ is even and $G=\SSS_{n-1}$.

\item $G$ is primitive, and one of the following holds: 
\begin{enumerate}[{\rm (a)}]
\item $n\equiv 2\pmod{4}$ and $G=\AAA_n$;

\item $(G,n)$ is as in one of the cases (S13),(S14) in Table III.
\end{enumerate}
\end{enumerate}

}

\vspace{3mm}
For restrictions of basic spin modules for  $\AAA_n$ we have the following analogous result: 

\vspace{3mm}
\noindent
{\bf Theorem C$'$.}
{\em 
Let $n\geq 5$, $p=2$ and $G< \AAA_n$. If  $E^{\be_n}_{(\pm)}\da_{G}$ is irreducible then one of the following holds:
\begin{enumerate}[{\rm (i)}]
\item $G\leq \AAA_{n-k,k}$ for some $1\leq k<n$, and either 
$n\equiv 0\pmod{4}$ and $k$ is odd, or $n\not\equiv 2\pmod{4}$ and $k\equiv 2\pmod{4}$. Moreover, in all of these cases $E^{\be_n}_\pm\da_{\AAA_{n-k,k}}$ is indeed irreducible.

\item $G\leq (\SSS_a\wr \SSS_b)\cap \AAA_n$ for $a,b>1$ with $n=ab$,  and either $a$ is odd or $a\equiv 2\pmod{4}$ and $b=2$. Moreover, in all of these cases $E^\la_{(\pm)}\da_{(\SSS_a\wr \SSS_b)\cap \AAA_n}$ is indeed irreducible.

\item $G$ is primitive, in which case $E^{\be_n}_{(\pm)}\da_G$ is irreducible if and only if 
$(G,n)$ is as in one of the cases (A7)-(A12) in Table IV.
\end{enumerate} 
Moreover, if $G$ is almost quasi-simple then $E^{\be_n}_{(\pm)}\da_{G}$ is irreducible if and only if one of the following holds:
\begin{enumerate}[{\rm (1)}]

\item
$n\not\equiv 2\pmod{4}$ 
and one of the following happens:
\begin{enumerate}
\item[\rm (a)] $4|n$ and $G = \AAA_{n-3,2,1}$ or $\AAA_{n-2,1,1}$.
\item[\rm (b)] 
$G = \AAA_{n-2,2}$. 
\item[\rm (c)] $n \equiv 0,3 \pmod{4}$ and $G = \AAA_{n-1}$.
\item[\rm (d)] $(G,n)$ is as in one of the cases (A9), (A11)-(A18) 
in Table IV.
\end{enumerate}
\item $(G,n)=(M_{10},10)$ (case (A10) of Table IV).
\end{enumerate}
}

\vspace{3mm}

\begin{Remark}\label{R180319_2}
We point out that \cite[Theorem C]{KMT1} contains an inaccuracy:   since $M_{12}<\AAA_{12}$ and $\be_{12}\in\Parinv_2(12)$, the restriction $D^{\be_{12}}\da_{M_{12}}$ is reducible, and so this case does not appear in Theorem C above. 
However, it does appear in Theorem~C$'$(iii) and (1)(d) as part of the case (A12). 

There is a similar inaccuracy in \cite[Main Theorem]{BK}: let $G=M_{11}\leq\SSS_{11}$ and $p=5$. Then $D^{(9,2)}\da_G$ is reducible by case (iii) of \cite[Main Theorem]{BK} and so $D^{(10,2)}\da_G$ is also reducible.
\end{Remark}

\edit{We point out that the results proved 
in \cite{KMT1,KMT3} that reduce the problem mostly to the treatment of doubly transitive groups 
do not depend on the Classification of Finite Simple Groups ({\sf {CFSG}}). However, the main results of this paper 
depend on {\sf {CFSG}} as follows: (i) our treatment of doubly transitive subgroups relies on their explicit list, see \cite{Cameron}, which is a consequence of {\sf {CFSG}}, and (ii) the treatment of ``non-generic'' situation in Section~\ref{SOD} uses the list of simple subgroups of $\SSS_n$ of large order (Proposition \ref{PSimple}) which also relies on 
{\sf {CFSG}}.}

We now describe the key ingredients of our proof and the organization of the paper. We will exploit various dimension bounds for irreducible representations of symmetric groups, especially new lower bounds obtained in \cite{KMT2}, see 
Theorems~\ref{TBound1} and \ref{TBound2}. Further dimension bounds and branching results are collected in the preliminary Section~\ref{SPrel}. 

Reduction theorems established in \cite{KMT1,KMT3} allow us to assume in many situations that the subgroup $G$ is primitive or even doubly transitive. Those subgroups tend to have a relatively large order, and we contrast order bounds with dimension bounds  in Section~\ref{SOD}, particularly to resolve the 
``non-generic'' situation where the module is either basic spin or not extendible to $\SSS_n$. 

In Sections~\ref{SSmall}--\ref{SLG3} we deal with doubly transitive subgroups $G \leq \SSS_n$. Given the well-known solution of Problem 1 in the case 
$(G,H) = (\AAA_n,\SSS_n)$, we will assume that $G \not\geq \AAA_n$.  Such subgroups $G$ are subdivided into the following four families, corresponding to the structure of the socle $\soc(G)$ and its action on $\{1,2, \ldots,n\}$: 

\begin{enumerate}
\item[\rm (A)] $\soc(G)$ is elementary abelian subgroup;

\item [\rm (B)] $\soc(G) \cong PSL_m(q)$ (is non-abelian simple)  acting on $n = (q^m-1)/(q-1)$ $1$-dimensional subspaces of $ \F_q^m$;

\item [\rm (C)] $G\cong Sp_{2m}(2)$, $m \geq 3$, acting on $n = 2^{m-1}(2^m + (-1)^\de)$ quadratic forms on $\F_2^{2m}$ of the given Witt defect $\de \in \{0,1\}$; 

\item [\rm (D)] all other doubly transitive subgroups; the subgroups from this class will be called {\em small doubly transitive subgroups}. 
\end{enumerate}

The small doubly transitive subgroups of (D) are handled in Section~\ref{SSmall}, largely relying on the aforementioned results on dimension bounds, 
branching rules to Young subgroups, and available information about modular representations of $H$ and $G$.

In Section \ref{SAff}, we handle the family (A) of affine permutation subgroups. Here, the key technical result is Proposition \ref{P110417_5}
that identifies the $\SSS_n$-modules that have no (nonzero) invariants over $\soc(G) \cong \CCC_r^m$, whose proof in turn relies on representation theory of 
affine general linear group $AGL_m(r)$ and the new branching recognition result Proposition \ref{C080417}.

The families (B) and (C) are handled in Sections~\ref{SLG1} and \ref{SLG3}, respectively. 
We note that these large doubly transitive groups are the main reason why the methods of \cite{BK} and \cite{KS} break down when one tries to 
employ them in small characteristics $p=2,3$. 

The heart of the proof is to show that if the irreducible $\F\SSS_n$-module $D^\la$ remains irreducible over such a subgroup $G$ from 
the families (B) and (C) , then 
the longest part $\la_1 = n-\ell$ of $\la$ is very large, in fact, $\ell \leq 3$ most of the time. 
We will do this in a sequence of steps. 

First, using the obvious bound $\dim V \leq |G|^{1/2}$  for 
any irreducible $G$-module $V$ and Lemma \ref{TBound3}, we show in Propositions
\ref{PRedSL} and \ref{PRedSp} that 
$$\dim D^\la \leq n^{\frac{1}{2}\log_2n+1}.$$ Then an application of Proposition \ref{PDim1} implies that
\begin{equation}\label{eq-b1}
  \ell = O(\log n).
\end{equation}

Next, we choose some $L$ such that $2\ell \leq L < n$. Considering $G\cap\SSS_{n-L}$ and 
using Theorem \ref{TSub} and Propositions \ref{PRedSL}(iii) and \ref{PRedSp}(ii), we prove that 
\begin{equation}\label{eq-b2}
  \dim D^\la = n^{O(k)}
\end{equation}
for some $k = O(\log \ell)$. On the other hand, Theorem \ref{TBound1} yields a lower bound
\begin{equation}\label{eq-b3}
  \dim D^\la > O(n^\ell/(\ell!)^2).
\end{equation} 
Given \eqref{eq-b1}, we can show that \eqref{eq-b3} contradicts \eqref{eq-b2}, unless $\ell$ is small. An iterative application of this argument 
will allow us to show that $\ell \leq 3$.
The remaining possibilities for $\la$ are ruled out using more precise information about $D^\la$.


Finally, the main theorems are proved in Section~\ref{SMain}. First, we use the main results of \cite{KMT1,KMT3} to reduce to subgroups doubly transitive on $\{1,\dots,n\}$ or doubly transitive on $\{1,\dots,n-1\}$ and fixing $n$. The results of the previous sections then allow us to complete the proofs of Theorems A, A$'$.
The proof of Theorem B requires a delicate argument to rule out the possibility of Theorem A(iii) for almost quasisimple groups.
The proof of Theorem C$'$ combines classifications of irreducible restrictions to maximal imprimitive subgroups (from \cite{KMT3}) and to primitive subgroups (obtained in Sections~\ref{SSmall}--\ref{SLG3}). 
After that we need to handle the case when $\soc(G) \cong \AAA_m$ has only orbits of length $1$ and $m$ on
$\{1,2, \ldots,n\}$. 

\section{Preliminary results}
\label{SPrel}
\subsection{Generalities}
Throughout the paper we work over a fixed algebraically closed ground field $\F$ of characteristic $p>0$. 
Let $G,H$ be arbitrary finite groups, $V,V'$ be $\F G$-modules,  and $W$ be an $\F H$-module. The following notation is used throughout the paper:
$$
\begin{array}{ll}
\text{$V{\da}_H$ or $V{\da}^G_H$} & \text{the {\em restriction} of $V$ from $G$ to $H$ (if $H\leq G$);}\\
\text{$\ind^GW$ or $\ind^G_HW$} & \text{the {\em induction} of $W$ from $H$ to $G$  (if $H\leq G$);}
\\
\text{$V\boxtimes W$} & 
\begin{array}{l}
\text{\hspace{-1.5 mm}the outer tensor product of $V$ and $W$ (this is a module} 
\\
\text{\hspace{-1.5 mm}over $G\times H$);}
\end{array}
\\
\text{$V\otimes V'$} & 
\begin{array}{l}
\text{\hspace{-1.5 mm}the inner tensor product of $V$ and $V'$ (this is a module} 
\\
\text{\hspace{-1.5 mm}over $G$);}
\end{array}
\\
\text{$V^G$} & \text{the space of {\em $G$-invariant vectors} in $V$;}
\\
\text{$\F_G$} &\text{the trivial $\F G$-module;}
\\
\text{$\Irr_\F(G)$}&\text{a complete set of irreducible $\F G$-modules;}
\\
\text{$\IBr_p(G)$}&\text{the set of irreducible $p$-Brauer characters of $G$;}
\\
\text{$\bmax_p(G)$} &\text{the maximal dimension of an irreducible $\F G$-module;}
\\
\text{$\bmax(G)$} &\text{the maximal dimension of an irreducible $\C G$-module;}
\\
\text{$d(G)$} &
\begin{array}{l}
\text{\hspace{-1.5mm}the minimal degree of a non-linear irreducible complex}
\\
\text{\hspace{-1.5mm}character of $G$ (if such exists);}
\end{array}
\\
\text{$P(G)$}&\text{the smallest index of a (proper) maximal subgroup of $G$;}
\\
\text{$\Par_p(n)$}&\text{the set of $p$-regular partitions of $n$;}
\\
\text{$\Parinv_p(n)$}&\text{the set of $\la \in \Par_p(n)$ such that $D^\la\da_{\AAA_n}$ is reducible;}
\\
\text{$h(\la)$}&\text{the number of nonzero parts in the partition $\la$.}
\end{array}
$$

\edit{Let $0\leq \ell<n$. We denote
\begin{align*}
\Par^{(\ell)}(n)&:=\{\la \vdash n \mid \la_1\geq n-\ell\},
\\ 
\L^{(\ell)}(n)&:=\{\la\in \Par_p(n)\mid \la\ \text{or}\ \la^\Mull\ \text{belongs to}\  \Par^{(\ell)}(n) \cap \Par_p(n)\}.
\end{align*}
Given a partition $\mu=(\mu_1,\mu_2,\dots)$ of $\ell$ with $\mu_1\leq n-\ell$, we have a partition 
\begin{equation}\label{En-lmu}
(n-\ell,\mu):=(n-\ell,\mu_1,\mu_2,\dots)
\end{equation}
of $n$. 
Every partition $\la$ of $n$ can be written in the form $\la=(\ell,\mu)$ for a (possibly empty) partition $\mu$ of $n-\ell$. 

For $\la,\la^1,\dots,\la^s\in\Par_p(n)$, we denote 
$$\llbracket D^\la\rrbracket:=\{D^\la,D^\la\otimes \sgn\}\quad\text{and}\quad \llbracket D^{\la^1},\dots,D^{\la^s}\rrbracket:=\llbracket D^{\la^1}\rrbracket\cup\dots\cup \llbracket D^{\la^s}\rrbracket.
$$
Special roles will be played by the sets
$$
\T_n:=\llbracket D^{(n)}\rrbracket,\quad \N_n:=\llbracket D^{(n-1,1)}\rrbracket,\quad \NT_n:=\N_n\cup \T_n. 
$$
}



The following simple observations turn out to be very useful:

\begin{Lemma} \label{LEasy} 
We have:
\begin{enumerate}
\item[{\rm (i)}] If $G\leq \SSS_n$, then $n\geq P(G)$. If $G$ is not primitive on $\{1,\dots,n\}$ then $n> P(G)$.
\item[{\rm (ii)}] If $G$ is a simple group, then $P(G)>d(G)$.
\end{enumerate}
\end{Lemma}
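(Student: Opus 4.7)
The plan is to handle the two parts separately, both by a coset/permutation action argument on a maximal subgroup of smallest index.

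For part (i), I would take any point $x \in \{1,\dots,n\}$, let $k \leq n$ be the size of its $G$-orbit, and consider the point stabilizer $G_x$, which has index $k$ in $G$. Embedding $G_x$ into any maximal subgroup $M$ containing it gives $P(G) \leq [G:M] \leq k \leq n$, which is the first claim. For the strict inequality, I would split the non-primitive case into two subcases. If $G$ is intransitive, choose $x$ in an orbit of size $k < n$ to get $P(G) \leq k < n$. If $G$ is transitive but imprimitive, pick a non-trivial block system and let $H$ be the setwise stabilizer of the block containing $x$; then $G_x \lneq H \lneq G$, so $G_x$ is not maximal, and any maximal $M \supseteq H$ satisfies $[G:M] \leq [G:H] < [G:G_x] = n$, hence $P(G) < n$.

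For part (ii), I would use the standard trick of looking at the action of $G$ on the cosets $G/M$, where $M$ is a maximal subgroup of $G$ with $[G:M] = P(G)$. Because $G$ is simple and $M$ is proper, the kernel of this action (the normal core of $M$) is trivial, so the representation is faithful. Decomposing the permutation character over $\C$ as $\pi = 1_G + \chi$ with $\chi$ of degree $P(G) - 1$, I would observe that $\chi \neq 0$ and is a sum of non-trivial irreducible characters of $G$. Since $G$ is simple (and necessarily non-abelian for $d(G)$ to exist), every non-trivial irreducible character is non-linear, so each constituent of $\chi$ has degree at least $d(G)$. Therefore $P(G) - 1 = \deg \chi \geq d(G)$, giving $P(G) > d(G)$.

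Neither part should present a serious obstacle: part (i) is elementary combinatorics of group actions, and part (ii) is the classical observation that a non-trivial permutation representation of a non-abelian simple group contains a non-linear irreducible constituent of the expected degree. The only minor subtlety is the (vacuous) case where $G$ is cyclic of prime order in part (ii), where $d(G)$ is undefined and the statement has no content; under the standing convention that the bound is only asserted when $d(G)$ exists, this causes no issue.
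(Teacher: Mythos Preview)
Your proposal is correct and follows essentially the same approach as the paper, which tersely says that (i) ``follows by considering point stabilizers'' and (ii) ``comes on observing that $\ind_H^{G}\C_G$ contains some non-trivial irreducible components for any $H<G$.'' Your write-up simply supplies the details behind these remarks, including the careful split of the non-primitive case into intransitive and imprimitive subcases.
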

\begin{proof}
(i) follows by considering point stabilizers. (ii) comes on observing that $\ind_H^{G}\C_G$ contains some non-trivial irreducible components for any $H<G$.  
\end{proof}

Note that $\bmax_p(G)\leq \bmax(G)$. We will need the following bound:

\begin{Lemma}\label{TBound3} {\rm \cite[Theorem 2.2]{Se}} 
Let $G = SL_m(q)$ or $Sp_{2m}(q)$ with $m \geq 2$. If $B$ is a Borel subgroup of $G$, then $\bmax(G) \leq [G:B]$.
\end{Lemma}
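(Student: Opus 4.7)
The plan is to use Harish-Chandra theory to reduce the desired bound to cuspidal characters of Levi subgroups, and then to dispatch the cuspidal case by explicit degree formulas.

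First I would carry out the Harish-Chandra reduction. Let $\chi$ be an arbitrary irreducible complex character of $G$. By the Harish-Chandra theorem for finite groups of Lie type, there exist a standard parabolic $P = L U_P \leq G$ with split Levi $L \supseteq T$ and a cuspidal character $\psi$ of $L$ such that $\chi$ occurs as a constituent of the parabolically induced character $R_L^G(\psi) = \ind_P^G \widetilde\psi$. Since in characteristic zero any irreducible constituent of an induced character has dimension at most that of the induced character,
$$\chi(1) \leq \dim R_L^G(\psi) = [G:P]\cdot\psi(1).$$
The Iwahori-type decomposition $B = (B\cap L)\cdot U_P$ yields $|B| = |B\cap L|\cdot|U_P|$, and hence the index identity $[G:P] = [G:B]/[L:B\cap L]$; thus
$$\chi(1) \leq [G:B]\cdot\frac{\psi(1)}{[L:B\cap L]}.$$
An induction on $|L|$, with inductive hypothesis $\psi(1) \leq [L:B\cap L]$ applied to the smaller Lie-type group $L$ (with trivial base case when $L = T$, where cuspidal characters are linear), reduces the problem to the case where $\chi$ is itself a cuspidal character of $G$.

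Second I would verify the cuspidal case. For $G = SL_m(q)$ each cuspidal irreducible character has degree $\prod_{i=1}^{m-1}(q^i-1)$, up to a divisor of $\gcd(m,q-1)$ coming from Clifford restriction from $GL_m(q)$, while $[G:B] = \prod_{i=2}^{m}(q^i-1)/(q-1)$. A short manipulation shows that the ratio of cuspidal degree to $[G:B]$ is at most $(q-1)^{m-1}/[m]_q$, with $[m]_q := 1 + q + \dots + q^{m-1}$, which is $\leq 1$ since $(q-1)^{m-1} \leq q^{m-1} < [m]_q$. For $G = Sp_{2m}(q)$ I would invoke Lusztig's classification of cuspidal characters (unipotent and non-unipotent) together with the explicit Lusztig degree polynomials, and verify the bound family by family; in each case the inequality reduces to a polynomial inequality in $q$ that follows from the shape of these degree polynomials.

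I expect the main obstacle to be the cuspidal case for $Sp_{2m}(q)$, where Lusztig's full machinery of families, $j$-induction, and the classification of cuspidal pairs must be invoked, rather than a uniform structural argument. Everything else -- the Harish-Chandra reduction, the index identity $[G:P] = [G:B]/[L:B\cap L]$, and the $SL_m(q)$ cuspidal comparison -- is a clean and essentially routine calculation.
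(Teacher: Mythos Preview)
The paper does not prove this lemma at all; it is simply quoted from Seitz \cite[Theorem 2.2]{Se}. Your Harish-Chandra reduction followed by a cuspidal check is exactly the strategy Seitz uses, so you are reconstructing the cited argument rather than offering an alternative.

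One technical point to tighten: your induction on $|L|$ assumes the inductive hypothesis applies to the Levi $L$, but the Levi subgroups of $SL_m(q)$ are not smaller $SL_k(q)$'s; they have the form $S(GL_{m_1}(q)\times\cdots\times GL_{m_r}(q))$, and for $Sp_{2m}(q)$ they are $GL_{m_1}(q)\times\cdots\times GL_{m_r}(q)\times Sp_{2m_0}(q)$. So the induction must be set up to cover $GL_k(q)$ (and products) as well, not just the two families in the statement. This is easy to arrange---the same argument works verbatim for $GL_k(q)$ and the bound is multiplicative over direct factors---but it should be said explicitly. Apart from that, your $SL_m$ cuspidal computation is correct, and your deferral to Lusztig for the $Sp_{2m}$ cuspidal degrees is the honest thing to do; that is where the real work lies in Seitz's proof too.
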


\subsection{Representations of symmetric and alternating groups}
Recall the notation and the facts on representation theory of symmetric and alternating groups introduced in Section~\ref{SIntro}. In addition, we will denote by $M^\la$ the permutation module and by $S^\la$ the Specht module over the symmetric group $\SSS_n$ corresponding to a partition $\la$ of $n$, see \cite{JamesBook}. Occasionally, we will need the corresponding Specht module over $\C$, which we denote $S^\la_\C$. Thus $S^\la$ is a reduction modulo $p$ of $S^\la_\C$. 

\begin{Lemma} \label{LSCe} 
Suppose that $G\leq \SSS_n$. If $S^\la\da_G$ is irreducible then so is $S^\la_\C\da_G$. 
\end{Lemma}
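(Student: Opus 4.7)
The plan is to relate $\End_{\C G}(S^\la_\C\da_G)$ to $\End_{\F G}(S^\la\da_G)$ via the integral Specht lattice and then invoke Schur's lemma. Let $S^\la_\ZZ$ denote the Specht $\ZZ\SSS_n$-lattice spanned by standard polytabloids, which is $\ZZ$-free with $S^\la_\ZZ \otimes_\ZZ \F \cong S^\la$ and $S^\la_\ZZ \otimes_\ZZ \C \cong S^\la_\C$. Setting $L := S^\la_\ZZ\da_G$, so that $L \otimes_\ZZ \F \cong S^\la\da_G$ and $L \otimes_\ZZ \C \cong S^\la_\C\da_G$, the aim is to establish
\[
\dim_\C \End_{\C G}(S^\la_\C\da_G) \;\leq\; \dim_\F \End_{\F G}(S^\la\da_G).
\]

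To this end, I would consider $R := \End_{\ZZ G}(L)$. As a $\ZZ$-submodule of the $\ZZ$-free module $\End_\ZZ(L)$, $R$ is itself $\ZZ$-free of some rank $r\geq 1$. Flat base change to $\C$ (using that $L$ is finitely presented over $\ZZ G$) gives $R \otimes_\ZZ \C \cong \End_{\C G}(S^\la_\C\da_G)$, so the $\C$-dimension of the latter equals $r$. On the other hand, the mod-$p$ reduction map $R \to \End_{\F G}(S^\la\da_G)$ has kernel exactly $pR$: if $\varphi \in R$ acts as zero on $L\otimes_\ZZ \F$, then in any $\ZZ$-basis of $L$ all matrix entries of $\varphi$ lie in $p\ZZ$, whence $\varphi \in pR$. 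Thus $R\otimes_\ZZ \F \hookrightarrow \End_{\F G}(S^\la\da_G)$, yielding the displayed inequality.

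Assuming $S^\la\da_G$ is irreducible, Schur's lemma (with $\F$ algebraically closed) gives $\dim_\F \End_{\F G}(S^\la\da_G) = 1$, so $\dim_\C \End_{\C G}(S^\la_\C\da_G) \leq 1$, with equality forced by the presence of the identity endomorphism. Semisimplicity of $\C G$-modules, together with the formula $\dim_\C \End_{\C G}(\bigoplus n_i V_i) = \sum n_i^2$ for distinct irreducibles $V_i$, then implies $S^\la_\C\da_G$ is irreducible. There is no substantial obstacle; the only point requiring a brief verification is that mod-$p$ reduction injects endomorphisms of the $\ZZ$-free lattice $L$ into endomorphisms of $L\otimes_\ZZ \F$.
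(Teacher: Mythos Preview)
Your proof is correct. The one subtle step is the passage from ``$\ker(R \to \End_{\F G}(S^\la\da_G)) = pR$'' to ``$R\otimes_\Z \F \hookrightarrow \End_{\F G}(S^\la\da_G)$''. What you have literally shown is that $R/pR$ injects into $\End_{\F G}(S^\la\da_G)$ as $\F_p$-spaces, and one must still check that the induced $\F$-linear map is injective. But your kernel computation is exactly the statement that $R \cap p\End_\Z(L) = pR$, i.e.\ that $\End_\Z(L)/R$ is $p$-torsion-free; this gives injectivity of $R\otimes_\Z \F \to \End_\Z(L)\otimes_\Z \F \cong \End_\F(L\otimes\F)$, whose image lies in $\End_{\F G}(S^\la\da_G)$. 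So the argument goes through.

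The paper takes a more direct route: since restriction to $G$ commutes with reduction modulo $p$, the module $S^\la\da_G$ is a reduction modulo $p$ of $S^\la_\C\da_G$; if the latter were reducible it would (by semisimplicity of $\C G$) split as a nontrivial direct sum, and reducing modulo $p$ would then produce at least two composition factors in $S^\la\da_G$, contradicting irreducibility. Your endomorphism-algebra comparison is a genuinely different argument: it trades the direct composition-length reasoning for a rank inequality $\dim_\C \End_{\C G} \leq \dim_\F \End_{\F G}$ established via the integral lattice. The paper's approach is shorter and needs no Schur's lemma or semisimplicity at the end; yours has the merit of giving a reusable quantitative inequality between endomorphism dimensions, which can be useful in other contexts (e.g.\ bounding multiplicities rather than just detecting irreducibility).
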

\begin{proof}
This follows on observing that reduction modulo $p$ and restriction to a subgroup commute.
\end{proof}

\begin{Lemma}\label{l4}
Let $p=3$ and $n\equiv 0\pmod{3}$. If $G<\SSS_n$ and $D^{(n-2,2)}\da_G$ is irreducible then $n\leq 24$.
\end{Lemma}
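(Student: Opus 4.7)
From Table I, since $p=3$ and $n \equiv 0 \pmod{3}$, we have $\dim D^{(n-2,2)} = n(n-3)/2$, and moreover $(n-2,2) \notin \Parinv_3(n)$. The latter means that $D^{(n-2,2)}\da_{\AAA_n}$ is always irreducible, so the lemma is to be understood with the implicit restriction $G \not\supseteq \AAA_n$. The plan is to combine the reduction theorems of \cite{KMT1,KMT3} with Cameron's classification of finite $2$-transitive groups (Table II, a consequence of CFSG) and with dimension bounds to force $n \leq 24$.

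First, I would apply the reduction theorems to $D^{(n-2,2)}$, a module that is neither $D^{(n-1,1)}$ nor basic spin, to conclude that for all sufficiently large $n$ any proper $G < \SSS_n$ with $G \not\supseteq \AAA_n$ and $D^{(n-2,2)}\da_G$ irreducible must act doubly transitively either on $\{1,\dots,n\}$ or, fixing one point, on $\{1,\dots,n-1\}$. Next, I would go through each $2$-transitive family in Table II under the joint constraints $n \equiv 0 \pmod 3$ and $\bmax_3(G) \geq n(n-3)/2$. Suzuki groups $^2B_2(q)$ give $n = q^2+1 \equiv 2 \pmod 3$ and so are excluded; Ree groups $^2G_2(q)$ give $n = q^3+1 \equiv 1 \pmod 3$; and $PSU_3(q)$ requires $q \equiv 2 \pmod 3$ to have $3 \mid n$, but then $3 \mid q+1$, contradicting the condition in Table II. For affine $AGL_m(r)$, the requirement $3 \mid r^m$ forces $r = 3$; in characteristic $3$ the normal subgroup $\CCC_3^m$ acts trivially on every irreducible $\F G$-module, so these modules factor through $GL_m(3)$, and Lemma \ref{TBound3} together with a direct computation gives $\bmax_3(G) < 3^m(3^m-3)/2$ for all $m \geq 3$. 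For $\soc(G) \cong PSL_d(q)$ with $d \geq 3$, for $Sp_{2m}(2)$, and for $PSL_2(q) \unlhd G \leq P\Gamma L_2(q)$, Lemma \ref{TBound3} (or direct inspection of maximal irreducible degrees) combined with $n \equiv 0 \pmod 3$ yields $\bmax_3(G) < n(n-3)/2$ once $n$ exceeds a modest bound. Finally, among Mathieu groups only $M_{12}$ ($n = 12$) and $M_{24}$ ($n = 24$) satisfy $n \equiv 0 \pmod 3$, both with $n \leq 24$.

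The main obstacle will be the numerical case-by-case verification for the classical families $PSL_d(q)$ and $Sp_{2m}(2)$, where the index $[G:B]$ and hence $\bmax(G)$ grows polynomially in the defining parameters while $n(n-3)/2$ grows as $n^2/2$. Here the residue constraint $n \equiv 0 \pmod 3$ substantially prunes the list, and once $n$ is above a small threshold the dimension inequality fails. The bound $n \leq 24$ is sharp, witnessed by $M_{24} < \SSS_{24}$, whose $3$-modular Brauer character table contains an irreducible of degree $252 = 24 \cdot 21/2$.
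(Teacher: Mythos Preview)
Your approach is fundamentally different from the paper's and misses the key simplification. The paper's proof is three lines: since $p=3$ and $3\mid n$, the Specht module $S^{(n-2,2)}$ is already irreducible, i.e.\ $D^{(n-2,2)}\cong S^{(n-2,2)}$ (compare dimensions using Lemma~\ref{LL2}(ii)); then Lemma~\ref{LSCe} shows that irreducibility of $D^{(n-2,2)}\da_G=S^{(n-2,2)}\da_G$ forces irreducibility of the complex Specht module $S^{(n-2,2)}_\C\da_G$; and Saxl's classification \cite[Theorem~1]{S} in characteristic zero then gives $n\leq 24$ directly. The congruence $3\mid n$ is precisely what makes this lift available, and it is the whole reason the lemma is a \emph{preliminary} result rather than a consequence of the main classification.

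Your plan to reduce to doubly transitive groups via \cite{KMT1} and then eliminate each family by the inequality $\bmax_3(G)<n(n-3)/2$ has a genuine gap. The Borel bound of Lemma~\ref{TBound3} is far too weak for the large classical families: for $G=Sp_8(2)$ on $n=120$ points one would need $\bmax_3(G)<7020$, whereas $[G:B]=3\cdot15\cdot63\cdot255=722925$; for $\soc(G)=PSL_4(5)$ on $n=156$ points one would need $\bmax_3(G)<11934$, whereas $[SL_4(5):B]=29016$. Handling $Sp_{2m}(2)$ for this very partition in fact requires the structural argument of Proposition~\ref{PSp233}, and $PSL_d(q)$ with $d\geq 4$ requires the iterative fixed-point machinery of Section~\ref{SLG1}. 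So your route would rederive a substantial portion of the paper's main theorem in order to establish a lemma in the preliminaries; the lift to characteristic zero is exactly the shortcut that bypasses all of this.
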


\begin{proof}
The assumptions imply that $D^{(n-2,2)}\cong S^{(n-2,2)}$. 
By Lemma~\ref{LSCe}, if $D^{(n-2,2)}\da_G$ is irreducible then so is $S^{(n-2,2)}_\C\da_G$. The result now follows from \cite[Theorem 1]{S}.
\end{proof}


We next record some known results on dimensions of special irreducible modules for $p=2$ and $3$.

\begin{Lemma}\label{LL2}
We have: 
\begin{enumerate}[\rm(i)]
\item If $p=2$, then 
$$\dim D^{(n-2,2)}
=
\left\{
\begin{array}{ll}
(n^2-5n+4)/2 &\hbox{if $n \equiv 0 \pmod{4}$,}\\
(n^2-3n-2)/2 &\hbox{if $n \equiv 1 \pmod{4}$,}\\
(n^2-5n+2)/2 &\hbox{if $n \equiv 2 \pmod{4}$,}\\
(n^2-3n)/2&\hbox{if $n \equiv 3 \pmod{4}$.}
\end{array}
\right.
$$ 

\item If $p=3$ then 
$$\dim D^{(n-2,2)}
=
\left\{
\begin{array}{ll}
(n^2-3n)/2 &\hbox{if $n \equiv 0 \pmod{3}$,}\\
(n^2-3n-2)/2 &\hbox{if $n \equiv 1 \pmod{3}$,}\\
(n^2-5n+2)/2 &\hbox{if $n \equiv 2 \pmod{3}$.}
\end{array}
\right.
$$ 

\item If $p=3$ then  
$$\dim D^{(n-2,1^2)}=
\left\{
\begin{array}{ll}
(n^2-5n+6)/2 &\hbox{if $3\mid n$,}\\
(n^2-3n+2)/2 &\hbox{if $3\nmid n$.}
\end{array}
\right.
$$ 
\end{enumerate}
\end{Lemma}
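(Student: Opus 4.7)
The plan is to deduce all three dimension formulas from the classical values of the Specht module dimensions combined with James's known decomposition numbers in characteristics $2$ and $3$ for partitions of the shapes $(n-2,2)$ and $(n-2,1^2)$, exactly as collected in the references \cite{JamesDim,JamesBook} cited in the text preceding the lemma.

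First I would record the hook-length values
\[
\dim S^{(n-2,2)}=\tfrac12 n(n-3),\qquad \dim S^{(n-2,1^2)}=\tfrac12 (n-1)(n-2),
\]
together with the standard facts $\dim D^{(n)}=1$ and $\dim D^{(n-1,1)}=n-1$ or $n-2$ according as $p\nmid n$ or $p\mid n$. Since each of $(n-2,2)$ and $(n-2,1^2)$ is $p$-regular (for $p=2$ or $3$, in the relevant cases), $D^{(n-2,2)}$ (respectively $D^{(n-2,1^2)}$) is the head of the corresponding Specht module, and all other composition factors of the Specht module are labelled by dominant partitions, namely $(n)$ and $(n-1,1)$ in the case of $(n-2,2)$, and $(n)$ and $(n-1,1)$ in the case of $(n-2,1^2)$.

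For part (i), I would invoke the $p=2$ decomposition numbers for two-row Specht modules (James): the composition multiplicities of $D^{(n)}$ and $D^{(n-1,1)}$ in $S^{(n-2,2)}$ are determined by the residue of $n$ modulo $4$. Subtracting the corresponding contributions $1\cdot\dim D^{(n)}+m\cdot\dim D^{(n-1,1)}$ from $\tfrac12 n(n-3)$ yields the four stated values. Concretely: for $n\equiv 3\pmod 4$ the Specht module is already irreducible, giving $(n^2-3n)/2$; for $n\equiv 1\pmod 4$ one subtracts a single copy of $D^{(n-1,1)}$ of dimension $n-1$, giving $(n^2-3n-2)/2$; for $n\equiv 2\pmod 4$ one subtracts $D^{(n-1,1)}$ of dimension $n-2$ (since $2\mid n$) and a copy of $D^{(n)}$, giving $(n^2-5n+2)/2$; and for $n\equiv 0\pmod 4$ an additional copy of $D^{(n-1,1)}$ appears, giving $(n^2-5n+4)/2$.

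Part (ii) is entirely analogous with $p=3$: the $3$-modular composition factors of $S^{(n-2,2)}$ and their multiplicities (again available from James's tables) depend only on $n\bmod 3$, and one subtracts the appropriate contributions of $D^{(n)}$ and $D^{(n-1,1)}$ (using $\dim D^{(n-1,1)}=n-1$ or $n-2$ according as $3\nmid n$ or $3\mid n$) from $\tfrac12 n(n-3)$. For part (iii), one runs the same subtraction for $S^{(n-2,1^2)}$ in characteristic $3$: the composition factors distinct from $D^{(n-2,1^2)}$ are accounted for by a copy of $D^{(n-1,1)}$ precisely when $3\mid n$ (in which case its dimension is $n-2$), and the two stated formulas follow at once from $\dim S^{(n-2,1^2)}=(n-1)(n-2)/2$. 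The only real obstacle is correctly reading off the relevant decomposition multiplicities in each congruence class, but this is bookkeeping against the cited tables and involves no new computation.
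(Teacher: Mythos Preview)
Your approach is exactly the paper's: the proof there is just a pointer to James \cite[24.15, 24.1]{JamesBook}, i.e.\ read off the decomposition numbers of $S^{(n-2,2)}$ and $S^{(n-2,1^2)}$ and subtract. So the strategy is fine.

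However, the concrete bookkeeping you wrote down for part (i) is wrong in two of the four cases, and your arithmetic does not actually produce the values you claim. For $n\equiv 1\pmod 4$ the extra composition factor of $S^{(n-2,2)}$ in characteristic $2$ is $D^{(n)}$, not $D^{(n-1,1)}$: subtracting $\dim D^{(n-1,1)}=n-1$ from $(n^2-3n)/2$ gives $(n^2-5n+2)/2$, not the $(n^2-3n-2)/2$ you assert. (Check $n=5$: $\dim S^{(3,2)}=5$, $\dim D^{(3,2)}=4$, so the missing piece is the trivial module.) For $n\equiv 0\pmod 4$ the decomposition is $[S^{(n-2,2)}]=[D^{(n-2,2)}]+[D^{(n-1,1)}]$, i.e.\ a single copy of $D^{(n-1,1)}$ and no trivial; your ``additional copy'' phrasing suggests you are piling factors on top of the $n\equiv 2$ case, which would overshoot. (Check $n=8$: $\dim S^{(6,2)}=20=14+6$, and compare Lemma~\ref{LS242}(i) in the paper for $n=24$.) The $n\equiv 2,3\pmod 4$ cases and parts (ii), (iii) are correct as you describe them. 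These are exactly the ``bookkeeping'' slips you flagged as the only obstacle; once you read James's tables correctly the proof goes through.
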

\begin{proof}
This is well known and follows easily from \cite[24.15, 24.1]{JamesBook}.
\end{proof}

The following results will be needed to study irreducible restrictions to $M_{24}$: 

\begin{Lemma}\label{LS243}
Let $n=24$, $p=3$, and $\la\in \Par^{(4)}(24)\setminus \Par^{(1)}(24)$. Then the dimension of $D^\la$ and the decomposition of $[S^\la]$ in the Grothendieck group are as follows 
\begin{enumerate}
\item[{\rm (i)}] $\dim D^{(22,2)}=252$ and $[S^{(22,2)}]=[D^{(22,2)}]$.
\item[{\rm (ii)}] $\dim D^{(22,1^2)}=231$ and  $[S^{(22,1^2)}]=[D^{(22,1^2)}]+[D^{(23,1)}]$.
\item[{\rm (iii)}] $\dim D^{(21,3)}=1726$ and $[S^{(21,3)}]=[D^{(21,3)}]+[D^{(23,1)}]$.
\item[{\rm (iv)}] $\dim D^{(21,2,1)}=1540$ and $[S^{(21,2,1)}]=[D^{(21,2,1)}]+[D^{(21,3)}]+[D^{(22,1^2)}]+[D^{(23,1)}]+[D^{(24)}]$.

\item[{\rm (v)}] $\dim D^{(20,4)}=6854$ and $[S^{(20,4)}]=[D^{(20,4)}]+[D^{(21,3)}]+[D^{(23,1)}]$.

\item[{\rm (vi)}]  $\dim D^{(20,3,1)}=26082$ and $[S^{(20,3,1)}]=[D^{(20,3,1)}]$.

\item[{\rm (vii)}]  $\dim D^{(20,2^2)}=7315$ and $[S^{(20,2^2)}]=[D^{(20,2^2)}]+[D^{(20,4)}]+[D^{(21,3)}]+[D^{(21,2,1)}]+2[D^{(23,1)}]+[D^{(24)}]$.

\item[{\rm (viii)}]  $\dim D^{(20,2,1^2)}=26334$ and $[S^{(20,2,1^2)}]=[D^{(20,2,1^2)}]$.

\end{enumerate}
\end{Lemma}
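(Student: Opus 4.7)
The plan is to split the problem into a Specht dimension computation and a decomposition-number computation, the two being linked by the identity $\dim S^\la=\sum_\mu [S^\la:D^\mu]\dim D^\mu$. For the Specht dimensions I would use the hook length formula, or more efficiently the classical closed expressions $\dim S^{(n-k,k)}=\binom{n}{k}-\binom{n}{k-1}$ and $\dim S^{(n-k,1^k)}=\binom{n-1}{k}$ for two-row and hook-shaped partitions, which already handle (i), (ii); the remaining six Specht dimensions reduce to short hook products. Combined with the values $\dim D^{(24)}=1$ and $\dim D^{(23,1)}=22$ (the latter coming from Table~I, since $3\mid 24$), and with $\dim D^{(22,2)}=252$, $\dim D^{(22,1^2)}=231$ from Lemma~\ref{LL2}(ii)--(iii), this pins down all the small building blocks.

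Next I would group the eight partitions by their $3$-cores, since $[S^\la:D^\mu]=0$ whenever $\la$ and $\mu$ lie in different $3$-blocks; this together with the dominance requirement $\mu\trianglerighteq\la$ and the $3$-regularity of $\mu$ already forces many coefficients to vanish and singles out a short list of candidate $\mu$'s for each $\la$. Within each block I would then compute the remaining decomposition numbers by running the LLT algorithm on the canonical basis of the level-one Fock space for $\widehat{\mathfrak{sl}}_3$, or equivalently by applying Kleshchev's modular branching rules upward from a partition whose composition series is already known, or, as a sanity check, by consulting the published decomposition matrices of $\SSS_{24}$ at $p=3$.

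The genuinely mechanical cases are (i), (ii), (iii), (vi), (viii): either $S^\la$ is already irreducible, or the only candidate extra composition factors are $D^{(24)}$ and $D^{(23,1)}$, and the multiplicities are then forced by the dimension identity once $\dim S^\la$ is known. The real work is in (iv), (v), (vii), where the composition length is three or more; the main obstacle is carrying out the LLT/Jantzen--Schaper bookkeeping cleanly for (iv) and (vii), each of which yields five or six composition factors. In every case the final check is to verify that $\dim S^\la$ matches the weighted sum of the dimensions of the composition factors, working inductively from the most dominant partition down, which simultaneously fixes the claimed value of $\dim D^\la$.
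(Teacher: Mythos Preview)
Your approach is correct and would work, but the paper takes a much shorter route by citing specific results already in the literature rather than recomputing decomposition numbers from scratch. Concretely: for the two-row cases (i), (iii), (v) it invokes James \cite[24.15]{JamesBook}, which gives closed-form decomposition numbers for all $S^{(n-k,k)}$; for the hook case (ii) it invokes \cite[24.1]{JamesBook}; for (vi) and (viii) it appeals to Carter's criterion \cite{JM}, which detects irreducibility of a Specht module directly from the hook lengths of $\la$ (so no LLT or Jantzen--Schaper computation is needed there); and for the two genuinely multi-factor cases (iv) and (vii) it simply reads off the decomposition from the tables in \cite[Appendix]{JamesDim}.

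Your plan via block separation, LLT, and the dimension identity is more self-contained and would give the same answers, but it is considerably more work for (iv) and (vii), and for (vi), (viii) it overlooks the one-line shortcut that Carter's criterion provides. The paper's approach buys brevity at the cost of relying on several external references; yours would buy independence from those references at the cost of a page or two of explicit bookkeeping.
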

\begin{proof}
(i), (iii), (v) follow from \cite[24.15]{JamesBook}.

(ii) follows from \cite[24.1]{JamesBook}.

(iv), (vii) follow from \cite[Appendix]{JamesDim}.

(vi), (viii) follow from Carter's Criterion, see \cite{JM}.
\end{proof}

\begin{Lemma}\label{LS242}
Let $n=24$, $p=2$, and $\la\in \Par^{(4)}(24)\setminus \Par^{(1)}(24)$. Then the dimension of $D^\la$ and the decomposition of $[S^\la]$ in the Grothendieck group are as follows 
\begin{enumerate}
\item[{\rm (i)}] $\dim D^{(22,2)}=230$ and $[S^{(22,2)}]=[D^{(22,2)}]+[D^{(23,1)}]$.

\item[{\rm (ii)}] $\dim D^{(21,3)}=1496$ and $[S^{(21,3)}]=[D^{(21,3)}]+[D^{(22,2)}]+[D^{(23,1)}]$.

\item[{\rm (iii)}] $\dim D^{(21,2,1)}=3520$ and $[S^{(21,2,1)}]=[D^{(21,2,1)}]$.

\item[{\rm (iv)}] $\dim D^{(20,4)}=7084$ and $[S^{(20,4)}]=[D^{(20,4)}]+[D^{(21,3)}]+[D^{(23,1)}]$.

\item[{\rm (v)}]  $\dim D^{(20,3,1)}=17248$ and $[S^{(20,3,1)}]=[D^{(20,3,1)}]+[D^{(20,4)}]+[D^{(21,3)}]+[D^{(22,2)}]+[D^{(23,1)}]+2[D^{(24)}]$.

\end{enumerate}
\end{Lemma}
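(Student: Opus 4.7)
\medskip\noindent
\textbf{Proof plan.} The $2$-regular partitions in $\Par^{(4)}(24)\setminus\Par^{(1)}(24)$ are precisely those with distinct parts and first part in $\{20,21,22\}$, namely $(22,2)$, $(21,3)$, $(21,2,1)$, $(20,4)$, and $(20,3,1)$, which accounts for the five items. The strategy is, in each case, to compute $\dim S^\la$ by the hook length formula, identify the composition factors of $S^\la$ via standard decomposition-number results, and then recover $\dim D^\la$ by subtraction.

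For the two-part cases (i), (ii), (iv), i.e.\ $\la=(22,2),(21,3),(20,4)$, the dimension of $D^{(22,2)}$ comes directly from Lemma~\ref{LL2}(i) with $n=24\equiv0\pmod 4$, giving $230$. The decomposition numbers $[S^{(n-k,k)}:D^{(n-j,j)}]$ in characteristic~$2$ are governed by the classical two-row formula of James \cite[24.15]{JamesBook} (expressed in terms of $2$-adic expansions of $k-j$ and $n-2j+1$); applying it with $n=24$ yields the three stated decompositions. Routine arithmetic from $\dim S^{(21,3)}=1748$, $\dim S^{(20,4)}=8602$ and the already-determined dimensions of $D^{(23,1)}$ (which is $n-2=22$ by Table~I) and $D^{(22,2)}=230$ then gives $\dim D^{(21,3)}=1496$ and $\dim D^{(20,4)}=7084$.

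For case (iii), the partition $(21,2,1)=(n-3,2,1)$, the plan is to apply Carter's Criterion (see \cite{JM}). Inspecting the columns of $(21,2,1)$: column~$1$ has hooks $\{23,3,1\}$ (all odd), column~$2$ has hooks $\{21,1\}$ (both odd), and all remaining columns are of length one. Hence all hook lengths within a given column have the same $2$-adic valuation, so Carter's criterion applies and $S^{(21,2,1)}=D^{(21,2,1)}$. The hook length formula then yields $\dim S^{(21,2,1)}=24!/(23\cdot 21\cdot 19\cdot 18!\cdot 3)=3520$.

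Case (v), $\la=(20,3,1)$, is the main obstacle and has six composition factors. Here the plan is either to invoke the explicit decomposition tables in \cite[Appendix]{JamesDim} or, equivalently, to apply Jantzen's sum formula together with the already-known decomposition numbers of the earlier Specht modules in the block to pin down the multiplicities. The hook length formula gives $\dim S^{(20,3,1)} = 24!/(22\cdot 20\cdot 19\cdot 17!\cdot 8)=26082$, and the stated composition series then forces
\[
\dim D^{(20,3,1)} = 26082 - 7084 - 1496 - 230 - 22 - 2\cdot 1 = 17248,
\]
using $\dim D^{(24)}=1$ and the dimensions computed in (i), (ii), (iv) above. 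The delicate part is to rule out any further composition factors and to check the multiplicity $2$ of $[D^{(24)}]$; this is where the explicit data from \cite[Appendix]{JamesDim} (or the sum formula combined with the Mullineux-type self-duality for the principal block) is needed, since unlike the two-row cases there is no closed-form two-parameter formula available.
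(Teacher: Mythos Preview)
Your treatment of (i)--(iv) is correct and essentially matches the paper: the two-row cases come from \cite[24.15]{JamesBook}, and (iii) follows from Carter's criterion \cite{JM} exactly as you argue. The hook-length and subtraction arithmetic is fine.

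For (v), however, your primary suggestion of \cite[Appendix]{JamesDim} is not the right reference. That appendix handles partitions with at most $p$ parts (so two-row partitions when $p=2$, three-row when $p=3$); this is why the paper invokes it in the $p=3$ analogue (Lemma~\ref{LS243}) but not here. The paper instead cites \cite[Theorem~7.1]{Jamesp=2}, James' 1976 paper giving explicit decomposition numbers over $\F_2$ for partitions with at most three parts, which covers $(20,3,1)$ directly. Your fallback of the Jantzen sum formula could in principle be made to work, but it is considerably more laborious than simply quoting the available closed-form result, and the aside about ``Mullineux-type self-duality'' is not meaningful here since the Mullineux map is trivial at $p=2$. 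So the approach is sound in outline but the sourcing for (v) should be corrected.
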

\begin{proof}
(i), (ii), (iv) follow from \cite[24.15]{JamesBook}.

(iii) follows from Carter's Criterion, see \cite{JM}.

(v) follow from \cite[Theorem 7.1]{Jamesp=2}.
\end{proof}

For partitions $\mu^1=(\mu^1_1,\dots,\mu^1_{h_1}), \dots, \mu^k=(\mu^k_1,\dots,\mu^k_{h_k})$, we define the composition 
$$
(\mu^1,\dots,\mu^k):=(\mu^1_1,\dots,\mu^1_{h_1},\dots,\mu^k_1,\dots,\mu^k_{h_k}).
$$
Recalling (\ref{ESpin}), for a partition $\la=(\la_1,\dots,\la_h)$ of $n$, we now define its {\em double}
$
\operatorname{dbl}(\la):=(\be_{\la_1},\dots,\be_{\la_h}).
$

\begin{Lemma} \label{LBenson} {\rm \cite[Theorem 1.1]{Benson}} 
We have 
$$
\Parinv_2(n):=\Par_2(n)\cap \{\operatorname{dbl}(\la)\mid \la\in\Par_2(n),\ \la_r\not\equiv 2\pmod{4}\ \text{for all}\ r\}.
$$
\end{Lemma}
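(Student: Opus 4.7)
This result is credited to Benson \cite{Benson}, so the plan is to follow his approach rather than reprove it from scratch. I will outline what I see as the natural route.

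The general philosophy is that, in contrast to odd $p$ where the criterion for reducibility of $D^\la\da_{\AAA_n}$ is just Mullineux-fixedness, in characteristic $2$ the sign representation is trivial and so Mullineux gives no information. Instead, one must test for the existence of an $\AAA_n$-stable subspace in $D^\la$ not stabilized by any odd permutation. My plan is to reduce this to an explicit combinatorial condition on $\la$ by:
\begin{enumerate}[\rm(1)]
\item First, since $D^\la$ is self-dual and defined over $\F_2$, the question of whether $D^\la\da_{\AAA_n}$ splits is governed by the existence of a particular structure (an $\SSS_n$-invariant non-degenerate quadratic form refining the symmetric bilinear form) on $D^\la$; this reformulation is what allows an inductive treatment.
\item Then, starting from the base case of basic spin modules $D^{\be_k}$ (whose dimensions and restriction behavior to $\AAA_k$ are known explicitly by \cite{Wales}, and from which one sees that $D^{\be_k}\da_{\AAA_k}$ splits precisely when $k\not\equiv 2\pmod 4$), one lifts the analysis to general $\la$.
\item For the inductive step, use branching to $\SSS_{n-1}$ together with the combinatorics of normal nodes: express $D^\la\da_{\SSS_{n-1}}$ in terms of the socles $D^{\la_A}$ for normal nodes $A$, and track when the $\AAA_n$-splitting property is inherited.
\end{enumerate}

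The combinatorial payoff should then be that the partitions $\la$ which can support such a form are exactly those that decompose as the concatenation $\operatorname{dbl}(\mu)=(\be_{\mu_1},\be_{\mu_2},\dots)$ of basic-spin shapes, each of size not congruent to $2\pmod 4$ (so that each component itself splits on restriction), and that the $2$-regularity of the resulting partition must be imposed as an extra cut (automatic only when the $\be_{\mu_r}$'s fit together without repeated rows).

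The hard part will be (3): showing that no partition \emph{outside} the image of $\operatorname{dbl}$ can give rise to an $\AAA_n$-splitting, and conversely showing that every $2$-regular $\operatorname{dbl}(\mu)$ with $\mu_r\not\equiv 2\pmod 4$ actually does split. The forward direction needs a structural obstruction (some odd permutation must act with an invariant that forces irreducibility), while the reverse direction needs an explicit construction of the splitting—most naturally via an outer tensor product of the individual basic spin splittings, combined with induction from the Young subgroup $\SSS_{\mu_1}\times\SSS_{\mu_2}\times\cdots$ to $\SSS_n$ and a careful tracking of signs across the induction. Since Benson has carried this out in full, I would simply cite \cite[Theorem 1.1]{Benson} for the statement as formulated here.
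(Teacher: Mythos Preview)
Your proposal is correct and matches the paper's treatment: the paper gives no proof of this lemma at all, simply citing \cite[Theorem~1.1]{Benson}. Your sketch of Benson's argument is reasonable background, but since you end by deferring to the same citation, you are doing exactly what the paper does.
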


We record for future reference:
\begin{Lemma} \label{L4.3}  
Let $n\geq 5$ and $\la=(\la_1,\la_2,\dots)\in\Parinv_p(n)$. Then
$$
\la_1 \leq
\left\{
\begin{array}{ll}
(n+2)/2 &\hbox{if $p=2$,}\\
(n+p+1)/2 &\hbox{if $p\geq 3$.}
\end{array}
\right.
$$ 
\end{Lemma}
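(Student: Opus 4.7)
We handle the cases $p=2$ and $p\ge 3$ separately.

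For $p=2$, the plan is to apply Lemma~\ref{LBenson} directly. Any $\la\in\Parinv_2(n)$ is of the form $\operatorname{dbl}(\mu)=(\be_{\mu_1},\be_{\mu_2},\dots)$ for some $\mu\in\Par_2(n)$ (so $\mu$ has distinct parts) with every part $\mu_r\not\equiv 2\pmod 4$. By \eqref{ESpin}, the first entry of $\be_{\mu_1}$ equals $\mu_1/2+1$ if $\mu_1$ is even and $(\mu_1+1)/2$ if $\mu_1$ is odd; in either case it is at most $\mu_1/2+1$. Since $\mu$ has distinct positive parts summing to $n$, we have $\mu_1\le n$, with equality exactly when $\mu=(n)$. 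Hence $\la_1\le n/2+1=(n+2)/2$, the bound being attained by $\la=\be_n$ when $n\equiv 0\pmod 4$.

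For $p\ge 3$, the condition $\la\in\Parinv_p(n)$ is equivalent to $\la^\Mull=\la$. The plan is to invoke the combinatorial description of Mullineux-fixed partitions from Bessenrodt--Olsson~\cite{BO} (equivalently, Ford--Kleshchev~\cite{FK}). A necessary condition for $\la=\la^\Mull$ is residue-content symmetry: for every $i\in I$, the number of nodes of residue $i$ in $\la$ must equal the number of nodes of residue $-i$, since $\Mull$ swaps $i$ and $-i$. Writing $\la_1=qp+r$ with $0\le r<p$, row $1$ contributes $q+1$ nodes to residues $0,1,\dots,r-1$ and $q$ nodes to the remaining residues; an elementary count shows that row $1$ alone is symmetric precisely when $r\in\{0,1\}$, and otherwise induces an asymmetry that must be compensated by the $n-\la_1$ nodes lying in rows $2,3,\dots$. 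Combined with the $p$-regularity of $\la$ (which bounds how those nodes can cluster in a single residue class), this should force $n-\la_1\ge \la_1-(p+1)$, giving $\la_1\le(n+p+1)/2$.

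The main obstacle will be making the $p\ge 3$ argument rigorous, since residue-content symmetry is necessary but not sufficient for $\la^\Mull=\la$, and a crude counting argument risks yielding only a slightly weaker bound. A cleaner route is to use the Mullineux symbol $(a_i,r_i)_i$ of $\la$ obtained by successively stripping $p$-rims: $\la$ is Mullineux-fixed iff $r_i=(a_i+\ep_i)/2$ for each $i$, where $\ep_i=1$ if $p\mid a_i$ and $\ep_i=0$ otherwise. Tracking how much of the first row of $\la$ lies in each successive $p$-rim under this constraint --- in particular, noting that the first rim has $a_1\le 2r_1$ nodes but can contribute at most a bounded amount to row $1$ --- will then yield the sharp bound $\la_1\le(n+p+1)/2$.
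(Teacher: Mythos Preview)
Your treatment of the $p=2$ case is correct and is exactly what the paper does: apply Lemma~\ref{LBenson} and read off the bound from the shape of $\operatorname{dbl}(\mu)$.

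For $p\ge 3$, however, there is a genuine gap. You correctly identify that residue-content symmetry is only necessary, and you pivot to the Mullineux symbol characterisation, which is indeed the right tool. But the crucial step --- ``tracking how much of the first row of $\la$ lies in each successive $p$-rim under this constraint \dots\ will then yield the sharp bound'' --- is asserted rather than carried out. This is precisely where all the work is: one needs to bound, for each $i$, the contribution of the $i$th $p$-rim to row~$1$ in terms of $a_i$ and $r_i$, sum these contributions, and use the constraint $r_i=(a_i+\ep_i)/2$ together with $\sum_i a_i=n$ to deduce $\la_1\le (n+p+1)/2$. None of this is done in your proposal, and the bound is sharp enough that a careless estimate will miss it by a constant. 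The paper does not attempt a self-contained argument here either: it simply cites \cite[Proposition~4.3(i)]{KST}, where the bound is established. If you want to give a direct proof, you must actually execute the Mullineux-symbol computation; otherwise, citing \cite{KST} is the appropriate move.
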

\begin{proof}
For $p\geq 3$ this is {\rm \cite[Proposition 4.3(i)]{KST}}, and for $p=2$ this follows from Lemma~\ref{LBenson}.
\end{proof}

To analyze restriction to large doubly transitive subgroups, we will need to know 
that the trivial submodule $\F_{\SSS_{n-m}}$ appears in the restriction $D^\la\da_{\SSS_{n-m}}$ for some reasonably small $m$. Recall the notation (\ref{En-lmu}). 

\begin{Theorem}\label{TSub}
Let $ \ell,  L$ be integers satisfying $0\leq 2 \ell \leq  L < n$, and 
$\la = (n- \ell,\mu)\in\Par_p(n)$. Then $D^\la\da_{\SSS_{n- L}}$ contains a trivial submodule. 
\end{Theorem}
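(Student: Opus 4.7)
The plan is to translate the claim into a $\Hom$-nonvanishing statement via Frobenius reciprocity, verify the resulting condition by a dominance calculation, and then invoke a standard consequence of Young-module theory. Specifically, the existence of a trivial $\F\SSS_{n-L}$-submodule in $D^\la\da_{\SSS_{n-L}}$ is equivalent, by Frobenius reciprocity together with the identification $\ind_{\SSS_{n-L}}^{\SSS_n}\F \cong M^{(n-L,1^L)}$, to the nonvanishing of $\Hom_{\SSS_n}(M^{(n-L,1^L)}, D^\la)$.

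Next, I would verify the dominance relation $\la \trianglerighteq (n-L,1^L)$. Writing $\la=(n-\ell,\mu_1,\ldots,\mu_h)$ with $\mu\vdash\ell$ and $h\le\ell$, the hypothesis $L\ge 2\ell\ge\ell$ gives $\la_1=n-\ell\ge n-L$. For $2\le i\le h+1$, the bound $\la_j\ge 1$ for $2\le j\le h+1$ yields $\sum_{j\le i}\la_j\ge (n-\ell)+(i-1)\ge (n-L)+(i-1)$. For $h+1<i\le L+1$ we have $\sum_{j\le i}\la_j=n$ while $\sum_{j\le i}(n-L,1^L)_j=n-L+i-1\le n$ since $i\le L+1$; and for $i>L+1$ both sides equal $n$. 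Thus $\la\trianglerighteq(n-L,1^L)$.

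Finally, I would invoke the standard theorem that, for any $p$-regular partition $\la\vdash n$ and any $\mu\vdash n$ with $\la\trianglerighteq\mu$, one has $\Hom_{\SSS_n}(M^\mu, D^\la)\ne 0$. This follows from the Klyachko--James description of the Young-module decomposition of $M^\mu$: the Young module $Y^\la$ occurs as a direct summand of $M^\mu$ precisely when $\la\trianglerighteq\mu$, and for $p$-regular $\la$ the module $Y^\la$ has simple head $D^\la$. Composing the projection $M^\mu\twoheadrightarrow Y^\la$ with the quotient $Y^\la\twoheadrightarrow D^\la$ gives the desired nonzero $\SSS_n$-homomorphism $M^{(n-L,1^L)}\to D^\la$, and pulling back via Frobenius reciprocity produces the required $\SSS_{n-L}$-invariant vector in $D^\la$.

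The principal obstacle is the last step, which relies on Young-module theory and is not entirely elementary. A self-contained alternative would construct an explicit $\SSS_{n-L}$-invariant vector in $D^\la$ out of suitable polytabloids of shape $\la$: the hypothesis $L\ge 2\ell$ should supply enough ``slack'' in the first row of $\la$ (of length $n-\ell$, beyond the fixed letters $\{1,\ldots,n-L\}$) to enable such a construction, but one then must verify that the constructed vector does not lie in the kernel of the surjection $S^\la\twoheadrightarrow D^\la$, and this bookkeeping is delicate.
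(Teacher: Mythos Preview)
Your argument cannot be correct as stated, because it never uses the full strength of the hypothesis $L\ge 2\ell$: the dominance verification needs only $L\ge\ell$, and the Young-module step does not involve $L$ at all. So if the argument worked it would prove the stronger statement with $L\ge\ell$ in place of $L\ge 2\ell$, and that stronger statement is false. Take $p=2$, $n=4$, $\la=(3,1)$ (so $\ell=1$) and $L=1$. Then $(3,1)$ is $2$-regular and $(3,1)\trianglerighteq(3,1)$, but $D^{(3,1)}\da_{\SSS_3}\cong D^{(2,1)}$ is the $2$-dimensional irreducible $\F_2\SSS_3$-module, which has no nonzero invariant vector. Equivalently, $\Hom_{\SSS_4}(M^{(3,1)},D^{(3,1)})=0$.

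The specific false step is the appeal to Young-module theory. It is \emph{not} true that $Y^\la$ occurs as a summand of $M^\mu$ whenever $\la\trianglerighteq\mu$; dominance is only a necessary condition (the $p$-Kostka numbers can vanish). Nor is it true that $Y^\la$ has simple head $D^\la$ for $p$-regular $\la$: in the example above, $M^{(3,1)}$ has simple socle $\langle(1,1,1,1)\rangle\cong D^{(4)}$, hence is indecomposable and equal to $Y^{(3,1)}$; being self-dual, its head is also $D^{(4)}$, not $D^{(3,1)}$. You may be conflating $Y^\la$ with the projective cover $P(D^\la)$, which is indeed a Young module but is $Y^\mu$ for a certain $p$-restricted partition $\mu$, not $Y^\la$ itself.

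The paper's proof is quite different: it proceeds by induction on $\ell$ using the modular branching rules of \cite{KBrII}. If $\la$ has a good node below the first row, removing it produces a submodule $D^{(n-\ell,\nu)}\subseteq D^\la\da_{\SSS_{n-1}}$ with $|\nu|=\ell-1$, and one applies the inductive hypothesis (with $n-1$ and $L$). If the only good node is in the first row, one first removes it (so $D^\la\da_{\SSS_{n-1}}\cong D^{(n-1-\ell,\mu)}$) and observes that the second removable node of $\la$ becomes normal; removing a good node below the first row then yields a submodule of $D^\la\da_{\SSS_{n-2}}$ with $\ell$ decreased by $1$. In the worst case each unit decrease in $\ell$ costs two letters, which is exactly why the hypothesis is $L\ge 2\ell$ and not $L\ge\ell$.
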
 

\begin{proof}
We will apply branching rules from \cite{KBrII} without further reference. We use induction on $\ell=|\mu|$, the theorem clearly holding if $ \ell=0$ since in that case $D^\la=D^{(n)}=\F_{\SSS_n}$. Let $ \ell>0$.

If $\la$ has a good node below the first row then there exists $\nu\in\Par_p(\ell-1)$ such that $(n- \ell,\nu)=(n-1-( \ell-1),\nu)$ is a $p$-regular partition of $n-1$ and $D^{(n- \ell,\nu)}\subseteq D^\la\da_{\SSS_{n-1}}$. By the inductive assumption, $D^{(n- \ell,\nu)}\da_{\SSS_{n- L}}$ contains a trivial submodule. 

Assume now that $\la$ has no good node below the first row. Then $n- \ell=\la_1>\la_2=\mu_1$, $(n-1- \ell,\mu)$ is $p$-regular and $D^\la\da_{\SSS_{n-1}}\cong D^{(n-1- \ell,\mu)}$. If $A$ is the second top removable node of $\la$ then $A$ is normal in $(n-1- \ell,\mu)$. So $(n-1- \ell,\mu)$ has a good node below the first row. In particular there exists $\nu\in\Par_p(\ell-1)$ such that $(n-1- \ell,\nu)=(n-2-( \ell-1),\nu)$ is a $p$-regular partition of $n-2$ and $D^{(n-1- \ell,\nu)}\subseteq D^\la\da_{\SSS_{n-2}}$. By the inductive assumption, $D^{(n- 1-\ell,\nu)}\da_{\SSS_{n- L}}$ contains a trivial submodule. 
\end{proof}

In the following lemma we use functors $e_i:\mod{\F \SSS_n}\to\mod{\F\SSS_{n-1}}$ for which we refer the reader to \cite{KBook}. The integer $\eps_i(\la)$ is defined as $\max\{k\mid e_i^k D^\la\neq 0\}$. 

\begin{Lemma}\label{L2}
Let $\la\in\Par_p(n)$ with $\eps_i(\la)=2$. Let $A$ and $B$ be the $i$-normal nodes in $\la$ with $A$ below $B$. If $\la_B$ is $p$-regular and the socle of $(e_i D^\la)/D^{\la_A}$ is isomorphic to $D^{\la_B}$ then $e_i D^\la\cong D^{\la_A}|D^{\la_B}|D^{\la_A}$.
\end{Lemma}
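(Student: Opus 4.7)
The plan is to combine Kleshchev's modular branching rules for the $i$-restriction functor $e_i$ with the self-duality of $e_iD^\la$ and the hypothesis on the socle of its quotient.

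First, by the modular branching rule applied to $\la$ with $\eps_i(\la)=2$ and bottom $i$-normal node $A$, the module $M:=e_iD^\la$ is indecomposable and self-dual, with $\soc(M)\cong\head(M)\cong D^{\la_A}$ (each appearing with multiplicity one as a submodule or a quotient), and the composition multiplicity $[M:D^{\la_A}]=\eps_i(\la)=2$. Moreover, any other composition factor $D^\nu$ of $M$ satisfies $\eps_i(\nu)\le\eps_i(\la)-2=0$. In particular, $\la_B$, which arises by removing the top $i$-normal node, satisfies $\eps_i(\la_B)=0$, which is consistent with $D^{\la_B}$ potentially appearing as a composition factor.

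Next, the hypothesis that $\soc(M/D^{\la_A})\cong D^{\la_B}$ says that the second socle layer $\soc^2(M)/\soc(M)$ of $M$ is isomorphic to $D^{\la_B}$. Equivalently, there is a length-two submodule $V\subseteq M$ with $\soc(V)=\soc(M)\cong D^{\la_A}$ and $V/\soc(V)\cong D^{\la_B}$. Since $M$ is self-dual (as a direct summand of the self-dual module $\res^{\SSS_n}_{\SSS_{n-1}}D^\la$), the socle and radical filtrations of $M$ have isomorphic Loewy layers in reversed order; in particular, the second head layer of $M$ is isomorphic to $D^{\la_B}$, and the bottom of the radical filtration is $\soc(M)\cong D^{\la_A}$.

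It remains to show that $M/V\cong D^{\la_A}$. The quotient $M/V$ has head $D^{\la_A}$ and $[M/V:D^{\la_A}]=1$. Any additional composition factor $D^\nu$ of $M/V$ must have $\eps_i(\nu)=0$ and be distinct from $D^{\la_A}$; combining the hypothesis on $\soc(M/D^{\la_A})$, which pins down the second socle layer uniquely as $D^{\la_B}$, with the self-dual symmetry of Loewy layers, leaves no room for such a $D^\nu$ strictly between the second socle layer and the head. Therefore $M/V\cong D^{\la_A}$, and $M$ has the asserted Loewy filtration $D^{\la_A}\,|\,D^{\la_B}\,|\,D^{\la_A}$.

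The hardest part will be this last step, namely rigorously excluding additional composition factors $D^\nu$ with $\eps_i(\nu)=0$ nestled between the second socle layer and the head. This will require a careful rigidity argument combining the hypothesis on $\soc(M/D^{\la_A})$, the branching-rule bound on composition factors, and the self-dual symmetry of $M$.
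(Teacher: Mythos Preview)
Your approach and the paper's rely on the same two ingredients---self-duality of $e_iD^\la$ and Kleshchev's branching theorem \cite[Theorem~1.4]{k4}---so you are on the right track. The issue is the final step, which you correctly flag as the hard part: the ``rigidity argument'' you sketch cannot succeed using only the facts you have assembled.

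Concretely, from $\soc M\cong D^{\la_A}$, $[M:D^{\la_A}]=2$, the hypothesis $\soc(M/D^{\la_A})\cong D^{\la_B}$, and self-duality, one can indeed deduce that the bottom two socle layers of $M$ are $D^{\la_A},D^{\la_B}$ and, dually, the top two radical layers are $D^{\la_A},D^{\la_B}$. But nothing in this list bounds the Loewy length or the composition length. An abstract self-dual module with socle layers
\[
D^{\la_A}\ \big|\ D^{\la_B}\ \big|\ D^{\la_B}\ \big|\ D^{\la_A}
\]
(whenever $\Ext^1(D^{\la_B},D^{\la_B})\neq 0$), or more generally $D^{\la_A}\,|\,D^{\la_B}\,|\,X\,|\,D^{\la_B}\,|\,D^{\la_A}$ with $X$ any nonzero self-dual semisimple module all of whose constituents have $\eps_i$-value $0$, satisfies every constraint you have written down, including the branching bound $\eps_i(\nu)\le 0$ for $\nu\neq\la_A$. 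So self-duality together with that $\eps_i$-bound does \emph{not} force composition length $3$; your proposed combination of hypotheses genuinely leaves room for extra factors in the middle.

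What is missing is control over the multiplicities of \emph{all} composition factors of $e_iD^\la$, not just $[M:D^{\la_A}]$. This is precisely what the paper draws from \cite[Theorem~1.4]{k4}: once one knows that the only composition factors are $D^{\la_A}$ (twice) and $D^{\la_B}$ (once), your argument finishes immediately, since $M/\soc_2 M$ then has a single composition factor $D^{\la_A}$ and hence $M\cong D^{\la_A}\,|\,D^{\la_B}\,|\,D^{\la_A}$.
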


\begin{proof}
This follows by self-duality of $e_i D^\la$, together with \cite[Theorem 1.4]{k4}.
\end{proof}

We will need the following strengthening of Theorem \ref{TSub} for the partition $(n-2,2)$:

\begin{Lemma}\label{LSub22}
If $n \geq 5$, then $D^{(n-2,2)}\da_{\SSS_{n-3}}$ contains a trivial submodule, provided $p=3$ and $n \equiv 0,1 \pmod{3}$, or 
$p=2$ and $n \equiv 0,1,3 \pmod{4}$.
\end{Lemma}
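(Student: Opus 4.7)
My plan is two-pronged, depending on whether the partition $\la=(n-2,2)$ admits a good node below its first row.

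In four of the five cases---namely $p=3$ with $n \equiv 0$ or $1 \pmod{3}$, and $p=2$ with $n \equiv 1$ or $3 \pmod{4}$---a direct residue check shows that $(2,2)$ is a $0$-good node for $\la$. Going along the rim from bottom left to top right, the reduced $0$-signature works out to one of $--$, $-$, or $+--$, and in each instance the lowest (hence good) minus sits at $(2,2)$. The modular branching rule then supplies an embedding $D^{(n-2,1)} \hookrightarrow D^{(n-2,2)}\da_{\SSS_{n-1}}$. Since $(n-2,1) \vdash n-1$ has $\ell=1$, Theorem~\ref{TSub} (applied with $L=2$) produces a trivial submodule of $D^{(n-2,1)}\da_{\SSS_{n-3}}$, hence of $D^{(n-2,2)}\da_{\SSS_{n-3}}$.

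The remaining case $p=2$, $n \equiv 0 \pmod{4}$ is more subtle since the reduced $0$-signature of $\la$ collapses to $+$ and the only normal node $(1,n-2)$ sits in the first row. Here the plan is to bound $\dim (D^{(n-2,2)})^{\SSS_{n-3}}$ from below by working inside the permutation module $V := M^{(n-2,2)}$, where $\SSS_{n-3}$ is identified with the pointwise stabilizer of $\{n-2,n-1,n\}$. Counting $\SSS_{n-3}$-orbits on the $2$-subsets of $\{1,\dots,n\}$ (one inside $\{1,\dots,n-3\}$, three mixed, three inside $\{n-2,n-1,n\}$) gives $\dim V^{\SSS_{n-3}}=7$. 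Using the Young filtration of $V$ with factors $S^{(n-2,2)}, S^{(n-1,1)}, S^{(n)}$, together with Lemma~\ref{LL2} and $[S^{(n-1,1)}] = [D^{(n-1,1)}] + [\F]$ (valid since $p \mid n$), one obtains the composition multiset $[V] = [D^{(n-2,2)}] + 2[D^{(n-1,1)}] + 2[\F]$.

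Next, a direct coordinate calculation with $S^{(n-1,1)} = \{(a_1,\dots,a_n) \in \F^n : \sum a_i = 0\}$ and $D^{(n-1,1)} = S^{(n-1,1)}/\F\cdot(1,\dots,1)$ yields $\dim (D^{(n-1,1)})^{\SSS_{n-3}} = 2$: the invariants $(S^{(n-1,1)})^{\SSS_{n-3}}$ are parametrized by tuples $(b,\dots,b,c,d,e)$ with $b+c+d+e=0$ (using that $n-3$ is odd), so have dimension $3$, and inspection of the last three coordinates shows that any $v\in S^{(n-1,1)}$ with $\sigma v-v\in\F\cdot(1,\dots,1)$ for all $\sigma\in\SSS_{n-3}$ in fact satisfies $\sigma v = v$, so every $\SSS_{n-3}$-invariant in $D^{(n-1,1)}$ lifts and we subtract one for the quotienting vector.

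Applying left-exactness of $(-)^{\SSS_{n-3}}$ in the form $\dim V^{\SSS_{n-3}} \leq \sum_i \dim W_i^{\SSS_{n-3}}$ (summed over composition factors with multiplicity) then gives
$$ 7 \leq 2\cdot 1 + 2\cdot 2 + \dim (D^{(n-2,2)})^{\SSS_{n-3}},$$
forcing $\dim (D^{(n-2,2)})^{\SSS_{n-3}} \geq 1$. The main obstacle is the tightness of this final estimate: the slack is exactly $1$, so both the composition multiplicities of $V$ and the value $\dim (D^{(n-1,1)})^{\SSS_{n-3}} = 2$ have to be pinned down precisely rather than merely bounded.
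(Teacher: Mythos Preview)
Your argument for the four ``easy'' cases coincides with the paper's: the good node $(2,2)$ gives $D^{(n-2,1)}\hookrightarrow D^{(n-2,2)}\da_{\SSS_{n-1}}$ and Theorem~\ref{TSub} finishes.

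For $p=2$, $n\equiv 0\pmod 4$ your approach is genuinely different from the paper's. The paper restricts to $\SSS_{n-1}$ (getting $D^{(n-3,2)}$), then to $\SSS_{n-2}$, invokes Sheth's explicit branching formula \cite{ShethBr} for the composition factors, uses Lemma~\ref{L2} to locate a uniserial submodule $D^{(n-3,1)}|D^{(n-2)}$, and finally a block argument to see the trivial in its restriction to $\SSS_{n-3}$. Your route---counting $\SSS_{n-3}$-orbits on $2$-subsets to get $\dim (M^{(n-2,2)})^{\SSS_{n-3}}=7$, computing $\dim(D^{(n-1,1)})^{\SSS_{n-3}}=2$ directly, and using subadditivity of $\dim(-)^H$ across a composition series---is more elementary in that it avoids \cite{ShethBr}, Lemma~\ref{L2}, and block theory entirely. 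The paper's approach, on the other hand, actually locates a specific submodule carrying the trivial, which is structurally sharper.

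One point to tighten: you write that Lemma~\ref{LL2} together with $[S^{(n-1,1)}]=[D^{(n-1,1)}]+[\F]$ yields $[M^{(n-2,2)}]=[D^{(n-2,2)}]+2[D^{(n-1,1)}]+2[\F]$, but Lemma~\ref{LL2} gives only $\dim D^{(n-2,2)}$. The missing step is $[S^{(n-2,2)}]=[D^{(n-2,2)}]+[D^{(n-1,1)}]$, and dimension alone does not rule out $[S^{(n-2,2)}]=[D^{(n-2,2)}]+(n-2)[\F]$, which would make your inequality vacuous. You should cite \cite[24.15]{JamesBook} (the same source underlying Lemma~\ref{LL2}) for the actual decomposition numbers; once that is in place your argument is complete and correct.
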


\begin{proof}
We will use branching rules from \cite{KBrII} without further reference. Assume first that $p=3$ and $n \equiv 0,1 \pmod{3}$, or that $p=2$ and $n \equiv 1,3 \pmod{4}$. Then $D^{(n-2,1)}\subseteq D^{(n-2,2)}\da_{\SSS_{n-3}}$ and so we can conclude using Theorem \ref{TSub}. Assume now that $p=2$ and $n \equiv 0\pmod{4}$, in which case $n\geq 6$. Then $D^{(n-2,2)}\da_{\SSS_{n-1}}\cong D^{(n-3,2)}$. By \cite{ShethBr}, we have in the Grothendieck group 
$$[D^{(n-3,2)}\da_{\SSS_{n-2}}]=2[D^{(n-2)}]+2[D^{(n-3,1)}]+[D^{(n-4,2)}]$$ 
(omitting the last summand if $n=6$). 
From Lemma \ref{L2} it follows that there exists $M\subseteq D^{(n-2,2)}\da_{\SSS_{n-2}}$ with $M\sim D^{(n-3,1)}|D^{(n-2)}$. Considering block structure it then follows that $D^{(n-3)}\subseteq M\da_{\SSS_{n-3}}\subseteq D^{(n-2,2)}\da_{\SSS_{n-3}}$.
\end{proof}

To conclude the subsection, we record for future reference the following recognition result for basic spin modules: 

\begin{Lemma}\label{Lcycle}
Let $p=2$, $n \geq 5$, and let $H =\AAA_n$ or $\SSS_n$. 
Suppose that $V$ is an irreducible $\F H$-module in which a 
$3$-cycle $t$ acts with exactly two eigenvalues. Then $V$ is a basic spin module.
\end{Lemma}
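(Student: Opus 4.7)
The plan is to reduce to the case $H=\SSS_n$ with $V=D^\la$, translate the hypothesis into the condition $V^t=0$, and then induct on $n$.

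Since $t$ has order $3$, coprime to $p=2$, it acts semisimply on $V$ with eigenvalues in $\{1,\omega,\omega^2\}$ for $\omega$ a primitive cube root of unity. Every element of $\SSS_n$ is conjugate to its inverse, so the Brauer character of $D^\la$ is rational; hence the $\omega$- and $\omega^2$-eigenspaces of $t$ have equal dimension, and the hypothesis of exactly two eigenvalues is equivalent to $V^t=0$. If $H=\AAA_n$ and $\la\in\Parinv_2(n)$, the two halves $E^\la_\pm$ are interchanged by conjugation by an odd permutation, and since all $3$-cycles lie in a single $\AAA_n$-class for $n\geq 5$, their Brauer characters agree on $t$. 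Thus $V^t=0$ iff $(D^\la)^t=0$, and the problem reduces to showing that a non-trivial $D^\la$ with $(D^\la)^t=0$ must satisfy $\la=\be_n$.

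I now induct on $n$. For the base case $n=5$, direct computation of the Brauer characters of $3$-cycles on the three non-trivial $D^\la$ with $\la\in\Par_2(5)$ (read from the decomposition matrix of $\SSS_5$) shows that only $D^{(3,2)}=D^{\be_5}$ gives character value $-\dim D^\la/2$, which is exactly the condition $V^t=0$. For $n\geq 6$, conjugate $t$ into $\SSS_{n-1}$; since $\langle t\rangle$ is of order coprime to $p$, the trivial-eigenspace multiplicity is additive over composition factors, so $V^t=0$ forces $(D^\mu)^t=0$ for every composition factor $D^\mu$ of $V\da_{\SSS_{n-1}}$. The trivial module $D^{(n-1)}$ is excluded since it is fixed by $t$, and the inductive hypothesis gives $\mu=\be_{n-1}$ for every such $\mu$; hence every composition factor of $D^\la\da_{\SSS_{n-1}}$ equals $D^{\be_{n-1}}$.

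Finally, I deduce $\la=\be_n$ from this restriction property. By the modular branching theorem (see \cite{KBrII}), $D^\la\da_{\SSS_{n-1}}=e_0D^\la\oplus e_1D^\la$, with the two summands lying in distinct blocks of $\SSS_{n-1}$ determined by residue content; since $D^{\be_{n-1}}$ lies in only one of these blocks, exactly one $e_iD^\la$ is non-zero. Its socle is $D^{\la_A}$ for the good $i$-node $A$ of $\la$, and the condition forces $\la_A=\be_{n-1}$, so $\la$ is obtained from $\be_{n-1}$ by adding a single addable node. For each residue class of $n$ modulo $4$ there are only two or three such candidates. For every candidate $\la\neq\be_n$, a direct computation of the reduced $0$- and $1$-signatures of $\la$ exhibits a second normal node $B$ whose associated composition factor $D^{\la_B}$ of $D^\la\da_{\SSS_{n-1}}$ differs from $D^{\be_{n-1}}$, contradicting the all-$\be_{n-1}$ property. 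Hence $\la=\be_n$. The main obstacle is this last combinatorial elimination, which must be carried out separately for each of the four residue classes of $n$ modulo $4$, since the residues of the addable nodes of $\be_{n-1}$ and the resulting normal-node structure of each candidate depend on that parity.
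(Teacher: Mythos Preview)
Your reduction to the case $H=\SSS_n$ is correct and close in spirit to the paper's, though the paper instead cites \cite[Theorem~8.1]{Wales} directly for the $\SSS_n$ case and then reduces $\AAA_n$ to $\SSS_n$ by choosing $g\in\SSS_n\setminus\AAA_n$ to be a $2$-cycle centralizing $t$, so that $W=V\oplus V^g$ is an $\SSS_n$-module on which $t$ still has only two eigenvalues. Your attempt at a self-contained induction is a genuinely different route.

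Your inductive framework is sound up to the deduction that $\la=\be_{n-1}\cup\{A\}$ with $A$ the good $i$-node of $\la$. The gap is in the final elimination. You claim that every candidate $\la\neq\be_n$ admits a normal node $B$ for which $D^{\la_B}$ is a composition factor of $D^\la\da_{\SSS_{n-1}}$ distinct from $D^{\be_{n-1}}$. But Kleshchev's branching rules only identify the socle and head of $e_iD^\la$ via the \emph{good} node; there is no general theorem asserting that every normal node contributes a composition factor. Concretely, take $n\equiv 2\pmod 4$ and $\la=(n/2,\,n/2-1,\,1)$. Since $n/2$ is odd, every removable node of $\la$ has residue $0$ and every addable node has residue $1$, so $\eps_1(\la)=0$ and $\eps_0(\la)=3$; thus you cannot invoke a normal node of the other residue. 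For $n=6$ (so $\la=(3,2,1)$) the three normal $0$-nodes $(3,1),(2,2),(1,3)$ give $\la_B\in\{(3,2),(3,1,1),(2,2,1)\}$, and the only $2$-regular one is $\be_5=(3,2)$ itself. So there is \emph{no} normal node $B$ with $\la_B$ $2$-regular and $\neq\be_{n-1}$, and your stated mechanism fails outright.

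The conclusion is still true in this case: $(3,2,1)$ is a $2$-core, so $D^{(3,2,1)}=S^{(3,2,1)}$ has dimension $16$, while $[e_0D^{(3,2,1)}:D^{\be_5}]=\eps_0(\la)=3$ accounts for dimension $12$, forcing copies of the trivial module $D^{(5)}$ to appear as well and hence $(D^\la)^t\neq 0$. But this is not the argument you gave, and you would still need a uniform treatment of $(n/2,n/2-1,1)$ for all $n\equiv 2\pmod 4$, $n\geq 10$, where the partition is no longer a $2$-core. The paper's route via Wales sidesteps all of this.
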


\begin{proof}
In the case $H = \SSS_n$, the statement is \cite[Theorem 8.1]{Wales}. Suppose $H = \AAA_n$. If $V$ extends to $\SSS_n$, then we are done by 
the previous case. If $V$ does not extend to $\SSS_n$, then we can find an irreducible $\F\SSS_n$-module $W$ such that 
$W\da_G \cong V \oplus V^g$ for any $g \in \SSS_n \smallsetminus H$. Certainly we can choose such a $g$ to be (a $2$-cycle) centralizing $t$.
Thus $t$ has the same eigenvalues on $V^g$ as on $V$, and so $t$ acts quadratically on $W$. By the $\SSS_n$-case, $W$ is basic spin, and so
is $V$.
\end{proof}

\subsection{Branching recognition results}

We begin by recording the following well-known branching recognition result for the modules in $\NT_n$. 

\begin{Lemma} \label{LKZ}
{\rm \cite[Proposition 2.3]{KZ}}
Let $n \geq 6$ and $D$ be an irreducible $\F\SSS_n$-module. Suppose that all composition factors of the restriction $D\da_{\SSS_{n-1}}$ belong to $\NT_{n-1}$. Then $D \in\NT_n$, unless $n = 6$, $p = 3$ and $D \in \llbracket D^{(4,2)}\rrbracket$, or $n = 6$, $p = 5$ and $D \in \llbracket D^{(4,1^2)}\rrbracket$.
\end{Lemma}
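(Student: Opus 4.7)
My strategy is to apply the modular branching rule of \cite{KBrII} together with the Mullineux-compatibility of good nodes. Let $\la \in \Par_p(n)$ be such that $D = D^\la$ satisfies the hypothesis. Since the socle of $D^\la \da_{\SSS_{n-1}}$ is $\bigoplus_A D^{\la_A}$, with $A$ running over the good nodes of $\la$, each $D^{\la_A}$ lies in $\NT_{n-1}$, i.e. $\la_A$ or $(\la_A)^\Mull$ is $(n-1)$ or $(n-2,1)$. Both $\NT_n$ and the hypothesis are preserved under the replacement $\la\mapsto\la^\Mull$ (since $\NT_{n-1}$ is closed under sign twist), and the Mullineux involution sends good nodes to good nodes with $(\la^\Mull)_{A^\Mull}=(\la_A)^\Mull$. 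Hence, up to replacing $\la$ by $\la^\Mull$, we may assume that some good node $A$ of $\la$ has $\la_A\in\{(n-1),(n-2,1)\}$. Adding a box to either of these partitions in every possible way gives $\la\in\{(n),\,(n-1,1),\,(n-2,2),\,(n-2,1^2)\}$; the first two are already in $\NT_n$, so only $\la=(n-2,2)$ and $\la=(n-2,1^2)$ need further analysis.

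For $\la=(n-2,2)$ I would show that the removable node $(1,n-2)$ of residue $n-3$ is always $(n-3)$-normal, so that $D^{(n-3,2)}$ is a composition factor of $D^\la\da_{\SSS_{n-1}}$: in the $(n-3)$-signature of $\la$ the minus from $(1,n-2)$ cannot be cancelled, since the only further rim entry in the bottom-left-to-top-right reading is the addable $(1,n-1)$ of residue $n-2\not\equiv n-3\pmod p$. By hypothesis $(n-3,2)\in\NT_{n-1}$. Using Lemma~\ref{LL2} (for $p=2,3$) and the Specht-module dimension/decomposition data in \cite[\S 24]{JamesBook} (for $p\ge 5$), one verifies that $\dim D^{(n-3,2)}>n-2$ while every module in $\NT_{n-1}$ has dimension at most $n-2$, forcing $n-1\leq 5$, i.e.\ $n=6$. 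Direct enumeration of $\NT_5$ via the Mullineux involution and the $p$-block structure of $\SSS_5$ shows $(3,2)\in\NT_5$ only for $p=3$, where $(3,2)=(5)^\Mull$ labels the sign representation. For this pair, $\la=(4,2)$ has $\eps_0(\la)=2$ with $0$-normal nodes $(2,2)$ (good) and $(1,4)$, and Lemma~\ref{L2} yields $e_0 D^{(4,2)}\cong D^{(4,1)}\mid D^{(3,2)}\mid D^{(4,1)}$, whose composition factors all lie in $\NT_5$; this is the first listed exception.

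The case $\la=(n-2,1^2)$ (which requires $p\geq 3$ for $p$-regularity) is handled by the same template. The removable node $(1,n-2)$ of residue $n-3$ is again always $(n-3)$-normal, so $D^{(n-3,1^2)}$ is forced to lie in $\NT_{n-1}$. Lemma~\ref{LL2}(iii) gives $\dim D^{(n-3,1^2)}>n-2$ whenever $n-1\geq 6$, so again $n=6$. An examination of $\NT_5$ in characteristics $p=3$, $p=5$ and $p>5$ shows $(3,1^2)\in\NT_5$ exactly when $p=5$, with $(3,1^2)=(4,1)^\Mull$ there. In this final case $(4,1^2)$ sits in a defect-zero $5$-block of $\SSS_6$, so $D^{(4,1^2)}=S^{(4,1^2)}$; combining the classical Specht branching filtration with the decomposition matrix of the weight-one principal block of $\SSS_5$ in characteristic $5$ identifies the composition factors of $D^{(4,1^2)}\da_{\SSS_5}$ as $D^{(3,1^2)},D^{(4,1)},D^{(4,1)},D^{(5)}$, all in $\NT_5$; this is the second exception.

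The main obstacle is the case-by-case small-$n$ analysis at $n=6$: one must combine explicit Mullineux calculations with the block structure of $\SSS_5$ in the relevant small characteristics, both to pin down precisely when a module of shape $D^{(3,2)}$ or $D^{(3,1^2)}$ can sit inside $\NT_5$ via a nontrivial sign twist, and to verify in each borderline pair that \emph{every} composition factor of $D^\la\da_{\SSS_5}$ (not merely the one forced by the normal node $(1,n-2)$) actually lies in $\NT_5$.
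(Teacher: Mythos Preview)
The paper does not prove this lemma; it is quoted from \cite[Proposition 2.3]{KZ}. Your strategy is sound and yields a correct proof once two points are tightened.

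First, the inference ``$(1,n-2)$ is $(n-3)$-normal, hence $D^{(n-3,2)}$ (resp.\ $D^{(n-3,1^2)}$) is a composition factor of $D^\la\da_{\SSS_{n-1}}$'' is not a direct consequence of the branching rules in \cite{KBrII}: those rules identify the socle via the \emph{good} node and give the multiplicity of that socle constituent, but normality of a non-good node does not in general force the corresponding simple to appear (compare the conditional hypothesis in Lemma~\ref{L2}). A clean fix is a Specht-module count. The kernel of $S^\la\twoheadrightarrow D^\la$ for $\la=(n-2,2)$ or $(n-2,1^2)$ has only $D^{(n)}$ and $D^{(n-1,1)}$ as possible composition factors (by unitriangularity of the decomposition matrix with respect to dominance), so its restriction to $\SSS_{n-1}$ contributes only factors in $\NT_{n-1}$. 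Since $[S^\la\da_{\SSS_{n-1}}]$ contains exactly one copy of $D^{(n-3,2)}$ (resp.\ $D^{(n-3,1^2)}$) via the classical Specht branching, this factor cannot be cancelled and must already occur in $[D^\la\da_{\SSS_{n-1}}]$.

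Second, your dimension bound $\dim D^{(n-3,2)}>n-2$ actually fails at $(n,p)=(7,2)$, where $\dim D^{(4,2)}=4\le 5$ by Lemma~\ref{LL2}(i). This is harmless: for $p=2$ the Mullineux map is trivial, so $\NT_{n-1}=\{D^{(n-1)},D^{(n-2,1)}\}$ and $(n-3,2)\notin\NT_{n-1}$ simply as partitions. For $p\ge 3$ your dimension argument does go through for all $n\ge 7$, and your treatment of the two $n=6$ exceptions is correct.
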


Define 
$$
u:=\left\{
\begin{array}{ll}
4 &\hbox{if $p=3$,}\\
3 &\hbox{otherwise.}
\end{array}
\right.
$$

\begin{Lemma} \label{L271118}
Let $n\geq 2u$ and $D$ be an irreducible $\F \SSS_n$-module. If  all composition factors of $D\da_{\SSS_{u,u}}$ are of the form $D^\mu\boxtimes D^\nu$ with $D^\mu\in \T_{u}$ or $D^\nu\in\T_{u}$, then $D^\la\in\NT_n$.
\end{Lemma}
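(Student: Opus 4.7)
My plan is to prove Lemma~\ref{L271118} by induction on $n \geq 2u$, using Lemma~\ref{LKZ} for the inductive step (reducing to restrictions to $\SSS_{n-1}$) and handling the base case $n = 2u$ by direct enumeration. A preliminary observation is that the hypothesis is preserved under $D \mapsto D \otimes \sgn$: since $\T_u$ is closed under the Mullineux involution (it is precisely $\{D^{(u)},\, D^{(u)}\otimes\sgn\}$), twisting every composition factor $D^\mu \boxtimes D^\nu$ by $\sgn_u \boxtimes \sgn_u$ sends $\T_u$-factors to $\T_u$-factors. So one is free to work up to sign-twist throughout.

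For the inductive step $n \geq 2u+1$, let $D^\mu$ be any composition factor of $D\da_{\SSS_{n-1}}$. Since $\SSS_{u,u} \leq \SSS_{2u} \leq \SSS_{n-1}$ (which uses $n-1 \geq 2u$), exactness of restriction ensures that every composition factor of $D^\mu\da_{\SSS_{u,u}}$ is also a composition factor of $D\da_{\SSS_{u,u}}$, hence of the form $D^\alpha \boxtimes D^\beta$ with $D^\alpha \in \T_u$ or $D^\beta \in \T_u$. The inductive hypothesis then gives $D^\mu \in \NT_{n-1}$, and since this holds for every composition factor of $D\da_{\SSS_{n-1}}$, Lemma~\ref{LKZ} yields $D \in \NT_n$. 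The exceptional cases in Lemma~\ref{LKZ} require $n=6$, but in the inductive step $n \geq 2u+1 \geq 7$ when $u = 3$ and $n \geq 9$ when $u = 4$, so those exceptions never apply.

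The base case $n = 2u$ is the main obstacle. Here one must enumerate $\Par_p(2u)$ for each relevant pair $(p,u)$, namely $(2,3)$, $(5,3)$, $(p,3)$ with $p \geq 7$, and $(3,4)$. For $\la$ corresponding to a module in $\NT_{2u}$ the hypothesis is easy to verify directly: $D^{(2u)}\da_{\SSS_{u,u}}\cong \F\boxtimes\F$, and $D^{(2u-1,1)}$ is a section of $M^{(2u-1,1)}\cong\F^{2u}$, whose restriction to $\SSS_{u,u}$ is $(M^{(u-1,1)}\boxtimes\F)\oplus(\F\boxtimes M^{(u-1,1)})$, whose composition factors all have a trivial tensorand (and similarly for their sign-twists). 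For the remaining $\la\in\Par_p(2u)$, one must exhibit a composition factor $D^\alpha\boxtimes D^\beta$ of $D^\la\da_{\SSS_{u,u}}$ with neither $\alpha$ nor $\beta$ in $\{(u),(1^u)\}$; this is done by iterating the Littlewood--Richardson decomposition of $[S^\la\da_{\SSS_{u,u}}]$ against the decomposition matrices from \cite{JamesBook} and \cite[Appendix]{JamesDim}, together with the basic spin computations of Lemmas~\ref{LL2} and~\ref{LBenson} for the $p=2$ case. Each of the finitely many cases is short (for instance, $|\Par_2(6)|=4$ and $|\Par_3(8)|=13$), but the check is $(p,u)$-dependent, and this is the technically tedious part of the proof; once completed, the inductive step above closes the argument cleanly.
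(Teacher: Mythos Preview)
Your proposal is correct and takes essentially the same approach as the paper: both reduce to the case $n=2u$ via Lemma~\ref{LKZ} (you frame this as upward induction on $n$, the paper as a one-line downward descent), and both handle the base case $n=2u$ by an explicit finite check. The paper simply writes ``an easy explicit check'' for the base case where you give more detail on how to carry it out.
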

\begin{proof}
If $D\not\in\NT_n$ then by Lemma \ref{LKZ}, the restriction $D\da_{\SSS_{2u}}$ has a composition factor not in $\NT_{2u}$. So it is enough to prove the lemma for $n=2u$, which is an easy explicit check.
\end{proof}

\begin{Proposition} \label{C080417} 
Let $s\geq 2$, $m_1,\dots,m_s\geq u$ and $m_1+\dots+m_s\leq n$, and $D$ be an irreducible $\F \SSS_n$-module such that all composition factors of $D\da_{\SSS_{m_1,\dots,m_s}}$ are of the form $D^{\mu^1}\boxtimes\dots\boxtimes D^{\mu^s}$ with at most one $t$ such that $D^{\mu^t}\not \in\T_{m_t}$. Then $D\in \NT_n$.
\end{Proposition}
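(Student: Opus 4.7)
The plan is to reduce directly to Lemma~\ref{L271118} by further restricting from $\SSS_{m_1,\dots,m_s}$ to a suitable copy of $\SSS_u\times\SSS_u$. Since $m_1,m_2\geq u$, I embed $\SSS_u\times\SSS_u$ inside $\SSS_{m_1}\times\SSS_{m_2}\leq \SSS_{m_1,\dots,m_s}$, with one $\SSS_u$-factor acting on $u$ of the letters of the first block and the other $\SSS_u$-factor on $u$ of the letters of the second block. The composition factors of $D\da_{\SSS_{u,u}}$ are exactly the composition factors of $N\da_{\SSS_{u,u}}$ as $N$ runs over the composition factors of $D\da_{\SSS_{m_1,\dots,m_s}}$.

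By hypothesis every such $N$ has the form $D^{\mu^1}\boxtimes\dots\boxtimes D^{\mu^s}$ with at most one index $t$ for which $D^{\mu^t}\notin\T_{m_t}$. As an $\SSS_{u,u}$-module via the embedding above, $N$ is a direct sum of $\prod_{i\geq 3}\dim D^{\mu^i}$ copies of $(D^{\mu^1}\da_{\SSS_u})\boxtimes(D^{\mu^2}\da_{\SSS_u})$, so any composition factor of $D\da_{\SSS_{u,u}}$ has the form $E\boxtimes F$ where $E$ is a composition factor of $D^{\mu^1}\da_{\SSS_u}$ and $F$ of $D^{\mu^2}\da_{\SSS_u}$ for some such $N$.

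The key observation is that any module in $\T_m$ is isomorphic to $D^{(m)}$ or (for $p>2$) to $D^{(1^m)}$, and in either case its restriction to $\SSS_u$ equals $D^{(u)}$ or $D^{(1^u)}$, hence lies in $\T_u$. Since at most one of $\mu^1,\dots,\mu^s$ is bad, at least one of $D^{\mu^1},D^{\mu^2}$ is in $\T_{m_1}$ or $\T_{m_2}$ respectively, regardless of whether the bad index $t$ (if any) lies in $\{1,2\}$ or outside it. Hence in every composition factor $E\boxtimes F$ of $D\da_{\SSS_{u,u}}$, at least one of $E,F$ lies in $\T_u$. Since $n\geq m_1+m_2\geq 2u$, Lemma~\ref{L271118} applies and yields $D\in \NT_n$.

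There is essentially no substantial obstacle: the hypothesis has been designed precisely to feed into Lemma~\ref{L271118}, and the only content of the argument is the simple bookkeeping that, whichever one of the indices $1,\dots,s$ happens to be ``bad'' for a given composition factor, the pair $\{1,2\}$ still contains at least one ``good'' slot, and that goodness is preserved under restriction from $\SSS_m$ to $\SSS_u$.
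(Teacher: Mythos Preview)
Your proof is correct and follows exactly the same approach as the paper's: restrict to $\SSS_u\times\SSS_u$ inside $\SSS_{m_1}\times\SSS_{m_2}$, observe that at least one tensor factor lands in $\T_u$, and apply Lemma~\ref{L271118}. One small notational point: the sign module for $\SSS_m$ is $D^{(m)^\Mull}$ rather than $D^{(1^m)}$ (the partition $(1^m)$ need not be $p$-regular), but this does not affect the argument since all you use is that $\sgn\da_{\SSS_u}=\sgn\in\T_u$.
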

\begin{proof}
By assumption, restricting further to the subgroups $\SSS_u\leq \SSS_{m_1}$ and $\SSS_u\leq \SSS_{m_2}$, we deduce that all composition factors of $D\da_{\SSS_{u,u}}$ are of the form $D^\mu\boxtimes D^\nu$ with $D^\mu\in \T_{u}$ or $D^\nu\in\T_{u}$. So the proposition follows from Lemma~\ref{L271118}.
\end{proof}

We need another special branching recognition result.

\begin{Lemma} \label{L271118_2}
Let $p=3$, $n\geq 8$ and $D^\la$ be an irreducible $\F \SSS_n$-module. If all composition factors of $D^\la\da_{\SSS_{n-1}}$ belong to $\NT_{n-1}\cup \llbracket D^{(n-3,1^2)}\rrbracket$ then 
$D^\la\in \NT_{n}\cup \llbracket D^{(n-2,1^2)}\rrbracket$.
\end{Lemma}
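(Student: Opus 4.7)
I would prove this by a short reduction to a small list of candidates for $\la$, followed by elimination.

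If every composition factor of $D^\la\da_{\SSS_{n-1}}$ already lies in $\NT_{n-1}$, then since $n\geq 8$ avoids the exceptional cases $n=6$, Lemma~\ref{LKZ} immediately gives $D^\la\in\NT_n$ and we are done. Otherwise some composition factor lies in $\llbracket D^{(n-3,1^2)}\rrbracket\setminus\NT_{n-1}$, and after possibly replacing $\la$ by $\la^\Mull$ (legitimate because both the sets $\NT_{n-1}\cup\llbracket D^{(n-3,1^2)}\rrbracket$ and $\NT_n\cup\llbracket D^{(n-2,1^2)}\rrbracket$ are by construction closed under tensoring with $\sgn$), we may assume that $D^{(n-3,1^2)}$ itself is a composition factor of $D^\la\da_{\SSS_{n-1}}$.

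By the Kleshchev modular branching rule, every composition factor of $D^\la\da_{\SSS_{n-1}}$ has label $\tilde e_i\la=\la\setminus A$ for a good $i$-node $A$ of $\la$. Hence $\la$ is obtained from $(n-3,1^2)$ by adjoining a single node, which must moreover be good in $\la$. The three possibilities are $(n-2,1^2)$, $(n-3,2,1)$, and $(n-3,1^3)$; the last is not $3$-regular, and if $\la=(n-2,1^2)$ we are done. Only $\la=(n-3,2,1)$ remains, which I would rule out by a short case analysis on $n\pmod 3$: its three removable nodes $(1,n-3), (2,2), (3,1)$ have residues $n-4, 0, 1\pmod 3$, and its four addable nodes at the ends of rows $1,2,3,4$ have residues $n-3, 1, 2, 0\pmod 3$. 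Forming the reduced $i$-signatures, one finds that for $n\equiv 0$ or $n\equiv 1\pmod 3$ the good node in the residue class containing $(1,n-3)$ is $(1,n-3)$ itself, so $\tilde e_i\la=(n-4,2,1)$ and the required composition factor $D^{(n-3,1^2)}$ does not actually appear in $D^{(n-3,2,1)}\da$, contradicting the premise. For $n\equiv 2\pmod 3$ the good $0$-node is $(2,2)$, yielding the composition factor $D^{(n-3,1^2)}$ as required, but then the good $1$-node is $(1,n-3)$, producing the additional composition factor $D^{(n-4,2,1)}$.

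It remains to verify in this last case that $D^{(n-4,2,1)}\not\in\NT_{n-1}\cup\llbracket D^{(n-3,1^2)}\rrbracket$. The partition $(n-4,2,1)$ is distinct in shape from $(n-1)$, $(n-2,1)$, and $(n-3,1^2)$, so the only possible coincidences are with their Mullineux duals, whose corresponding simple modules have dimensions $1$, at most $n-1$, and at most $(n^2-3n+2)/2$ respectively by Lemma~\ref{LL2}. The lower bounds from \cite{KMT2} (Theorems~\ref{TBound1},~\ref{TBound2}) give $\dim D^{(n-4,2,1)}$ of order $n^3$, eliminating these possibilities. I expect the main obstacles to be (i) pinning down the convention for the good $i$-node consistently with the paper (I am taking the good node to be the top-most normal $i$-node in each residue class, which can be checked against the Kleshchev branching-socle identity $\soc(e_i D^\la)=D^{\tilde e_i\la}^{\oplus\eps_i(\la)}$) and (ii) handling the boundary case $n=8$ of the dimension estimate, where the general lower bounds from \cite{KMT2} may need to be supplemented by a direct inspection of the decomposition matrix of $\SSS_7$ in characteristic $3$.
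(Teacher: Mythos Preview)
Your route is genuinely different from the paper's: rather than a six-case analysis on the shape of $\la$ (according to $h(\la)$, $\la_3$, and $\la_1-\la_2$), you use branching to pin $\la$ down to $\{(n-2,1^2),(n-3,2,1)\}$ once $D^{(n-3,1^2)}$ is known to occur in the restriction, and then eliminate $(n-3,2,1)$. This is shorter when it works, and the reduction step is valid: if $D^\mu$ is a composition factor of $D^\la\da_{\SSS_{n-1}}$ then $\mu$ is obtained from $\la$ by removing a \emph{normal} node (this is part of the branching rules in \cite{KBrII}).

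However, your elimination of $(n-3,2,1)$ contains a genuine error at $n\equiv 1\pmod 3$. Two issues compound. First, the composition factors of $D^\la\da_{\SSS_{n-1}}$ are not only the $D^{\tilde e_i\la}$: by \cite{KBrII}, each \emph{normal} (not just good) node $A$ yields a composition factor $D^{\la\setminus A}$, so a residue with $\eps_i=2$ contributes two distinct partitions. Second, the good $i$-node is the \emph{bottom-most} normal $i$-node (cf.\ Lemma~\ref{L2}, where the socle of $e_iD^\la$ is $D^{\la_A}$ with $A$ below $B$), not the top-most as you assume. For $n\equiv 1\pmod 3$ the reduced $0$-signature of $(n-3,2,1)$ is $+--$, coming from $(4,1),(2,2),(1,n-3)$ read bottom to top; thus both $(2,2)$ and $(1,n-3)$ are $0$-normal, the good $0$-node is $(2,2)$, and $D^{(n-3,1^2)}$ lies in the socle of $D^{(n-3,2,1)}\da_{\SSS_{n-1}}$, contradicting your claim that it does not appear.

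The fix is uniform over all residues of $n$: the top removable node of any partition is always normal, so $D^{(n-4,2,1)}$ is a composition factor of $D^{(n-3,2,1)}\da_{\SSS_{n-1}}$ for every $n\geq 8$ (this is exactly how the paper dispatches this $\la$, in its Case~3). It then remains to check that $(n-4,2,1)\notin\NT_{n-1}\cup\llbracket D^{(n-3,1^2)}\rrbracket$, which is immediate from the explicit six-partition description of this set recorded at the start of the paper's proof (via \cite{bkz}); no dimension bounds are needed, and $n=8$ causes no extra difficulty.
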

\begin{proof}
Note that
\begin{align*}
\NT_{m}\cup \llbracket D^{(m-2,1^2)}\rrbracket=\{&D^{(m)},D^{(\lceil m/2\rceil,\lfloor m/2\rfloor)},D^{(m-1,1)},D^{(\lceil (m-1)/2\rceil,\lfloor (m-1)/2\rfloor,1)},\\
&D^{(m-2,1^2)},D^{(\lceil (m-2)/2\rceil,\lfloor (m-2)/2\rfloor,2)}\}
\end{align*}
for $m\geq 7$, see for example \cite[Lemma 2.2]{bkz}. Throughout the proof we will be using branching rules from \cite{KBrII} without further referring to them. 

{\sf Case 1.} $h(\la)\geq 4$. Then from \cite[Lemma 4.7]{BaK} that $D^\la\da_{\SSS_6}$ contains a composition factor $D^{(2,2,1,1)}$. Hence $D^\la\da_{\SSS_{n-1}}$ has a composition factor of the form $D^\mu$ with $h(\mu)\geq 4$. In particular $D^\mu\not\in \NT_{n-1}\cup \llbracket D^{(n-3,1^2)}\rrbracket$.

{\sf Case 2.} $h(\la)=3$ and $\la_3\geq 3$. Then $n\geq 10$ and by \cite[Lemma 4.13]{BaK}, $D^\la_{\SSS_{10}}$ contains a composition factor  $D^{(4,3^2)}$. So if $n>10$ then $D^\la\da_{\SSS_{n-1}}$ contains a composition factor $D^\mu$ with $\mu_3\geq 3$, in particular $D^\mu\not\in \NT_{n-1}\cup \llbracket D^{(n-3,1^2)}\rrbracket$. If $n=10$ then $\la=(4,3^2)$ and $\la^\Mull=(7,2,1)$, so $D^{(5,2^2)}\not \in\NT_{9}\cup \llbracket D^{(7,1,1)}\rrbracket$ is a composition factor of $D^\la\da_{\SSS_9}$ since $(5,2^2)=(6,2,1)^\Mull$.

{\sf Case 3.} $h(\la)=3$, $\la_3\leq 2$ and $\la_1-\la_2\geq 3$. We may assume that $\la_2\geq 2$, since otherwise $\la=(n-2,1^2)$. But then  $D^{(\la_1-1,\la_2,\la_3)}\not\in \NT_{n-1}\cup \llbracket D^{(n-3,1^2)}\rrbracket$ is a composition factor of $D^\la\da_{\SSS_{n-1}}$.

{\sf Case 4.} $h(\la)=3$, $\la_3\leq 2$ and $\la_1-\la_2\leq 2$. We may assume that $\la_1-\la_2=2$, since otherwise $D^\la\in \NT_{n}\cup \llbracket D^{(n-2,1^2)}\rrbracket$. If $n>8$ then $\la_2\geq 3>\la_3$, so  $D^{(\la_1,\la_2-1,\la_3)}\not\in \NT_{n-1}\cup \llbracket D^{(n-3,1^2)}\rrbracket$ is a composition factor of $D^\la\da_{\SSS_{n-1}}$. If $n=8$ then $\la=(4,2,2)$ and $D^{(4,2,1)}\not \in\NT_{7}\cup \llbracket D^{(5,1,1)}\rrbracket$ is a composition factor of $D^\la\da_{\SSS_7}$.

{\sf Case 5.} $h(\la)=2$ and $\la_1-\la_2\geq 3$. We may assume that $\la_2\geq 2$, in which case $D^{(\la_1-1,\la_2)}\not\in \NT_{n-1}\cup \llbracket D^{(n-3,1^2)}\rrbracket$ is a composition factor of $D^\la\da_{\SSS_{n-1}}$.

{\sf Case 6.} $h(\la)=2$ and $\la_1-\la_2\leq 2$. We may assume that $\la_1-\la_2=2$. Since $n\geq 8$ we have that $\la_2\geq 3$ and so $D^{(\la_1,\la_2-1)}\not\in \NT_{n-1}\cup \llbracket D^{(n-3,1^2)}\rrbracket$ is a composition factor of $D^\la\da_{\SSS_{n-1}}$.
\end{proof}

\begin{Corollary} \label{P110417} 
Let $p=3$, $n=2^m$ for $m\geq 4$, and $D$ be an irreducible $\F \SSS_n$-module. Suppose that all composition factors $D^\mu\boxtimes D^\nu$ of the restriction $D\da_{\SSS_{n/2}\times \SSS_{n/2}}$ satisfy one of the following three conditions:
\begin{enumerate}
\item[{\rm (1)}] $D^\mu\cong D^\nu\in \N_{n/2}$, 
\item[{\rm (2)}]  $D^\mu\in\T_{n/2}, D^\nu\in \NT_{n/2}\cup \llbracket D^{(n/2-2,1,1)}\rrbracket$, 
\item[{\rm (3)}] $D^\nu\in\T_{n/2}, D^\mu\in \NT_{n/2}\cup \llbracket D^{(n/2-2,1,1)}\rrbracket$. 
\end{enumerate}
Then $D\in\NT_n\cup\llbracket D^{(n-2,1,1)}\rrbracket$. 
\end{Corollary}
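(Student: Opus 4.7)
Since $n-1=2^m-1\geq 15\geq 8$, Lemma \ref{L271118_2} applies at level $n$. It therefore suffices to show that every composition factor $D^\tau$ of $D\da_{\SSS_{n-1}}$ lies in $\NT_{n-1}\cup\llbracket D^{(n-3,1^2)}\rrbracket$.

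Fix such a $D^\tau$ and restrict further to the Young subgroup $\SSS_{n/2}\times\SSS_{n/2-1}\leq\SSS_{n-1}$, where $\SSS_{n/2}$ acts on $\{1,\dots,n/2\}$ and $\SSS_{n/2-1}$ on $\{n/2+1,\dots,n-1\}$. By transitivity of restriction, every composition factor of $D^\tau\da_{\SSS_{n/2,n/2-1}}$ is a composition factor of $D\da_{\SSS_{n/2,n/2-1}}$, hence arises from some composition factor $D^\mu\boxtimes D^\nu$ of $D\da_{\SSS_{n/2,n/2}}$ in the form $D^\mu\boxtimes D^\beta$ with $D^\beta$ a composition factor of $D^\nu\da_{\SSS_{n/2-1}}$. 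Using the branching rules of \cite{KBrII} together with the fact that $p=3\nmid n/2=2^{m-1}$ (which controls the structure of $D^{(n/2-1,1)}$ and its restriction), I would verify that hypothesis (2) or (3) forces at least one of the two tensor factors $D^\mu, D^\beta$ to be trivial, while hypothesis (1), in which $D^\mu=D^\nu\in\N_{n/2}$, only produces factors with $D^\mu\in\N_{n/2}$ and $D^\beta\in\T_{n/2-1}\cup\N_{n/2-1}$.

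If every composition factor of $D^\tau\da_{\SSS_{n/2,n/2-1}}$ has at most one non-trivial tensor factor, then since $n/2,n/2-1\geq u=4$, Proposition \ref{C080417} applies with $s=2$, $m_1=n/2$, $m_2=n/2-1$ and yields $D^\tau\in\NT_{n-1}$, as required. Otherwise, the case analysis above forces $D^\tau\da_{\SSS_{n/2,n/2-1}}$ to contain a ``natural $\boxtimes$ natural'' composition factor $D^\alpha\boxtimes D^\beta$ with $D^\alpha\in\N_{n/2}$, $D^\beta\in\N_{n/2-1}$, coming necessarily from a case~(1) contribution, and the goal becomes to show $D^\tau\in\llbracket D^{(n-3,1^2)}\rrbracket$.

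This last identification is the main obstacle. I would attack it by iterating the strategy: applying Lemma \ref{L271118_2} at level $n-1$ reduces to analyzing composition factors of $D^\tau\da_{\SSS_{n-2}}$ and showing that they lie in $\NT_{n-2}\cup\llbracket D^{(n-4,1^2)}\rrbracket$; the restriction argument of the previous paragraph is then repeated with the Young subgroup $\SSS_{n/2}\times\SSS_{n/2-2}$ (which still has both parts $\geq u$), and Proposition \ref{C080417} pins down the non-$\llbracket D^{(n-4,1^2)}\rrbracket$ factors. A Littlewood--Richardson computation (giving $c^{(n-3,1^2)}_{(n/2-1,1),(n/2-2,1)}=1$) shows that $(n-3,1^2)$ and its Mullineux image really do carry the ``natural $\boxtimes$ natural'' signature, and the delicate point is to show conversely that no partition outside $\NT_{n-1}\cup\llbracket D^{(n-3,1^2)}\rrbracket$ can produce the precise combination of restriction data (natural-tensor-natural composition factor plus Proposition \ref{C080417}-compliant behaviour on everything else) that the hypothesis on $D$ forces on $D^\tau$.
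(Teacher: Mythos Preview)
Your approach is heading in a workable direction but misses the key simplification that makes the paper's proof essentially two lines, and as written it leaves a genuine gap at the ``delicate point'' you yourself flag.

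The paper's argument is this. In each of the three cases (1), (2), (3), \emph{both} tensor factors $D^\mu$ and $D^\nu$ already lie in $\NT_{n/2}\cup\llbracket D^{(n/2-2,1^2)}\rrbracket$. (Check: in (1) they are in $\N_{n/2}$; in (2) and (3) one is in $\T_{n/2}$ and the other in $\NT_{n/2}\cup\llbracket D^{(n/2-2,1^2)}\rrbracket$.) Hence, forgetting one factor, every composition factor of $D\da_{\SSS_{n/2}}$ lies in $\NT_{n/2}\cup\llbracket D^{(n/2-2,1^2)}\rrbracket$. Now iterate Lemma~\ref{L271118_2} \emph{upward}: if every composition factor of $D\da_{\SSS_k}$ lies in $\NT_k\cup\llbracket D^{(k-2,1^2)}\rrbracket$ for some $k\geq n/2\geq 8$, then for any composition factor $D^\sigma$ of $D\da_{\SSS_{k+1}}$, all composition factors of $D^\sigma\da_{\SSS_k}$ are composition factors of $D\da_{\SSS_k}$, so Lemma~\ref{L271118_2} gives $D^\sigma\in\NT_{k+1}\cup\llbracket D^{(k-1,1^2)}\rrbracket$. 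Taking $k=n/2,n/2+1,\dots,n-1$ in turn yields $D\in\NT_n\cup\llbracket D^{(n-2,1^2)}\rrbracket$.

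Your plan, by contrast, starts at level $n$ and descends, and at each step tries to exploit the full $\SSS_{n/2}\times\SSS_{n/2-j}$ bimodule structure rather than simply the restriction to one $\SSS_{n/2}$. This produces the ``natural~$\boxtimes$~natural'' obstruction, which you then propose to dissolve by further iteration together with a converse identification argument you describe only in outline. That converse (showing no $\tau$ outside $\NT_{n-1}\cup\llbracket D^{(n-3,1^2)}\rrbracket$ can exhibit the observed restriction pattern) is never established in your proposal, so the proof is incomplete as stated. The fix is not to push that analysis through, but to notice that the hypothesis already controls $D\da_{\SSS_{n/2}}$ directly, so there is no obstruction to handle.
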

\begin{proof}
By assumption, all composition factors of $D\da_{\SSS_{n/2}}$ belong to $\NT_{n/2}\cup \llbracket D^{(n/2-2,1^2)}\rrbracket$, and the result follows from Lemma \ref{L271118_2}.
\end{proof}

\subsection{Dimension bounds}
Recall the notation (\ref{En-lmu}). 
We begin by recording James' lower bounds for $\dim D^{(n-\ell,\mu)}$ with $\ell\leq 4$:

\begin{Lemma} \label{LBound} {\rm \cite[Appendix]{JamesDim}} 
Let $1\leq \ell\leq 4$, $\mu\in \Par_p(\ell)$, and $n$ be such that $(n-\ell,\mu)\in\Par_p(n)$ with $\mu \vdash \ell$. Then 
$$
\dim D^{(n-\ell,\mu)}\geq 
\left\{
\begin{array}{ll}
n-2 &\hbox{if $\ell=1$,}\\
(n^2-5n+2)/2 &\hbox{if $\ell=2$,}\\
(n^3 -9n^2 +14n)/6 &\hbox{if $\ell=3$.}
\\(n^4-14n^3+47n^2-34n)/24 &\hbox{if $\ell=4$.}
\end{array}
\right. 
$$
\end{Lemma}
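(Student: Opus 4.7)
The plan is to bound $\dim D^{(n-\ell,\mu)}$ from below by comparison with $\dim S^{(n-\ell,\mu)}$, exploiting the identity
\begin{equation*}
\dim D^{(n-\ell,\mu)}=\dim S^{(n-\ell,\mu)}-\sum_{\nu\,\triangleright\,(n-\ell,\mu)}[S^{(n-\ell,\mu)}:D^\nu]\,\dim D^\nu,
\end{equation*}
where the sum is over $p$-regular partitions $\nu\vdash n$ strictly dominating $(n-\ell,\mu)$. Crucially, any such $\nu$ has $\nu_1\geq n-\ell$, so $\nu=(n-\ell',\mu')$ with either $\ell'<\ell$, or $\ell'=\ell$ and $\mu'\,\triangleright\,\mu$ in $\Par(\ell)$. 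Thus the sum ranges over finitely many ``shapes'' (independent of $n$), which opens the door to an induction on $\ell$.

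First, I would compute $\dim S^{(n-\ell,\mu)}$ via the hook-length formula. For each fixed $\mu\vdash\ell$ with $\ell\leq 4$ this is an explicit polynomial in $n$ of degree $\ell$; for instance $\dim S^{(n-2,2)}=n(n-3)/2$, $\dim S^{(n-2,1^2)}=(n-1)(n-2)/2$, and analogous cubic and quartic formulas for $\ell=3,4$. One checks that the minimum over $\mu\vdash\ell$ of the leading coefficient matches the leading coefficient of the claimed polynomial bound.

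Second, I would induct on $\ell$. The base case $\ell=1$ follows from the classical fact that $\dim D^{(n-1,1)}=n-1$ or $n-2$ depending on whether $p\nmid n$. For the inductive step, I would enumerate, for each $\mu\vdash\ell$, the (finitely many) pairs $(\ell',\mu')$ with $(n-\ell',\mu')\,\triangleright\,(n-\ell,\mu)$, and use: (i) the hook-length formula to bound $\dim S^{(n-\ell',\mu')}$ above by a polynomial of degree $\ell'<\ell$ (or $\ell'=\ell$ with strictly smaller shape), hence $\dim D^{(n-\ell',\mu')}\leq \dim S^{(n-\ell',\mu')}$, and (ii) the Jantzen--Schaper sum formula (or direct computation using $p$-cores and weights) to show that $[S^{(n-\ell,\mu)}:D^{(n-\ell',\mu')}]$ is bounded by a constant depending only on $\ell$. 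Summing the contributions yields a subtracted total of order $O(n^{\ell-1})$, so the leading term $c_\mu n^\ell$ of $\dim S^{(n-\ell,\mu)}$ survives and produces the desired lower bound after minimising $c_\mu$ over $\mu\vdash\ell$.

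The main obstacle will be controlling the decomposition multiplicities $[S^{(n-\ell,\mu)}:D^\nu]$ uniformly across all $p$ and all small $n$; for example when $p\mid n$ or $p\mid n-1$ extra composition factors appear and $\dim D^\nu$ itself drops. Handling these requires a careful case analysis for the short list of ``close-to-the-first-row'' shapes $\nu$ that can contribute, together with verification that the small exceptional values of $n$ (where the bound is tightest) still satisfy the claimed inequality by direct computation. Once this bookkeeping is carried out for $\ell=2,3,4$ in turn, the uniform polynomial bounds claimed in the statement emerge.
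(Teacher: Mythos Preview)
The paper does not prove this lemma at all: it is merely recorded with a citation to \cite[Appendix]{JamesDim}, where James carries out exactly the computation you outline. Your sketch (hook-length formula for $\dim S^{(n-\ell,\mu)}$, the identity $\dim D^\la=\dim S^\la-\sum_{\nu\triangleright\la}[S^\la:D^\nu]\dim D^\nu$, induction on $\ell$, and explicit bounds on the decomposition multiplicities for the finitely many dominating shapes) is precisely James' method, so there is nothing to compare.

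One cautionary remark: your proposal treats the multiplicity bound as coming from ``Jantzen--Schaper or direct computation'', but James actually works harder than this suggests. For $\ell\leq 4$ he determines the exact decomposition numbers $[S^{(n-\ell,\mu)}:D^{(n-\ell',\mu')}]$ case by case as functions of the residue of $n$ modulo $p$, and then minimises $\dim D^{(n-\ell,\mu)}$ over all congruence classes. A soft $O(1)$ bound on the multiplicities would only give the leading term, not the precise lower-order coefficients $(n^2-5n+2)/2$, $(n^3-9n^2+14n)/6$, $(n^4-14n^3+47n^2-34n)/24$ claimed in the statement. So if you were to write this out in full you would need the exact decomposition data, not just an asymptotic estimate.
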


Set
$$
\de_p:=
\left\{
\begin{array}{ll}
0 &\hbox{if $p\neq 2$,}\\
1 &\hbox{if $p=2$.}
\end{array}
\right.
$$
For integers $\ell\geq 0$ and $n$ we define the rational numbers
\begin{align*}
C_{\ell}^p(n)&:=
p^\ell\binom{n/p-\de_p}{\ell}
\\
&
=\frac{1}{\ell!}\prod_{i=0}^{\ell-1}(n-(\de_p+i)p)
\\
&
=
\left\{
\begin{array}{ll}
\frac{n(n-p)(n-2p)\cdots(n-(\ell-1)p)}{\ell!} &\hbox{if $p>2$,}\\
\frac{(n-p)(n-2p)\cdots(n-\ell p)}{\ell!}  &\hbox{if $p=2$.}
\end{array}
\right.
\end{align*}

The following result 
substantially develops \cite{JamesDim} (the upper bound $\dim D^\la \leq n^\ell$ is trivial, since $D^\la$ is contained in the permutation module
$M^\la$, which has dimension at most $n!/(n-\ell)! \leq n^\ell$).

\begin{Theorem}\label{TBound1} {\rm \cite[Theorem A]{KMT2}} 
Let $\ell\geq 4$, $n\geq p(\de_p+\ell-2)$, and $\la=(n-\ell,\mu)\in\Par_p(n)$ for some $\mu\in\Par_p(\ell)$. Then 
$$n^\ell \geq \dim D^\la\geq C_{\ell}^p(n).$$ 
\end{Theorem}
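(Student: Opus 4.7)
The upper bound $\dim D^\la \le n^\ell$ is immediate: $D^\la$ is a composition factor of the Specht module $S^\la$, which embeds in the Young permutation module $M^\la$, so
$\dim D^\la \le \dim M^\la = \binom{n}{\la_1,\dots,\la_{h(\la)}} \le n(n-1)\cdots(n-\ell+1) \le n^\ell.$

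For the lower bound, I would argue by induction on $\ell \ge 4$. The base case $\ell=4$ reduces to Lemma~\ref{LBound}: one checks that the James polynomial bound $(n^4-14n^3+47n^2-34n)/24$ dominates $C_4^p(n)$ in the entire effective range where $C_4^p(n)>0$. For instance at $p=3$ one has $C_4^3(n)=n(n-3)(n-6)(n-9)/24$, which is $\le 0$ for $6\le n\le 9$, while for $n\ge 10$ the difference James$(n)-C_4^3(n)=4n(n^2-13n+32)/24$ is nonnegative; the cases $p=2$ and $p\ge 5$ are analogous polynomial comparisons, with the finitely many small-$n$ configurations not covered by the polynomial inequality checked directly against the tables of \cite{JamesDim}.

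For the inductive step I would exploit the telescoping identity
$$
C_\ell^p(n) \;=\; \frac{n-\de_p p}{n-(\de_p+\ell)p}\, C_\ell^p(n-p),
$$
reducing to proving the representation-theoretic analogue
$$
\dim D^{(n-\ell,\mu)} \;\ge\; \frac{n-\de_p p}{n-(\de_p+\ell)p}\,\dim D^{(n-p-\ell,\mu)},
$$
after which an outer induction on $n$ (at $\ell$ fixed, incremented by $p$) closes the argument. The natural mechanism is iterated modular restriction to $\SSS_{n-p}$: the $p$ rightmost first-row nodes $(1,n-\ell-j+1)$ for $1\le j\le p$ have residues covering every class modulo $p$, and successively applying the Kleshchev functors $e_i$ for these residues produces $D^{(n-p-\ell,\mu)}$ as a composition factor of $D^{(n-\ell,\mu)}\da_{\SSS_{n-p}}$, with a multiplicity computable from the socle theorem of \cite{KBrII,k4} applied to the reduced $i$-signatures along the first row.

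The principal obstacle is the case analysis for partitions $\mu$ whose shape interferes with this sequence of first-row removals---typically when $\mu_1$ is close to $n-\ell$ (so some nodes $(1,n-\ell-j+1)$ fail to remain removable at intermediate stages) or when $\mu$ contributes $i$-normal nodes that cancel with the first-row nodes in the reduced $i$-signature. In such boundary configurations I would instead restrict to $\SSS_{n-k}$ for a suitable $k<p$ (compatibly with Theorem~\ref{TSub} and Lemma~\ref{LSub22}) and combine this with the Jantzen sum formula applied to the Specht filtration of $D^\la$ to extract the required bound, falling back on the explicit dimension formulas in \cite{JamesDim} when $n$ is small enough that none of these branching reductions apply.
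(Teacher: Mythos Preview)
The paper does not prove this theorem. It is quoted verbatim from \cite[Theorem~A]{KMT2}; the only argument the paper supplies is the one-line remark preceding the statement, namely that $D^\la\subseteq M^\la$ and $\dim M^\la\le n!/(n-\ell)!\le n^\ell$. Your upper-bound paragraph reproduces exactly that, so on that half you agree with the paper.

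For the lower bound there is nothing in this paper to compare against, and what you wrote is a plan rather than a proof. Two concrete issues:

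\emph{The induction is mis-shaped.} You announce induction on $\ell$ with base $\ell=4$, but the ``inductive step'' you describe is an induction on $n$ in steps of $p$ with $\ell$ held fixed; the hypothesis at $\ell-1$ is never invoked. If you really mean a separate induction on $n$ for each $\ell\ge 4$, you must specify the base: presumably the range where $C_\ell^p(n)\le 0$, but then the $\ell=4$ discussion via Lemma~\ref{LBound} is redundant, and there is no inductive glue between different values of $\ell$.

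\emph{The key inequality is asserted, not proved.} The heart of your argument is
\[
\dim D^{(n-\ell,\mu)} \;\ge\; \frac{n-\de_p p}{\,n-(\de_p+\ell)p\,}\,\dim D^{(n-p-\ell,\mu)},
\]
but removing $p$ first-row nodes via $e_{i_1}\cdots e_{i_p}$ only tells you that $D^{(n-p-\ell,\mu)}$ occurs as a composition factor of $D^{(n-\ell,\mu)}\da_{\SSS_{n-p}}$, which gives $\dim D^{(n-\ell,\mu)}\ge \dim D^{(n-p-\ell,\mu)}$, not the required factor $>1$. Extracting that factor would need precise control over the \emph{other} composition factors of the restriction and their dimensions, which the socle/signature machinery you cite does not provide in this generality. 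The fallback to Jantzen sums and tables in boundary cases is not worked out and would, at best, cover finitely many $n$; it does not rescue the generic step. So as written there is a genuine gap: the multiplicative inequality that drives the induction is unproved.
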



While Theorem~\ref{TBound1} requires that $n$ is relatively large compared to $\ell$, we also have the following universal lower bounds which strengthens \cite[Theorem 5.1]{GLT}:

\begin{Theorem}\label{TBound2} {\rm \cite[Theorems B, C]{KMT2}} 
Let $\la = (\la_1, \la_2,\dots)\in\Par_p(n)$, and $k:=\max\{\la_1,\la^\Mull_1\}$. 
\begin{enumerate}[\rm(i)]
\item If $p=2$, then $\dim D^\la \geq 2^{n-k}$.
\item If $p > 2$ let $\la^\Mull=(\la^\Mull_1,\la^\Mull_2,\dots)$, and let 
$m$ be minimal such that $D^\la\da_{\SSS_{n-m}}$ contains a $1$-dimensional submodule. Put 
$k:=\max\{\la_1,\la^\Mull_1\}$ and $t:= \max\{n-k,m\}.$
Then 
$$\dim D^\la\geq 2\cdot 3^{(t-2)/3}.$$
\end{enumerate}
In particular, for all $p$ and $n\geq 5$, we have $\dim D^\la\geq 2^{(n-k)/2}$. 
\end{Theorem}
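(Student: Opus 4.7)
The plan is to prove both parts by induction on $n$, employing Kleshchev's modular branching rule. For a $p$-regular partition $\la$, this rule identifies the socle composition factors of $D^\la\da_{\SSS_{n-1}}$ as $D^{\tilde{e}_i \la}$ (one for each residue $i$ with a good node) and gives $[D^\la\da_{\SSS_{n-1}}:D^{\tilde{e}_i\la}] = \eps_i(\la)$. Combined with the inductive bound applied to each factor, this yields
$$\dim D^\la \;\geq\; \sum_{i} \eps_i(\la)\cdot \dim D^{\tilde{e}_i\la},$$
and it is this inequality that will be pushed through in both parts.

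For part (i), since $\la$ is $2$-regular its parts are strictly decreasing, so the box $(1,\la_1)$ is always removable. I would case-split on the row in which the good nodes sit. If some good $i$-node lies in row $1$, then $(\tilde{e}_i\la)_1 = \la_1-1$ and the inductive bound on $D^{\tilde{e}_i\la}$ already forces $\dim D^\la \geq 2^{n-\la_1}$. If every good node lies below row $1$, then each $(\tilde{e}_i\la)_1 = \la_1$, so the induction yields only $2^{n-1-\la_1}$ per summand, and the missing factor of $2$ must be recovered either from a multiplicity $\eps_i(\la)\geq 2$ or from the presence of two distinct good residues (one per residue $0$ and $1$). The really delicate subcase, in which exactly one good node exists, lies below row $1$, and has $\eps_i(\la)=1$, is ruled out (or handled directly) by analyzing the signature combinatorics: such a configuration forces $(1,\la_1)$ to have residue opposite to $i$ and to be cancelled by a specific addable node, which severely restricts the shape of $\la$ and can then be settled by restricting one further step to $\SSS_{n-2}$, or by invoking Lemma~\ref{LBound} for a direct check.

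For part (ii) with $p \geq 3$, the argument runs symmetrically in $\la$ and $\la^\Mull$, exploiting $D^\la\otimes\sgn \cong D^{\la^\Mull}$: the bound on $\dim D^\la$ must draw on the larger of $n-\la_1$ and $n-\la^\Mull_1$, and the number $m$ enters because $m$ is naturally dual to $n-\la_1$ under the Mullineux twist (having a trivial $\SSS_{n-m}$-submodule in $D^\la$ corresponds to a sign-type submodule in $D^{\la^\Mull}$). The base $3$ in $3^{(t-2)/3}$ reflects that in characteristic $p\geq 3$ at least three residues are available, so three successive branching steps can be chosen to multiply the dimension by a factor of at least $3$; the leading factor $2$ absorbs the smallest non-trivial cases. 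Concretely, I would iterate the branching three steps at a time and use that among any three consecutive residue classes encountered at least one produces a multiplicative gain of $\geq 3$, invoking the induction hypothesis on both the $\la$- and $\la^\Mull$-sides to track $t=\max\{n-k,m\}$ simultaneously.

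The uniform bound $\dim D^\la\geq 2^{(n-k)/2}$ for $n\geq 5$ then drops out: from (i) directly when $p=2$, since $2^{n-k}\geq 2^{(n-k)/2}$; and for $p\geq 3$ from (ii) by comparing $2\cdot 3^{(t-2)/3}\geq 2^{(n-k)/2}$ whenever $t\geq n-k\geq 2$, the required inequality $3^{1/3}>2^{1/2}$ being equivalent to $9>8$. The remaining small cases $n-k\in\{0,1\}$ are verified directly, using Lemma~\ref{LBound} together with the explicit dimensions of $D^{(n)}$, $D^{(1^n)}$, and the basic spin modules from Table~I. The main technical hurdle throughout is the signature-degeneracy analysis in part (i) and the three-step iteration in part (ii), which is precisely where this result sharpens the earlier bound of \cite[Theorem 5.1]{GLT}.
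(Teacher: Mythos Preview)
This theorem is not proved in the present paper; it is quoted verbatim from \cite{KMT2} (Theorems~B and~C there). So there is no proof here to compare your outline against, and I can only comment on your sketch on its own terms.

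For part (i), your branching-by-induction strategy is correct, but you have manufactured a difficulty that does not exist. The top removable node $(1,\la_1)$ is \emph{always} $j$-normal for $j=\res(1,\la_1)$: it contributes the rightmost symbol in the $j$-signature and nothing can cancel it. Under the convention used here (cf.\ Lemma~\ref{L2}) the good $i$-node is the \emph{bottom} $i$-normal node. Hence either $\eps_j(\la)=1$, in which case the good $j$-node is $(1,\la_1)$ and $(\tilde e_j\la)_1=\la_1-1$, or $\eps_j(\la)\geq 2$ and the inductive bound $2^{(n-1)-\la_1}$ picks up the missing factor of $2$ from the multiplicity. Your ``really delicate subcase'' (a unique good node below row $1$ with $\eps=1$) is impossible, and no appeal to Lemma~\ref{LBound} or further signature analysis is needed.

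Your sketch for part (ii) is too vague to be a proof. The claim that $m$ is ``naturally dual to $n-\la_1$ under the Mullineux twist'' is not correct as stated: $m$ is the minimal depth at which a one-dimensional submodule appears, which is generally unrelated to $n-\la_1^\Mull$. More seriously, the proposed three-step iteration --- that three consecutive branchings always produce a factor of $3$ --- fails as written: along a JS chain the dimension does not change at all, and such chains can be long. What saves the bound is that along a JS chain $m$ decreases in lockstep, so $t$ drops; making this precise requires a more careful simultaneous induction on $\la$ and $\la^\Mull$, tracking when one-dimensional submodules can first appear. Your derivation of the ``in particular'' clause from (i) and (ii) is fine.
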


The following technical result will be used to study irreducible restrictions to doubly transitive subgroups $G$ with $\soc(G)\cong PSL(m,q)$ and $Sp_{2m}(2)$ in Sections~\ref{SLG1} and \ref{SLG3}. 

\begin{Proposition}\label{PDim1}
Let $n \geq 324$, $p = 2$ or $3$, and define $\ell$ from 
$\max(\la_1,\la^\Mull_1)=n-\ell$. 
If
$$\dim D^\la \leq n^{\frac{1}{2}\log_2 n +1}$$
then $\ell \leq 0.7\log_2 n+1.4$.
\end{Proposition}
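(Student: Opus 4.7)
The plan is to contrast the hypothesis $\dim D^\la \leq n^{\frac{1}{2}\log_2 n + 1}$ with the dimension lower bound $C^p_\ell(n)$ provided by Theorem~\ref{TBound1}. Since $D^{\la^\Mull}\cong D^\la\otimes\sgn$ has the same dimension as $D^\la$ and the integer $\ell$ in the statement is symmetric in $\la$ and $\la^\Mull$, we may replace $\la$ by $\la^\Mull$ if necessary and assume $\la_1=n-\ell$, so that $\la = (n-\ell,\mu)\in\Par_p(n)$ for some partition $\mu$ of $\ell$. If $\ell\leq 3$ the desired conclusion is immediate, since $0.7\log_2 n + 1.4 \geq 0.7\log_2 324 + 1.4 > 7$ for every $n\geq 324$; hence assume $\ell\geq 4$.

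To apply Theorem~\ref{TBound1} its hypothesis $n\geq p(\de_p+\ell-2)$ must be verified. For this, Theorem~\ref{TBound2} yields $\dim D^\la\geq 2^{\ell/2}$, which combined with the hypothesis gives the crude bound $\ell \leq (\log_2 n)^2 + 2\log_2 n$; a routine check shows that for $n\geq 324$ (and $p\in\{2,3\}$) this already forces $n \geq p(\de_p+\ell-2)$. Theorem~\ref{TBound1} then delivers
$$\dim D^\la \,\geq\, C^p_\ell(n) \,\geq\, \frac{(n-p\ell)^\ell}{\ell!},$$
where the second inequality arises by replacing each factor in the defining product of $C^p_\ell(n)$ by its smallest value $n-p\ell$.

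The crux is then to show that whenever $\ell > 0.7\log_2 n + 1.4$ and $n\geq 324$, the right-hand side above strictly exceeds $n^{\frac{1}{2}\log_2 n+1}$, contradicting the hypothesis. Setting $L := \log_2 n$ and taking logarithms, this reduces to the inequality
$$
\ell\log_2(n-p\ell)-\log_2 \ell! \,>\, \frac{L^2}{2}+L.
$$
In the relevant regime $p\ell = O((\log n)^2)$ is negligible compared to $n$, so $\log_2(n-p\ell) = L - o(1)$; applying Stirling's formula $\log_2\ell! \leq \ell\log_2(\ell/e)+\frac{1}{2}\log_2(2\pi\ell)$ reduces matters to a numerical comparison in $L$ whose leading terms on the left and right are $0.7 L^2 - 0.7 L \log_2 L$ and $\tfrac{1}{2}L^2$ respectively. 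The borderline case $n=324$, where the smallest admissible integer is $\ell=8$ (just above $0.7L + 1.4\approx 7.24$), is checked by direct computation to have a comfortable margin of about $7$ bits, and monotonicity of $C^p_\ell(n)$ in $\ell$ throughout the relevant range and in $n$ for fixed $\ell$ handles all larger values. The delicate part is that the constants $0.7$ and $1.4$ are tuned so that the Stirling estimate just barely succeeds at the boundary $n=324$, so one must carry out the asymptotic comparison carefully, making essential use of the integrality of $\ell$ and of the additive offset $1.4$.
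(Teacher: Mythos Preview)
Your approach matches the paper's through the application of Theorem~\ref{TBound1}: reduce to $\la_1=n-\ell$, use Theorem~\ref{TBound2} to get the crude bound $\ell\le (\log_2 n)^2+2\log_2 n$, check that this guarantees $n\ge p(\de_p+\ell-2)$, and then pit $C^p_\ell(n)$ against $n^{\frac12\log_2 n+1}$. The divergence is only in how the resulting numerical inequality is established.

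There is a genuine gap in your final step. The claim that ``monotonicity of $C^p_\ell(n)$ \ldots in $n$ for fixed $\ell$ handles all larger values'' does not close the argument: the target $n^{\frac12\log_2 n+1}$ also grows with $n$, and in fact the \emph{ratio} $C^p_\ell(n)\big/n^{\frac12\log_2 n+1}$ can decrease as $n$ grows for fixed $\ell$ (differentiate the logarithm at $(n,\ell,p)=(324,8,3)$: one gets $\tfrac{8\cdot 324}{300}-(\log_2 324+1)\approx -0.7<0$). So a single base-case check at $n=324$ plus monotonicity does not suffice; you would need a uniform estimate over all $n\ge 324$ at the moving threshold $\ell=\lceil 0.7\log_2 n+1.4\rceil$. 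Your Stirling sketch is in the right direction, but at the threshold the leading-order comparison $0.7L^2-0.7L\log_2 L$ versus $\tfrac12 L^2$ is actually negative for $L=\log_2 324\approx 8.34$, and the inequality is only rescued by the lower-order terms and the integrality jump---which you acknowledge is ``delicate'' but do not carry out.

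The paper avoids all of this with an iterative bootstrap. From the estimate $C^p_\ell(n)>\bigl(\tfrac{2(n+3)}{\ell}-6\bigr)^\ell$ (valid for $\ell\ge 6$) it extracts the implication: if $f(n,c):=\tfrac{2(n+3)}{n^{1/c}+6}\ge \ell$ then $\ell\le c\cdot(\tfrac12\log_2 n+1)$. It then applies this with $c=16,\,9,\,2.8,\,1.6,\,1.4$ in turn, each time only needing to verify the elementary one-variable inequality $f(n,c)>L_i(n)$ for $n\ge 324$, where $L_i(n)$ is the bound on $\ell$ produced at the previous stage. No Stirling, no integrality, no base case; the bound $n\ge 324$ is exactly what makes the $c=9$ step go through.
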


\begin{proof}
Set $L(n) := \frac{1}{2}\log_2 n +1$. 
We need to show that $\ell \leq 1.4L(n)$. As $1.4L(324)>7$, we may assume that $\ell>7$. 
Replacing $\la$ by $\la^\Mull$ if necessary, we 
may assume that $\la = (n-\ell,\mu)$ for a partition $\mu$ of $\ell$. By Theorem~\ref{TBound2} and the assumption, we now have 
$$2^{\ell/2} \leq \dim D^\la \leq n^{L(n)},$$
and so
\begin{equation}\label{eq-PD11}
  \ell \leq 2L(n)\log_2 n = (\log_2 n+2)\log_2 n =: L_1(n).
\end{equation}
As $n\geq 324$, we certainly have that $\ell \leq L_1(n) < \frac{1}{3}n+2$, whence $n\geq p(\de_p+\ell-2)$, and Theorem \ref{TBound1} applies to give
\begin{equation}\label{250219}
\dim D^\la\geq C^p_\ell(n) > \frac{(n+3- 3\ell)^\ell}{\ell !} > \biggl( \frac{2(n+3)}{\ell}-6 \biggr)^\ell,
\end{equation}
where we have used $\ell! < (\ell/2)^\ell$ for $\ell\geq 6$ to get the last inequality.

If $\ell > cL(n)$ for some $c> 0$, we get
$$n^{L(n)} > \biggl( \frac{2(n+3)}{\ell}-6 \biggr)^{cL(n)},$$
and so 
$$\ell > f(n,c):=\frac{2(n+3)}{n^{1/c}+6}.$$
We have therefore shown that  
\begin{equation}\label{eq-PD12}
  \mbox{If }f(n,c) \geq \ell\  \mbox{ for some }c > 0, \mbox{ then }\ell \leq cL(n).
\end{equation}  
We will use this implication repeatedly to prove $\ell \leq 1.4L(n)$.

First we take $c = 16$. By the assumption on $n$ and \eqref{eq-PD11}, $f(n,16) > L_1(n) \geq \ell$. Hence,  \eqref{eq-PD12} implies that 
$$\ell \leq L_2(n):=16L(n) = 8 \log_2 n +16.$$
Next we take $c = 9$ and note that $f(n,9) > L_2(n) \geq \ell$ for $n \geq 324$ (this is the only place where smaller $n$ would not work). Applying \eqref{eq-PD12}, we deduce that 
$$\ell \leq L_3(n):=9L(n) = 4.5 \log_2 n +9.$$
Now take $c = 2.8$ and note that $f(n,2.8) > L_3(n) \geq \ell$ for $n \geq 324$. Applying \eqref{eq-PD12}, we now obtain 
$$\ell \leq L_4(n):=2.8L(n) = 1.4 \log_2 n +2.8.$$
Next we take $c = 1.6$ and note that $f(n,1.6) > L_4(n) \geq \ell$ for $n \geq 324$. Using \eqref{eq-PD12}, we deduce that 
$$\ell \leq L_5(n):=1.6L(n) = 0.8 \log_2 n +1.6.$$
Finally, we take $c = 1.4$ and note that $f(n,1.4) > L_5(n) \geq \ell$ for $n \geq 324$. Again using \eqref{eq-PD12}, we conclude that 
$\ell \leq 1.4L(n)$, as stated.
\end{proof}



We now establish some dimension recognition results for modules in $\L^{(\ell)}(n)$ for small $\ell$. 


\begin{Lemma}\label{Lr1}
If $n \geq 17$ and $\dim E^{\mu}_{(\pm)} < (n^2-5n+2)/2$, then $\mu \in  \L^{(1)}(n)$.
\end{Lemma}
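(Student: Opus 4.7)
The plan is to proceed by contrapositive: assume $\mu \notin \L^{(1)}(n)$, so that $\max(\mu_1, \mu^{\Mull}_1) \leq n - 2$, and show that $\dim E^\mu_{(\pm)} \geq (n^2 - 5n + 2)/2$. Since $\dim D^\mu = \dim D^{\mu^{\Mull}}$, I may replace $\mu$ by $\mu^{\Mull}$ if necessary and assume $\mu = (n - \ell, \nu)$ with $\nu \vdash \ell$ and $\ell \geq 2$.

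Suppose first that $\mu \notin \Parinv_p(n)$, so $\dim E^\mu = \dim D^\mu$ and it suffices to show $\dim D^\mu \geq (n^2 - 5n + 2)/2$. For $\ell \in \{2, 3, 4\}$ this is immediate from Lemma \ref{LBound}: the $\ell = 2$ bound matches the threshold exactly, while the $\ell = 3, 4$ bounds are strictly larger for $n \geq 17$. For $\ell \geq 5$, Theorem \ref{TBound1} furnishes $\dim D^\mu \geq C_\ell^p(n)$ in the range $n \geq p(\de_p + \ell - 2)$, which comfortably dominates the threshold there, and Theorem \ref{TBound2}(i) covers the residual $p = 2$ cases via $\dim D^\mu \geq 2^\ell$.

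Now suppose $\mu \in \Parinv_p(n)$, so $\dim E^\mu_\pm = \dim D^\mu / 2$; here the required bound is the doubled one, $\dim D^\mu \geq n^2 - 5n + 2$. A direct inspection of Table I shows that for $n \geq 17$ neither $(n - 2, 2)$ nor $(n - 2, 1^2)$ lies in $\Parinv_p(n)$, so $\ell \geq 3$ automatically in this case. For $\ell = 3$, Lemma \ref{LBound} gives $\dim D^\mu \geq (n^3 - 9n^2 + 14n)/6$, which exceeds $n^2 - 5n + 2$ for $n \geq 17$, and $\ell = 4$ yields an even stronger estimate. For $\ell \geq 5$, I combine Lemma \ref{L4.3} (which forces $\ell \geq (n - p - 1)/2$ when $p > 2$, and $\ell \geq (n - 2)/2$ when $p = 2$) with Theorem \ref{TBound1} and Theorem \ref{TBound2} to close the bound.

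The main obstacle is expected to be a bookkeeping verification in this second case for mid-range $p$, where $C_\ell^p(n)$ in Theorem \ref{TBound1} becomes non-positive and Theorem \ref{TBound2}(ii) alone is too weak. In this residual regime one exploits the self-Mullineux constraint $\mu = \mu^{\Mull}$ (for $p > 2$), together with the first-part bound from Lemma \ref{L4.3}, to exclude the finitely many offending partition shapes directly.
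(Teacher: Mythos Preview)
The paper proves this lemma in one line by citing \cite[Lemma~6.1]{GT}, which directly establishes the gap between the smallest and second-smallest non-trivial degrees of irreducible $\F\AAA_n$-modules for $n \geq 17$.

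Your attempt to rebuild the result from the dimension bounds available in this paper is a natural idea, but there is a genuine gap in Case~1 for $\ell \geq 5$. You assert that $C_\ell^p(n)$ ``comfortably dominates the threshold'' whenever $n \geq p(\de_p + \ell - 2)$, but this is false: for $p \geq 3$ the product defining $C_\ell^p(n)$ contains the factor $n - (\ell - 1)p$, which is $\leq 0$ as soon as $\ell \geq n/p + 1$. For instance, $p = 3$, $n = 17$, $\ell = 7$ gives $C_7^3(17) < 0$ even though the hypothesis $n \geq 3(\ell-2) = 15$ of Theorem~\ref{TBound1} is met; for $p \geq 5$ the failure already occurs at $\ell = 5$. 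In that regime Theorem~\ref{TBound2} yields only $\dim D^\mu \geq 2 \cdot 3^{(\ell-2)/3}$ (or $2^{\ell/2}$), which for moderate $\ell$ is far below $(n^2 - 5n + 2)/2$. You patch only the residual $p = 2$ case, where $\ell > n/2$ makes $2^\ell$ large enough.

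The same issue recurs in Case~2: with $p = 3$, $n = 17$, Lemma~\ref{L4.3} forces $\ell \geq 7$, yet $C_7^3(17) < 0$ and Theorem~\ref{TBound2} gives at most $2 \cdot 3^{5/3} < 13$, nowhere near the required $n^2 - 5n + 2 = 206$. Your proposed fix (``exclude the finitely many offending partition shapes directly'') is not a proof: the offending shapes depend on both $n$ and $p$ with $p$ unbounded, and in Case~1 there is no self-Mullineux constraint to exploit at all. The result in \cite{GT} ultimately rests on sharper, $p$-independent degree bounds (going back to James) that are not recoverable from Theorems~\ref{TBound1} and~\ref{TBound2} alone.
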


\begin{proof}
The statement follows from \cite[Lemma 6.1]{GT}. 
\end{proof}

The following proposition extends \cite[Lemma 1.20]{BK}:

\begin{Proposition}\label{PBound12}
The following lower bounds hold.
\begin{enumerate}[\rm(i)]
\item Let $n\geq 13$, and assume in addition that $n\geq 23$ if $p=2$. 
Then for $\la\in\Par_p(n)$, we have either $\lambda \in  \L^{(2)}(n)$ or $$\dim D^\lambda \geq (n^3-9n^2+14n)/6.$$
\item Suppose that $p \geq 3$ and $n \geq 17$. Then for $\la\in\Par_p(n)$, we have either $\lambda \in  \L^{(3)}(n)$ or 
$$\dim D^\lambda \geq (n^4-14n^3+47n^2-34n)/24.$$
\end{enumerate}
\end{Proposition}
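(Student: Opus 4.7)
The argument for both parts proceeds by a case analysis on the ``defect'' $\ell := n - \max(\la_1, \la^\Mull_1)$. Since $\dim D^\la = \dim D^{\la^\Mull}$, we may replace $\la$ by $\la^\Mull$ if necessary to arrange $\la_1 \geq \la^\Mull_1$, and then write $\la = (n-\ell,\mu)$ with $\mu \in \Par_p(\ell)$. The hypothesis $\la \notin \L^{(\ell_0)}(n)$ (with $\ell_0 = 2$ in (i) and $\ell_0 = 3$ in (ii)) forces $\ell \geq \ell_0 + 1$.

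The base case $\ell = \ell_0 + 1$ is handled directly by Lemma \ref{LBound}: in (i), Lemma \ref{LBound}(iii) gives $\dim D^\la \geq (n^3-9n^2+14n)/6$, and in (ii), Lemma \ref{LBound}(iv) gives $\dim D^\la \geq (n^4-14n^3+47n^2-34n)/24$. For the case $\ell = \ell_0 + 2$ in (i), one applies Lemma \ref{LBound}(iv) and verifies the elementary polynomial inequality $n^3 - 18n^2 + 83n - 90 \geq 0$, which holds for $n \geq 13$.

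For the remaining range ($\ell \geq 5$ in both parts), we split into two sub-regimes. When $n \geq p(\de_p + \ell - 2)$, Theorem \ref{TBound1} furnishes $\dim D^\la \geq C^p_\ell(n)$, and one checks directly that $C^p_\ell(n)$ dominates the target throughout the relevant range, using that $C^p_\ell(n)/C^p_{\ell-1}(n) = (n - p(\de_p+\ell-1))/\ell$ controls how the bound evolves with $\ell$ for fixed $n$. When $n < p(\de_p+\ell-2)$ so that Theorem \ref{TBound1} is unavailable, $\ell$ is then necessarily large (exceeding roughly $n/p$), and we invoke Theorem \ref{TBound2}: observing that $\ell = n - \max(\la_1,\la^\Mull_1)$ coincides with the quantity $n-k$ appearing there, part (i) uses the universal bound $\dim D^\la \geq 2^{\ell/2}$, while part (ii) exploits the sharper odd-characteristic bound $\dim D^\la \geq 2\cdot 3^{(\ell-2)/3}$ of Theorem \ref{TBound2}(ii). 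In both cases the exponential dominates the polynomial target for $n$ above the stated threshold; the jump from $n \geq 13$ to $n \geq 23$ in (i) reflects precisely the weaker base $2$ available at $p=2$.

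The main delicacy will be calibrating the thresholds in the transition zone between Theorems \ref{TBound1} and \ref{TBound2}, where the gap between $C^p_\ell(n)$ and the target is tightest (e.g.\ around $\ell = 5$ for small $n$ in characteristic $3$). For the few boundary cases left over, one may exploit the additional constraint $\la^\Mull_1 \leq n - \ell_0 - 1$ to rule out partitions $\mu$ whose Mullineux image has a long first row, or verify the finitely many remaining cases directly using the known decomposition matrices.
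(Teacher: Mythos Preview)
Your approach is structurally different from the paper's, and it contains a genuine gap.

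The paper's proof is essentially a one-liner: for $\la \in \L^{(3)}(n) \smallsetminus \L^{(2)}(n)$ one quotes Lemma~\ref{LBound}, and for all $\la \notin \L^{(3)}(n)$ (with the single exception of the basic spin module when $p=2$) one invokes the bound $\dim D^\la \geq (n^4-14n^3+47n^2-34n)/24$ from M\"uller \cite[(6.2)]{Mu}, which is exactly the target in (ii) and is easily seen to exceed the target in (i) once $n \geq 13$. The basic spin case is then a direct check of $2^{\lfloor (n-1)/2 \rfloor}$ against the cubic for $n \geq 23$. You do not cite M\"uller's result, and instead try to assemble the bound from Theorems~\ref{TBound1} and~\ref{TBound2}. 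This does not work as stated.

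Your claim that ``$C^p_\ell(n)$ dominates the target throughout the relevant range'' is false already at the very start of that range. For part~(ii) with $p=3$, $n=17$, $\ell=5$ one computes $C^3_5(17) = 17 \cdot 14 \cdot 11 \cdot 8 \cdot 5/120 \approx 873$, while the target is $(17^4-14\cdot 17^3+47\cdot 17^2-34\cdot 17)/24 = 1156$; similar failures occur for $(n,\ell)$ equal to $(18,5)$, $(18,6)$, $(19,6)$, $(19,7)$, and so on. For part~(i) with $p=3$, $n=13$, $\ell=5$ one has $C^3_5(13) \approx 30$ against a target of $143$. In none of these cases does Theorem~\ref{TBound2} help: for $p \geq 3$ it yields only $2\cdot 3^{(\ell-2)/3}$, which at $\ell=5$ gives $6$ and at $\ell=7$ gives about $12$ --- indeed for $n=17$, $p=3$ the bound $2\cdot 3^{(\ell-2)/3}$ never reaches $1156$ for any admissible $\ell$. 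So the deficiency is not a narrow ``transition zone'' between the two theorems but a systematic shortfall for small $n$ in odd characteristic, encompassing essentially all $\ell \geq 5$. Your fallback of ``verifying the finitely many remaining cases'' would thus amount to checking decomposition matrices for a substantial range of $(p,n,\ell)$, which you have not done and which the paper avoids entirely via M\"uller's theorem.
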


\begin{proof}
By Lemma \ref{LBound}, if $\la \in \L^{(3)}(n) \smallsetminus \L^{(2)}(n)$, then 
$\dim D^\la \geq (n^3-9n^2+14n)/6$. Now assume that $\la \notin \L^{(3)}(n)$, and in addition
$D^\la$ is not basic spin if $p=2$. Then $\dim D^\la \geq (n^4-14n^3+47n^2-34n)/24$
by \cite[(6.2)]{Mu}. In the case where $D^\la$ is basic spin and $n \geq 23$, one can check directly that 
$\dim D^\lambda \geq (n^3-9n^2+14n)/6$.
\end{proof}

\begin{Remark}
The statement of Proposition \ref{PBound12}(i) does not hold for   $p=2$ and $n=22$, a counterexample given by the basic spin module $D^{\be_{22}}$. However,
a similar argument shows that for $n \geq 17$ we have either $\lambda \in  \L^{(2)}(n)$
or $\dim D^\lambda > 2(n^3-9n^2+14n)/25$.
\end{Remark}

%

\section{Order bounds and dimension bounds}
\label{SOD}

\subsection{Subgroups of large order}
First we extend Propositions 6.1 and 6.2 of \cite{KlW}, following mostly the arguments given therein.

\begin{Proposition}\label{PSimple}
Let $S < \AAA_n$ be a non-abelian simple subgroup such that 
\begin{equation}\label{bound1}
  |\Aut(S)| \geq 2^{n/2-4}.
\end{equation}  
Then one of the following happens:
\begin{enumerate}[\rm(i)]
\item $S \cong \AAA_m$ with $m < n$. Moreover, if $m \geq 12$, then $S$ is intransitive, and each of its orbits
on $\{1,2, \ldots, n\}$ has length $1$ or $m$.
\item $S \cong PSL_m(q)$ with $(m,q) = (2, \leq 37)$, $(3, \leq 5)$, $(4,3)$, $(5,2)$, or $(6,2)$.
\item $S \cong SU_3(3)$ or $SU_4(2) \cong PSp_4(3)$.
\item $S \cong Sp_6(2)$.
\item $S \cong M_{11}$, $M_{12}$, $M_{22}$, $M_{23}$, or $M_{24}$.
\end{enumerate}
\end{Proposition}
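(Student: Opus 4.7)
The plan is to apply the Classification of Finite Simple Groups to $S$, combining the hypothesis $|\Aut(S)|\geq 2^{n/2-4}$ with Lemma~\ref{LEasy}(i), which yields $n\geq P(S)$. This produces the master inequality
$$|\Aut(S)|\geq 2^{P(S)/2-4},$$
and the proof consists in enumerating all non-abelian simple $S$ for which this can hold, following largely the pattern of \cite{KlW} (whose Propositions~6.1 and 6.2 address slightly different bounds) but adjusted to the constant $n/2-4$ at hand.

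For $S\cong \AAA_m$, the strict inclusion $S<\AAA_n$ forces $m<n$. Assuming $m\geq 12$, I would show that every $S$-orbit on $\{1,\dots,n\}$ has length $1$ or $m$. This uses the well-known fact that the minimal faithful transitive action of $\AAA_m$ of degree different from $m$ has degree at least $\binom{m}{2}$; a non-standard orbit would therefore force $n\geq \binom{m}{2}$, and comparing $|\Aut(S)|=|\SSS_m|$ with $2^{\binom{m}{2}/2-4}$ shows that the master inequality fails for $m\geq 12$.

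For $S$ of Lie type of (untwisted) rank $r$ over $\F_q$, I would, for each classical and exceptional family, insert the standard lower bounds for $P(S)$ (taken from \cite{KlL}) and the routine upper bound $|\Aut(S)|\leq |S|\cdot d\cdot \log_p q$, where $d$ accounts for diagonal and graph automorphisms. In every family $P(S)$ grows like $q^{r-1}$ (or $q^r$) while $|\Aut(S)|$ is polynomial in $q$ of degree $O(r^2)$, so the master inequality reduces, for generic $(r,q)$, to the doubly-exponential failure $q^{O(r^2)}\geq 2^{q^{r-1}/2}$. This bounds both $r$ and $q$ by small explicit constants, and a finite enumeration of the surviving candidates then yields items (ii)--(iv). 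For sporadic $S$, direct inspection of the ATLAS entries, comparing $|\Aut(S)|$ with $2^{P(S)/2-4}$, leaves only the five Mathieu groups in case (v).

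The main obstacle I foresee is the low-rank, small-field corner of the Lie-type enumeration, where the master inequality becomes nearly tight and each candidate has to be checked individually against the exact values of $P(S)$ and $|\Aut(S)|$. Groups such as $PSL_2(q)$ for small $q$, $PSL_3(q)$ for $q\in\{2,3,4,5\}$, $PSL_m(2)$ for $m\leq 6$, and the unitary and symplectic groups in small dimensions over $\F_2,\F_3$ all sit in this zone; the precise shape of the list in (ii)--(iv) is exactly the output of that case-by-case verification.
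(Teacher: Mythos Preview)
Your proposal is correct and follows essentially the same approach as the paper: both apply CFSG, use the master inequality $|\Aut(S)|\geq 2^{P(S)/2-4}$ coming from Lemma~\ref{LEasy}(i), and then run through the alternating, Lie-type, and sporadic families using the $P(S)$ values from \cite{KlL} and \cite{Atlas}, with the $\AAA_m$ case handled via the $\binom{m}{2}$ bound on non-standard orbits. The paper additionally invokes the Landazuri--Seitz--Zalesskii bound on $d(S)$ for the exceptional groups (and as a fallback for some sporadics where $P(S)$ is not tabulated), but otherwise your outline matches the paper's proof.
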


\begin{proof}
(a) First we consider the case $S \cong \AAA_m$. Note that $m < n$ as $S \neq \AAA_n$.  Assume furthermore that
$m \geq 12$ and $S$ has an orbit of length $k \neq 1,m$ on $\{1,2, \ldots, n\}$. As in \cite{KlW}, it follows that 
$n \geq k \geq m(m-1)/2$. Now one can check that  $2^{m(m-1)/4-4} > m!$
for $m \geq 12$, a contradiction.

For the remaining cases, recall from Lemma~\ref{LEasy} that 
$$
  n \geq P(S) > d(S). 
$$

Now, if $S$ is one of the $26$ sporadic simple groups and not listed in (v), then using 
the exact value of $P(S)$ given in \cite{Atlas} (or of $d(S)$, if $P(S)$ was not listed therein) one can check that
\eqref{bound1} cannot hold.

\smallskip
(b) Assume now that $S$ is a classical group. Then $|\Aut(S)|$ and $P(S)$ are listed in Tables 5.1.A and  5.2.A of 
\cite{KlL}.

First suppose that $S = PSL_m(q)$. Then $|\Aut(S)| < q^{m^2}$. If $m \geq 4$ (and 
$S \not\cong \AAA_8$), then $P(S) = (q^m-1)/(q-1)$, and
one can check that \eqref{bound1} can hold only when $(m,q) = (4,3)$, $(5,2)$, or $(6,2)$. If $(m,q) = (3, \geq 7)$ or
$(2, \geq 41)$, then again $P(S) = (q^m-1)/(q-1)$ and one checks that \eqref{bound1} is violated. 

Next suppose that $S = PSU_m(q)$. Then we again have $|\Aut(S)| < q^{m^2}$. If $m \geq 5$, then $P(S) > q^{2m-3}$ and 
one checks that \eqref{bound1} cannot hold. If $(m,q) = (4, \geq 3)$ then $P(S) = (q+1)(q^3+1)$. 
If $m = 3$, then $P(S) = q^3+1$ for $q \geq 7$ or $q = 4$, and $50$ if $q=5$. In all these cases, \eqref{bound1} is violated. 

Suppose now that $S = PSp_{2m}(q)$ with $m \geq 2$, or $\Omega_{2m+1}(q)$ with $m \geq 3$. If $m \geq 3$, then 
$|\Aut(S)| < q^{m(2m+1)+1}/2$ whereas $P(S) \geq q^{m-1}(q^m-1)$, and so \eqref{bound1} can possibly hold
only when $(m,q) = (3,2)$. Similarly, if $(m,q) = (2, \geq 4)$, then $P(S) = (q^4-1)/(q-1)$, and \eqref{bound1} cannot hold.

Suppose $S = P\Omega^\pm_{2m}(q)$ with $m \geq 4$. If $m >4$ or if $S \not\cong P\Omega^+_8(q)$, then 
$|\Aut(S)| < q^{m(2m-1)+1}$ whereas $P(S) > q^{2m-2}$, and so \eqref{bound1} is impossible. Similarly, 
\eqref{bound1} rules out the remaining case $S = P\Omega^+_8(q)$. 

\smallskip
(c) Finally, assume that $S$ is an exceptional group of Lie type. The cases $S = F_4(2)$, $\twn2 F_4(2)'$, $\twn3 D_4(2)$, 
$G_2(3)$, $G_2(4)$, or $\twn2 B_2(8)$ can be ruled out directly using \cite{Atlas}. In all other cases, we can use the 
Landazuri-Seitz-Zalesskii lower bound on $d(S)$ as recorded in \cite[Table 5.3.A]{KlL} to check that 
\eqref{bound1} cannot hold.
\end{proof}

The following known lemma follows from the O'Nan-Scott theorem, see e.g. \cite{LPS}:

\begin{Lemma} \label{LAbSoc} 
Suppose $G < \SSS_n$ is a primitive subgroup with an abelian minimal normal subgroup $S$. 
Then $n = r^m$ is a power of some prime $r$, and $G$ is a subgroup of the affine group $AGL(V) = AGL_m(r)$ in its action on the points of $V = \F_r^m$. 
\end{Lemma}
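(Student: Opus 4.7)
The plan is to deduce this as a fairly standard consequence of primitivity together with the structure of an abelian minimal normal subgroup; no heavy machinery is needed, and in particular the full O'Nan--Scott theorem can be avoided.

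First I would show that $S$ acts transitively on $\Omega:=\{1,\dots,n\}$. Since $G$ is primitive, the orbits of any normal subgroup $N\trianglelefteq G$ form a system of blocks for $G$, so they are either all singletons or form a single block. The former would force $S\leq \bigcap_\alpha G_\alpha=1$, contradicting that $S$ is a nontrivial minimal normal subgroup. Hence $S$ is transitive on $\Omega$.

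Next I would show that the transitive action of $S$ is regular, using that $S$ is abelian. Fix $\alpha\in\Omega$ and let $S_\alpha$ be its stabilizer in $S$. For any $g\in G$, conjugation by $g$ sends $S_\alpha$ to $S_{g\alpha}$. But since $S$ is abelian, $S_{g\alpha}=gS_\alpha g^{-1}=S_\alpha$ (as every element of $S$ commutes with $S_\alpha$, so fixes $\alpha$ iff it fixes $g\alpha$; more cleanly, $S_\alpha$ is normal in $S$, so $S_\alpha$ equals the kernel of the action of $S$ on the $S$-orbit of $\alpha$, which is all of $\Omega$). Thus $S_\alpha$ is the kernel of the $S$-action on $\Omega$, hence trivial. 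Therefore $|S|=|\Omega|=n$, and the $S$-action identifies $\Omega$ with $S$ via $\alpha\mapsto s\alpha$.

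Now I would identify the structure of $S$. As a minimal normal subgroup of a finite group, $S$ is characteristically simple, so $S$ is a direct product of isomorphic simple groups; being abelian, these simple factors are all cyclic of some prime order $r$, and hence $S\cong(\Z/r\Z)^m$ with $n=|S|=r^m$. Set $V:=S$, viewed as an $\F_r$-vector space of dimension $m$.

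Finally, to embed $G$ into $AGL(V)$, I would use the regular action of $S$ on itself to identify $\Omega$ with $V$, with $S$ acting by translations. The conjugation action of $G$ on its normal subgroup $S$ gives a homomorphism $\varphi\colon G\to \Aut(S)=GL(V)$, with kernel $C_G(S)$. Because $S$ is self-centralizing in $G$ (if not, $C_G(S)$ would be a normal subgroup of $G$ strictly containing $S$, but its intersection with $G_\alpha$ would be normalized by $G_\alpha\cdot S=G$, forcing by primitivity either $C_G(S)\leq G_\alpha$ hence trivial intersection with translations, contradicting $C_G(S)\supsetneq S$, or the contradiction that $G_\alpha$ is normal; more directly, one checks as above that $C_G(S)$ has the same orbits as $S$ and acts regularly, giving $C_G(S)=S$), the map $\varphi$ has kernel exactly $S$, and $G/S$ embeds into $GL(V)$. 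Assembling the translation action of $S$ on $V$ with the linear action of $G/S$, one gets $G\hookrightarrow V\rtimes GL(V)=AGL(V)=AGL_m(r)$ compatibly with the action on $\Omega=V$, as claimed.

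The only slightly delicate point is showing $C_G(S)=S$; everything else is routine. I would phrase that step as above, or alternatively cite the standard fact that in a primitive group an abelian regular normal subgroup is self-centralizing.
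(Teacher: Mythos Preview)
Your proof is correct and self-contained, whereas the paper simply deduces the lemma from the O'Nan--Scott theorem (citing \cite{LPS}) without further argument. So the approaches differ: yours is elementary and explicit, the paper's is a one-line appeal to a much deeper structural theorem.

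One remark on presentation: your discussion of $C_G(S)=S$ is more tangled than necessary. Since $S$ is transitive and regular, $G=S\rtimes G_\alpha$ with $S\cap G_\alpha=1$. Under the identification $\Omega\cong S$ via $s\mapsto s\alpha$, each $h\in G_\alpha$ acts by $s\alpha\mapsto (hsh^{-1})\alpha$, so the permutation action of $G_\alpha$ on $\Omega$ is exactly its conjugation action on $S$, giving $G_\alpha\hookrightarrow \Aut(S)=GL(V)$ with kernel $C_{G_\alpha}(S)$; but any $h\in C_{G_\alpha}(S)$ then fixes every $s\alpha$, hence $h=1$. This immediately yields $G=S\rtimes G_\alpha\hookrightarrow V\rtimes GL(V)=AGL_m(r)$ compatibly with the action on $\Omega$, and $C_G(S)=S$ drops out as a byproduct rather than being needed as an input.
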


\begin{Proposition}\label{PPrim}
Let $G < \SSS_n$ be a primitive subgroup, not containing $\AAA_n$ and such that 
\begin{equation}\label{bound2}
  |G| \geq 2^{n/2-4}.
\end{equation}  
Then one of the following happens:
\begin{enumerate}[\rm(i)]
\item $\soc(G)$ is elementary abelian of order $n = r^k$, with $(r,k) = (2,\, {\leq{6}})$, $(3,\,{\leq{3}})$, or $(5,2)$.
\item $S \leq G \leq \Aut(S)$ for a non-abelian simple group $S$. Furthermore, either $S \cong \AAA_m$ with
$m \leq 11$, or $S$ satisfies Proposition \ref{PSimple}(ii)--(v).

\item $\soc(G) = S \times S$ for a non-abelian simple group $S \leq \AAA_a$, $5 \leq a \leq 9$, and $n = a^2$.
\end{enumerate}
\end{Proposition}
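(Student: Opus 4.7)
The plan is to apply the O'Nan--Scott theorem to the primitive subgroup $G \leq \SSS_n$ and match each resulting possibility against the exponential lower bound $|G| \geq 2^{n/2-4}$, deferring the heavy lifting to Lemma~\ref{LAbSoc} and Proposition~\ref{PSimple}. Since $\soc(G)$ is characteristically simple, it is either an elementary abelian $r$-group or a direct power $T^k$ of a non-abelian simple group $T$, and I will handle three cases accordingly.

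\smallskip
\textbf{Affine case.} If $\soc(G)$ is abelian, Lemma~\ref{LAbSoc} places $G \leq AGL_m(r)$ with $n = r^m$. I will combine the crude estimate $|AGL_m(r)| < r^{m(m+1)}$ with the hypothesis to obtain $m(m+1)\log_2 r \geq r^m/2 - 4$. The right-hand side grows exponentially in $m$ while the left-hand side is quadratic in $m$ (for fixed $r$), so only finitely many pairs $(r,m)$ survive, and a direct inspection of the candidates yields exactly the list in~(i).

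\smallskip
\textbf{Almost simple case.} If $\soc(G) = S$ is non-abelian simple, then $C_G(S) \cap S = Z(S) = 1$ and, $S$ being the unique minimal normal subgroup, $C_G(S) = 1$, so $S \trianglelefteq G \leq \Aut(S)$. The bound $|\Aut(S)| \geq |G| \geq 2^{n/2-4}$ therefore places $S$ on the list of Proposition~\ref{PSimple}. It remains to eliminate the possibility $S \cong \AAA_m$ with $m \geq 12$: primitivity of $G$ forces $S$ to be transitive on $\{1,\ldots,n\}$ (otherwise the orbits of the normal subgroup $S$ form a non-trivial $G$-invariant partition), but Proposition~\ref{PSimple}(i) asserts that for $m \geq 12$ such an $S$ must be intransitive, a contradiction. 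Hence $m \leq 11$, giving~(ii).

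\smallskip
\textbf{Product-power case.} Suppose $\soc(G) = T^k$ with $T$ non-abelian simple and $k \geq 2$. By the O'Nan--Scott theorem I subdivide into product action, simple or compound diagonal action, and twisted wreath product action. For diagonal and twisted wreath actions one has $n \geq |T|^{k-1} \geq 60$, while $|\Out(T)|$ is bounded in $|T|$ (using the consequence of {\sf CFSG} that resolves the Schreier conjecture), so the bound $|G| \leq |T|^k |\Out(T)| k!$ is only polynomial in $n$ and cannot match $2^{n/2-4}$; the tightest boundary case $|T|=60$, $k=2$ fails by a direct numerical check. In the product action $G \leq L \wr \SSS_k$ with $L \leq \Aut(T)$ almost simple primitive of degree $a$ and $n = a^k$, so $|G| \leq |\Aut(T)|^k k!$. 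For $k \geq 3$ and $a \geq 5$ this is vastly smaller than $2^{a^k/2-4}$. For $k = 2$, the faithful action of simple $T$ on $a$ points embeds $T$ in $\AAA_a$, and the inequality $2|\Aut(T)|^2 \geq 2^{a^2/2-4}$, combined with $|\Aut(T)| \leq 2 \cdot a!$, forces $a \leq 9$ after a direct numerical check on small $a$, yielding~(iii).

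\smallskip
The main obstacle will be the sharp bookkeeping in the product-action case: the inequality is numerically tight near $a = 9$, so one must check $a \in \{9, 10\}$ carefully (with $T = \AAA_{10}$ genuinely failing) to obtain the stated bound $a \leq 9$. The affine analysis, while mechanical, requires similar care at the boundary pairs $(r,m) = (2,6)$, $(3,3)$, $(5,2)$, and one should verify that each surviving pair can in fact be realized by a primitive subgroup satisfying the hypothesis.
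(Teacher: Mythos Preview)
Your approach is essentially identical to the paper's: both apply the O'Nan--Scott theorem and dispatch the affine, almost simple, and product-type cases by the same order-versus-degree comparisons, invoking Proposition~\ref{PSimple} for the almost simple case. Two minor points worth tightening: in the product-action case the paper bounds $|G| \leq (a!)^k\,k!$ directly from $G \leq \SSS_a \wr \SSS_k$, which avoids your detour through $|\Aut(T)| \leq 2\cdot a!$ (a bound that is not obviously valid in general and is unnecessary here); and for the diagonal and twisted-wreath types the relevant input is $|\Out(T)| < |T|$ (a consequence of the classification), not the Schreier conjecture, which only asserts solvability of $\Out(T)$.
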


\begin{proof}
We apply the O'Nan-Scott theorem in the version given in \cite{LPS}. First suppose that $\soc(G) \cong \CCC_r^k$,
so that $n = r^k$ for a prime $r$. Then Lemma \ref{LAbSoc} shows that 
$G \leq AGL_k(r)$ and so $|G| < r^{k^2+k}$. A direct computation shows that \eqref{bound2} can hold only in the cases listed in
(i).

Next assume that $\soc(G) = S$ is non-abelian simple. Then $S \leq G \leq \Aut(S)$ and $S$ is transitive on 
$\{1,2,\ldots,n\}$. Now we can apply Proposition \ref{PSimple} to arrive at (ii). 

In the remaining cases, $\soc(G) = S^k$ for a non-abelian simple group $S$ and $k \geq 2$, and $G$ is of type 
III(a), III(b), or III(c) in the notation of \cite{LPS}. Suppose $G$ is of type III(b), so that $n = a^b$ with $a \geq 5$,
$b \geq 2$, and $G \leq \SSS_a \wr \SSS_b$. In this case, $b \leq \log_5 n$ and 
$$(a!)^b\cdot b! \leq (a^a)^bb^b \leq n^ab^b \leq n^{\sqrt{n}}\cdot (\log_5 n)^{\log_5 n}.$$
Now if $n \geq 318$, then
$$2^{n/2-4} >   n^{\sqrt{n}}\cdot (\log_5 n)^{\log_5 n},$$
violating \eqref{bound2}. The cases where $n = a^b \leq 317$ can now be checked directly to show
that $b = 2$ and $5 \leq a \leq 9$. This implies that $k = 2$, $S \leq \AAA_a$, and we arrive at (iii).

Suppose $G$ is of type III(a). Then $n = |S|^{k-1}$ and $G \leq S^k \cdot (\SSS_k \times \Out(S))$. Since $|S| \geq 60$,
we can check that
\begin{equation}\label{for-bound2}
  2^{|S|^{k-1}/2-4} > |S|^{k+1}\cdot k!.
\end{equation} 
As $|\Out(S)| < |S|$, \eqref{bound2} cannot hold. 

Finally, assume that $G$ is of type III(c). Then $n = |S|^k$ and 
$$G \leq \Aut(S^k) \cong S^k \cdot (\Out(S)^k \rtimes \SSS_k).$$
Now \eqref{for-bound2} implies that
$$2^{n/2-4} \geq 2^{|S|^k/2-4}  > (|S|^{k+1}\cdot k!)^2 > |S|^{2k} \cdot k! > |G|,$$
a contradiction.
\end{proof}

\subsection{Irreducible restrictions for some special modules and groups}

Now we prove main results of this section. 

\begin{Theorem}\label{ext-spin}
Let $p = 2$ or $3$, $H=\AAA_n$ or  $\SSS_n$ with $n \geq 5$, and $V$ be an irreducible $\F H$-module.
Let $G < H$ be a primitive subgroup not containing $\AAA_n$, with $S :=\soc(G)$, such that $V\dar_G$ is irreducible. 
Assume in addition that either $V$ is a basic spin module in characteristic $2$, or $H=\AAA_n$ and $V$ does not extend to $\SSS_n$. Then one of 
the following happens:
\begin{enumerate}[\rm(i)]
\item $S$ is elementary abelian of order $n = r^k$, with $(r,k) = (2, {\leq{6}})$, $(3,{\leq{3}})$, or $(5,2)$.
\item $S \in \{M_{11}, M_{12},M_{22}, M_{23}, M_{24}\}$, and $G$ is doubly transitive.
\item $H = \AAA_9$, $p = 2$, $SL_2(8) \unlhd G \leq SL_2(8) \rtimes \CCC_3$, $V$ is the basic spin module of dimension $8$
whose Brauer character takes value $-1$ on elements of order $9$ of $S$.
\item $n=10$, $p=2$, $S \cong \AAA_6$, $G \not\leq PGU_2(9)$, $V$ is basic spin of dimension $16$. 
\item $H = \AAA_6$, $p=3$, $G \cong \AAA_5$, $\dim V = 3$. 
\item $n=6$, $p=2$, $S \cong \AAA_5$, $V$ is basic spin of dimension $4$.
\end{enumerate}
Moreover, in the cases described in {\rm (iii)-(vi)} the restriction $V\da_G$ is indeed irreducible. 
\end{Theorem}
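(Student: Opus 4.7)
The plan is to combine the explicit dimension lower bounds of Theorem~\ref{TBound2} with the elementary order inequality $|G|\ge (\dim V)^2$ (valid since $V\da_G$ is irreducible), feed the resulting order bound into Proposition~\ref{PPrim}, and then analyze each of the resulting families of primitive subgroups.

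First I would establish a lower bound on $\dim V$. If $V$ is basic spin in characteristic $2$ then $\dim V\ge 2^{(n-3)/2}$ directly from the explicit formula in Table~I. If instead $H=\AAA_n$ and $V=E^\la_{\pm}$ does not extend to $\SSS_n$, then $\la\in\Parinv_p(n)$, so Lemma~\ref{L4.3} gives $\la_1\le (n+p+1)/2$, and hence Theorem~\ref{TBound2} yields
\[
\dim V \;=\; \tfrac{1}{2}\dim D^\la \;\geq\; \tfrac{1}{2}\cdot 2^{(n-\la_1)/2}
\]
for $p=2$, and the analogous power of $3$ bound for $p=3$. In either scenario, $|G|\ge (\dim V)^2 \ge 2^{n/2-4}$ once $n$ is larger than a small explicit constant; the finitely many small $n$ that escape this inequality are handled directly, using the classification of primitive subgroups of $\SSS_n$ together with the $p$-modular character tables of $H$ in \cite{Atlas} and \cite{GAP}.

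Next, Proposition~\ref{PPrim} reduces us to three cases. Case (i) of Proposition~\ref{PPrim}, where $\soc(G)$ is elementary abelian, is precisely conclusion (i) of the theorem. Case (iii), with $\soc(G)=S\times S$ and $n=a^2\leq 81$, is ruled out by contrasting $\dim V$ with the crude upper bound $\bmax_p(S)^2$ on the dimensions of irreducible $\F(S\times S)$-modules. For the almost simple case (ii), Proposition~\ref{PSimple} leaves a short list of candidates. The alternating-type subcase $S\cong\AAA_m$ is controlled by Proposition~\ref{PSimple}(i): for $m\ge 12$ the socle $S$ is intransitive, contradicting primitivity of $G$, so $m\le 11$; only finitely many configurations remain and each is analyzed using the Brauer character tables. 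For each remaining Lie-type or sporadic socle, I would compare $\dim V$ with $\bmax_p(G)$ and, when needed, check agreement at the level of entire Brauer character restrictions; this eliminates all but the listed Mathieu groups (conclusion (ii)), the subgroups of $SL_2(8)\rtimes \CCC_3$ inside $\AAA_9$ (conclusion (iii)), $\AAA_6$ inside $\AAA_{10}$ (conclusion (iv)), $\AAA_5$ in $\AAA_6$ for $p=3$ (conclusion (v)), and $\AAA_5$ in $\AAA_6$ for $p=2$ (conclusion (vi)).

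The ``moreover'' claim is checked by direct computation with the known $p$-modular character data of the finitely many surviving $(G,n,V)$. The main obstacle is the socle $S\cong\AAA_m$ subcase: we must combine the orbit information from Proposition~\ref{PSimple}(i) with primitivity of $G$ and with Clifford theory for the extension $\AAA_n\le \SSS_n$ in order to decide, case by case, both whether $V\da_G$ is irreducible and whether the two $\AAA_n$-constituents $E^\la_{\pm}$ of $D^\la\da_{\AAA_n}$ (in the non-extending scenario) are distinguished by $G$; the characterisation of the exceptional $SL_2(8)$-orbit in conclusion (iii), flagged via the value $-1$ on elements of order $9$, arises here.
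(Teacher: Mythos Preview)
Your overall architecture matches the paper's proof exactly: use Theorem~\ref{TBound2} and Lemma~\ref{L4.3} to get $|G|\ge(\dim V)^2\ge 2^{n/2-4}$, feed into Proposition~\ref{PPrim}, and then do a case-by-case analysis of the surviving socles.

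Two points where your sketch is thinner than what the argument actually requires. First, the exponential bound from Theorem~\ref{TBound2} alone is too weak for several of the almost-simple and product-type cases; the paper also uses the polynomial lower bounds of Lemma~\ref{Lr1} and Proposition~\ref{PBound12}(i) (namely $\dim V\ge (n^2-5n+2)/2$ for $n\ge 17$ and $\dim V\ge (n^3-9n^2+14n)/12$ for $n\ge 23$). For instance, with $S=Sp_6(2)$ one has $n\ge 28$ and $\bmax(G)=512$, and the exponential bound at $p=3$ gives only $\dim V\ge 2^5$, which does not suffice; the cubic bound is what forces the contradiction. The same occurs for the product-type case $n=a^2$ with $a=5,6$. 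Second, in Proposition~\ref{PPrim}(iii) the correct upper bound is $\dim V\le 2\,\bmax_p(\Aut(S))^2$, since $G\le \Aut(S)^2\rtimes\CCC_2$; your $\bmax_p(S)^2$ undercounts both the outer automorphisms of $S$ and the wreathing $\CCC_2$.
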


\begin{proof}
(a) Applying Lemma \ref{Lr1} and Proposition \ref{PBound12}(i) we deduce
\begin{equation}\label{eq-ext2}
  \dim V \geq \left\{ \begin{array}{ll}(n^2-5n+2)/2,& \mbox{ if }n \geq 17,\\ 
  (n^3-9n^2+14n)/12, & \mbox{ if }n \geq 23. \end{array} \right.
\end{equation}

\smallskip
(b) Let $\la$ be a $p$-regular partition of $n$ such that $V$ is an irreducible constituent of $D^\la\da_H$. Note that $\la=\la^\Mull$ if $p=3$. 
If $p=2$ and $V$ is basic spin then $\dim V \geq 2^{(n-4)/2}$. If $V$ does not extend to $\SSS_n$, then by Lemma~\ref{L4.3},
$$
\la_1 \leq
\left\{
\begin{array}{ll}
(n+2)/2 &\hbox{if $p=2$,}\\
(n+4)/2 &\hbox{if $p=3$.}
\end{array}
\right.
$$ 
By Theorems~\ref{TBound2} and \ref{TBound2}, we have in all cases
\begin{equation}\label{eq-ext1}
  \dim V \geq \left\{ \begin{array}{ll} 2^{(n-8)/4}, & \mbox{ if }p=3,\\
  2^{(n-4)/2}, & \mbox{ if }p=2. \end{array} \right.
\end{equation}
Since $V\dar_G$ is irreducible, it follows that $|G| > \dim(V)^2 \geq 2^{n/2-4}$, and so one of the conclusions of  Proposition \ref{PPrim} must hold. The case (i) of Proposition~\ref{PPrim} leads to the exception (i) of the theorem. 

\smallskip
(c) Suppose we are in the case (iii) of Proposition \ref{PPrim}. As mentioned in the proof of Proposition \ref{PPrim}, we have that 
$G \leq \Aut(S^2) \cong \Aut(S)^2 \rtimes \CCC_2$, and so 
$$\dim V \leq \bmax_p(\Aut(S)^2 \rtimes \CCC_2) \leq 2\bmax_p(\Aut(S))^2.$$
Checking $\bmax_p(\Aut(S))$ using \cite{GAP}, we see that
$$\dim V \leq \left\{ \begin{array}{ll} 2\cdot 189^2, & a=9,\\
    2\cdot 80^2, & a=8,\\
    2\cdot 20^2, & a=7,\\
    2\cdot 16^2, & a=6,\\
    2\cdot 6^2, & a=5. \end{array} \right.$$
As $n = a^2$, this contradicts \eqref{eq-ext1} when $7 \leq a \leq 9$ and \eqref{eq-ext2} when $a = 5,6$.
    
\smallskip    
(d) Finally, we consider the case (ii) of  Proposition \ref{PPrim}, so that $S \unlhd G \leq \Aut(S)$, and either $S \cong \AAA_m$ with
$m \leq 11$, or $S$ satisfies Proposition \ref{PSimple}(ii)--(v).
As $G$ is primitive, $S = \soc(G)$ is transitive on 
$\{1,2, \ldots, n\}$. Also $n \geq P(S)$ by Lemma~\ref{LEasy}.

(d1) Assume $S = Sp_6(2)$, so that $G=S$ and $n \geq P(S) = 28$ \cite{Atlas}. On the other hand, $\dim V \leq \bmax(G) = 512$, contradicting \eqref{eq-ext2}.

(d2) $S = SU_3(3)$. The argument is similar to (d1) but using 
$P(S) = 28$ and $\bmax(G) \leq 64$. 

(d3) $S = SU_4(2)$. The argument is similar to (d1) but using  $P(S) = 27$ and $\bmax(G) \leq 80$. 

(d4) Suppose $S = PSL_2(q)$ with $q = r^f\leq 37$ for a prime $r$, so that $\Aut(S) \cong PGL_2(q) \rtimes \CCC_f$. 

(d4.1) If $q \geq 16$, then $n \geq P(S) = q+1 \geq 17$, see \cite[Table 5.2.A]{KlL}, and by \cite{Lu1} we have 
$$\dim V \leq \bmax(G) \leq f\bmax(PGL_2(q)) =f(q+1) \leq q(q+1)/4 < (n^2-5n+2)/2,$$
which contradicts \eqref{eq-ext2}. 

(d4.2) If $q = 13$ (resp. $11$) then $n \geq 14$ (resp. $n \geq 11$), and
$\dim V \leq \bmax(G) = q+1$. In any of these two cases, we see that $n = 14$ (resp. $n \in \{11,12\}$). Furthermore,
since $V$ has dimension $\leq q+1$, it extends to $\SSS_n$ (see \cite{GAP}) and is not basic spin, a contradiction.

(d4.3) Suppose $S = PSL_2(9) \cong \AAA_6$. As $S \neq \AAA_n$, we have $n \in \{10,15\}$ or $n \geq 20$, and $\dim V \leq \bmax_p(\Aut(S)) \leq 16$, 
see \cite{Atlas} and \cite{GAP}. It follows from \eqref{eq-ext2} that $n \in \{10,15\}$. In this case, any irreducible $\F \AAA_n$-module  of dimension $\leq 16$ extends to 
$\SSS_n$, a contradiction. On the other hand, the basic spin modules of $H$ are of dimension $16$, and their Brauer characters take value $-2$ 
at elements $x \in G$ of order $3$ and value $1$ at elements $y \in G$ of order $5$ (see \cite{GAP}), hence they are irreducible over $G$ whenever
$G \not\leq S \cdot 2_2 \cong PGU_2(9)$, leading to the exception (iv).

(d4.4) Suppose $S = SL_2(8)$. Here, $n = 9$ or $n \geq 28$, and $\dim V \leq \bmax(G) \leq 27$. It follows from \eqref{eq-ext2} that
$n = 9$. In this case, the only irreducible $\F\AAA_9$-modules 
of dimension $\leq 28$ that do not extend to $\SSS_9$ are the two $2$-modular basic spin modules
of dimension $8$, and one can check that exactly one of them is irreducible over $G$ (namely the one whose Brauer character takes value 
$-1$ on elements of order $9$ in $S$), leading to the exception (iii).

(d4.5) Suppose $S = PSL_2(7) \cong SL_3(2)$. Here, $n \in \{7,8,14\}$ or $n \geq 21$, and $\dim V \leq \bmax(G) \leq 8$. It follows from \eqref{eq-ext2} that
$n \in \{7,8\}$. In these cases, the only irreducible $\F \AAA_n$-modules of dimension $\leq 8$ that do not extend to $\SSS_n$ are the $2$-modular basic spin modules of dimension $4$, which restrict reducibly to $G$. Likewise, the basic spin modules of $\SSS_n$ are reducible over $G$ (as can be seen by checking the 
value of the Brauer characters at elements of order $3$ in $G$, see \cite{GAP}).

(d4.6) Suppose $S = PSL_2(5) \cong \AAA_5$. As $S \neq \AAA_n$, we have $n \in \{6,10\}$ or $n \geq 15$, and $\dim V \leq \bmax(G) \leq 6$, 
see \cite{Atlas}. It follows that $n = 6$. In this case, the irreducible $\F\AAA_6$-modules of dimension $\leq 6$ that do not extend to 
$\SSS_6$ are the $3$-modular modules of dimension $3$, and they restrict irreducibly to $G$, yielding the exception (v). Next, the basic spin modules of $H=\AAA_6$ or $\SSS_6$ are of dimension $4$, and their Brauer character takes value $1$ at elements of order $3$ in $G$, whence they are irreducible over $G$, leading to the exception (vi).

(d5) Suppose $S = SL_3(q)$ with $q\leq 5$. The case $q=2$ is treated in (d4.5).

(d5.1) If $q=5$, then $n \geq P(S) = 31$ and 
$$\dim V \leq \bmax(G) \leq 310 < (n^2-5n+2)/2,$$ 
contradicting \eqref{eq-ext2}.

(d5.2) If $q=4$, then $n \geq P(S) = 21$ and 
$$\dim V \leq \bmax(G) \leq \bmax(PGL_3(4) \cdot \CCC_2^2) \leq 420 < (n^3-9n^2+14n)/12.$$
This contradicts \eqref{eq-ext2} unless $n \leq 22$. As $n$ divides $|S|$, we conclude that $n = 21$, whence
$G \leq PGL_3(4) \rtimes \CCC_2$ and so $\bmax(G) \leq 128 < (n^2-5n+2)/2$, again contradicting \eqref{eq-ext2}.

(d5.3) If $q=3$, then $n \geq P(S) = 13$, and 
$$\dim V \leq \bmax(G) \leq 52 < (n^2-5n+2)/2.$$
This contradicts \eqref{eq-ext2} unless $n \leq 16$. Inspecting the list of maximal subgroups of $S$ \cite{Atlas}, we conclude that $n = 13$, whence
$G = S$ and $\dim V \leq \bmax_p(S) \leq 27$. But then $V$ extends to $\SSS_{13}$ and is not basic spin.

(d6) Suppose $S = PSL_4(3)$. Then $n \geq P(S) = 40$, and  
$$\dim V \leq \bmax(\Aut(S)) \leq 4\bmax(S) = 4160 <  (n^3-9n^2+14n)/12,$$
violating \eqref{eq-ext2}.

(d7) Suppose $S = SL_5(2)$. Then $n \geq P(S) = 31$, and  
$$\dim V \leq \bmax_p(\Aut(S)) \leq 1024 <  (n^3-9n^2+14n)/12,$$
violating \eqref{eq-ext2}.

(d8) Suppose $S = SL_6(2)$. Then $P(S) = 63$ \cite[Table 5.2.A]{KlL} and $\bmax(G) \leq 66960$ \cite{GAP}. Hence $n \leq 72$ by \eqref{eq-ext1}.
Note that if $K$ is a proper subgroup of $S$, then either $[S:K] \geq 651$, or $K$ is contained in a maximal subgroup $M \cong \CCC_2^5 \rtimes SL_5(2)$ of index 
$63$ in $S$, see \cite{GAP}. Hence, if we take $K = \operatorname{Stab}_S(1)$ in the action of $S$ on $\{1,2, \ldots,n\}$ (so that $[S:K] = n$), then we must have that
$n = [S:M] = 63$, and in fact $S$ acts on $n$ points $\{1,2, \ldots,n\}$ via its action on $63$ lines or $63$ hyperplanes of 
$\F_2^6$. None of these actions can be extended to $\Aut(S)$, so we have that $G = S$. If $p=2$,
then \eqref{eq-ext1} implies that $\dim V > 2^{29} > \bmax(G)$, a contradiction. Suppose 
$p=3$. Then $\la_1 \leq (n+4)/2$ by Lemma~\ref{L4.3}, whence $\la_1 \leq 33$. 
On the other hand, Proposition \ref{PBound12}(ii) implies that $\la \in \L^{(3)}(n)$, and so $\la_1 \geq 60$, a 
contradiction.

\smallskip
(e) Suppose that $S\cong \AAA_m$ with $m\leq 11$. The case $m=5$ and $6$ are considered in (d4.6) and (d4.3), respectively. 

\smallskip
(e1) Let $m=11$. As $S \neq \AAA_n$, we have $n \geq 55$ and 
$\dim V \leq \bmax(\SSS_{11}) = 2310$, see \cite{Atlas}. This contradicts \eqref{eq-ext2}.

(e2) Let  $m=10$. This case is treated similarly to (e1) observing that $n \geq 45$ and 
$\dim V \leq \bmax(\SSS_{10}) = 768$.

(e3) Let  $m=9$. This case is treated similarly to (e1) observing that $n \geq 36$ and 
$\dim V \leq \bmax(\SSS_9) = 216$.

(e4) Let $m=8$. As $S \neq \AAA_n$, we have $n = 15$ or $n \geq 28$, and 
$\dim V \leq \bmax(\SSS_8) = 90$, see \cite{Atlas}. It follows from \eqref{eq-ext2} that $n = 15$ and $G=S$. Now, 
the only irreducible $\F\AAA_{15}$-modules of dimension $\leq 90$ that does not extend to $\SSS_{15}$ are the two basic spin modules of dimension $64$. If $\varphi$ is the Brauer character of any of these two modules and $g \in S$ is a $3$-cycle, then $g$ becomes a
disjoint product of five $3$-cycles in the doubly transitive embedding $S \hookrightarrow \AAA_{15}$, and so $\varphi(g) = -2$. However,
the unique irreducible $2$-Brauer character of $S$ of degree $64$ takes value $4$ at $g$, and so $\varphi\da_G$ is reducible. 

(e4) Let $m=7$. As $S \neq \AAA_n$, we have $n = 15$ or $n \geq 21$, and 
$\dim V \leq \bmax(\SSS_7) = 35$, see \cite{Atlas}. It follows from \eqref{eq-ext2} that $n = 15$. Now, all 
irreducible $\F\AAA_{15}$-modules of dimension $\leq 35$ extend to $\SSS_{15}$ and are not basic spin. 

\smallskip
(f) Let $S$ be a Mathieu group. Suppose that the conclusion (ii) of the current theorem does {\it not} hold.
Then, if $S = M_{24}$, we have by \cite{Atlas} that $n \geq 276$ and $\dim V \leq \bmax(G) = 10395$, contradicting
\eqref{eq-ext2}. If $S = M_{23}$, we have by \cite{Atlas} that $n \geq 253$ and $\dim V \leq \bmax(G) = 2024$, again contradicting
\eqref{eq-ext2}. The same argument applies to $S = M_{22}$, where we have  $n \geq 77$ and 
$\dim V \leq \bmax(\Aut(S)) = 560$, to $S = M_{12}$, for which we have $n \geq 66$ and 
$\dim V \leq \bmax(\Aut(S)) = 176$, and to $S = M_{11}$, for which we have $n \geq 55$ and 
$\dim V \leq \bmax(G) = 55$.
\end{proof}

Note that cases (i) and (ii) of Theorem \ref{ext-spin} will be settled in Theorem \ref{TAGL} and Theorem \ref{T2small}.

\begin{Proposition}\label{PSmall}
Let 
$p=2$ or $3$, $\la\in\Par_p(n)$, $H = \AAA_n$ or $\SSS_n$, and $V$ be an irreducible constituent of $D^\la\da_H$, with $\dim V>1$. 
Suppose that $G < H$ is a doubly transitive subgroup such that  $S := \soc(G) < \AAA_n$ is one of the following simple groups:
\begin{enumerate}[\rm(a)]
\item $\AAA_m$ with $5 \leq m \leq 7$;
\item $PSL_2(q)$ with $7 \leq q \leq 9$;
\item $PSL_3(q)$ with $2 \leq q \leq 4$;
\item $SL_4(2) \cong \AAA_8$.
\end{enumerate}
Then $V\da_G$ is irreducible if and only if one of the following happens:
\begin{enumerate}[\rm(i)]
\item $\la \in \L^{(1)}(n)$; furthermore, $(G,n,p)$ fulfills the conditions set in Table II.
\item $S \cong \AAA_5 \cong SL_2(4) \cong PSL_2(5)$, $n = 6$, $p=3$, $\la = \la^\Mull = (4,1^2)$, and $(G,H,\dim V) = (\AAA_5,\AAA_6,3)$ or $(\SSS_5,\SSS_6,6)$.
\item $S \cong \AAA_6 \cong PSL_2(9)$, $n = 10$, $p=2$, $V = D^{(6,4)}\da_H$ of dimension $16$, and $G \leq \Aut(S)$ but $G \not\leq PSU_2(9) = S \cdot 2_2$.
\item $S = SL_2(8)$, $G = S$ or $G=\Aut(S) \cong P\Gamma L_2(8)$, $H = \AAA_9$, $p=2$, $\dim V = 8$, and $V$ is the
only one of $E^{\be_9}_{\pm}$ whose Brauer character takes value $-1$ at elements of order $9$ in $SL_2(8)$.
\item $S = SL_2(8)$, $G = \Aut(S) \cong P\Gamma L_2(8)$, $n=9$, $p=3$, $\dim V = 27$, and $\la$ or $\la^\Mull$ equals $(7,2)$.
\end{enumerate}
\end{Proposition}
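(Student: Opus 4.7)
Each simple group $S$ in the hypothesis admits only finitely many doubly transitive actions, all of degree $n \leq 21$ (for instance, $\AAA_m$ acts on $m$ or on $\binom{m}{2}$ points for $5\le m \le 7$; $PSL_2(q)$ acts on $q+1$ points for $q=7,8,9$; $PSL_3(q)$ acts on $(q^3-1)/(q-1)$ points for $q=2,3,4$; and $SL_4(2)\cong \AAA_8$ acts on $8$ or $15$ points). Since $S \trianglelefteq G \leq \Aut(S)$, this gives a short finite list of triples $(G,n)$. For each such $(G,n)$ we record the upper bound $\dim V \leq \bmax_p(G)$, which by \cite{GAP} or \cite{Atlas} is at most a few hundred in every case.

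Next, I will combine these upper bounds with the lower bounds on $\dim D^\la$ to constrain $\la$ drastically. By Lemma \ref{Lr1}, if $n\ge 17$ and $\la \notin \L^{(1)}(n)$ then $\dim V \geq (n^2-5n+2)/2$; by Proposition \ref{PBound12}(i), if moreover $\la \notin \L^{(2)}(n)$ then $\dim V \geq (n^3-9n^2+14n)/6$; and Theorem \ref{TBound2} yields exponential lower bounds which, together with Lemma \ref{L4.3}, control $\la_1$ and $\la_1^\Mull$. For $n \leq 16$, the analogous lower bounds from Lemma \ref{LBound} and the explicit dimension tables of James (together with Lemma \ref{LL2}) suffice. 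In every case this leaves only a handful of candidates for $\la$.

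The residual candidates split into three groups. First, if $\la\in\L^{(1)}(n)$, then $V\da_G$ is the restriction of (a constituent of) the natural module $D^{(n-1,1)}\da_G$, and irreducibility is governed precisely by Table II, producing conclusion (i). Second, if $p=2$ and $V$ is a basic spin module, then Theorem \ref{ext-spin} applies directly and yields the exceptions (iii) and (iv) for $\AAA_6$ on $10$ points and $SL_2(8)$ on $9$ points; note that only one of $E^{\be_9}_\pm$ survives in case (iv) because the Brauer characters of the two differ in sign at elements of order $9$ in $SL_2(8)$, and only one matches an irreducible Brauer character of $G$. Third, for the remaining non-spin candidates in $\L^{(2)}(n)\cup \L^{(3)}(n)$, irreducibility of $V\da_G$ is verified by directly computing and comparing Brauer characters of $D^\la\da_G$ against $\IBr_p(G)$ using \cite{GAP}. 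This procedure isolates the sporadic exceptions (ii) ($S=\AAA_5$ on $6$ points with $\la=(4,1^2)$ in characteristic $3$) and (v) ($S=SL_2(8)$ on $9$ points with $\la$ or $\la^\Mull$ equal to $(7,2)$ in characteristic $3$), and rules out all others.

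The main obstacle is the delicate bookkeeping in the alternating case: when $\la\in\Parinv_p(n)$, both of $E^\la_\pm$ must be tested, and two $\SSS_n$-conjugate subgroups of $\AAA_n$ may interchange $E^\la_+$ and $E^\la_-$, so they need not restrict uniformly. Cross-checking this subtle asymmetry (notably in case (iv) and in possible $\AAA_n$-versions of cases involving $PSL_3(q)$ or $SL_4(2)$) against the final list is the most technical aspect, and it is where the explicit character data of the small groups $G$ is indispensable.
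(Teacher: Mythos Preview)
Your proposal is correct and takes essentially the same approach as the paper: enumerate the finitely many pairs $(G,n)$, bound $\dim V\le\bmax_p(G)$ via \cite{GAP}, constrain $\la$ by dimension lower bounds, and check the residual candidates by explicit Brauer-character comparison. Two minor corrections: (a) the non-natural $2$-transitive degrees of $\AAA_m$ for $m=5,6,7$ are $6,10,15$ (coming from $\AAA_5\cong PSL_2(5)$, $\AAA_6\cong PSL_2(9)$, and the embedding $\AAA_7<GL_4(2)$), not $\binom{m}{2}$; (b) since every $n$ arising here is at most $21$, the paper dispenses with Proposition~\ref{PBound12}, Theorem~\ref{TBound2}, Lemma~\ref{L4.3}, and Theorem~\ref{ext-spin} entirely and simply reads off $\bmax_p(G)$ and the short list of $\dim D^\la$ from \cite{GAP} case by case, invoking Lemma~\ref{Lr1} only once (for $PSL_3(4)$ with $n=21$).
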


\begin{proof}
The `if-part' in (i) follows \cite{Mortimer}, and in (ii)--(v) from \cite{GAP}. Conversely, suppose $V\da_G$ is irreducible. We may assume that $\la\not\in\L^{(1)}(n)$ again by \cite{Mortimer}. 

If $S = \AAA_5 \cong PSL_2(5)$ then $n=6$, and 
by \cite{GAP}, 
$\dim V \leq 4$ if $p=2$ and $\dim V \leq 6$ if $p=3$. As $\la \notin \L^{(1)}(n)$, we deduce that $p=3$, $\dim D^\la = 6$, and arrive at (ii).

If $S = \AAA_6 \cong PSL_2(9)$ then $n=10$, and by  \cite{GAP}, 
$\dim V \leq 16$ if $p=2$ and $\dim V \leq 9$ if $p=3$. As $\la \notin \L^{(1)}(n)$, we deduce  
that $p=2$, $\dim V = 16$, and arrive at (iii).

If $S = \AAA_7$, then $n=15$ and $G=S$, and by \cite{GAP}, 
$\dim D^\la \leq 2(\dim V) \leq 40$, whence $\la \in \L^{(1)}(n)$, a contradiction. 

If $S = \AAA_8 \cong SL_4(2)$ then $n=15$, $G=S$, and by  \cite{GAP}, 
$\dim V \leq 64$, hence $p=2$ and $\dim V = 64$. Note that $\IBr_2(G)$ contains a unique character of degree $64$, which 
takes value $4$ at an element of class $3A$ in $\AAA_8$, which belongs to class $3D$ in $\AAA_{15}$, in the notation of \cite{GAP}. However, any 
character in $\IBr_2(\AAA_{15})$ of degree $64$ takes value $-2$ at class $3D$, and $\IBr_2(\SSS_{15})$ contains no character of degree $64$,
a contradiction.

If $S = SL_3(2) \cong PSL_2(7)$ then $n=7$ or $8$, and by  \cite{GAP}, 
$\dim D^\la = \dim V \leq 8$. If $p=3$, then we conclude using \cite{GAP} that $\la \in \L^{(1)}(n)$, a contradiction. Let  $p=2$.  Since $\dim V \leq 8$ and $\la \notin \L^{(1)}(n)$, we deduce that $(H,\dim V)$ is either $(\AAA_n,4)$ or $(\SSS_n,8)$. Checking the 
degrees of characters in $\IBr_2(G)$ we see that $\dim V = 8$.
Now, any irreducible $2$-Brauer character of degree $8$ of $H$ takes value $-4$ or $2$ at elements of order $3$, whereas
any irreducible $2$-Brauer character of degree $8$ of $G$ takes value $-1$ at elements of order $3$, a contradiction. 

If $S = SL_2(8)$ then $n=9$ or $28$, $S \unlhd G \leq \Aut(S) \cong P\Gamma L_2(8) \cong S \rtimes \CCC_3$, and 
by \cite{GAP}, 
$\dim V \leq 12$ if $p=2$ and $\dim V \leq 27$ if $p=3$. In particular, if $n = 28$, then $\la \in \L^{(1)}(n)$ a contradiction. 
If $n = 9$ then, by \cite{GAP}, remembering that $\la \notin \L^{(1)}(n)$, we can check that (iv) occurs if $p=2$ and (v) occurs if $p=3$.

If $S = SL_3(3)$ then $n=13$,  $G=S$, and by \cite{GAP}, 
$\dim V \leq 27$, whence $\la \in \L^{(1)}(n)$, a contradiction. 

If $S = PSL_3(4)$ then $n=21$, $G\leq PGL_3(4) \rtimes \CCC_2$, and by \cite{GAP}, we have 
$\dim V \leq 2 \cdot 64 = 128 < (n^2-5n+2)/2,$
whence $\la \in \L^{(1)}(n)$ by Lemma \ref{Lr1}, a contradiction. 
\end{proof}

We will also need the following extension of Theorem \ref{ext-spin} to some subgroups of $\SSS_n$ that are not
primitive:

\begin{Proposition}\label{PSmall2}
Let $p=2$, $H =\AAA_n$ or $\SSS_n$, and let $G < H$ be an almost simple subgroup with $S = \soc(G)$ that is not 
primitive in $H$. 
Assume in addition that $(S,n)$ satisfies one of the following conditions: 
\begin{enumerate}[\rm(i)]
\item $S \cong \AAA_m$ and $(m,n)$ is $(5,\leq 10)$, $(6, \leq 14)$, $(7, \leq 16)$ or $(8, \leq 19)$. 
Moreover, if $m = 7$ or $8$, then some orbit of $S$ on $\Omega := \{1,2, \ldots,n\}$ has 
length $> m$. 

\item $S \cong PSL_{2}(q)$ and $(q,n)$ is $(7,\leq 12)$, $(8, \leq 14)$, $(11, \leq 14)$, $(13, \leq 15)$ or $(16, \leq 17)$. 


\item $(S,n)$ is $(SL_{3}(3),\leq 17)$ or $(PSL_{3}(4),21)$.

\item $S \cong M_{t}$ with $t = 11$, $12$, $22$, $23$, or $24$. 
\end{enumerate}
Then $V\da_G$ is irreducible for a basic spin $\F H$-module $V$ if and only if one of the following holds:
\begin{enumerate}[\rm(a)]
\item $H = \SSS_6$ and $G = \SSS_5$ fixes one letter.
\item $\SSS_5 \cong G = \AAA_{5,2} < H = \AAA_7$ or $\SSS_5 \cong G = \AAA_{5,2,1} < H = \AAA_8$.
\item $H=\AAA_7$ or $\AAA_8$, and $S = \AAA_6$. 
\item $H = \AAA_7$ or $\AAA_8$, and $S = \AAA_5$ acts $2$-transitively on $\{1,2, \ldots,6\}$.
\item $H = \AAA_{11}$, and $G = M_{10}< \AAA_{10}$ acts $2$-transitively on $\{1,2, \ldots,10\}$.
\item $H = \AAA_{12}$ and $G \in \{\SSS_6,M_{10},\Aut(\AAA_6)\}$ acts $2$-transitively on $\{1,2, \ldots,10\}$.
\item $H = \AAA_{12}$, $G \in \{\SSS_6,M_{10},\Aut(\AAA_6)\}$, and $S$ acts on $\{1,2, \ldots,6\}$ and $\{7,8, \ldots,12\}$ via two
inequivalent $2$-transitive actions.
\item $H = \AAA_{12}$ and $G=M_{11} < \AAA_{11}$.
\end{enumerate}
\end{Proposition}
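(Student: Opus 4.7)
The plan is to work case by case through the finitely many pairs $(S,n)$ allowed by hypotheses (i)--(iv), enumerating all non-primitive almost simple overgroups $G$ of $S$ in $H$, and in each instance deciding whether the basic spin $\F H$-module $V$ restricts irreducibly to $G$.

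First I would list the possible orbit structures of $S$ on $\Omega := \{1,\ldots,n\}$. Since $G$ is almost simple and non-primitive in $H$, and $S = \soc(G)$ is characteristic in $G$, either $S$ is intransitive on $\Omega$, or $S$ is transitive but imprimitive. In the intransitive case, the orbits of $S$ have lengths that are indices of core-free subgroups of $S$ summing to $n$; combined with the explicit bound on $n$ in (i)--(iv), the ATLAS list of maximal subgroups of $S$ leaves only finitely many orbit partitions, and the additional condition in (i) for $m=7,8$ is used to exclude the case of a single orbit of length $m$. The transitive imprimitive case is similarly constrained by the knowledge of subgroups of $S$ of index $n$ and their block structures. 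For each configuration, the possibilities for $G$ are bounded by $N_H(S)$, and by $G \leq \Aut(S)$ in the almost simple setup.

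For each candidate $(G,n)$, I would apply the necessary condition $\dim V \leq \bmax_2(G)$. Since $p=2$ and $V$ is basic spin, $\dim V = 2^{\lfloor (n-1)/2 \rfloor}$ by Table I, while $\bmax_2(G)$ for all the groups appearing in (i)--(iv) is available from \cite{GAP} and the ATLAS. This inequality alone rules out most configurations, leaving only the candidates in (a)--(h). For each surviving candidate, I would then verify irreducibility (or reducibility) by an explicit $2$-Brauer character computation in \cite{GAP}: the value of the Brauer character of the basic spin module $V$ at a $2$-regular element $g \in G$ is determined by the cycle type of $g$ in its action on $\Omega$, and comparing the resulting character of $V\da_G$ with the list of irreducible $2$-Brauer characters of $G$ of the appropriate degree settles each case, simultaneously proving both directions of the equivalence.

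The main obstacle is the delicate case (g): here $S \cong \AAA_6$ embeds in $H = \AAA_{12}$ with two orbits of length $6$ on which it acts by two \emph{inequivalent} $2$-transitive actions, these two actions being swapped by an outer automorphism of $\AAA_6$ not lying in $\SSS_6$. Distinguishing (g) from (f), where the two length-$6$ actions are equivalent, requires a careful identification of the $H$-conjugacy classes of embeddings $\AAA_6 \hookrightarrow \AAA_{12}$ with two orbits of length $6$, together with the induced action of $\Out(\AAA_6) = \CCC_2 \times \CCC_2$ on the $2$-Brauer characters of $\AAA_6$; only one of the two conjugacy classes of such embeddings leads to an irreducible restriction of $V$, and the Brauer character computation must be performed separately in each of the two copies. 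An analogous but easier splitting distinguishes the actions of $\AAA_6$ and $M_{10}$ on pairs of $6$-element sets arising in (f), and a similar outer-automorphism bookkeeping arises in case (e) for $M_{10}$ inside $\AAA_{10} \leq \AAA_{11}$.
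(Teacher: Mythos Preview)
Your proposal is correct and follows essentially the same approach as the paper: case-by-case elimination using the dimension bound $\dim V \leq \bmax_2(G)$ against $\dim V = 2^{\lfloor(n-1)/2\rfloor}$, combined with the non-primitivity bound $n \geq P(S)+1$, followed by explicit $2$-Brauer character computations in \cite{GAP} for the surviving candidates. The paper organizes the casework in the order (iv), (iii), (ii), (i) and, for the $\AAA_5$ and $\AAA_6$ cases with only small orbits, invokes \cite[Proposition 6.3]{KMT3} to constrain the orbit partition; your orbit-structure enumeration via the ATLAS achieves the same end, and your identification of case (g) as the delicate point matches the paper's treatment exactly.
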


\begin{proof}
We will prove the `only-if-part'. The `if-part' is then an easy explicit check. 
Set $\Omega := \{1,2, \ldots,n\}$. 
Let $U$ be an irreducible summand of $V\da_S$. If $U$ is trivial, then $S$ acts trivially on $V$ by Clifford's theorem, contradicting the faithfulness
of the $\F H$-module $V$. Thus we have 
\begin{equation}\label{eq-d}
  \dim U = 2^a,~\dim V \geq 2^{(n-4)/2}
\end{equation}
for some $a \in \ZZ_{\geq 1}$. 
Since $G$ is not primitive, we have by Lemma~\ref{LEasy}(i):
\begin{equation}\label{eq-n}
  n \geq P(S)+1.
\end{equation}   

First, we consider the case (iv). Then \eqref{eq-d} implies by \cite{GAP} that $\dim U = 16$ and $t = 11$ or $12$. If $t=11$, 
then $G = S$, $V=U$, $\dim V = 16$, and $n \geq 12$ by \eqref{eq-n}. It follows that $H = \AAA_{12}$ and $G < \AAA_{11}$, leading to (h).
If $t = 12$, then $\dim V \leq 32$ since $G/S \hookrightarrow \Out(S) \leq \CCC_2$, whereas $n \geq 13$ by \eqref{eq-n}. It follows 
that $H=\AAA_{13}$, $\dim V = 32$, $G = \Aut(M_{12}) \leq \AAA_{13} \cap \SSS_{12} = \AAA_{12}$. The latter implies 
that $V = E^{\be_{13}}_{\pm}$ is irreducible over $\AAA_{12}$, a contradiction.

\smallskip
Next suppose we are in the case (iii). If $S = PSL_3(4)$, then $P(S) = 21$, violating \eqref{eq-n}. If $S = SL_3(3)$, then 
$n \geq 14$ by \eqref{eq-n}, whereas $\dim U = 16$ by \cite{GAP}, and so $\dim V \leq 32$, a contradiction.  

\smallskip
Consider now the case (ii). Then $q \neq 16$ because of \eqref{eq-n}, and $q \neq 11,13$ by \eqref{eq-d} and \cite{GAP}. If $q=8$,
then $S \leq G \leq \Aut(S)$ and so $\dim V \in \{2,4,8\}$ by \cite{GAP}. On the other hand, $n \geq 10$ by \eqref{eq-n}, and this is impossible
since $\dim V \geq \dim E^{\be_{10}} = 16$. Thus $q=7$, in which case $\dim V = 8$ by \cite{GAP} and $n \geq 8$ by \eqref{eq-n}. The condition
$\dim V = 8$ implies that $H \in \{\SSS_8,\AAA_9\}$. If $H = \SSS_8$, then the Brauer character of $V$ can take values $-4$ or $2$ at elements of
order $3$, whereas any $\psi \in \IBr_2(G)$ of degree $8$ takes value $-1$ at elements of order $3$, a contradiction. Thus $H = \AAA_9$. Note 
that if we embed $S = PSL_2(7)$ in $H$ via a transitive embedding $S < \AAA_7$ (so fixing two more points) or a transitive embedding 
$S < \AAA_8$, then any element $g \in S$ of order $3$ will fix $3$ points, and so the Brauer character of $V$ takes value $2$ at $g$, again a contradiction.

\smallskip
Finally, suppose we are in the case (i). 

Assume first that $S = \AAA_8$. 
Then $\dim V = 4$, $8$, or $64$ by \cite{GAP}.
Since $S$ is not primitive on $\Omega$ and has an orbit of length $> 8$ on $\Omega$, we have by \cite{Atlas} 
that $n \geq 16$, whence $H = \AAA_{16}$, 
$\dim V = 64$, and $G=S$ is a $2$-transitive subgroup of $\AAA_{15}$.  In this embedding, a $3$-cycle $g \in S$ will have $1$ fixed point on $\Omega$,
so the Brauer character of $V$  takes value $-2$ at $g$. But any $\psi \in \IBr_2(S)$ of degree $64$ takes value $4$ at $g$, a contradiction.

Next let $S = \AAA_7$. Then $\dim V = 4$ or $8$ by \cite{GAP}.
As $S$ is not primitive on $\Omega$ and has an orbit of length $> 7$ on $\Omega$, we have by \cite{Atlas} that $n \geq 16$, contradicting 
\eqref{eq-d}.

\smallskip
Now let $S = \AAA_6$. Then $\dim V = 4$, $8$, or $16$ by \cite{GAP}, and $n \geq 7$ by \eqref{eq-n}. It follows that 
$H = \AAA_n$ with $7 \leq n \leq 12$, or $H = \SSS_n$ with $7 \leq n \leq 10$. 

Assume first that some $S$-orbit on $\Omega$ has length
$l>6$. As $S$ is not primitive, we must then have that $l = 10$, $H = \AAA_n$ with $n = 11$ or $12$, and $\dim V = 16$. In either case, we may assume 
that $S$ acts $2$-transitively on $\{1,2, \ldots,10\}$ and fixes $11$, and also $12$ if $n=12$. In the $2$-transitive embedding 
$\AAA_6 \hookrightarrow \SSS_{10}$, elements of $S$ of order $3$ acts with one fixed point and elements of order $5$ act fixed-point-freely;
furthermore a point stabilizer in $S$ is just $\NB_S(Q)$ for $Q \in \Syl_3(S)$. 
The embedding extends to $\Aut(\AAA_6)$, with the image of $M_{10} = S \cdot 2_3$ (in the notation of \cite{Atlas} and \cite{GAP}) contained in
$\AAA_{10}$. 
Using the class fusion information, it is easy to check in \cite{GAP} that $E^{\be_{10}}$ is irreducible over $\SSS_6 \cong S \cdot 2_1$, $M_{10}$,
and $\Aut(S)$, but splits into a direct sum of two simple summands over $S$ and $S \cdot 2_2 \cong PGU_2(9)$. Also, 
$$E^{\be_{12}}_{\pm}\da_{\AAA_{11}} \cong E^{\be_{11}}_{\pm},~E^{\be_{11}}_{\pm}\da_{\AAA_{10}} \cong E^{\be_{10}}.$$ 
Hence, if $n=11$, then $G$ fixes $11$ and is contained in the natural $\AAA_{10}$, and 
so $G \cong M_{10}$, leading to (e). Note that we can extend the embedding $\Aut(S) \hookrightarrow \AAA_{10}$ to 
$\Aut(S) \hookrightarrow \AAA_{12}$ uniquely by demanding the involution $(1,2) \in \SSS_6$ to interchange $11$ and $12$. This leads to
(f) when $n=12$.  

Now we consider the case where all orbits of $S = \AAA_6$ on $\Omega$ have length $1$ or $6$, and there is more than one orbit of length $6$. Then $n=12$ and $H = \AAA_{12}$. Let $\pi_1, \pi_2: S \to \AAA_6$ be induced by the action of $S$ on its two orbits on $\Omega$, and let
$\psi_i$ denote the Brauer characters afforded by $E^{\be_6}\da_{\pi_i(S)}$. Then $\psi_i \in \IBr_2(S)$ and has degree $4$; also,
$\psi_i^2$ contains $\F_S$ (with multiplicity $4$). As $G$ is irreducible on $V$ and $V\da_{\AAA_6 \times \AAA_6} \cong E^{\be_6} \boxtimes E^{\be_6}$, 
we see that $\psi_1 \neq \psi_2$. In this case, $\psi_1\psi_2 = \nu_1+\nu_2$, with $\nu_i \in \IBr_2(S)$ of degree $8$, and 
$\operatorname{Stab}_{\Aut(S)}(\nu_1) = S \cdot 2_2$. So as long as $\pi_1$ and $\pi_2$ are inequivalent and $G \not\leq S\cdot 2_2$, $V\da_G$
is irreducible, leading to (g). (Note that such an action exists: for instance, we can embed $\SSS_6$ in $\AAA_{6,6}$, with two inequivalent actions of
$\SSS_6$ on the first and the last six letters.)

Finally, we consider the case where $S=\AAA_6$ has exactly one orbit $\{1,2, \ldots,6\}$ of length $6$ and fixes each of $7, \ldots,n$; in particular,
$G \leq \SSS_{6,n-6}$. If $H = \SSS_n$ and $n \geq 8$, then it follows that $D^{\be_n}$ is irreducible over $\SSS_{6,n-6}$, contradicting 
\cite[Proposition 2.15]{KMT1}. The case $H = \SSS_7$ is also impossible by dimension consideration. Suppose $H = \AAA_n$ and let
$n_2 \geq n_3 \geq \ldots \geq n_h \geq 1$ denote the lengths of $G$-orbits on $\{7, \ldots,n\}$. Then $G \leq \AAA_\nu$ for $\nu := (6,n_2, \ldots,n_h)$.
Since $G/S \leq \CCC_2^2$, $n_i \in \{1,2,4\}$. As $V$ is irreducible over $\AAA_\nu$, we see by \cite[Proposition 6.3]{KMT3} that
$\nu = (6,1)$, $(6,1,1)$, or $(6,2)$, and arrive at (c).

\smallskip
Finally, let $S = \AAA_5$. Then $\dim V = 2$ or $4$ by \cite{GAP}, and $n \geq 6$ by \eqref{eq-n}. It follows that 
$H = \AAA_n$ with $6 \leq n \leq 8$, or $H = \SSS_6$. 

If $H = \SSS_6$, then $G \leq \SSS_5$ as $G$ is 
not primitive, and we arrive at (a). 

Suppose $H = \AAA_n$ but some $S$-orbit has length $l>5$. As $S$ is not primitive and 
$n \leq 8$, we have that $l=6$, $n=7$ or $8$, and $S$ has one orbit $\Omega':=\{1,2, \ldots,6\}$ and fixes the remaining letters. In this action,
elements of order $3$ in $S$ act fixed-point-freely on $\Omega'$. Using this class fusion information, we can check in \cite{GAP} that 
$V\da_S$ is irreducible (of dimension $4$), giving rise to (d).

Finally, assume $H = \AAA_n$ and $S=\AAA_5$ has only orbits of length $5$ and $1$ on $\Omega$. Then we may assume that $S$ has one orbit 
$\{1,2, \ldots,5\}$ and fixes each of $6, \ldots, n$. Again let $n_2 \geq n_3 \geq \ldots \geq n_h \geq 1$ denote the lengths of $G$-orbits on $\{6, \ldots,n\}$. Then $G \leq \AAA_\nu$ for $\nu := (5,n_2, \ldots,n_h)$.
Since $G/S \leq \CCC_2$, $n_i \in \{1,2\}$. As $V$ is irreducible over $\AAA_\nu$, we see by \cite[Proposition 6.3]{KMT3} that
$\nu = (5,2,1)$, or $(5,2)$, and arrive at (b).
\end{proof}

\section{The small doubly transitive groups}\label{SSmall}
\label{SSmallGroups}
Recall that we call a doubly transitive subgroup $G < \SSS_n$  small, if $S = \soc(G)$ is non-abelian, $S \not\cong \AAA_n$, $S \not\cong PSL_m(q)$ when $n = (q^m-1)/(q-1)$, and $S \not\cong Sp_{2m}(2)$ when $n = 2^{m-1}(2^m \pm 1)$. 

All small $2$-transitive subgroups are handled in the following theorem:

\begin{Theorem}\label{T2small}
Let $p=2$ or $3$, $H =\AAA_n$ or $\SSS_n$, and $W$ be an irreducible summand of $D^\la\dar_H$ for some $\la \in\Par_p(n)\setminus \L^{(1)}(n)$. Let $G<H$ be a small doubly transitive subgroup. Then $W\dar_G$ is irreducible if and only if one of the following cases occurs.
\begin{enumerate}[\rm(i)]
\item $G=M_{11}$, $\AAA_n \leq H \leq \SSS_n$, and one of the following happens:
\begin{enumerate}[\rm(a)]
\item $p=2$, $n=11$ or $12$, $\la = (n-2,2)$, and 
$W = D^\la\dar_H$ has dimension $44$;
\item $p=3$, $n=11$, $\la$ or $\la^\Mull$ is $(9,1^2)$, 
and $W = D^\la\dar_H$ has dimension $45$.
\end{enumerate}

\item $G = M_{11}$,  $p=2$, $H = \AAA_n$, $n=11$ or $12$, and $W=E^{\be_{n}}_\pm$ has dimension $16$. 


\item $G = M_{12}$, $p = 2$, $n =12$, and one of the following happens:
\begin{enumerate}[\rm(a)]
\item $\AAA_{12} \leq H \leq \SSS_{12}$, $\la = (10,2)$, and 
$W = D^\la\dar_H$ has dimension $44$;
\item $H= \AAA_{12}$, $\la = \be_{12}$, and $W = E^{\be_{12}}_\pm$ has dimension  $16$;
\item $H = \AAA_{12}$, $\la = (6,5,1)$, and $W =  E^\la_\pm$ has dimension $144$.
\end{enumerate}

\item $G = M_{12}$, $p = 3$, $n=12$, $\AAA_{12} \leq H \leq \SSS_{12}$, and one of the following happens:
\begin{enumerate}[\rm(a)]
\item $\la$ or $\la^\Mull$ is $(10,1^2)$,  
and $W = D^\la\dar_H$ has dimension $45$;
\item $\la$ or $\la^\Mull$ is $(10,2)$, 
and $W = D^\la\dar_H$ has dimension $54$.
\end{enumerate}

\item $M_{22} \leq G \leq \Aut(M_{22})$, $p=3$, $n=22$, $\AAA_{22} \leq H \leq \SSS_{22}$,  
$\la$ or $\la^\Mull$ is $(20,1^2)$, and $W = D^\la\dar_H$ has dimension $210$.

\item $G = M_{23}$, $p=3$, $n = 23$, $\AAA_{23} \leq H \leq \SSS_{23}$, 
$\la$ or $\la^\Mull$ is $(21,1^2)$, and $W = D^\la\dar_H$ has dimension $231$.

\item $G = M_{24}$, $p = 3$, $n=24$, $\AAA_{24} \leq H \leq \SSS_{24}$, and one of the following happens:
\begin{enumerate}[\rm(a)]
\item $\la$ or $\la^\Mull$ is $(22,1^2)$,  
and $W = D^\la\dar_H$ has dimension $231$;
\item $\la$ or $\la^\Mull$ is $(22,2)$, 
and $W = D^\la\dar_H$ has dimension $252$.
\end{enumerate}
\end{enumerate}
\end{Theorem}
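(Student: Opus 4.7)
\emph{Strategy.} The approach is to contrast the irreducibility constraint $\dim W \leq \bmax_p(G)$ with a lower bound on $\dim W$ coming from the assumption $\la\notin \L^{(1)}(n)$. For $n\geq 17$, Lemma~\ref{Lr1} applied to $E^\la$ or $E^\la_{\pm}$ gives
\[
\dim W \;\geq\; (n^2-5n+2)/2,
\]
and the same bound holds when $W=D^\la$ since then $\dim W\geq \dim E^\la_{(\pm)}$. Cameron's classification \cite{Cameron} of $2$-transitive groups, a consequence of {\sf CFSG}, then reduces us to a finite list of candidates, each of which is verified in {\sf GAP} via the known class fusion of the embedding $G\hookrightarrow \SSS_n$.

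\emph{Ruling out the infinite families and the large-degree sporadics.} The infinite families of small doubly transitive subgroups are $^2B_2(q)$ on $q^2+1$ points with $q=2^{2a+1}\geq 8$, $PSU_3(q)$ on $q^3+1$ points with $q\geq 3$, and $^2G_2(q)$ on $q^3+1$ points with $q=3^{2a+1}\geq 27$. For each, the crude bound $\bmax_p(G)\leq \bmax(G)\leq |\Aut(\soc G)|^{1/2}$ yields values of order $O(n^{5/4})$, $O(n^{4/3})$, and $O(n^{7/6})$ respectively, all strictly below $(n^2-5n+2)/2$ once $q$ exceeds its minimum admissible value (a direct check settles the smallest cases). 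The large-degree sporadic cases are disposed of using \cite{Atlas}: $\bmax(HS)=2520<15049=(176^2-5\cdot176+2)/2$ handles the $HS$ case; for $Co_3$ (degree $276$), Proposition~\ref{PBound12}(i) forces $\la\in \L^{(2)}(n)\setminus\L^{(1)}(n)$, so $\la$ or $\la^\Mull$ equals $(n-2,2)$ or $(n-2,1^2)$ (when $p$-regular), and the candidate dimensions computed from Lemma~\ref{LL2} lie in $\{37400,37401,37674\}$, none of which appears in $\IBr_p(Co_3)$ for $p=2,3$.

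\emph{Finite case check.} The remaining candidates are $M_{11}$ (in degrees $11$ and $12$), $M_{12}$ (degree $12$), $M_{22}$ and $\Aut(M_{22})$ (degree $22$), $M_{23}$ (degree $23$), $M_{24}$ (degree $24$), $PSL_2(11)$ in degree $11$, and $\AAA_7<GL_4(2)$ in degree $15$. For each such triple $(G,n,p)$ one enumerates $\la\in \Par_p(n)\setminus \L^{(1)}(n)$ with $\dim D^\la\leq 2\bmax_p(G)$, computes the Brauer character of the corresponding $W$ restricted to $G$ via the known class fusion of $G\hookrightarrow \SSS_n$, and compares it against $\IBr_p(G)$; the same computation verifies the `if' direction. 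The principal subtlety, and the main obstacle to overcome, is the analysis of the half-restrictions $E^\la_\pm$ whenever $\la\in\Parinv_p(n)$: one must pin down which of the two constituents remains irreducible over $G$, and the answer depends on the detailed class fusion of the embedding---this is why, for example, the basic spin cases in (ii) and (iii)(b),(c) appear only for specific Mathieu embeddings, while other Mathieu embeddings of the same abstract group at the same $n$ are excluded from the list.
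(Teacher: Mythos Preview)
Your overall strategy---dimension bounds to dispose of the infinite families and large sporadics, then a finite computer check for the Mathieu groups---matches the paper's. The paper packages the non-Mathieu, non-$Co_3$ cases into a reference to \cite[\S5]{BK} (with Lemma~\ref{Lr1} substituted for the old bound), while you sketch those arguments explicitly; this is a cosmetic difference.

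There is, however, a genuine gap in your $M_{24}$, $p=3$ analysis when $W=E^\la_\pm$ does not extend to $\SSS_{24}$. You propose to enumerate all $\la$ with $\dim D^\la\leq 2\bmax_3(M_{24})\leq 20790$. But $\la\in\Parinv_3(24)$ forces $\la_1\leq 14$ by Lemma~\ref{L4.3}, i.e.\ $\ell\geq 10$, and in this regime none of the available bounds lets you carry out the enumeration: Theorem~\ref{TBound1} requires $n\geq 3(\ell-2)$ and gives $C^3_\ell(24)=0$ once the product hits the factor $(24-24)$; Theorem~\ref{TBound2}(ii) yields only $\dim D^\la\geq 2\cdot 3^{8/3}<40$; and Proposition~\ref{PBound12}(ii) gives $\dim D^\la\geq 6854$, which does not exceed $20790$. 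So you are left needing the $3$-modular dimensions of $D^\la$ for all self-Mullineux $\la\vdash 24$ with $\la_1\leq 14$, and these are not supplied by Lemmas~\ref{LS243}--\ref{LS242} or the cited references. The paper closes this gap by a branching argument: restrict $D^\la$ to $\SSS_{22,2}$, decompose into eigenspaces of the transposition $t$ to obtain an $\SSS_{22}$-submodule $D^\mu$ of dimension at most $(\dim D^\la)/2\leq 10395$, apply \cite[Lemma~1.23]{BK} to get $\mu\in\L^{(4)}(22)$, and then Frobenius reciprocity forces $\la\in\L^{(6)}(24)$, i.e.\ $\la_1\geq 18$, contradicting $\la_1\leq 14$. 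You should incorporate this step.
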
 

\begin{proof}
If $S:=\soc(G)$ is a not a Mathieu group or $Co_3$, then the arguments in \cite[Section 5]{BK}, but using Lemma \ref{Lr1} instead of \cite[Lemma 1.18(i)]{BK}, show that $D^\la\dar_G$ is reducible. We now consider the remaining possibilities for $S$. 
Replacing $\la$ by $\la^\Mull$ if necessary, we will assume that 
$\la_1\geq \la^\Mull_1$.

\smallskip
{\sf Case 1}: {\it $G = M_{11}$ in transitive representations of degrees $n = 11$ and $12$.}

By comparing the traces in these transitive representations \cite{GAP}, one can see that the classes 
$2a$, $3a$, $4a$, $5a$, $8ab$, $11a$ of $G$ belong to classes
$2b$, $3c$, $4b$, $5b$, $8a$, $11a$ in $\AAA_{11}$, and 
$2c$, $3d$, $4d$, $5b$, $8b$, $11a$ in $\AAA_{12}$.

Let $p = 2$. According to \cite{GAP}, any $\varphi \in \IBr_2(G)$ has degree $1$, $10$, $16$, or $44$; and the degrees 
of characters in $\IBR_p(H)$ are also known. Hence we need to consider 
only the cases where $\dim D^\la = 32$ or $44$. In the latter case, $D^\la\dar_{\AAA_{11}}$ is irreducible and is obtained by 
reducing $S^{(9,2)}_\C$ modulo $2$ (and restricting to $\AAA_{11}$). 
Using the above class fusion, we see that the case $\dim D^\la = 44$ does give rise
to an example (and $\la = (n-2,2)$). If $\dim D^\la = 32$ (and so $\la = (6,5)$, respectively $(7,5)$),
then its restriction to $\AAA_n$ is a direct sum of two simple modules of
dimension $16$, both of which are irreducible over $G$, giving rise to another example with $(\dim V,H) = (16,\AAA_n)$.
 
Let $p = 3$. Using \cite{GAP} as above, when $n=11$ we see that $\dim V = 45$ and $\la = (n-2,1^2)$ (up to tensoring with $\sgn$), yielding another example. There is no example when $n=12$, since $\varphi \in \IBr_3(\AAA_{12})$ of degree $45$ takes value $3$ at 
the class $8b$ of $\AAA_{12}$, whereas  $\psi \in \IBr_3(M_{11})$ of degree $45$ takes value $-1$ at 
the class $8a$ of $M_{11}$.

\smallskip
{\sf Case 2}: {\it $G = M_{12}$ in permutation representations of degree $n = 12$.}

Let $p = 2$. Using the character degrees of $G$ and $H$ as listed in \cite{GAP}, we need to consider 
only the cases where $\dim D^\la = 32$, $44$ and $288$. We can embed $M_{11}$ into $G$ as a $2$-transitive subgroup of 
$G < \SSS_{12}$. Now the first two cases, with $(\la,H) = ((7,5),\AAA_{12})$ and $((10,2),\AAA_{12} \mbox { or }\SSS_{12}$), give rise to examples, since $V\dar_H$ is irreducible by the results of Case 1.  Next, by restricting $\psi \in \IBr_2(\AAA_{12})$ to $G$, we see
that conjugacy classes $3A$, $3B$, and $5A$ of $G$ as listed in \cite{GAP} correspond to the classes $3D$, $3C$, and $5B$ in $\AAA_{12}$.
It follows that the last case, with $(\la,\dim V,H) = ((6,5,1),144,\AAA_{12})$, gives rise to another example.

Let $p=3$. Using \cite{GAP} as above, we see that $\dim V = 45$ or $54$, and $\la = (10,1^2)$ or $(10,2)$, respectively (up to 
tensoring with $\sgn$). In both cases, $D^\la$ is obtained by reducing $S^\la_\C$ modulo $3$. Since $\chi := S^\la_\C\dar_G$ is irreducible by 
\cite[Main Theorem (iii), (iv)]{BK}, and $\chi (\bmod\ 3)$ is irreducible by \cite{GAP}, both cases give rise to examples. 

\smallskip
{\sf Case 3}: {\it $M_{22} \leq G \leq \Aut(M_{22})$ in permutation representations of degree $n = 22$.}

Let $p=2$. According to \cite{GAP}, any $\varphi \in \IBr_2(G)$ has degree $\leq 140 < (n^2-5n+2)/2$. By Lemma 
\ref{Lr1}, this contradicts the assumption
$\la \notin \L^{(1)}(n)$.

Let $p=3$. By \cite{GAP}, any $\varphi \in \IBr_3(G)$ has degree $\leq 231 < (n^3-9n^2+14n)/12$. 
Hence $\dim D^\la < (n^3-9n^2+14n)/6$ and so $\la \in \L^{(2)}(n) \smallsetminus \L^{(1)}(n)$ by Proposition \ref{PBound12}(i). By Lemma \ref{LL2} and by checking the possible dimensions of $V$ in \cite{GAP}, we see that $\dim D^\la = 210$, and that
without any loss we may assume that $D^\la$ is obtained by reducing $S^{(n-2,1^2)}_\C$ modulo $p$. As 
$\chi := S^{(n-2,1^2)}_\C\dar_S$ is irreducible by 
\cite[Main Theorem (iv)]{BK}, and $\chi (\bmod\ 3)$ is irreducible by \cite{GAP}, both cases $G = S$ and 
$G = \Aut(S)$ give rise to examples.   
 
 
\smallskip
{\sf Case 4}: {\it $G = M_{23}$ in permutation representations of degree $n = 23$.}

First we consider the case where $V$ does not extend to $\SSS_n$. As in part (b) of the proof of Theorem \ref{ext-spin},
we have that $\la_1 \leq (n+2)/2$ if $p = 2$ and $\la_1 \leq (n+4)/2$ if $p=3$. Now, if $p=2$ then $\la_1 \leq 12$,
whence $\dim V = (\dim D^\la)/2 \geq 2^{10}$ by Theorem \ref{TBound2}(i), whereas $\bmax_2(G) = 896$ \cite{GAP},
a contradiction. If $p=3$, then $\la_1 \leq 13$ and so
$\dim V = (\dim D^\la)/2 \geq 2783$ by Proposition \ref{PBound12}(ii), contrary to $\bmax_3(G) = 1035$ \cite{GAP}.

Now we consider the case where $V = D^\la\da_H$. Then 
$$\dim D^\la \leq \bmax_p(G) \leq 1035 < (n^3-9n^2+14n)/6,$$ 
whence $\la \in \L^{(2)}(n) \smallsetminus \L^{(1)}(n)$ by Proposition \ref{PBound12}(i), and so $\la_1 \geq n-2$.

Consider the case $D^\la = D^{(n-2,2)}$. If $p=2$, then $D^\la$ is obtained 
by reducing $S^\la_\C$ modulo $p$. Furthermore, $\chi := S^\la_\C\dar_G$ is irreducible by 
\cite[Main Theorem (iii)]{BK}, but $\chi (\bmod\ 2)$ is reducible by \cite{GAP}, ruling out this case.  If $p = 3$, 
then $\dim D^\la = 208$. Since $\la \notin \L^{(1)}(n)$, $D^\la$ is irreducible over $\AAA_n$ by Lemma \ref{Lr1}. 
Since no $\varphi \in \IBr_p(G)$ has degree $208$ \cite{GAP}, there are no examples in this case either.

Suppose now that $p=3$ and $D^\la = D^{(n-2,1^2)}$. Then $D^\la$ is obtained 
by reducing $S^\la_\C$ modulo $p$. Furthermore, $\psi := S^\la_\C\dar_G$ is irreducible by 
\cite[Main Theorem (iv)]{BK}, and $\psi (\bmod\ 3)$ irreducible by \cite{GAP}, we obtain another example. 

\smallskip 
{\sf Case 5}: {\it $G = M_{24}$ in permutation representations of degree $n = 24$.}

According to \cite{GAP}, $\bmax_p(G) \leq 10395$. 

\smallskip
{\sf Case 5.1}: {\it $V = D^\la\da_H$ or $\dim V < 10395/2$. }
In this case we have that $\dim D^\la \leq 10395$. This implies 
by \cite[Lemma 1.23]{BK} that either $\la \in \L^{(4)}(n) \smallsetminus \L^{(1)}(n)$, or $p=2$ and $\la = (13,11)$. 
In the latter case, $D^\la$ is a basic spin module of dimension $2048$. Since no $\varphi \in \IBr_2(G)$ has 
degree $2048$ or $1024$, this case is ruled out. 

Let $p=2$. Then $\dim D^\mu$ for $\mu \in \L^{(4)}(n) \smallsetminus \L^{(1)}(n)$ is determined by 
Lemma \ref{LS242}, and neither $\dim D^\mu$ nor $(\dim D^\mu)/2$ matches $\varphi(1)$ for any 
$\varphi \in \IBr_2(G)$ \cite{GAP}.  In fact, $D^{(23,1)}$ is also reducible over $G$. Thus we have no example for $p=2$.

Let $p=3$. Then $\dim D^\mu$ for $\mu \in \L^{(4)}(n) \smallsetminus \L^{(1)}(n)$ is determined by 
Lemma \ref{LS243}, and using \cite{GAP} we see that $\dim D^\mu$ or $(\dim D^\mu)/2$ can match $\varphi(1)$ for some
$\varphi \in \IBr_3(G)$ only when $\mu = (22,2)$, $(22,1^2)$, or $(21,2,1)$. If $\la = (22,2)$, then $D^\la$ is obtained 
by reducing $S^\la_\C$ modulo $p$. Furthermore, $\psi := S^\la_\C\dar_G$ is irreducible by 
\cite[Main Theorem (iii)]{BK}, and $\psi (\bmod\ 3)$ irreducible by \cite{GAP}, giving rise to an example. Next,
$\al := S^{(22,1^2)}_\C\dar_G$ is irreducible by \cite[Main Theorem (iii)]{BK}, and $\al (\bmod\ 3) = \beta_{22} + \beta_{231}$,
where $\beta_i$ is an irreducible $3$-Brauer character of $G$ of degree $i \in \{22,231\}$. On the other hand,
$[S^{(22,1^2)}] = [D^{(22,1^2)}] + [D^{(23,1)}]$ by Lemma \ref{LS243}(ii), with $D^{(23,1)}\dar_G = \beta_{23}$. It follows
that $D^{(22,1^2)}\dar_G = \beta_{231}$, leading to another example. Finally, by Lemma~\ref{L4.3}, 
$D^{(21,2,1)}\dar_{\AAA_{24}}$ is irreducible (of dimension $1540$ by Lemma \ref{LS242}), ruling out the case $\la = (21,2,1)$.

\smallskip
{\sf Case 5.2}: {\it $V$ does not extend to $\SSS_{24}$ and $\dim V > 10395/2$. }

Since 
$\dim V \leq \bmax_p(G) \leq 10395$ and $\bmax_2(G) = 1792$, we must have that $p=3$, and $10395 < \dim D^\la \leq 2 \cdot 10395$.
Consider the Young subgroup $\SSS_{22,2}\cong\SSS_{22} \times \SSS_2<\SSS_{24}$. Note that the second factor $\SSS_{2}$ is generated by a transposition $t$,
which acts semisimply on $D^\la$ and has both $1$ and $-1$ as eigenvalues. The two corresponding $t$-eigenspaces are invariant under $\SSS_{22}$.
Thus the restriction of $D^\la$ to a natural subgroup $\SSS_{22}$ contains a simple submodule $D^\mu$ of dimension at most
$(\dim D^\la)/2 \leq 10395$. By \cite[Lemma 1.23]{BK} applied to $D^\mu$, we have $\mu \in \L^{(4)}(22)$. By the Frobenius reciprocity, 
$D^\la$ is a quotient of $\ind^{\SSS_{24}}_{\SSS_{22}}(D^\mu)$, and so $\la \in \L^{(6)}(24)$, i.e. $\la_1 \geq 18$. But this implies 
by Lemma~\ref{L4.3} that $D^\la$ is irreducible over $\AAA_{24}$, contrary to our assumption. 

\smallskip 
{\sf Case 6}: {\it $G = Co_3$ in permutation representations of degree $n = 276$.}

According to \cite{GAP}, any $\varphi \in \IBr_2(G)$ has degree $\leq 131584 < (n^3-9n^2+14n)/12$. 
Hence $\la \in \L^{(2)}(n) \smallsetminus \L^{(1)}(n)$ by Proposition \ref{PBound12}(i). It 
follows by Lemma \ref{LL2} that $\dim D^\la = 37400$ if $p = 2$, $37401$ or $37674$ if $p = 3$. Since no $\varphi \in \IBr_p(G)$ has such degree (or half of it, in case $D^\la\dar_{\AAA_n}$ is reducible) by \cite{GAP}, there are no examples.   
\end{proof} 

\section{Affine permutation groups}
\label{SAff}
In this section we consider restrictions to subgroups $G$ of $\SSS_n$ having regular normal elementary abelian $r$-subgroups, whose structure is explained in Lemma~\ref{LAbSoc}. Note that any irreducible $r$-modular representation of a finite group $G$ with nontrivial 
normal $r$-subgroup $R$ must be trivial on $R$. Therefore, if an $\F_r\SSS_n$-module $V$ is irreducible over such $G$ and 
$n \geq 5$, then $\dim V=1$. Henceforth we may restrict ourselves to $\SSS_n$-modules in characteristic $p \neq r$.

\subsection{Invariants in modules over wreath products}
Throughout this subsection, we assume that $r$ is a prime different from $p$. For $m\in\Z_{\geq 1}$, we denote 
$$H_m:=GL_m(r),\quad V_m:=\F_r^m, \quad G_m:=AGL_m(r)=V_m\rtimes H_m.$$ 
We also denote by $X_m$ the set of linear characters $V_m \to \F^\times$, and $X_m^\times\subset X_m$ be the subset of all non-trivial linear characters.  
Note that $X_m$ is an abelian group via $(\xi+\eta)(v):=\xi(v)\eta(v)$ for $\xi,\eta\in X_m$ and $v\in V_m$. In fact, $X_m$ can be identified with $\F_r^m$. 
In particular, for any $\xi\in X_m$, we have 
\begin{equation}\label{EqKills}
r\xi=0.
\end{equation}

\begin{Lemma} \label{LPChars} 
Let $r>2$ and assume that $m>1$ if $r=3$. There exist $\xi_1,\dots,\xi_r\in X_m^\times$ not all equal to each other and such that $\xi_1+\dots+\xi_r=0$. 
\end{Lemma}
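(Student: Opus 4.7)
The plan is to construct the $r$ characters explicitly by perturbing a single nonzero character. Fix any $\xi \in X_m^\times$; I will look for a second character $\eta \in X_m$ satisfying $\eta \neq 0$, $\eta \neq \xi$, and $\eta \neq -\xi$, and then set
$$\xi_1 = \xi_2 = \cdots = \xi_{r-2} = \xi, \qquad \xi_{r-1} = \xi + \eta, \qquad \xi_r = \xi - \eta.$$
Given \eqref{EqKills}, the sum telescopes to $r\xi = 0$. The condition $\eta \neq \pm \xi$ guarantees $\xi \pm \eta \neq 0$, so all $\xi_i$ lie in $X_m^\times$, and $\eta \neq 0$ forces $\xi_1 \neq \xi_{r-1}$, giving the non-equality clause.

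The only thing left is to produce $\eta$. Since $r$ is odd, the elements $0, \xi, -\xi$ are three distinct elements of $X_m$, so an admissible $\eta$ exists as soon as $|X_m| = r^m > 3$. For $r \geq 5$ this is immediate from $r^m \geq r > 3$; for $r = 3$ the hypothesis $m \geq 2$ gives $r^m \geq 9 > 3$. This handles both cases allowed by the hypothesis, and is the only place the assumption ``$m>1$ if $r=3$'' is used.

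There is no real obstacle: the arithmetic in $X_m$ is just addition in $(\mathbb{Z}/r\mathbb{Z})^m$, and the construction is a one-line manipulation. The only subtlety is to notice that one must avoid both $\xi$ and $-\xi$ (not merely $\xi$) when choosing $\eta$, which forces the cardinality bound $r^m > 3$ and thereby explains the side condition on $m$ when $r=3$.
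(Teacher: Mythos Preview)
Your proof is correct. The paper's own proof is simply ``Checked easily identifying $X_m$ with $\F_r^m$,'' leaving the details to the reader; your explicit construction is a perfectly valid way to fill those in, and your analysis of when $|X_m| > 3$ correctly isolates why the hypothesis $m>1$ is needed for $r=3$.
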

\begin{proof}
Checked easily identifying $X_m$ with $\F_r^m$. 
\end{proof}

A key role in the study of the restriction of irreducible modules $D^\la$ from $\SSS_{r^m}$ to $G_m$ embedded via its natural action on the points of $V_m$ is played by the analysis of the invariant space $(D^\la)^{V_m}$. For this, it is convenient to embed $V_m$ into some wreath product subgroup of $\SSS_{r^m}$.

We will now assume that $m\geq 2$ and denote $n:=r^m$. We have $V_m\leq G_m\leq \SSS_n$ via the natural action of $G_m$ on $n=r^m$ points of $V_m$. Consider the corresponding embedding $\phi_m:V_m\into \SSS_n$---this comes from the regular action of $V_m$ on itself. We consider subgroups of $V_m$ as subgroups of $\SSS_n$ via the embedding $\phi_m$.

Let $e_1,\dots,e_m$ be the standard basis of $V_m=\F_r^m$, and $a\in \F_r$. We denote 
\begin{align*}
V_m(a)&:=\{b_1e_1+\dots+b_{m-1}e_{m-1}+ae_m\mid b_1,\dots,b_{m-1}\in\F_r\}\subseteq V_m,
\\
A&:=\{be_m\mid b\in\F_r\}\leq V_m,
\\
n'&:=r^{m-1}=n/r.
\end{align*}
We identify $V_{m-1}$ with $V_m(0)$ and $A$ with $(\F_r,+)$. 
Note that 
$
V_m=V_{m-1}\times A. 
$

For each $a\in A=\F_r$, let $\Si^a\cong \SSS_{n'}$ be the symmetric group on $V_m(a)$. We have a natural embedding 
$$
P:={\mathop{\scalebox{1.5}{\raisebox{-0.2ex}{$\times$}}}}_{a\in A}\,\Si^a\cong \SSS_{n'}^{\times r}\,\,\into\,\, \SSS_n 
$$
as a parabolic subgroup. 
Note that $V_{m-1}$ acts on each $V_m(a)$ regularly, so $V_{m-1}$ is embedded into $P$ diagonally via ${\mathop{\scalebox{1.5}{\raisebox{-0.2ex}{$\times$}}}}_{a\in A}\,\phi_{m-1}$. The group $A$ acts on the components $\Si^a$ of $P$ via conjugation:
$$
b\Si^ab^{-1}=\Si^{a+b}\qquad(a,b\in A). 
$$
Let 
\begin{equation}\label{EW}
W:=\langle P,A\rangle = P\rtimes A\cong \SSS_{n'}\wr A\leq \SSS_n.
\end{equation}
As $V_{m-1}$ is a subgroup of $P$, we have that 
$V_m=V_{m-1}\times A$ is a subgroup of $W=P\rtimes A$. 

We now describe irreducible $\F W$-modules. For this we consider the {\em $p$-regular $A$-multipartitions}, i.e. tuples $\bmu=(\mu^a)_{a\in A}$ such that $\mu^a\in\Par_p(n')$ for all $a\in A$. To any such $\bmu$, we associate the $\F P$-module \,
$
\boxtimes_{a\in A}D^{\mu^a}.
$ 
Elements of $\boxtimes_{a\in A}D^{\mu^a}$ are linear combinations of pure tensors of the form $\otimes_{a\in A} d_a$ with $d_ a\in D^{\mu_a}$ for all $a\in A$. Define 
$$
M(\bmu):=\ind_{P}^W (\boxtimes_{a\in A} D^{\mu^a}).
$$
Elements of $M(\bmu)$ are linear combinations of elements of the form $b\otimes (\otimes_{a\in A}d_a)$, where $b\in A$ and $d_a\in D^{\mu_a}$ for all $a\in A$. Recalling that $V_m=V_{m-1}\times A$  and considering $\F A$ as a left regular $\F A$-module, we have
\begin{equation}\label{E290317}
M(\bmu)\da_{V_{m}}=M(\bmu)\da_{V_{m-1}\times A}
\cong (\otimes_{a\in A}D^{\mu^a}\da_{V_{m-1}})\boxtimes \F A. 
\end{equation}

We say that a $p$-regular $A$-multipartition $\bmu=(\mu^a)_{a\in A}$ is {\em constant} (with value $\mu$) if there exists $\mu\in\Par_p(n')$ such that 
$\mu^a=\mu$ for all $a\in A$. Let $\bmu=(\mu)_{a\in A}$ be a  constant $A$-multipartition with value $\mu$. For any linear character $\al:A\to \F$, we define a $\F W$-module $M_\al(\mu)$ by extending the $P$-action on $\boxtimes_{a\in A} D^\mu$ to $W=P\rtimes A$-module via 
$$
b(\otimes_{a\in A} d_a)=\al(b)(\otimes_{a\in A} d_{a+b})\qquad(b\in A).
$$

The following result follows from Clifford theory:

\begin{Lemma} 
If $M$ is an irreducible $\F W$-module then it is isomorphic to a module of one of the following two types:
\begin{enumerate}
\item[{\rm (1)}] $M_{\al}(\mu)$ for some $\mu\in\Par_p(n')$ and some linear character $\al:A\to \F$;
\item[{\rm (2)}] $M(\bmu)$ for some non-constant $p$-regular $A$-multipartition $\bmu$. 
\end{enumerate}
Conversely, all modules of the forms {\rm (1)} and {\rm (2)} are irreducible, and the only isomorphisms between them are  $M(\bmu)\cong M(\bnu)$ if there exists $b\in A$ such that $\nu^a=\mu^{a+b}$ for all $a\in A$. 
\end{Lemma}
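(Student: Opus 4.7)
The plan is to apply Clifford theory to the normal subgroup $P \trianglelefteq W$ with cyclic prime-order quotient $W/P \cong A$, using the standard classification of irreducibles over direct products.

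First, I would observe that since $\F$ is algebraically closed and $P \cong \SSS_{n'}^{\times r}$ is a direct product of $r$ copies of $\SSS_{n'}$, the irreducible $\F P$-modules are exactly the outer tensor products $N(\bmu) := \boxtimes_{a \in A} D^{\mu^a}$ indexed by $p$-regular $A$-multipartitions $\bmu = (\mu^a)_{a \in A}$. The action of $A$ on $P$ by conjugation permutes the factors via $b\Si^a b^{-1} = \Si^{a+b}$, and hence $A$ acts on isomorphism classes of simple $\F P$-modules by $b \cdot N(\bmu) \cong N(\bmu^{(b)})$, where $\mu^{(b),a} := \mu^{a-b}$. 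The $A$-orbit of $N(\bmu)$ has size $1$ if $\bmu$ is constant and, since $|A|=r$ is prime, size $r$ otherwise.

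Next, let $M$ be an irreducible $\F W$-module and let $N(\bmu)$ be a simple $\F P$-submodule of $M\da_P$. By Clifford's theorem, $M\da_P$ is the direct sum of the $A$-conjugates of $N(\bmu)$, each occurring with the same multiplicity, and $M \cong \ind_T^W N'$ for some irreducible $\F T$-module $N'$ extending the action on $N(\bmu)$, where $T$ is the stabilizer in $W$ of $N(\bmu)$. There are two cases. If $\bmu$ is non-constant, then $T = P$ (the $A$-orbit has full size $r$), so $N' = N(\bmu)$ and $M \cong \ind_P^W N(\bmu) = M(\bmu)$, which is then automatically irreducible. If $\bmu$ is constant with value $\mu$, then $T = W$ and one must extend $N(\bmu) = (D^\mu)^{\boxtimes r}$ from $P$ to $W$; the obvious formula $b \cdot (\otimes_a d_a) := \al(b) \otimes_a d_{a+b}$ gives a well-defined extension for every linear character $\al: A \to \F^\times$, and hence $M \cong M_\al(\mu)$. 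Since $p \neq r$ and $\F$ is algebraically closed, $A \cong \CCC_r$ has exactly $r$ distinct linear characters over $\F$, and all extensions are accounted for by Gallagher's theorem: $M_\al(\mu) \not\cong M_{\al'}(\mu)$ for $\al \neq \al'$, because they differ by tensoring with the inflation of $\al/\al'$, which is a nontrivial one-dimensional $\F W$-module.

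For the converse, I would verify that each $M(\bmu)$ with $\bmu$ non-constant and each $M_\al(\mu)$ is indeed irreducible: the former by the Mackey-type criterion (the stabilizer of the constituent $N(\bmu)$ in $M(\bmu)\da_P$ is exactly $P$), the latter because $M_\al(\mu)\da_P = N(\bmu)$ is already simple. Finally, for the isomorphism classification: two modules of types (1) and (2) are never isomorphic since $M_\al(\mu)\da_P$ is simple while $M(\bmu)\da_P$ is not; within type (1), $M_\al(\mu) \cong M_{\al'}(\mu')$ forces $\mu = \mu'$ and $\al = \al'$ as noted; and within type (2), $M(\bmu) \cong M(\bnu)$ iff they share a common $P$-constituent, i.e.\ iff $\bnu$ lies in the $A$-orbit of $\bmu$, which translates to the stated condition $\nu^a = \mu^{a+b}$ for some $b \in A$.

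No real obstacle is expected, as the argument is entirely formal Clifford theory; the only delicate point is confirming that the hypothesis $r \neq p$ ensures $A$ has enough linear characters over $\F$ so that every extension of a constant $N(\bmu)$ arises as some $M_\al(\mu)$, which is immediate from $\gcd(r,p) = 1$ and algebraic closedness of $\F$.
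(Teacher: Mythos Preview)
Your proposal is correct and is precisely the Clifford-theoretic argument the paper has in mind; the paper's own proof is the single sentence ``This follows from Clifford theory,'' and you have simply spelled out the details. No differences to report.
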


In the next two lemmas, we study $V_m$-invariants in irreducible $\F W$-modules. 

\begin{Lemma} \label{LI} 
Let $M$ be an irreducible $\F W$-module of the form $M=M_{\al}(\mu)$ for some $\mu\in \Par_p(n')$. Then $M^{V_m}=0$ if and only if $\al\neq 0$ and one of the following conditions holds:
\begin{enumerate}
\item[{\rm (i)}] $D^\mu\in \T_{n'}$.
\item[{\rm (ii)}] $D^\mu\in \N_{n'}$ and either $r=2$, or $r=3$ and $m=2$.
\end{enumerate} 
\end{Lemma}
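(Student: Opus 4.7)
The plan is first to identify $M_\al(\mu)^{V_m}$ with an explicit isotypic subspace of a tensor product, and then to pin down $\mu$ via Fourier analysis on the $p'$-group $V_{m-1}$. On the underlying vector space $\boxtimes_{a\in A} D^\mu$ of $M_\al(\mu)$, the subgroup $V_{m-1}\subset V_m$ acts via the diagonal embedding while $A$ cycles the tensor factors with a scalar twist by $\al$, so
\[ M_\al(\mu)^{V_m} = \bigl\{ x \in N \mid \pi_b(x) = \al(b)^{-1} x \text{ for all } b \in A \bigr\}, \]
where $N := \bigl((D^\mu\dar_{V_{m-1}})^{\otimes r}\bigr)^{V_{m-1}}$ and $\pi$ denotes the cyclic-permutation action of $A$ on the $r$ tensor factors. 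Thus $M_\al(\mu)^{V_m}$ is the $\al^{-1}$-isotypic subspace of $N$ for $A$, whose dimension I would compute by a trace formula.

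Since $r \neq p$, the $p'$-group $V_{m-1}$ acts semisimply on $U := D^\mu\dar_{V_{m-1}}$, giving a weight-space decomposition $U = \bigoplus_{\xi \in X_{m-1}} U_\xi$. Setting $S := \{\xi \mid U_\xi \neq 0\}$ one obtains
\[ N = \bigoplus_{\sum_i \xi_i = 0} U_{\xi_1}\otimes \cdots \otimes U_{\xi_r}, \]
with $A$ cycling the indices. Because $r$ is prime, the $A$-orbits on index tuples have size $1$ (the constant tuples $(\xi,\dots,\xi)$, which automatically sum to $0$ since $r\xi = 0$) or size $r$; the non-constant orbits contribute a copy of the regular representation of $A$ tensored with $U_{\xi_1}\otimes\cdots\otimes U_{\xi_r}$ and therefore appear in every $A$-isotypic component. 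A short trace computation then yields, for $\al \neq 0$,
\[ \dim M_\al(\mu)^{V_m} = \frac{\dim N - \dim U}{r} = \frac{1}{r}\Bigl(\sum_{\xi \in S} \bigl((\dim U_\xi)^r - \dim U_\xi\bigr) + C\Bigr), \]
where $C$ is the contribution from non-constant tuples $(\xi_i) \in S^r$ with $\sum \xi_i = 0$. This vanishes precisely when (a) every $U_\xi$ is at most one-dimensional and (b) no such non-constant tuple exists. For $\al = 0$, the symmetric tensors in $U_\xi^{\otimes r}$ for any $\xi \in S$ already furnish $A$-fixed vectors in $N$, so $M_0(\mu)^{V_m} \neq 0$; this settles the direction of the equivalence that rules out $\al = 0$.

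The main obstacle is to convert the combinatorial conditions (a) and (b) into the concrete list (i), (ii). The key rigidity input is that every nonzero element of $V_{m-1}$ acts fixed-point-freely of order $r$ on $V_m(0)$, hence has cycle type $(r^{n'/r})$ in $\SSS_{n'}$; consequently the Brauer character $\chi_\mu$ takes only two values on $V_{m-1}$, namely $\dim D^\mu$ at the identity and some constant $c$ elsewhere. Fourier inversion then gives $\dim U_0 = (\dim D^\mu + c(n'-1))/n'$ and $\dim U_\xi = (\dim D^\mu - c)/n'$ for $\xi \neq 0$; forcing each of these into $\{0,1\}$ as required by (a) leaves only three arithmetic possibilities: $\dim D^\mu = 1$ (so $D^\mu \in \T_{n'}$), $\dim D^\mu = n'-1$ with $c = -1$ and $S = X_{m-1}^\times$ (so $D^\mu \in \N_{n'}$), or $\dim D^\mu = n'$ with $S = X_{m-1}$. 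The last option is excluded by (b) via the non-constant tuple $(\xi,-\xi,0,\dots,0)$ with $\xi \neq 0$. In the first case both (a) and (b) are automatic, giving (i). In the second case (b) becomes the statement that no non-constant $r$-tuple in $(X_{m-1}^\times)^r$ sums to zero, which by Lemma~\ref{LPChars} holds precisely when $r = 2$ or $(r,m) = (3,2)$, giving (ii).
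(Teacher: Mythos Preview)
Your approach is genuinely different from the paper's and largely correct: whereas the paper produces explicit invariant vectors via the averaging formula $\sum_{b\in A}\al(b)(\otimes_a d_{a+b})$, you instead compute $\dim M_\al(\mu)^{V_m}$ exactly as the $\al^{-1}$-isotypic piece of $N$ under the cyclic $A$-action, and then pin down $\mu$ by Fourier analysis on the $p'$-group $V_{m-1}$. The trace calculation (each non-identity cyclic shift on $U_\xi^{\otimes r}$ has trace $\dim U_\xi$, and non-constant weight-tuples give free $A$-orbits) is correct, and the resulting characterisation as ``(a) each $\dim U_\xi\le 1$ and (b) no non-constant zero-sum $r$-tuple in $S$'' is clean. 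Your rigidity step---all nonzero elements of $V_{m-1}$ are $\SSS_{n'}$-conjugate, so the Brauer character is constant off the identity and hence $\dim U_\xi$ takes at most two values---is exactly the right observation and is more conceptual than the paper's case-by-case construction.

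There is, however, a genuine gap when $r=2$. You exclude the possibility $\dim D^\mu=n'$ (i.e.\ $S=X_{m-1}$) via the ``non-constant'' tuple $(\xi,-\xi,0,\dots,0)$. But for $r=2$ this tuple has only two entries and, since $X_{m-1}$ has exponent $2$, it collapses to the constant pair $(\xi,\xi)$. In fact for $r=2$ the condition $\xi_0+\xi_1=0$ forces $\xi_1=\xi_0$, so \emph{no} non-constant zero-sum pair exists and your condition (b) is vacuous. Thus for $r=2$ your argument as written does not rule out $(d_0,d_1)=(1,1)$. The fix is the same input you already need for your parenthetical ``(so $D^\mu\in\N_{n'}$)'' in the $d=n'-1$ case: by \cite[Theorem~6]{JamesDim}, since $p\nmid n'$, any nontrivial $D^\mu$ with $\dim D^\mu\le n'$ lies in $\N_{n'}$, so in particular $\dim D^\mu=n'$ does not occur. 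This is precisely what the paper invokes at the corresponding point of its $r=2$ argument. Once you cite James here, your proof is complete and arguably more uniform than the paper's, since your dimension formula simultaneously handles both the ``if'' and ``only if'' directions.
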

\begin{proof}
The `if'-part is an explicit check. 
For the `only-if'-part, if $\al=0$, pick a non-zero $d\in D^{\mu}_\xi$ for some 
$\xi\in X_{m-1}$. Then, using (\ref{EqKills}), it is easy to see that  $\otimes_{a\in A}d\in M^{V_m}\setminus\{0\}$. Now let $\al\neq 0$. Suppose we are given the following data: 
\begin{enumerate}
\item[(a)] characters $\{\xi_a\in X_{m-1}\mid a\in A\}$ with  $\sum_{a\in A}\xi_a=0$;
\item[(b)] non-zero vectors $\{d_a\in D^\mu_{\xi_a}\mid a\in A\}$, not all proportional to each other.
\end{enumerate}
Then it is easy to see that 
$$
\sum_{b\in A}\al(b)(\otimes_{a\in A} d_{a+b})\in M^{V_m}\setminus\{0\}.
$$

In view of (i), we may assume that $D^\mu \not \in \T_{n'}$. So $\dim D^\mu \geq 2$. 
If $D^\mu_\xi= 0$ for all $\xi\in X_{m-1}^\times$, then $D^\mu_0=D^\mu$, and we can take $\xi_a=0$ for all $a$ to satisfy (a) and pick vectors $d_a\in D^\mu_0$, not all proportional to each other, to satisfy (b). Thus we may assume that $D^\mu_\xi\neq 0$ for some $\xi\in X_{m-1}^\times$, in which case we have $D^\mu_\xi\neq 0$ for all $\xi\in X_{m-1}^\times$. Now, we use Lemma~\ref{LPChars} to find characters $\xi_a\in X_{m-1}^\times$ satisfying (a), and by taking any non-zero $d_a\in D^\mu_{\xi_a}$ we also satisfy (b). Lemma~\ref{LPChars} is applicable unless $r=2$, or $r=3$ and $m=2$, so these cases need to be considered separately. In the case $r=3$ and $m=2$ there is actually nothing to check in view of the exception (ii) since for $\SSS_3$ all irreducible modules are in $\NT_3$. 

Let $r=2$. If there is $\xi\in X_{m-1}$ with $\dim D^\mu_\xi\geq 2$, we set $\xi_a:=\xi$ for all $a$ to satisfy (a), cf. (\ref{EqKills}). 
Then pick linearly independent vectors $x_1,x_2\in D^\mu_\xi$ and set $d_a:=x_1$ for $a\in A_1$ and $d_a=x_2$ for $a\in A_2$, where $A=A_1\sqcup A_2$ for some non-empty sets $A_1,A_2$. The vectors $d_a$ satisfy (b). Thus we may assume that $\dim D^\mu_{\xi}\leq 1$ for all $\chi\in X_{m-1}$, i.e. $\dim D^\mu\leq n'$. Using \cite[Theorem 6(ii)]{JamesDim}, we deduce that $D^\mu\in\N_{n'}$, which is exception (ii). 
\end{proof}

\begin{Lemma} \label{LII} 
Let $M$ be an irreducible $\F W$-module of the form $M=M(\bmu)$ for some non-constant $p$-regular $A$-multipartition $\bmu$. Then $M^{V_m}=0$ if and only one of the following two conditions holds: 
\begin{enumerate}
\item[{\rm (i)}] there exists $b\in A$ such that $(D^{\mu^b})^{V_{m-1}}=0$ and $D^{\mu^a}\in \T_{n'}$ for all $a\neq b$. 
\item[{\rm (ii)}] $r=2$, $m=3$, $p> 3$, and there exists $b\in A$ such that $D^{\mu^b}\in \N_4$
and $\mu^a=(2,2)$ for $a\neq b$. 
\end{enumerate} 
\end{Lemma}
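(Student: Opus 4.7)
The plan is to use formula (\ref{E290317}) to translate the vanishing of $M(\bmu)^{V_m}$ into a combinatorial condition on $V_{m-1}$-weight spaces, and then analyze this condition case by case. Since $(\F A)^A$ is one-dimensional (being the regular-representation invariants), (\ref{E290317}) gives
\begin{equation*}
M(\bmu)^{V_m}=0 \iff \Bigl(\bigotimes_{a\in A} D^{\mu^a}\da_{V_{m-1}}\Bigr)^{V_{m-1}}=0,
\end{equation*}
with $V_{m-1}$ acting diagonally on the tensor product. Writing the weight decomposition $D^{\mu^a}\da_{V_{m-1}}=\bigoplus_{\xi\in X_{m-1}} D^{\mu^a}_\xi$ and setting $W_a:=\{\xi\in X_{m-1} : D^{\mu^a}_\xi\neq 0\}$, the diagonal action on $\bigotimes_a D^{\mu^a}_{\xi_a}$ has character $\sum_a\xi_a$, so the condition becomes: no tuple $(\xi_a)\in\prod_a W_a$ satisfies $\sum_a\xi_a=0$.

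For the ``if'' direction, the key observation is that for each $a\neq b$ in both (i) and (ii) we have $W_a=\{0\}$. In (i), $D^{\mu^a}\in\T_{n'}$ is one-dimensional and $V_{m-1}\leq\AAA_{n'}$ except when $(r,m)=(2,2)$, where the non-constant hypothesis on $\bmu$ combined with the condition $0\notin W_b$ forces $\mu^a=(n')$ to be the trivial partition anyway. In (ii), the module $D^{(2,2)}$ has the normal Klein four subgroup $V_2\trianglelefteq\SSS_4$ in its kernel. Hence any tuple $(\xi_a)\in\prod_a W_a$ summing to zero would force $\xi_b=0\notin W_b$, a contradiction.

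For the ``only if'' direction, suppose $M(\bmu)^{V_m}=0$. The all-zero tuple forces $D^{\mu^b}_0=0$ for some $b$, which we fix. The main step is to show $W_a=\{0\}$ for each $a\neq b$: assuming $\eta\in W_a\setminus\{0\}$, one constructs a valid zero-sum tuple by using Lemma \ref{LPChars} for $r\geq 3$ (producing nontrivial characters summing to zero and placing them across the various $W_c$), and by analyzing the disjointness condition $W_a\cap W_b=\emptyset$ for $r=2$; in both regimes this rules out all configurations except those appearing in (i) and (ii). Once $W_a=\{0\}$, the module $D^{\mu^a}$ has the transitive subgroup $V_{m-1}$ in its kernel, so for $n'\geq 5$ the normal closure of $V_{m-1}$ in $\SSS_{n'}$ contains $\AAA_{n'}$, forcing $\dim D^{\mu^a}=1$ and $\mu^a\in\T_{n'}$, which is case (i). The small cases $n'\leq 4$ are inspected individually: the only $V_{m-1}$-trivial $\SSS_{n'}$-module outside $\T_{n'}$ is $D^{(2,2)}$, occurring exactly when $(r,m)=(2,3)$ and $p>3$ (so that $D^{(2,2)}$ is two-dimensional and not in the one-dimensional set $\T_4$). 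In this exceptional situation the constraint $0\notin W_b$ together with $W_a=\{0\}$ for all $a\neq b$ forces $W_b\subseteq X_{m-1}^\times$, which (since $\SSS_4$-modules whose $V_2$-weights are exactly $X_{m-1}^\times$ are precisely $D^{(3,1)}$ and $D^{(2,1^2)}$) places $\mu^b\in\N_4$, giving case (ii). The main obstacle is the careful case analysis for $r=2$, where the zero-sum condition reduces to the weaker disjointness $W_a\cap W_b=\emptyset$, and one must rule out the possibility of non-trivial disjoint weight configurations falling outside (i) and (ii).
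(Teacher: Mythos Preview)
Your proof is correct and takes essentially the same route as the paper's: both reduce via \eqref{E290317} to the vanishing of diagonal $V_{m-1}$-invariants in $\bigotimes_a D^{\mu^a}$, then construct zero-sum weight tuples for $r\geq 3$ and analyze disjointness of weight supports for $r=2$. One small correction: your appeal to Lemma~\ref{LPChars} is not quite what is needed (and would fail its hypothesis when $r=3,\,m=2$, since you would be applying it to $X_{m-1}=X_1$); what your argument actually uses --- and what the paper invokes without comment --- is the simpler fact that for $r\geq 3$ every element of $X_{m-1}$ can be written as a sum of two nonzero elements, so that after fixing arbitrary $\xi_c\in W_c$ for $c\neq a,b$ one can always choose $\xi_a,\xi_b\in X_{m-1}^\times$ completing a zero-sum tuple.
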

\begin{proof}
We denote by $N$ the $\F V_{m-1}$-module $\otimes_{a\in A} D^{\mu^a}\da_{V_{m-1}}$. 
By (\ref{E290317}), we have 
$$M^{V_m}\cong N^{V_{m-1}}\boxtimes (\F A)^A\cong N^{V_{m-1}}.$$
This gives the `if'-part, for if there exists $b\in A$ such that $(D^{\mu^b})^{V_{m-1}}=0$ and $D^{\mu^a}\in \T_{n'}$ for all $a\neq b$, then $N\cong D^{\mu^b}\da_{V_{m-1}}$, and so $N^{V_{m-1}}=0$. 

For the `only-if-part', assume first that $r\neq 2$. If there is at most one $b\in A$ with $D^{\mu_b}\not\in \T_n$ the result easily follows. Suppose there are $b\neq c$ in $A$ such that $D^{\mu^b},D^{\mu^c}\not\in \T_{n'}$. Then $D^{\mu^b}_\xi$ and $D^{\mu^c}_\xi$ are non-zero for all $\xi\in X_{m-1}^\times$. For each $a\in A\setminus\{ b,c\}$, take $d_a\in D^{\mu^a}_{\xi_a}$ for some $\xi_a\in X_{m-1}$. Now, there exist $\xi_b,\xi_c\in X_{m-1}^\times$ such that $\sum_{a\in A}\xi_a=0$. Pick non-zero $d_b\in D^{\mu^b}_{\xi_b}$ and $d_c\in D^{\mu^c}_{\xi_c}$. Then $\otimes_{a\in A} d_a$ is a non-zero $V_{m-1}$-invariant vector of $N$. 

Now, let $r=2$. Note that $N^{V_{m-1}}\neq 0$ if and only if there is a character $\xi\in X_{m-1}$ such that $D^{\mu^0}_\xi$ and $D^{\mu^1}_\xi$ are non-zero. This is not the case exactly when $(D^{\mu^0})^{V_{m-1}}=D^{\mu^0}$ and $(D^{\mu^1})^{V_{m-1}}=0$, or $(D^{\mu^0})^{V_{m-1}}=0$ and $(D^{\mu^1})^{V_{m-1}}=D^{\mu^1}$. But the equality $(D^{\mu})^{V_{m-1}}=D^\mu$ holds if and only if $D^\mu\in \T_{n'}$, or $m=3$, $p> 3$, and $D^{\mu}=D^{(2,2)}$, cf. for example \cite[Lemma 5.5]{BK}. 
\end{proof}

\subsection{Invariants in modules over symmetric groups}
Recall that we are 
considering the embeddings $V_m\leq G_m\leq \SSS_n$ for $n=r^m$ and assuming that $p \neq r$. 

\begin{Lemma} \label{LNat} 
If $D\in \N_n$, then $D^{V_m}=0$. 
\end{Lemma}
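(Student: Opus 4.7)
The plan is to handle the two modules in $\N_n = \llbracket D^{(n-1,1)}\rrbracket$ together by first reducing to $D^{(n-1,1)}$, and then to use the very concrete description of this module as a submodule of the natural permutation module.

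First I would check that $V_m \leq \AAA_n$, so that the sign character becomes trivial upon restriction to $V_m$ and therefore $(D^{(n-1,1)} \otimes \sgn)^{V_m} \cong (D^{(n-1,1)})^{V_m}$. Indeed, in the regular action of $V_m$ on itself, a nonzero $a \in V_m$ is fixed-point-free: for $r$ odd it is a product of $r$-cycles (each a product of an even number of transpositions), and for $r = 2$ with $m \geq 2$ it is a product of $2^{m-1}$ transpositions, with $2^{m-1}$ even. So $V_m \leq \AAA_n$ in all cases considered, and the case of $D^{(n-1,1)}\otimes\sgn$ follows from that of $D^{(n-1,1)}$.

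Next, since $r \neq p$, we have $p \nmid n = r^m$. Therefore $D^{(n-1,1)} = S^{(n-1,1)}$ coincides with the sum-zero subspace of the natural permutation module $M^{(n-1,1)}$ of $\SSS_n$ on the $n$ points of $V_m$. Any $V_m$-invariant vector in $M^{(n-1,1)}$ is a function on $V_m$ constant on $V_m$-orbits, but $V_m$ acts transitively on itself, so $(M^{(n-1,1)})^{V_m}$ is spanned by the all-ones vector $(1,1,\dots,1)$, which has coordinate sum $n \neq 0$ in $\F$. Intersecting with the sum-zero subspace gives
\[
(D^{(n-1,1)})^{V_m} = S^{(n-1,1)} \cap (M^{(n-1,1)})^{V_m} = 0,
\]
completing the proof.

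There is no real obstacle here: the statement is essentially a one-line computation once one unpacks $\N_n$, checks that $V_m \subseteq \AAA_n$, and uses the identification of $D^{(n-1,1)}$ with the augmentation submodule (valid precisely because $p \nmid n$). This lemma is really the base case; the substantive work comes later, when combining it with Lemmas~\ref{LI} and \ref{LII} to recognize $V_m$-invariants inside modules restricted to the wreath product $W$ from (\ref{EW}).
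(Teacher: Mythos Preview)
Your proof is correct but follows a different route from the paper's. The paper argues uniformly for any $D\in\N_n$: since $\SSS_n$ acts faithfully on $D$, some non-trivial linear character of $V_m$ occurs in $D\da_{V_m}$; since $GL_m(r)\leq N_{\SSS_n}(V_m)$ permutes the $n-1$ non-trivial characters of $V_m$ transitively, all of them occur; and since $\dim D=n-1$ (as $p\nmid n=r^m$), there is no room for the trivial character. This handles $D^{(n-1,1)}$ and $D^{(n-1,1)}\otimes\sgn$ simultaneously without checking $V_m\leq\AAA_n$ or invoking the augmentation model. Your approach, by contrast, makes the reduction to $D^{(n-1,1)}$ explicit via $V_m\leq\AAA_n$ and then computes invariants inside $M^{(n-1,1)}$ by hand; this is more concrete and entirely self-contained, while the paper's argument is shorter and applies verbatim to any faithful $(n-1)$-dimensional $\SSS_n$-module.
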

\begin{proof}
Since the action of $\SSS_n$ on $D$ is faithful, $D$ affords a non-trivial character of $V_m$, hence $D$ affords all $n-1$ non-trivial characters of $V_m$, hence the trivial character does not appear by dimensions. 
\end{proof}

\begin{Lemma} \label{LGAP} 
Let $p=3$ and $r=2$. 
\begin{enumerate}
\item[{\rm (i)}] Let $m=3$, i.e. $n=8$. Then $(D^\la)^{V_3}=0$ if and only if $D^\la\in \N_8\cup \llbracket D^{(6,1,1)}, D^{(5,3)}\rrbracket$. 

\item[{\rm (ii)}] Let $m=4$, i.e. $n=16$. Then $(D^\la)^{V_4}=0$ if and only if $D^\la\in\N_{16}\cup \llbracket D^{(14,1,1)}\rrbracket$. 
\end{enumerate}
\end{Lemma}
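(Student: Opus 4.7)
The plan is as follows. Let $m \geq 2$ and $n = 2^m$. Since $V_m$ acts regularly on itself, each nontrivial $v \in V_m$ has order $2$ and moves every point, hence has cycle type $(2^{n/2})$ in $\SSS_n$. As $n/2 = 2^{m-1}$ is even (recall $m \geq 2$), such elements are even permutations, so $V_m \subseteq \AAA_n$; in particular $D^\la$ and $D^\la \otimes \sgn$ have the same restriction to $V_m$, and it suffices to verify the claim on one representative of each Mullineux pair. Moreover, every element of $V_m$ has order coprime to $p = 3$, so the $3$-Brauer character $\chi_\la$ of $D^\la$ is defined on all of $V_m$, and the standard invariant formula gives
\[
\dim (D^\la)^{V_m} \;=\; \frac{1}{2^m}\sum_{v \in V_m}\chi_\la(v) \;=\; \frac{\dim D^\la + (2^m-1)\chi_\la(\sigma_m)}{2^m},
\]
where $\sigma_m \in \SSS_n$ is any fixed element of cycle type $(2^{n/2})$. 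Thus $(D^\la)^{V_m} = 0$ if and only if
\begin{equation}\label{EInvCrit}
(2^m-1)\,\chi_\la(\sigma_m) \;=\; -\dim D^\la.
\end{equation}

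For the cases $D^\la \in \T_n$ the invariant space is obviously nonzero, and for $D^\la \in \N_n$ the invariant space vanishes by Lemma~\ref{LNat}. It therefore remains to inspect all remaining Mullineux pairs of $3$-regular partitions of $n$, compute $\dim D^\la$ and $\chi_\la(\sigma_m)$, and check condition \eqref{EInvCrit}. The required data are available either directly from the $3$-modular character tables of $\SSS_8$ and $\SSS_{16}$ in \cite{GAP}, or equivalently by combining the $3$-modular decomposition matrices of $\SSS_n$ with the ordinary Specht character values at cycle type $(2^{n/2})$ (since $\chi_\la(\sigma_m) = \sum_\mu d_{\mu\la}^{-1}\,\chi_\mu^\C(\sigma_m)$ with $\chi_\mu^\C$ the ordinary character of $S^\mu_\C$).

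A case-by-case inspection then shows that besides the modules in $\N_n$, condition \eqref{EInvCrit} is satisfied precisely by $D^{(6,1^2)}$ and $D^{(5,3)}$ when $n=8$, and by $D^{(14,1^2)}$ when $n=16$; all other $3$-regular $\la$ yield a strictly positive invariant dimension. Combined with the Mullineux-pair reduction above, this produces exactly the lists displayed in (i) and (ii).

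The main obstacle is not conceptual but bookkeeping: for $n=16$ the number of $3$-regular partitions is sizable, which is why the label of the lemma flags the verification as a \textsf{GAP} computation; the formula \eqref{EInvCrit} reduces the problem to a single $\F$-valued check per Mullineux pair, so the argument is essentially a routine table lookup.
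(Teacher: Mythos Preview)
Your proof is correct and follows essentially the same approach as the paper: both arguments use that all nontrivial elements of $V_m$ lie in a single $\SSS_n$-conjugacy class (of cycle type $(2^{n/2})$), reduce the vanishing of invariants to the Brauer character condition $\chi_\la(\sigma_m)=-\dim D^\la/(2^m-1)$, and then inspect the $3$-Brauer character tables of $\SSS_8$ and $\SSS_{16}$ in \cite{GAP}. Your explicit remark that $V_m\subseteq\AAA_n$ (so that it suffices to check one representative of each Mullineux pair) is a helpful addition that the paper leaves implicit.
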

\begin{proof}
(i) In \cite{GAP}, 
\color{black}
any nontrivial element $t$  
in $V_3$ is of class 2A. If $\varphi$ is the Brauer character 
of an irreducible 3-modular module $D$ of $\SSS_8$, then $D^{V_m}=0$ if and only if 
$\varphi(t) = -\varphi(1)/7$. Now inspecting the $3$-Brauer character table in \cite{GAP} of $\SSS_8$, 
we see that there are exactly six possibilities for such $\varphi$, two for each dimension 
$7$, $21$, and $28$. These correspond, respectively, to modules in $\N_8$, $\llbracket D^{(6,1,1)}\rrbracket$, and $\llbracket D^{(5,3)}\rrbracket$. 

(ii) We apply the same argument as in the case $m=3$. Now $t \in V_4 \smallsetminus \{1\}$ 
is of class 2C, and the condition is $\varphi(t) = -\varphi(1)/15$. It follows by checking 
the $3$-Brauer character table of $\SSS_{16}$ in \cite{GAP} that there are exactly four possibilities for such $\varphi$, two of dimension $15$  and two of dimension $105$. These correspond, respectively, to modules in $\N_{16}$ and $\llbracket D^{(14,1,1)}\rrbracket$. \end{proof}

\begin{Lemma} \label{LGAP1} 
Let $p=2$, $r=3$, and $m=2$, i.e. $n=9$. Then $(D^\la)^{V_2}=0$ if and only if $D^\la\cong \N_9\cup D^{(5,4)}$. 
\end{Lemma}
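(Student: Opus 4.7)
The plan is to mirror the argument used in Lemma \ref{LGAP}: reduce the vanishing of $(D^\la)^{V_2}$ to a single Brauer character equation, then verify it by inspection of the $2$-Brauer character table of $\SSS_9$ in \cite{GAP}.

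First I would observe that $V_2 \cong \CCC_3^2$ acts regularly on the $9$ points, so every nontrivial element $t\in V_2$ is fixed-point-free of order $3$ on $\{1,\dots,9\}$, hence has cycle type $(3^3)$ in $\SSS_9$. Since all nontrivial elements of $V_2$ are conjugate under $GL_2(3)\le \Aut(V_2)$ and a fortiori have the same $\SSS_9$-cycle type, they take the same value under any Brauer character of $\SSS_9$. Letting $\varphi$ denote the Brauer character of $D^\la$ and noting that $p=2\nmid |V_2|=9$, averaging gives
$$\dim (D^\la)^{V_2}\;=\;\frac{1}{|V_2|}\sum_{v\in V_2}\varphi(v)\;=\;\frac{\varphi(1)+8\varphi(t)}{9}.$$
Therefore $(D^\la)^{V_2}=0$ if and only if $\varphi(t)=-\varphi(1)/8$, where $t$ lies in the class of elements of $\SSS_9$ of cycle type $(3^3)$.

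Next I would run through the eight $2$-regular partitions of $9$, namely $(9)$, $(8,1)$, $(7,2)$, $(6,3)$, $(5,4)$, $(6,2,1)$, $(5,3,1)$, $(4,3,2)$, and read off $\varphi(1)$ and $\varphi(t)$ from the $2$-modular Brauer character table of $\SSS_9$ in \cite{GAP}. The equation $\varphi(t)=-\varphi(1)/8$ is satisfied in exactly two cases: the natural module $D^{(8,1)}$ of dimension $8$ (with $\varphi(t)=-1$, which also follows directly from the fact that $D^{(8,1)}$ is the heart of the permutation module and $t$ has no fixed points), and the basic spin module $D^{(5,4)}$ of dimension $8$. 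Since $p=2$, the Mullineux involution is trivial, so $\N_9=\llbracket D^{(8,1)}\rrbracket=\{D^{(8,1)}\}$, and the conclusion reads $(D^\la)^{V_2}=0 \iff D^\la\in \N_9\cup\{D^{(5,4)}\}$, as claimed.

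There is essentially no conceptual obstacle: the only labor is the character-table inspection, exactly as in the proof of Lemma \ref{LGAP}. One could alternatively verify the value of $\varphi(t)$ on $D^{(5,4)}$ without GAP by invoking Lemma \ref{Lcycle} to deduce that a single $3$-cycle has only two eigenvalues on basic spin, and then computing $\varphi$ at the product $t$ of three disjoint $3$-cycles multiplicatively via the tensor factorization of the basic spin module under $\SSS_{3,3,3}$; but the \cite{GAP} check is by far the cleanest route.
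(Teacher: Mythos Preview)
Your approach is correct and essentially identical to the paper's: identify the conjugacy class of nontrivial elements of $V_2$ in $\SSS_9$, reduce the vanishing of invariants to the character equation $\varphi(t)=-\varphi(1)/8$, and check the $2$-Brauer table in \cite{GAP}. One slip: $D^{(5,4)}=D^{\be_9}$ has dimension $2^{\lfloor (9-1)/2\rfloor}=16$, not $8$ (so the relevant check is $\varphi(t)=-2$); the paper records the two solutions as having dimensions $8$ and $16$.
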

\begin{proof}
(i) In \cite{GAP}, the elements in $V_2\smallsetminus \{1\}$ are of class 3B. So, arguing as in the proof of Lemma~\ref{LGAP}, we get exactly two $2$-modular 
modules $W$ of $\SSS_9$ with $W^{V_2}=0$, of dimensions $8$ and $16$. These correspond, respectively, to modules in $N_9$ and $D^{(5,4)}$.
\end{proof}

Now we can prove our key technical result which develops \cite[Proposition~4.6]{KS}:

\begin{Proposition} \label{P110417_5} 
Let $p=2$ or $3$ and $m\geq 2$. Then $(D^\la)^{V_m}= 0$ if and only if one of the following happens: 
\begin{enumerate}
\item[{\rm (i)}] $D^\la\in\N_n$;
\item[{\rm (ii)}] $r=2$, $p=3$, $D^\la\in\llbracket D^{(n-2,1,1)}, D^{(5,3)}\rrbracket$;
\item[{\rm (iii)}] $r=3$, $p=2$, $D^\la\cong D^{(5,4)}$.
\end{enumerate}
\end{Proposition}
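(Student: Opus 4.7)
The strategy uses the wreath product embedding $V_m=V_{m-1}\times A\leq P\rtimes A=W\leq \SSS_n$ from~\eqref{EW}, combined with the classification of irreducible $\F W$-modules in Lemmas~\ref{LI} and~\ref{LII} and the branching recognition results Proposition~\ref{C080417} and Corollary~\ref{P110417}, all organized by induction on $m$. For the ``if'' direction, case~(i) follows from Lemma~\ref{LNat}. Case~(iii) is Lemma~\ref{LGAP1}, and the $m=3,4$ instances of case~(ii) are Lemma~\ref{LGAP}. For case~(ii) with $r=2$, $p=3$, $m\geq 5$ and $\la=(n-2,1,1)$ (or its sign twist), I would enumerate the $W$-composition factors of $D^\la\da_W$ via further restriction to $P=\SSS_{n'}\times\SSS_{n'}$, recognize each as a configuration permitted by Lemmas~\ref{LI} or~\ref{LII}, and use the inductive hypothesis to verify $(D^{\mu^b})^{V_{m-1}}=0$ for the distinguished factor, thereby forcing trivial $V_m$-invariants on every factor.

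For the ``only if'' direction, since $|V_m|=r^m$ is coprime to $p$ the restriction $D^\la\da_{V_m}$ is semisimple, so $(D^\la)^{V_m}=0$ if and only if every $W$-composition factor $M$ of $D^\la\da_W$ satisfies $M^{V_m}=0$. Lemmas~\ref{LI} and~\ref{LII} then constrain $M$: excluding the enumerated exceptions, $M$ is either $M_\al(\mu)$ with $\al\neq 0$ and $D^\mu\in\T_{n'}$, or $M(\bmu)$ with a unique distinguished index $b\in A$ satisfying $(D^{\mu^b})^{V_{m-1}}=0$ and $D^{\mu^a}\in\T_{n'}$ for $a\neq b$. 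Restricting further to $P=\SSS_{n'}^{\times r}$ (noting $M_\al(\mu)\da_P\cong (D^\mu)^{\boxtimes r}$ and $M(\bmu)\da_P$ is a sum of cyclic shifts of $\boxtimes_a D^{\mu^a}$), every composition factor $\boxtimes_a D^{\mu^a}$ of $D^\la\da_P$ has at most one tensor factor outside $\T_{n'}$, so Proposition~\ref{C080417} forces $D^\la\in\NT_n$; since $\T_n$ has full $V_m$-invariants, this gives $D^\la\in\N_n$, i.e.\ case~(i).

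The exceptional outcomes~(ii) and~(iii) come from the extra configurations in Lemma~\ref{LI}(ii) together with the inductive hypothesis. For $r=3$, $m=2$ the analysis collapses to Lemma~\ref{LGAP1}, yielding case~(iii). For $r=2$, $p=3$, Lemma~\ref{LI}(ii) additionally permits $M_\al(\mu)$ with $D^\mu\in\N_{n'}$, and inductively the distinguished factor of an $M(\bmu)$-type factor may lie in $\N_{n'}\cup\llbracket D^{(n'-2,1,1)},D^{(5,3)}\rrbracket$; the $D^{(5,3)}$ possibility, which only arises when $m-1=3$, must be excluded in the induction step by block or dimension considerations. The resulting $P$-profile of $D^\la\da_P$ then falls within the hypotheses of Corollary~\ref{P110417} for $m\geq 4$, while Lemma~\ref{LGAP}(i) handles $m=3$ directly; together these yield $D^\la\in\NT_n\cup\llbracket D^{(n-2,1,1)}\rrbracket$ (plus $\llbracket D^{(5,3)}\rrbracket$ when $n=8$), and excising $\T_n$ leaves exactly case~(ii). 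The main obstacle is the careful bookkeeping of these exceptional configurations from Lemma~\ref{LI}(ii) and verifying that the inductive hypothesis ``closes'' under the branching recognition step of Corollary~\ref{P110417} — which is precisely why that corollary was formulated with the tight hypothesis involving $D^{(n/2-2,1,1)}$.
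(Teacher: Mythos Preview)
Your overall architecture matches the paper's: reduce via Lemmas~\ref{LI} and~\ref{LII} to conditions on the $P$-composition factors, then apply the branching recognition results. Two points deserve comment.

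\textbf{The ``if'' direction for $\la=(n-2,1,1)$ with $r=2$, $p=3$, $m\geq 5$.} Your plan to enumerate the $W$-composition factors of $D^{(n-2,1,1)}\da_W$ and feed each through Lemmas~\ref{LI},~\ref{LII} would require an explicit branching computation that you have not carried out. The paper avoids this entirely: since $3\nmid n=2^m$, the module $D^{(n-2,1,1)}$ is the mod-$3$ reduction of the Specht module $S^{(n-2,1,1)}_{\C}$, and \cite[Lemma~5.6]{BK} already gives $(S^{(n-2,1,1)}_{\C})^{V_m}=0$; reducing modulo $3$ finishes the job in one line. Your route is in principle workable but considerably more labor.

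\textbf{The $D^{(5,3)}$ obstruction in the ``only if'' direction.} Here you have a genuine gap. You treat only $m=3$ as a base case (via Lemma~\ref{LGAP}(i)) and then attempt to run Corollary~\ref{P110417} for all $m\geq 4$. But at $m=4$ the inductive hypothesis at level $m-1=3$ allows $D^{\mu^b}\in\llbracket D^{(5,3)}\rrbracket$, and this lies outside the hypotheses of Corollary~\ref{P110417}; your proposed ``block or dimension considerations'' are not spelled out and there is no obvious quick argument. The paper's fix is simply to take \emph{both} $m=3$ and $m=4$ as base cases---Lemma~\ref{LGAP}(ii) already computes the $m=4$ case directly---so that the induction starts at $m\geq 5$. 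Then $m-1\geq 4$, the inductive hypothesis never produces $D^{(5,3)}$, and Corollary~\ref{P110417} applies cleanly. You cite Lemma~\ref{LGAP}(i) but overlook part~(ii); using it removes the obstruction with no extra work.
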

\begin{proof}
It follows from \cite[Lemma 5.6]{BK} that in the case $r=2$ and $p=0$, we have $(S^{(n-2,1,1)}_\C)^{V_m}=0$. Reducing modulo $3$, we deduce $(D^{(n-2,1,1)})^{V_m}= 0$. The rest of the ``if" part follows from Lemmas~\ref{LNat}, \ref{LGAP}, \ref{LGAP1}. 

For the ``only-if" part, recall that $V_m\leq W=P\rtimes A\leq \SSS_n$, cf. (\ref{EW}). By Lemmas~\ref{LI} and \ref{LII}, we have $(D^\la)^{V_m}= 0$ only if all composition factors  of $D^\la_P$ are of the form $\boxtimes_{a\in A} D^{\mu^a}$ satisfying one of the following conditions:
\begin{enumerate}
\item[${\tt (C_1)}$] there is $b\in A$ such that $D^{\mu^a}\in\T_{n'}$ for all $a\neq b$, and either $(D^{\mu^b})^{V_{m-1}}=0$ or $D^{\mu^b}\in\T_{n'}$;
\item[${\tt (C_2)}$] $\mu^a=\mu^b$ for all $a,b\in A$, $D^{\mu^a}\in\N_{n'}$ for all $a\in A$, and either $r=2$, or $r=3$ and $m=2$;
\end{enumerate}

Assume first that $p=2$. Then $r\neq 2$. If $(r,m)\neq (3,2)$, the restriction $D^\la_P$ only has composition factors $\boxtimes_{a\in A} D^{\mu^a}$ satisfying ${\tt (C_1)}$. By Proposition~\ref{C080417}, $D^\la\in\NT_n$. 
In the exceptional case $(r,m)= (3,2)$ use Lemma~\ref{LGAP1}.

Now, let $p=3$. Then $r\neq 3$. If $r\neq 2$, the restriction $D^\la_P$ only has composition factors $\boxtimes_{a\in A} D^{\mu^a}$ satisfying ${\tt (C_1)}$. By Proposition~\ref{C080417}, $D^\la\in\NT_n$. Let $r=2$. By Lemma~\ref{LGAP}, we may assume that $m\geq 5$. Then, by induction on $m$, we may assume that all composition factors $D^\mu\boxtimes D^\nu$ of $D^\la_{\SSS_{n/2}\times \SSS_{n/2}}$ satisfy one of the following three conditions: 
\begin{enumerate}
\item[{\rm (1)}] $D^\mu\cong D^\nu\in \N_{n/2}$, 
\item[{\rm (2)}] $D^\mu\in\T_{n/2}, D^\nu\in \NT_{n/2}\cup \llbracket D^{(n/2-2,1,1)}\rrbracket$,
\item[{\rm (3)}] $D^\nu\in\NT_{n/2}, D^\mu\in \N_{n/2}\cup \llbracket D^{(n/2-2,1,1)}\rrbracket$.
\end{enumerate}
By Corollary~\ref{P110417}, $D^\la\in\NT_n\cup\llbracket D^{(n-2,1,1)}\rrbracket$.
\end{proof}

\begin{Remark}
One can ask what could be an analogue of Proposition~\ref{P110417_5} in the case $m=1$, that is, which irreducible 
modules $D^\la$ of $\SSS_r$ in characteristic $p \neq r$ have no invariants on the cyclic subgroup $\CCC_r < \SSS_r$. Until now, this question
has been resolved only in the case $p =0$ (see \cite{Z}), and in the case $r/2 < p < r$ (see \cite[Lemma 3.2]{Th}). 
\end{Remark}

\subsection{Irreducible restrictions to affine permutation groups}
Let $G \leq \SSS_n$ be a primitive subgroup with a regular normal abelian subgroup;
or more generally, let $G \leq \SSS_n$ be any subgroup with a regular normal elementary abelian subgroup. Then, up to conjugacy, 
$G$ is a subgroup of the group $AGL_m(r)$ of all affine transformations of the affine  space $V_m=\F_r^m$ for a prime $r$. The group $AGL_m(r)$ acts naturally on $n=r^m$ points of $V_m$, which yields an embedding $G\leq \SSS_n$. 
Moreover, 
$AGL_m(r)\cong V_m \rtimes GL_m(r)$, and $G=V_m\rtimes H$ for $H\leq GL_m(r)$. 


The following theorem is the main result of the section. It develops \cite[Corollary 4.7]{KS}. 

\begin{Theorem} \label{TAGL} 
Let $p=2$ or $3$, $n \geq 5$, $H=\SSS_n$ or $\AAA_n$, and let $M$ be an irreducible $\F H$-module of dimension greater than $1$. Let 
$G < H$ be a subgroup that contains a regular normal, elementary abelian subgroup. Then $M\da_G$ is irreducible if and only if one of the following happens:
\begin{enumerate}
\item[{\rm (i)}] $M\da_{\AAA_n}\cong E^{(n-1,1)}$ and $G$ is $2$-transitive;
\item[\rm (ii)] $M\da_{\AAA_n}\cong E^{(n-2,1^2)}$, and of the following holds:
\begin{enumerate}
\item[{\rm (a)}] $p=3$, $G = AGL_m(2)$ with $n=2^m$;
\item[{\rm (b)}] $p=3$, $G = \CCC_2^4 \rtimes \AAA_7$ with $n=16$;
\end{enumerate}
\item[\rm (iii)]  $p=2$, $H = \AAA_9$, $G = ASL_2(3)$
or $\CCC_3^2 \rtimes {\mathsf Q}_8$, and $M\cong E^{(5,4)}_\pm$. 
\item[\rm (iv)] $p=2$, $H=\AAA_5$, $G = \CCC_5\rtimes\CCC_2$, and $M\cong E^{(3,2)}_\pm$. 
\end{enumerate}
\end{Theorem}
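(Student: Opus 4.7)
The plan is to combine Clifford theory for the normal elementary abelian subgroup $V_m \trianglelefteq G$ with Proposition~\ref{P110417_5}, then carry out a short case analysis. First I would dispose of the case $p = r$: here $V_m$ is a normal $p$-subgroup of $G$, hence acts trivially on every irreducible $\F G$-module $M\dar_G$. Since $V_m$ is transitive on $\{1,\dots,n\}$, its normal closure in $\SSS_n$ contains $\AAA_n$ (as $n \geq 5$), so a non-trivial irreducible $\F H$-module cannot have $V_m$ in its kernel. Thus we may assume $p \neq r$.

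In this coprime setting $M\dar_{V_m}$ is semisimple with $1$-dimensional constituents, the characters of $V_m$. If $M\dar_G$ is irreducible, Clifford theory forces the set of constituents to be a single $H$-orbit in $X_m$. If $0 \in X_m$ appears, then the orbit is $\{0\}$ and $V_m$ acts trivially on $M$, which as above contradicts $\dim M > 1$. Hence $M^{V_m} = 0$, and assuming $m \geq 2$, Proposition~\ref{P110417_5} leaves only the following candidates for $D^\la$ (where $M$ is an irreducible summand of $D^\la\dar_H$): $D^\la \in \N_n$; or $D^\la \in \llbracket D^{(n-2,1,1)}, D^{(5,3)}\rrbracket$ with $r=2$, $p=3$; or $D^\la = D^{(5,4)}$ with $r=3$, $p=2$, $n=9$.

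For each surviving case I would describe $M\dar_{V_m}$ explicitly and then convert irreducibility of $M\dar_G$ into a transitivity condition on $H \leq GL_m(r)$. When $D^\la \in \N_n$, the restriction $D^\la\dar_{V_m}$ is exactly $\sum_{\xi \in X_m^\times} \xi$ with multiplicity one, so $M\dar_G$ is irreducible iff $H$ is transitive on $X_m^\times$, equivalently $G$ is $2$-transitive on $\{1,\dots,n\}$; this gives (i). For $D^\la \in \llbracket D^{(n-2,1,1)}\rrbracket$ with $r=2$, $p=3$, $n=2^m$, I would realize $D^\la$ inside $\wedge^2(\F^n)$ and decompose $\wedge^2(\F^n)\dar_{V_m}$ via the $V_m$-orbits on $2$-subsets to read off the character support, which is $GL_m(2)$-stable; comparing the Clifford stabilizers then shows $H$ must act transitively on that support, giving precisely $H=GL_m(2)$ or, exceptionally for $m=4$, $H=\AAA_7<GL_4(2)\cong\AAA_8$ via the $4$-dimensional irreducible $\F_2\AAA_7$-module, yielding (ii). The cases $D^\la = D^{(5,3)}$ at $n=8$ and $D^\la = D^{(5,4)}$ at $n=9$ are small enough that one inspects $\IBr_p(V_m \rtimes H)$ directly (via \cite{GAP} and the Brauer character of $M$ computed from Specht-module data): $D^{(5,3)}$ produces no examples, whereas $D^{(5,4)}$ splits as $E^{(5,4)}_+ \oplus E^{(5,4)}_-$ over $\AAA_9$, and only $H = SL_2(3)$ or $Q_8$ stabilizes each summand and acts irreducibly, producing (iii).

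Finally, the omitted case $m = 1$ (so $n = r$ is prime) must be handled separately, since Proposition~\ref{P110417_5} assumes $m \geq 2$. Here $G = \CCC_r \rtimes H$ with $H \leq \CCC_{r-1}$, so $|G| \leq r(r-1)$, while Theorem~\ref{TBound2} forces $\dim M$ to grow quickly with $n$; comparing $\dim M \leq |G|/r = |H|$ limits $r$ to a handful of small primes, and a direct character check (using that $M^{\CCC_r}=0$ again by Clifford) shows that the only surviving example is $r=5$, $p=2$, $G = \CCC_5 \rtimes \CCC_2$, $M = E^{(3,2)}_\pm$ (the basic spin module of $\AAA_5$), giving (iv). The main obstacle is the explicit decomposition of $D^{(n-2,1,1)}\dar_{V_m}$ in all characteristic $3$ cases $n = 2^m$ and the verification that $AGL_m(2)$ and the exceptional $\CCC_2^4 \rtimes \AAA_7$ are the only affine subgroups that make the Clifford stabilizer act irreducibly on each isotypic component.
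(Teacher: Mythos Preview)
Your overall architecture matches the paper's: reduce to $p\neq r$, use Clifford theory to get $M^{V_m}=0$, invoke Proposition~\ref{P110417_5}, and analyze the surviving $\lambda$. Two points deserve comment.

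First, a genuine gap: Proposition~\ref{P110417_5} is a statement about $(D^\la)^{V_m}$, not about $M^{V_m}$. When $H=\AAA_n$ and $M=E^\la_\pm$ does not extend to $\SSS_n$, you know only that $(E^\la_\pm)^{V_m}=0$; you must still argue that $(E^\la_\mp)^{V_m}=0$ as well. The paper handles this (for $m\geq 2$) by observing that every nontrivial element of $V_m$ has cycle type $(r^{n/r})$ with $n/r>1$, hence lies in a single $\AAA_n$-conjugacy class; thus $E^\la_+\da_{V_m}\cong E^\la_-\da_{V_m}$ as Brauer characters (and as modules since $p\nmid|V_m|$), giving $(D^\la)^{V_m}=0$. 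This matters precisely in the $(5,4)$ case, where $\la\in\Parinv_2(9)$.

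Second, the $(n-2,1^2)$ case: your sketch says transitivity on the character support of $D^{(n-2,1^2)}\da_{V_m}$ already pins down $GL_m(2)$ or $\AAA_7$. But the support is all of $X_m^\times$ (each $\zeta\neq 0$ appears with multiplicity $(n-2)/2$ in $\wedge^2(D^{(n-1,1)})$), so transitivity on the support is merely $2$-transitivity of $G$---far too weak. The real constraint is that the stabilizer of $\zeta$ must act irreducibly on the $(n-2)/2$-dimensional isotypic piece, and you correctly flag this as ``the main obstacle.'' The paper sidesteps the obstacle entirely: since $3\nmid n=2^m$, the Specht module satisfies $D^{(n-2,1^2)}\cong S^{(n-2,1^2)}\pmod 3$, so by Lemma~\ref{LSCe} irreducibility of $D^\la\da_G$ forces irreducibility of $S^{(n-2,1^2)}_\C\da_G$, and Saxl's characteristic-zero classification \cite{S} immediately yields $G=AGL_m(2)$ or $G=\CCC_2^4\rtimes\AAA_7$. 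One then checks (via \cite{BK}, pp.~179--180) that the reduction stays irreducible. This lift-to-characteristic-zero trick is the key shortcut you are missing.
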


\begin{proof}
Let $\la\in\Par_p(n)$ be such that $M=D^\la$ if $H=\SSS_n$, or $M=E^\la_{(\pm)}$ if $H=\AAA_n$. 

Recall that $n=r^m$ and 
$
G=V_m\rtimes G_0 \leq AGL_m(r) 
$. 
Assume $M\da_G$ is irreducible. By Clifford theory,  $M^{V_m} = 0$,  
and so $p \neq r$. 
In particular, $p\nmid n$, and so $D^{(n-1,1)}$ is 
a reduction modulo $p$ of the natural $(n-1)$-dimensional representation in characteristic $0$, hence $D^{(n-1,1)}\da_G$ is irreducible only if $G$ is $2$-transitive, in which case it is indeed always irreducible by \cite{Mortimer}. This gives case (i), and from now on we assume that $E\da_{\AAA_n}\not\cong E^{(n-1,1)}$. 

We now exclude the case $m=1$. In this case $|G|\leq |AGL(1,r)|= r(r-1)$. In particular if $M\da_G$ is irreducible then $\dim M<r=n$. If $H=\SSS_n$, or $H=\AAA_n$ and $M$ lifts to $\SSS_n$, then by \cite[Theorem 6(i)]{JamesDim} for $r \geq 7$ we have $D^\la \in \NT_n$, which was excluded in the previous paragraph. The special case $r=5$ is checked using  \cite{ModularAtlas}. On the other hand, if $H=\AAA_n$ and $M\cong E^\la_\pm$ note first that $G\leq AGL(1,r)\cap \AAA_r\cong \CCC_r\rtimes \CCC_{(r-1)/2}$,
and the dimension of an irreducible $\F(\CCC_r\rtimes \CCC_{(r-1)/2})$-module is 
at most $(r-1)/2$. On the other hand, from \cite[Proposition 4.1]{KST}, we have that $\dim E^\la_\pm\geq 2^{(r-6)/2}$. If $r\geq 11$ we have that $2^{(r-6)/2}>(r-1)/2$. So we only need to consider the cases $r=5$ and $7$. In these cases using modular character tables it can be checked that if $\dim E^\la_\pm\leq (r-1)/2$ then $r=5$, $p=2$ and $\la=(3,2)$, in which case the restriction is indeed irreducible. This corresponds to the special case (iv).


As $M\da_{G}$ is irreducible, so is $M\da_{AGL_m(r)\cap H}$. It easily follows that $M\da_{V_{m}}$ is a direct summand of $N\da_{V_{m}}$ for some irreducible  $\F AGL_m(r)$-module $N$. 
As mentioned above, we have $M^{V_{m}}=0$. If $H=\SSS_n$, or $H=\AAA_n$ and $M$ lifts to $\SSS_n$, then $(D^\la)^{V_{m}}=M^{V_{m}}= 0$. On the other hand, if $H=\AAA_n$ and $M=E^\la_\pm$, note first that any non-trivial element $g\in V_{m}$ has cycle type $(r^{n/r})$ and $n/r>1$ since we have already excluded the case $m=1$. So  $g^\si$ is in the same $\AAA_n$-conjugacy class as $g$ for any $\si\in\SSS_n$. It follows that the Brauer characters of $E^\la_+\da_{V_{m}}$ and of $E^\la_-\da_{V_{m}}$ coincide and hence  $E^\la_+\da_{V_{m}}\cong E^\la_-\da_{V_{m}}$ as $p\nmid r$. So in this case we can still conclude that $(D^\la)^{V_{m}}= 0$.
 By Proposition~\ref{P110417_5} we may now assume that one of the following happens: 
\begin{enumerate}
\item[{\rm (1)}] $r=2$, $p=3$, $D^\la\in\llbracket D^{(n-2,1,1)}, D^{(5,3)}\rrbracket$;
\item[{\rm (2)}] $r=3$, $p=2$, $D^\la\cong D^{(5,4)}$. 
\end{enumerate}

{\sf Case 1.1:} $r=2$, $p=3$, and $D^\la\in\llbracket D^{(n-2,1,1)}\rrbracket$. 
By \cite[Theorem 24.1]{JamesBook} that if $2< p \nmid n$ then 
$D^{(n-2,1,1)}$ is reduction modulo $p$ of the Specht module $S^{(n-2,1,1)}_\C$ in characteristic $0$. Here $p = 3 \nmid n = 2^{m}$, so 
(by tensoring with $\sgn$ if necessary) we may assume 
that $D^\la$ is reduction modulo $3$ of $S^{(n-2,1,1)}_\C$. 
Moreover, 
it is easy to see that $D^\la\da_{\AAA_n}$ is irreducible. 
Hence $S^{(n-2,1,1)}_\C\da_{G}$ is irreducible. 
Note that $m\geq 3$ as $n = 2^{m} \geq 5$. By \cite{S},
the only proper subgroup of $AGL_m(2)$ that contains $V_m$ and is irreducible on $S^{(n-2,1^2)}$ is $K := V_4 \rtimes \AAA_7 < \AAA_{16}$.
Furthermore, the only complex irreducible character of degree $7$ of $GL_3(2)$ remains irreducible modulo $3$, cf. \cite{ModularAtlas}, so the arguments on pp. 179--180 of \cite{BK} show that $D^\la\da_K$ is indeed irreducible.
We have shown that either $G = AGL_m(2)$ with $m \geq 3$, or
$G  = K$ and $m=4$, as stated in (ii).

\smallskip
{\sf Case 1.2.} $r=2$, $p=3$, $D^\la\in\llbracket  D^{(5,3)}\rrbracket$. By \cite[Tables]{JamesBook}, we have $\dim D^{(5,3)}=28$. 
Furthermore, $D^{(5,3)}$ is irreducible over $\AAA_8$, so it suffices to show that 
$D^\la\da_{AGL_3(2)}$ is reducible. Since $D^\la$ affords all $7$ non-trivial linear characters of $V_3$,
it follows that $D^\la\da_G = \ind^G_{G_1}(U)$, where $U$ is a $4$-dimensional module of $G_1 = V_3 \rtimes \SSS_4$. Now $V_3$ acts via scalars on 
$U$, and the degree of any irreducible $\F\SSS_4$-representation is at most $3$, whence $U$, and so $D^\la\da_G$, is reducible.  

\smallskip
{\sf Case 2:} $r=3$, $p=2$, $D^\la\cong D^{(5,4)}$. By \cite[Tables]{JamesBook}, we have $\dim D^{(5,4)}=16$. 
First we consider the case $H = \SSS_9$. Then it suffices to show that 
$D^\la\da_G$ is reducible for $G = AGL_2(3)$. This group $G$ is the $7^{\mathrm {th}}$ maximal subgroup of 
$\SSS_9$ as listed in \cite{GAP}. We can pick two elements $x \in V_2 \smallsetminus \{1\}$ and $y \in G \smallsetminus V_2$, which belong to
classes $3A$ and $3C$ in $G$ (in the notation of \cite{GAP}) and which both induce fixed-point-free permutations in $\SSS_9$. Thus both $x$ and 
$y$ belong to class $3B$ of $\SSS_9$. 
The only irreducible $2$-Brauer character of $G$ of degree $16$ takes 
value $1$ at $y$, whereas the character of $D^\la$ takes value $-2$ at $y$, cf. \cite{GAP}. 
Hence we conclude that $D^\la\da_G$ is reducible. (An alternate way is to 
note that $D^\la$ is reduction modulo $2$ of the basic spin  module of a double cover $\hat\SSS_9$, and the latter is 
reducible over the inverse image of $G$ in $\hat\SSS_9$ by \cite[Theorem 1.1]{KlW}.) 

Now let $H = \AAA_9$. Then $D^{(5,4)}\da_{\AAA_9}$ splits. Each of $E^{(5,4)}_\pm$ affords $8$ non-trivial linear characters of $V_2$, which are permuted transitively by 
$AGL_2(3) \cap \AAA_9 = ASL_2(3)$. Moreover, the only proper subgroup of $SL_2(3)$ that 
acts transitively on these $8$ characters is ${\mathsf Q}_8$. It follows that 
$G  = ASL_2(3)$ or $V_2 \rtimes {\mathsf Q_8}$, in which case the restriction is indeed irreducible, giving the case (iii).
\end{proof}

\section{Doubly transitive groups with socle $PSL_m(q)$}\label{SLG1}
Throughout the section: $q=r^f$ is a power of a prime $r$, $m \geq 2$, $W:= \F_q^m$ with standard basis $e_1,\dots,e_m$, and $\PP(W)$ is the set of $1$-dimensional subspaces of $W$. 
Also, unless otherwise stated, 
$$n := |\PP(W)|=(q^m-1)/(q-1),$$ and  $G < \SSS_n$ with $S:=\soc(G) = PSL_m(q)$ acting naturally on  $\PP(W)$. 

\subsection{Bounding the partition $\la$ for groups with socle $PSL_m(q)$}
With the notation as above, we have:

\begin{Lemma} \label{L270219} 
We have $S \unlhd G\leq PGL_m(q)\rtimes \CCC_f\leq \Aut(S)$. 
\end{Lemma}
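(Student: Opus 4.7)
The plan is to embed $G$ into $\Aut(S)$ and then rule out outer automorphisms of $S$ that are not in $P\Gamma L_m(q)$.

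First I would show $G \hookrightarrow \Aut(S)$. Since $S$ is non-abelian simple, $S \cap C_G(S) \leq Z(S) = 1$. Moreover, $S$ is doubly transitive (hence primitive) on $\{1,\dots,n\} = \PP(W)$, so $C_{\SSS_n}(S)$ is semi-regular, commutes with the doubly transitive action of $S$, and must therefore be trivial. Hence $C_G(S) = 1$, and conjugation gives an embedding $G \hookrightarrow \Aut(S)$ identifying $S$ with $\Inn(S)$.

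For $m = 2$ there is nothing more to prove, since $\Aut(PSL_2(q)) = PGL_2(q) \rtimes \langle \phi\rangle = P\Gamma L_2(q)$ with $\phi$ the Frobenius automorphism of order $f$.

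For $m \geq 3$ the group $\Aut(S)$ is generated by $PGL_m(q)$, the field automorphism $\phi$ of order $f$, and the inverse-transpose graph automorphism $\gamma$ of order $2$, and $P\Gamma L_m(q) = PGL_m(q) \rtimes \langle \phi\rangle$ has index $2$ in $\Aut(S)$. The key observation is that $S$ has two conjugacy classes of maximal parabolic subgroups of index $n$: the stabilizer $S_P$ of a $1$-space $P \subset W$, and the stabilizer $S_H$ of a hyperplane $H \subset W$. These classes are distinct when $m \geq 3$, and the graph automorphism $\gamma$ interchanges them, while every element of $P\Gamma L_m(q)$ preserves each class.

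Now, since $G$ acts on $\PP(W)$ and $S$ acts naturally, the stabilizer $G_P$ of a point $P \in \PP(W)$ satisfies $G_P \cap S = S_P$. For any $g \in G$, the automorphism $\al_g \in \Aut(S)$ induced by conjugation by $g$ sends $S_P$ to $S_{g(P)}$, which again is the stabilizer of a $1$-space. Thus every $\al_g$ preserves the $S$-conjugacy class of stabilizers of $1$-spaces, so $\al_g \in P\Gamma L_m(q)$. This gives $G \leq PGL_m(q) \rtimes \CCC_f$, as required.

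The only mild obstacle is a sanity check for small cases in which $PSL_m(q)$ admits exceptional isomorphisms with an alternating group (for example $PSL_2(9) \cong \AAA_6$, $PSL_4(2) \cong \AAA_8$). These, however, are excluded from the family (B) in the introduction, or the argument above applies verbatim since the two conjugacy classes of maximal parabolics of index $n$ are still interchanged by $\gamma$. So no additional case analysis is needed.
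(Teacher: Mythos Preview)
Your proof is correct and follows essentially the same approach as the paper. Both arguments establish $C_{\SSS_n}(S)=1$ to get $G\hookrightarrow\Aut(S)$ (the paper invokes Cameron's structure theorem for doubly transitive groups, while you give the direct semi-regularity argument), and for $m\geq 3$ both rule out the graph automorphism by observing that it swaps the $S$-conjugacy class of $1$-space stabilizers with that of hyperplane stabilizers, whereas every element of $G$ must preserve the former. One small correction: the exceptional isomorphisms you mention (e.g.\ $PSL_4(2)\cong\AAA_8$ on $15$ points, $PSL_2(9)\cong\AAA_6$ on $10$ points) are \emph{not} excluded from the hypotheses of the lemma---they do occur in family (B)---but, as you also note, your parabolic-class argument applies to them verbatim, so this does not affect correctness.
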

\begin{proof}
Note that 
$N := N_{\SSS_n}(S)$ is doubly transitive with non-abelian simple normal subgroup $S$. By 
\cite[Proposition 5.2]{Cameron}, $\soc(N) = S$. Now $C_{\SSS_n}(S) \cap S = 1$, hence $\soc(N) = S$ implies that $C_{\SSS_n}(S)=1$. So  
$S \unlhd G \leq N \leq \Aut(S)$. The group $\Aut(S)$ is described in \cite[Theorem 2.5.12]{GLS}. 
If $m=2$, we have $\Aut(S) = PGL_m(q) \rtimes \CCC_f$, and we are done. If $m \geq 3$, the inverse-transpose automorphism of $S$ does not stabilize its action on $\PP(W)$, so we have $G \leq PGL_m(q) \rtimes \CCC_f$. 
\end{proof}

By Lemma~\ref{L270219}, we have $G\leq PGL_m(q)\rtimes \CCC_f$ where $PGL_m(q)\rtimes \CCC_f$ acts naturally on $\PP(W)$. For $1\leq k\leq m-1$,
let $W_k:= \langle e_1, e_2, \ldots ,e_k \rangle_{\F_q}\subseteq W$, and denote by $\tilde P_k$ the subgroup of $PGL_m(q)\rtimes \CCC_f$ 
consisting of all elements that fix every point of $\PP(W_k)$. (If $k > 1$, then $\tilde P_k$ is the image in $PGL_m(q) \rtimes \CCC_f$ 
of the subgroup of $GL_m(q) \rtimes C_f$ that acts via scalars on $W_k$.) Also, let $P_k := \tilde P_k \cap G$. By construction,
$P_k$ fixes all 
$$L_k:=(q^k-1)/(q-1)$$ 
$1$-dimensional subspaces of $\langle e_1, e_2, \ldots,e_k \rangle_{\F_q}$. Thus: 

\begin{Lemma} \label{L270219_3} 
The subgroup $P_k$ is contained in a natural subgroup $\SSS_{n-L_k}$ of $\SSS_n$. 
\end{Lemma}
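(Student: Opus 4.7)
The statement is essentially an unpacking of the definitions, so the plan is very short. The key observation is that the embedding $G \hookrightarrow \SSS_n$ arises from the action of $G \leq PGL_m(q) \rtimes C_f$ on the set $\PP(W)$ of $n$ points, and elements of $\tilde P_k$ (hence of $P_k = \tilde P_k \cap G$) by definition fix every one of the $L_k$ points of $\PP(W_k) \subseteq \PP(W)$.

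Concretely, I would proceed as follows. Under the bijection between $\PP(W)$ and $\{1,2,\dots,n\}$ used to embed $G \leq \SSS_n$, the subset $\PP(W_k)$ corresponds to some set $\Omega_k \subseteq \{1,\dots,n\}$ of size $L_k = (q^k-1)/(q-1)$. Let $\Omega_k' := \{1,\dots,n\} \setminus \Omega_k$, which has size $n - L_k$. Since every element of $\tilde P_k$ fixes every point of $\PP(W_k)$, the corresponding permutation in $\SSS_n$ fixes every element of $\Omega_k$ pointwise, and therefore permutes $\Omega_k'$. Hence $\tilde P_k$, and a fortiori $P_k$, is contained in the pointwise stabiliser of $\Omega_k$ in $\SSS_n$, which is precisely a natural copy of $\SSS_{n-L_k}$ (acting on $\Omega_k'$). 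This gives the inclusion $P_k \leq \SSS_{n-L_k}$ as required.

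There is no substantive obstacle: the only thing to check is that ``fixing every point of $\PP(W_k)$'' in the projective sense translates to ``fixing pointwise the corresponding letters'' under the embedding $G \hookrightarrow \SSS_n$, and this is immediate from the fact that the embedding is defined precisely by the action of $G$ on $\PP(W)$.
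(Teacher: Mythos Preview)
Your proof is correct and is essentially the same as the paper's: the paper simply notes that by construction $P_k$ fixes all $L_k$ one-dimensional subspaces of $W_k$, and the lemma follows immediately.
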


\begin{Lemma} \label{L270219_4} 
Let $\la=(n-\ell,\dots)\in\Par_p(n)$. For an integer $1 \leq k \leq m-1$ such that $(q^k-1)/(q-1) \geq 2\ell$, we have $(D^\la)^{P_k}\neq 0$. In particular, if $D^\la\da_G$ is irreducible then $\dim D^\la\leq [G:P_k]$. 
\end{Lemma}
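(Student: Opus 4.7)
The plan is to reduce the statement to Theorem \ref{TSub} (applied to a suitable Young subgroup containing $P_k$) and then use a standard Frobenius reciprocity argument.

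First, I would observe that the hypothesis $L_k := (q^k-1)/(q-1) \geq 2\ell$ combined with $k \leq m-1$ (so that $L_k < n$) means we are exactly in the setting of Theorem \ref{TSub} with the role of $L$ played by $L_k$. Thus $D^\la \da_{\SSS_{n-L_k}}$ contains a trivial submodule, i.e.\ a non-zero vector fixed by the natural subgroup $\SSS_{n-L_k} \leq \SSS_n$. By Lemma \ref{L270219_3}, we have $P_k \leq \SSS_{n-L_k}$, so any vector fixed by $\SSS_{n-L_k}$ is also fixed by $P_k$. This immediately gives $(D^\la)^{P_k} \neq 0$.

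For the second assertion, I would invoke Frobenius reciprocity: a non-zero $P_k$-invariant vector in $D^\la$ corresponds to a non-zero homomorphism
$$\Hom_G\bigl(\ind^G_{P_k} \F_{P_k},\, D^\la\da_G\bigr) \cong \Hom_{P_k}\bigl(\F_{P_k},\, D^\la\da_{P_k}\bigr) = (D^\la)^{P_k} \neq 0.$$
If $D^\la\da_G$ is irreducible, then any such non-zero homomorphism must be surjective, so $D^\la\da_G$ is a quotient of $\ind^G_{P_k} \F_{P_k}$, and therefore
$$\dim D^\la \leq \dim \ind^G_{P_k} \F_{P_k} = [G:P_k],$$
which is exactly the claimed bound.

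There is no real obstacle here: the lemma is essentially a packaging of Theorem \ref{TSub} combined with the elementary Frobenius reciprocity inequality, with the numerical input $L_k \geq 2\ell$ designed to fit the hypothesis of that theorem. The work lies entirely in the earlier results; this lemma is a clean corollary that will be used downstream to control $\dim D^\la$ via the indices $[G:P_k]$ for appropriate $k$.
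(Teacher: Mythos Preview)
Your proof is correct and follows exactly the same approach as the paper: the paper's own proof simply cites Lemma~\ref{L270219_3} and Theorem~\ref{TSub} for the first statement and Frobenius reciprocity for the second, and you have accurately unpacked these references.
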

\begin{proof}
The first statement follows from Lemma~\ref{L270219_3} and Theorem \ref{TSub}. The second one then follows from the Frobenius Reciprocity. 
\end{proof}

Setting
$$P_k \unlhd R_k:= \operatorname{Stab}_G(\langle e_1, \ldots,e_k \rangle_{\F_q}),$$ 
we have that
\begin{equation}\label{E280219_3}
PGL_k(q) \cong (R_k \cap S)/(P_k \cap S) \unlhd R_k/P_k \leq PGL_k(q)\rtimes\CCC_f.
\end{equation} 
(Indeed, one can find an element of $SL_n(q)$ that fixes $W_k$ and has any prescribed determinant in its action on $W_k$.)
Since both $G$ and $S$ act transitively on the set $\PP_k(W)$ of $k$-subspaces of $W$, we have
\begin{equation}\label{E280219}
[G:R_k]=[S:S\cap R_k]=|\PP_k(W)|
=\prod_{i=1}^k\frac{q^{n-i+1}-1}{q^i-1}.
\end{equation}

\begin{Lemma} \label{L270219_5} 
Let $\la=(n-\ell,\dots)\in\Par_p(n)$ and $D^\la\da_G$ be irreducible. For an integer $1 \leq k \leq m-1$ such that $(D^\la)^{P_k}\neq 0$, we have 
$$
\dim D^\la \leq [G:R_k]\bmax_p(R_k/P_k)= \frac{[G:P_k]}{[R_k:P_k]}\bmax_p(R_k/P_k)=\bmax_p(R_k/P_k)\prod_{i=1}^k\frac{q^{n-i+1}-1}{q^i-1}.
$$
The assumption $(D^\la)^{P_k}\neq 0$ is guaranteed if 
$(q^k-1)/(q-1) \geq 2\ell$.  
\end{Lemma}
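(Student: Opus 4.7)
The plan is to upgrade the straightforward trivial-invariant bound of Lemma~\ref{L270219_4} by exploiting the normality $P_k \unlhd R_k$ to induce from $R_k$ rather than from $P_k$. Since $P_k \unlhd R_k$, the subspace $(D^\la)^{P_k}$ is an $\F R_k$-submodule of $D^\la\da_{R_k}$ on which $P_k$ acts trivially; equivalently, it is an $\F(R_k/P_k)$-module.

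First I would pick any irreducible $\F R_k$-submodule $U$ of the nonzero space $(D^\la)^{P_k}$. Since $P_k$ acts trivially on $U$, the action of $R_k$ factors through $R_k/P_k$, so $\dim U \leq \bmax_p(R_k/P_k)$. The inclusion $U \hookrightarrow D^\la\da_{R_k}$ is a nonzero $\F R_k$-homomorphism, so by Frobenius reciprocity it corresponds to a nonzero $\F G$-homomorphism $\Ind_{R_k}^G U \to D^\la$. As $D^\la\da_G$ is irreducible by hypothesis, this map is surjective, and hence
$$\dim D^\la \;\leq\; \dim \Ind_{R_k}^G U \;=\; [G:R_k]\,\dim U \;\leq\; [G:R_k]\,\bmax_p(R_k/P_k).$$
The remaining equalities in the display of the lemma then follow from $[G:P_k]=[G:R_k]\cdot[R_k:P_k]$ together with the index formula~\eqref{E280219}.

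For the final assertion, the condition $(q^k-1)/(q-1) \geq 2\ell$ is precisely $L_k \geq 2\ell$. By Lemma~\ref{L270219_3}, $P_k$ sits inside the natural subgroup $\SSS_{n-L_k}$ of $\SSS_n$, so it suffices to show that $(D^\la)^{\SSS_{n-L_k}} \neq 0$. This is supplied directly by Theorem~\ref{TSub} applied with $L:=L_k$, whose hypotheses $0 \leq 2\ell \leq L < n$ are satisfied since $2\ell \leq L_k$ by assumption and $L_k = (q^k-1)/(q-1) < (q^m-1)/(q-1) = n$ because $k \leq m-1$.

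No step presents any real obstacle: the whole argument is a clean Frobenius-reciprocity calculation once the normality $P_k \unlhd R_k$ has been used to promote $(D^\la)^{P_k}$ from a mere vector space to an $\F(R_k/P_k)$-module; the final non-vanishing statement is a direct citation of Theorem~\ref{TSub}.
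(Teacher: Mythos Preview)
Your proof is correct and follows essentially the same approach as the paper: pick a simple $R_k$-submodule of $(D^\la)^{P_k}$ (on which $P_k$ acts trivially, so its dimension is bounded by $\bmax_p(R_k/P_k)$), apply Frobenius reciprocity to bound $\dim D^\la$ by $[G:R_k]$ times this dimension, and invoke \eqref{E280219} for the index formula. For the final non-vanishing assertion the paper simply cites Lemma~\ref{L270219_4}, which packages exactly the Lemma~\ref{L270219_3} + Theorem~\ref{TSub} argument you spelled out.
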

\begin{proof}
Note that $R_k$ acts on $(D^\la)^{P_k}$ and the $R_k$-module $(D^\la)^{P_k}$ contains a simple submodule 
$X$ of dimension at most $\bmax_p(R_k/P_k)$. By the Frobenius reciprocity, we have
$$\dim D^\la \leq [G:R_k]\dim X \leq [G:R_k]\bmax_p(R_k/P_k),$$
and it remains to use (\ref{E280219}) and Lemma~\ref{L270219_4}. 
\end{proof}

\begin{Proposition}\label{PRedSL}
Let $V$ an irreducible $\F G$-module. Then:
\begin{enumerate}[\rm(i)]
\item $\dim V \leq |G|^{1/2} \leq |\Aut(S)|^{1/2}$.
\item 
$\dim V \leq n^{(m+1)/2} < n^{\frac{1}{2}\log_2 n +1}$.
\item Suppose that $V = D^\la\da_G$ for $\la = (n-\ell, \ldots)\in\Par_p(n)$, and that  
there exists an integer $1 \leq k \leq m-1$ such that $(q^k-1)/(q-1) \geq 2\ell$. Then $\dim V < q^{mk}$. 
\end{enumerate}
\end{Proposition}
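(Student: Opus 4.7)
The plan is to prove the three parts in sequence. For (i), I invoke the standard bound $\dim V \leq |G|^{1/2}$ valid for any irreducible $\F G$-module: in characteristic zero this is immediate from $\sum_\chi \chi(1)^2 = |G|$, and in characteristic $p$ it follows because every irreducible $p$-Brauer character $\varphi$ is a constituent of the reduction modulo $p$ of some ordinary irreducible $\chi$, giving $\varphi(1) \leq \chi(1) \leq |G|^{1/2}$. The containment $|G| \leq |\Aut(S)|$ is precisely Lemma~\ref{L270219}.

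For (ii), I sharpen the bound using Lemma~\ref{TBound3}. Lemma~\ref{L270219} places $G$ inside $PGL_m(q) \rtimes \CCC_f$ with $G \cap PGL_m(q)$ containing $S$ as a subgroup of index dividing $\gcd(m,q-1)$. Two applications of Clifford theory to the normal chain $S \unlhd G \cap PGL_m(q) \unlhd G$ (both factors cyclic) yield $\bmax_p(G) \leq \bmax(G) \leq f\gcd(m,q-1)\bmax(SL_m(q))$, and Lemma~\ref{TBound3} gives $\bmax(SL_m(q)) \leq [SL_m(q):B] = \prod_{i=2}^m [i]_q$, where $[i]_q := (q^i-1)/(q-1)$. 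The combinatorial heart is the pairing inequality $[i]_q[m-i]_q < n$ for $1 \leq i \leq m-1$, with the stronger endpoint form $[1]_q[m-1]_q = (q^{m-1}-1)/(q-1) < n/q$: squaring and pairing $i$ with $m-i$ gives $(\prod_{i=1}^{m-1}[i]_q)^2 = \prod_{i=1}^{m-1}[i]_q[m-i]_q$, and the slack from the two endpoint factors of $1/q$ absorbs $(f\gcd(m,q-1))^2$, since $q=r^f$ forces $f \leq \log_2 q$ and $\gcd(m,q-1) \leq q-1$. Multiplying by $n$ via $\prod_{i=2}^m[i]_q = n\prod_{i=1}^{m-1}[i]_q$ yields the desired $f\gcd(m,q-1)\prod_{i=2}^m[i]_q \leq n^{(m+1)/2}$. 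The secondary inequality $n^{(m+1)/2} < n^{\frac{1}{2}\log_2 n+1}$ reduces to $m-1 \leq \log_2 n$, which holds since $n \geq q^{m-1} \geq 2^{m-1}$.

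For (iii), apply Lemma~\ref{L270219_5} with the specified $k$. The product $\prod_{i=1}^k(q^{m-i+1}-1)/(q^i-1)$ equals the $q$-binomial coefficient $\binom{m}{k}_q$, bounded above by $(q/(q-1))^k q^{k(m-k)}$ via direct estimation of each factor. For $\bmax_p(R_k/P_k)$, the chain \eqref{E280219_3} places $R_k/P_k$ between $PGL_k(q)$ and $PGL_k(q) \rtimes \CCC_f$; for $k \geq 2$ the same Clifford-plus-Lemma~\ref{TBound3} estimate as in (ii) applies, giving a bound of order $q^{k^2/2}$ up to lower-order factors, while for $k=1$ the group $R_1/P_1$ is cyclic so $\bmax_p(R_1/P_1) = 1$ and the bound simply reads $\dim V \leq n = [m]_q < q^m$. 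Combining yields $\dim V < q^{mk}$ with the margin coming from the $(q-1)^{-O(k)}$-type factors.

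The principal obstacle will be verifying the pairing estimate in (ii) with sufficient sharpness: the case $q=8$, $m=2$, $f=3$ is tight since $\bmax(P\Gamma L_2(8)) = 27 = n^{(m+1)/2}$, and borderline cases such as $q=9$, $m=2$, $f=2$ require combining the precise bound $[1]_q[m-1]_q/n = (q^{m-1}-1)/(q^m-1) < 1/q$ with $f\gcd(m,q-1) \leq (q-1)\log_2 q$ to squeeze through.
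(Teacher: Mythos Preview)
Your parts (i) and (iii) are sound and close to the paper's arguments, but part (ii) has a real gap. Your Clifford-theory bound $\bmax(G)\leq f\gcd(m,q-1)\bmax(SL_m(q))$ is correct but too crude: combined with the Seitz bound it yields $f\gcd(m,q-1)\prod_{i=2}^{m}[i]_q$, and for several small parameters this exceeds $n^{(m+1)/2}$. Concretely, for $(m,q)=(2,9)$ you get $2\cdot 2\cdot 10=40>10^{3/2}$; for $(m,q)=(2,27)$ you get $3\cdot 2\cdot 28 = 168 > 28^{3/2}$; and for $(m,q)=(3,4)$ you get $2\cdot 3\cdot 105 = 630 > 21^2=441$. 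Your proposed fix of sharpening the endpoint pairing to $[1]_q[m-1]_q/n<1/q$ cannot rescue this: for $m=2$ the product $\prod_{i=1}^{m-1}[i]_q$ is just $[1]_q=1$, so there is no slack at all, and for $m=3$ the two endpoint factors together give only $q^2$ of slack, which is still less than $(f\gcd(3,q-1))^2=36$ when $q=4$. The underlying issue is that the Clifford blow-up factor $\gcd(m,q-1)$ from $PGL_m(q)/PSL_m(q)$ is genuinely lossy --- for instance $\bmax(PGL_2(q))=q+1=\bmax(PSL_2(q))$ when $q$ is odd, not $2(q+1)$ --- and the pairing inequality cannot recover this.

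The paper handles this by case distinction: for $m=2$ it uses the exact value $\bmax(PGL_2(q))=q+1$ together with $f^2\leq 2^f+1\leq q+1$; for $m=3,4$ it invokes L\"ubeck's character degree tables \cite{Lu1} to drop the factor of $f$ from the estimate $f\bmax(PGL_m(q))$; and only for $m\geq 5$ does the crude bound $f\gcd(m,q-1)\leq (q-1)^2\leq (q-1)^{(m-1)/2}$ suffice. Your uniform approach is essentially the paper's argument for $m\geq 5$, but you need separate treatment of $m\leq 4$.
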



\begin{proof}
(i) Follows from Lemma~\ref{L270219}. 


\smallskip
(ii) Note that $n  > 2^{m-1}$, so $m \leq 1+\log_2 n$, which implies the second inequality. 
Let $H := PGL_m(q)$. By Lemma~\ref{L270219}, $G\leq H \rtimes \CCC_f$. 
If $m=2$, then $\bmax(H) = q+1$. Since $f^2 \leq 2^f+1 \leq q+1$, we deduce that
$\dim V \leq f\bmax(H) \leq (q+1)^{3/2}$, as stated.

Let $m \geq 3$. 
Using Lemma \ref{TBound3}, and the estimate $(q^i-1)(q^{m-i}-1) < q^m-1$ for $1 \leq i \leq m-1$, we get
$$\begin{aligned}
    \bmax(H) & \leq \bmax(S) \cdot [H:S] \leq \bmax(SL_m(q)) \cdot [H:S]\\
    & \leq \frac{(q-1)(q^2-1)(q^3-1) \ldots (q^m-1)}{(q-1)^m} \cdot \gcd(m,q-1)\\
    & \leq \frac{(q^m-1)^{(m+1)/2}}{(q-1)^m} \cdot \gcd(m,q-1).\end{aligned}$$
Also,
$f \leq 2^f-1 \leq q-1$. So for $m \geq 5$ we have 
$$\dim V \leq \bmax(G) \leq f\bmax(H) \leq \frac{(q^m-1)^{(m+1)/2}}{(q-1)^m} \cdot f \cdot \gcd(m,q-1) \leq 
    \biggl( \frac{q^m-1}{q-1} \biggr)^{(m+1)/2},$$
as stated. For $m = 3,4$, using \cite{Lu1} one can drop the factor of $f$ in the above estimates for 
$\bmax(H)$ and $\bmax(G)$, whence the statement follows again.

\smallskip
(iii) First we consider the case $k=1$. Then note that $R_1=P_1$ and 
$S\cap R_1 = S\cap P_1$. It follows from (\ref{E280219}) and Lemma~\ref{L270219_4} that
$$\dim V \leq [G:R_1] = [S:S \cap R_1] = [S:S \cap P_1].$$  
Now let $k \geq 2$. As recorded in \eqref{E280219}, $Y := (S \cap R_k)/(S \cap P_k) \cong PGL_k(q)$. Clearly, $|Y| > q^2 > f^2$, whence 
$\b_p(Y) \leq |Y|^{1/2} < |Y|/f$. Again using \eqref{E280219}, we obtain
$$\b_p(R_k/P_k) \leq f\cdot\b_p(Y) < |Y|.$$
Combining with Lemma~\ref{L270219_4} and \eqref{E280219}, we get 
$$\dim V < [G:R_k] \cdot |Y| = [S:S \cap R_k] \cdot |(S \cap R_k)/(S \cap P_k)| = [S:S \cap P_k].$$
Thus in both cases we have
$$\dim V \leq [S:S \cap P_k] \leq q^{k(k-1)/2}\prod^m_{i=m-k+1}(q^i-1) < q^{mk},$$
which completes the proof.
\end{proof}

\begin{Proposition}\label{PBoundSL}
Let $n \geq 324$, $m\geq 4$, $p = 2$ or $3$, $\la\in\Par_p(n)$ such that $D^\la\da_G$ is irreducible, and define $\ell$ from $n-\ell=\max(\la_1,\la^\Mull_1)$. Then 
$\ell \leq 4$ if $2 \leq q \leq 5$ and $\ell \leq 3$ if $q \geq 7$. 
\end{Proposition}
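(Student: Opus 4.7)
The plan is to combine the upper bounds on $\dim D^\la$ that follow from the irreducibility of $D^\la\da_G$ (Proposition~\ref{PRedSL}) with the lower bounds from Theorem~\ref{TBound1} and Lemma~\ref{LBound}, using an iterative refinement of the bound on $\ell$.

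First, I would apply Proposition~\ref{PRedSL}(ii) to obtain $\dim D^\la \leq n^{(m+1)/2} < n^{\frac{1}{2}\log_2 n + 1}$. Since $n\geq 324$, Proposition~\ref{PDim1} then yields the initial bound
\[
\ell \leq 0.7\log_2 n + 1.4.
\]
In particular, $\ell < n/3$, so the hypothesis $n \geq p(\de_p + \ell - 2)$ of Theorem~\ref{TBound1} holds and gives $\dim D^\la \geq C_\ell^p(n)$. One can further use the inequality $C^p_\ell(n) > (2(n+3)/\ell - 6)^\ell$ derived in the proof of Proposition~\ref{PDim1} (valid for $\ell \geq 6$).

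The iterative step runs as follows. Suppose we have the intermediate bound $\ell \leq L$, and let $k = k(L)$ be the smallest positive integer with $L_k := (q^k-1)/(q-1) \geq 2L$. For $n\geq 324$ and $m \geq 4$ one checks that $k(L) \leq m-1$ throughout the range of $L$ that arises, so Proposition~\ref{PRedSL}(iii) applies and yields $\dim D^\la < q^{mk(L)}$. Combining with Theorem~\ref{TBound1} gives
\[
C^p_\ell(n) \leq \dim D^\la < q^{mk(L)} \leq \bigl(q(2L(q-1)+1)\bigr)^m.
\]
Taking logarithms and using $q^m \leq (q-1)n+1$, this inequality is incompatible with $\ell$ being close to $L$, and forces $\ell \leq L'$ for some $L' < L$. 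Iterating this step a bounded number of times drives the bound down to $\ell \leq 5$ uniformly in $q$.

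The final reduction to $\ell \leq 4$ (respectively $\ell \leq 3$ when $q \geq 7$) is a base case argument. For each fixed $q \in \{2,3,4,5\}$, one shows that $\ell = 5$ forces a contradiction by comparing $C^p_5(n)$ (which grows like $n^5/120$) against $q^{mk(5)}$ and using $n = (q^m-1)/(q-1) \geq 324$, so that $m$ is large enough to make $n^{5}/120$ exceed $q^{mk(5)}$. For $q \geq 7$ one exploits that $k(4) = 2$ (since $L_2 = q+1 \geq 8$), so the upper bound becomes $q^{2m} \leq ((q-1)n+1)^2$; the lower bound $C^p_4(n) \geq (n^4 - 14n^3 + \cdots)/24$ from Lemma~\ref{LBound} then dominates for $n \geq 324$, ruling out $\ell = 4$.

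The main technical obstacle is uniformity across small $q$, particularly $q = 2$, where the required index $k(L)$ is not much smaller than $m$, so the upper bound $q^{mk}$ is not especially sharp. One must exploit the specific form of $C^p_\ell(n)$ and verify $k(L) \leq m-1$ at each stage of the iteration; this is where the assumption $n \geq 324$ is crucial. A further delicate point is the treatment of small $\ell$ (namely $\ell \in \{4,5,6\}$), where the asymptotic lower bound $(2(n+3)/\ell - 6)^\ell$ is weaker than the exact formulas of Lemma~\ref{LBound} and has to be replaced by the latter for the argument to close.
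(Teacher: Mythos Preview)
Your proposal is correct and follows essentially the same approach as the paper: obtain the initial bound $\ell \leq 0.7\log_2 n + 1.4$ from Propositions~\ref{PRedSL}(ii) and~\ref{PDim1}, then pit the lower bound $C^p_\ell(n)$ of Theorem~\ref{TBound1} against the upper bound $q^{mk}$ of Proposition~\ref{PRedSL}(iii) with a suitably chosen $k$. The paper organises the endgame as explicit case analysis ($\ell \geq 12$; $8 \leq \ell \leq 11$; $\ell = 7$; $5 \leq \ell \leq 6$; $\ell = 4$ with $q \geq 7$), each time fixing a specific $k$ and deriving a numerical contradiction, rather than phrasing it as an iteration down to $\ell \leq 5$, but this is only a presentational difference.
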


\begin{proof}
We may assume that $\ell\geq 4$, for otherwise there is nothing to prove. Replacing $\la$ by $\la^\Mull$ if necessary, we may assume that $\la_1 = n-\ell$. By Propositions \ref{PRedSL}(ii) and~\ref{PDim1}, we have  
\begin{equation}\label{eq-PB11}
  \ell \leq L(n) := 0.7\log_2 n + 1.4. 
\end{equation}  
So $n\geq p(\de_p+\ell-2)$, and by Theorem \ref{TBound1},  we have
\begin{equation}\label{E270219}
\dim D^\la\geq C^p_\ell(n)>\frac{(n+3-3\ell)^\ell}{\ell !}.
\end{equation} 

\smallskip 
\noindent
 {\sf Claim 1:} If $1 \leq k \leq m-1$ and  $(q^k-1)/(q-1) \geq 2\ell$ then $q^{mk}>\frac{(n+3-3\ell)^\ell}{\ell !}$. 
 
 \noindent
 Indeed, by Proposition \ref{PRedSL}(iii), $\dim D^\la< q^{mk}$, and the claim follows from (\ref{E270219}). 
 
\smallskip 
\noindent
{\sf Claim 2:} If $k:= \lceil \log_q(2\ell-1) \rceil +1$, then $q^{mk}>\frac{(n+3-3\ell)^\ell}{\ell !}$. 

\noindent To prove Claim 2, it suffices to verify that the given $k$ satisfies the assumptions of Claim 1. Clearly $k\geq 1$, and $(q^k-1)/(q-1) \geq 2\ell$ is easy. 
Note that $(2L(n)-1)^2 < n$ by our assumption on $n$, so from (\ref{eq-PB11}), we have $2\ell-1<n^{1/2}$, and hence, using also $m \geq 4$, we get $q^2(2\ell-1) < q^{m/2}n^{1/2} < q^m$.
Now $k \leq m-1$ follows from 
$$q^k < q^{\log_q(2\ell-1) +2} = q^2(2\ell-1) < q^m.$$

\medskip
Suppose $\ell \geq 12$.  Then for $k$ as in Claim 2, we have
\begin{equation}\label{eq-PB13}
  \ell/k > \frac{\ell}{\log_2 (2\ell-1)+2} > 1.83.
\end{equation}  
On the other hand, $n = (q^m-1)/(q-1)$ implies that $m < \log_q n +1 < \frac{4}{3}\log_q n$, i.e.
\begin{equation}\label{eq-PB14}
  n > q^{3m/4}.
\end{equation}   
Also, for $n \geq 324$ we have $(L(n)/1.87)^{4.27} < n$, and so from (\ref{eq-PB11}) we get 
\begin{equation}\label{eq-PB15}
  \frac{1.87n}{\ell} > n^{0.765}.
\end{equation}  
We also have 
\begin{equation}\label{E270219_2}
n+3-3\ell >n+3-3L(n) > 14.8n/15.8
\end{equation} 
for $n \geq 324$. Using Claim 2 and (\ref{eq-PB13})--\eqref{E270219_2}, and $\ell! < (\ell/2)^\ell$ (which certainly holds for $\ell\geq 12$), we arrive at a contradiction:
$$q^{mk} > \frac{(n+3-3\ell)^\ell}{\ell !} > \biggl( \frac{14.8n/15.8}{\ell/2}\biggr)^\ell > \biggl(\frac{1.87n}{\ell}\biggr)^\ell > n^{0.765\ell} > n^{1.39k} > q^{mk}.$$

Suppose $8 \leq \ell \leq 11$. If $q \geq 3$, we take $k$ as in Claim 2. If $q=2$ then $k=5$ satisfies the assumptions of Claim 1---indeed, $n=(2^m-1) \geq 324$ implies $m \geq 9$, and $2^5-1 > 2\ell$. As we have $k \leq 5$ for all $q$, using Claims 1 and 2 for $q=2$ and $q\geq 3$, respectively, we get
\begin{equation}\label{E270218_3}
q^{5m} \geq q^{mk} > \frac{(n+3-3\ell)^\ell}{\ell !}.
\end{equation}
If $\ell =10$ or $11$, then 
$$
\frac{(n+3-3\ell)^\ell}{\ell !}\geq\frac{(n-27)^{10}}{10!} > n^7 > q^{7(m-1)},
$$
hence $5m>7(m-1)$, a contradiction. 
If $\ell = 9$, then 
$$\frac{(n+3-3\ell)^\ell}{\ell !}=\frac{(n-24)^9}{9!} > n^{20/3} > q^{20(m-1)/3},$$
hence $5m>20(m-1)/3$, a contradiction since $m\geq 4$. 
Let $\ell = 8$. Then for $q\geq 3$ we have $k=
\lceil \log_q(15) \rceil +1\leq 4$. So by Claim 2, we have 
$$q^{4m} \geq q^{mk} >  \frac{(n-21)^8}{8!} > n^{6} > q^{6(m-1)},$$
a contradiction. For $q=2$, we have $m \geq 9$, $k=5$, and we again get a contradiction: 
$$q^{5m} = q^{mk} >  \frac{(n-21)^8}{8!} > n^{6} > q^{6(m-1)}.$$

Suppose $\ell = 7$. If $q \geq 3$, choose $k$ as in Claim 2. If $q=2$, then choose $k = 4$. In both cases we have $k \leq 4$. Now we get a contradiction using Claims 1 and 2: 
$$ q^{4m} \geq q^{mk}  >  \frac{(n-18)^7}{7!} > n^{16/3} > q^{16(m-1)/3}.$$

Suppose $5 \leq \ell \leq 6$. If $q \geq 3$ take $k=3$ and apply  Claim 1 to get a contradiction:
$$q^{3m} \geq q^{mk}  >  \frac{(n-12)^5}{5!} > n^{4} > q^{4(m-1)}.
$$
If $q =2$ take $k = 4$ and apply Claim 1 to get a contradiction:  
$$2^{4m} = q^{mk}  >  \frac{(n-12)^5}{5!} > (n+1)^{4} > 2^{4m}.$$

If $q \geq 7$ and $\ell = 4$ take $k = 2$ and apply Claim~1 to get a contradiction: 
$$q^{2m} = q^{mk}  >  \frac{(n-9)^4}{4!} > n^{8/3} > q^{8(m-1)/3}.$$
\end{proof}

\subsection{Ruling out the remaining $D^\la$ for groups with socle $PSL_m(q)$} Proposition~\ref{PBoundSL} rules out irreducible restrictions $D^\la\da_G$ in the generic case where $n \geq 324$, $m\geq 4$, and $\ell$ not too small. In this subsection we deal with the remaining cases.  

\begin{Lemma}\label{PRestSL2}
Let $p = 2$ or $3$, $\la\in\Par_p(n)$, $m \geq 5$, and $q=2$. If $D^\la\da_G$ is irreducible then $\la \in \L^{(1)}(n)$.
\end{Lemma}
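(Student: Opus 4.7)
We argue by contradiction: assume $\la\notin\L^{(1)}(n)$ but $D^\la\da_G$ is irreducible. Since $\soc(G)=PSL_m(2)$ is perfect, the sign character restricts trivially to $G$, so $D^\la\da_G\cong D^{\la^\Mull}\da_G$, and replacing $\la$ by $\la^\Mull$ if necessary we may assume $\la_1=n-\ell\geq\la_1^\Mull$ with some $\ell\geq 2$. The strategy is to bound $\ell$ above and then rule out each $\ell\in\{2,3,4\}$ in turn.

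To bound $\ell$: for $m\geq 9$ we have $n\geq 511\geq 324$, so Proposition~\ref{PBoundSL} yields $\ell\leq 4$. For $5\leq m\leq 8$ I would adapt that proof: for the least $k$ with $2^k-1\geq 2\ell$, Theorem~\ref{TSub} produces $(D^\la)^{P_k}\neq 0$; Lemma~\ref{L270219_5} and $R_k/P_k\leq GL_k(2)$ give $\dim D^\la\leq\bmax_p(GL_k(2))\cdot[G:R_k]$; comparing with the lower bound $C^p_\ell(n)$ from Theorem~\ref{TBound1} (or with Theorem~\ref{TBound2} for the few $\ell$ to which Theorem~\ref{TBound1} does not apply) forces $\ell\leq 4$ in these four residual cases as well.

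To rule out $\ell\in\{3,4\}$, take $k=\ell$; then $2^k-1\geq 2\ell$, so Theorem~\ref{TSub} gives $(D^\la)^{P_k}\neq 0$, and Lemma~\ref{L270219_5} together with the isomorphisms $GL_3(2)\cong PSL_2(7)$ and $GL_4(2)\cong\AAA_8$ (which yield $\bmax_p(GL_3(2))\leq 8$ and $\bmax_p(GL_4(2))\leq 64$) produces the upper bound $\dim D^\la\leq\bmax_p(GL_k(2))\cdot[G:R_k]$. Bounding $[G:R_k]$ by the Gaussian binomial coefficient $\prod_{i=1}^{k}(2^{m-i+1}-1)/(2^i-1)$ and comparing with $C^p_\ell(n)\geq (2^m-3\ell-1)^\ell/\ell!$ from Theorem~\ref{TBound1} yields a contradiction for every $m\geq 5$.

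For $\ell=2$ the only candidates are $\la=(n-2,2)$ (for either $p$) and $\la=(n-2,1^2)$ (only for $p=3$). Since $n=2^m-1$ satisfies $n\equiv 3\pmod{4}$ when $p=2$ and $n\equiv 0$ or $1\pmod{3}$ when $p=3$, Lemma~\ref{LSub22} supplies $(D^{(n-2,2)})^{\SSS_{n-3}}\neq 0$ in the first case, which makes $k=2$ admissible in Lemma~\ref{L270219_5}; this gives $\dim D^{(n-2,2)}\leq\bmax_p(GL_2(2))\cdot[G:R_2]\leq 2\cdot n(n-1)/6=n(n-1)/3$, contradicting the explicit dimension formulas of Lemma~\ref{LL2} for every $m\geq 5$ (with Lemma~\ref{l4} dispatching $p=3$, $3\mid n$ directly). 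For $\la=(n-2,1^2)$ with $3\nmid n$ (so $m$ odd), $D^\la$ equals the Specht module $S^\la$, so Lemma~\ref{LSCe} lifts the irreducibility to characteristic zero and Saxl's theorem \cite[Theorem 1]{S} then excludes $PSL_m(2)$ for $m\geq 5$. The remaining subcase $\la=(n-2,1^2)$ with $p=3$ and $3\mid n$ (even $m\geq 6$) is the principal obstacle, because Theorem~\ref{TSub} only yields $\SSS_{n-4}$-invariants (too coarse for $k=2$) while the $k=3$ bound is too weak to give a contradiction; the plan is to produce the needed $\SSS_{n-3}$-invariant directly by tracing the restriction $D^{(n-2,1^2)}\da\SSS_{n-1}\da\SSS_{n-2}\da\SSS_{n-3}$ through successive good-node removals \cite{KBrII} and then apply the $k=2$ argument as above.
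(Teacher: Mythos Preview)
Your overall strategy coincides with the paper's: bound $\ell$ from above, then eliminate the small values. Two points deserve comment.

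\textbf{The range $5\le m\le 8$.} Your proposed ``adaptation'' has a gap. For small $m$ the least $k$ with $2^k-1\ge 2\ell$ can exceed $m-1$ (e.g.\ $m=5$ forces $k\le 4$, hence $\ell\le 7$), so no $P_k$-bound is available; and for such $\ell$ neither Theorem~\ref{TBound1} (whose hypothesis $n\ge p(\de_p+\ell-2)$ fails) nor the crude $2^{\ell/2}$ from Theorem~\ref{TBound2} suffices against $|G|^{1/2}$ when $p=3$. The paper sidesteps this by reading off $\bmax_p(SL_m(2))$ from \cite{GAP,Lu2} for $m=5,6,7,8$ and invoking Proposition~\ref{PBound12}(i) directly, which immediately gives $\ell\le 2$. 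This is cleaner and avoids any iteration.

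\textbf{The case $\la=(n-2,1^2)$.} Here your route and the paper's genuinely diverge. The paper observes that for $p=3$ one has $D^{(n-2,1^2)}\cong \wedge^2(D^{(n-1,1)})$ (checked via Lemma~\ref{LL2}(iii)), so irreducibility of $D^\la\da_G$ would force $SL_m(2)$ to have a nontrivial irreducible module with irreducible exterior square, contradicting \cite[Proposition~3.4]{MM}. An equally short alternative noted in the paper: $G=SL_m(2)$ is not $3$-homogeneous on $\PP(W)$, so \cite[Theorem~A]{KMT1} applies. Both arguments handle $3\mid n$ and $3\nmid n$ uniformly in one line. Your approach---Saxl's theorem for $3\nmid n$, and tracing good-node removals to produce an $\SSS_{n-3}$-invariant for $3\mid n$---does work (the branching chain $D^{(n-2,1^2)}\to D^{(n-3,1^2)}\to D^{(n-3,1)}$ followed by the observation that the lowest $2$-normal node of $(n-3,1)$ is $(2,1)$ indeed yields $D^{(n-3)}$ in the socle), but is considerably more laborious and is presented only as a plan.
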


\begin{proof}
By Lemma~\ref{L270219}, we have $G = S = SL_m(2)$. 
Set $V:=D^\la$ and suppose $V\da_G$ is irreducible. Write $n-\ell=\max(\la_1,\la^\Mull_1)$. Replacing $\la$ by $\la^\Mull$ if necessary, we may assume that $\la_1 = n-\ell$. We need to prove that $\ell\leq 1$.

\smallskip
\noindent
{\sf Claim 1:} $\ell \leq 5$.

\noindent
If $m \geq 9$, then $n =2^m-1\geq 511$ and so $\ell \leq 4$ by Proposition \ref{PBoundSL}. 
Let $m=8$. By \cite{GAP}, $\bmax(G) = 361416600 < 2^{28.5}$. As $\dim V \geq 2^{\ell/2}$ by  
Theorem~\ref{TBound2}, we have that $\ell \leq 56 < n/4$. By Theorem \ref{TBound1}, 
$$\dim V> \biggl( \frac{2(n+3)}{\ell}-6 \biggr)^\ell =  \biggl( \frac{516}{\ell}-6 \biggr)^\ell > \bmax(G),$$
for $6 \leq \ell \leq 56$, which contradicts the irreducibility of $V\da_G$. 
Let $m =7$. By \cite{GAP}, $\bmax(G) = 2731008 < 2^{21.5}$, whence $\ell \leq 42 < n/3$ by  
Theorem~\ref{TBound2}. By Theorem \ref{TBound1},
$$\dim V> \biggl( \frac{2(n+3)}{\ell}-6 \biggr)^\ell =  \biggl( \frac{262}{\ell}-6 \biggr)^\ell > \bmax(G),$$
for $6 \leq \ell \leq 42$, which contradicts the irreducibility of $V\da_G$. 
Let $m = 6$. By \cite{GAP} for $p=3$ and \cite{Lu2} for $p=2$, 
$$\max(\bmax_2(G),\bmax_3(G)) =\max(32768,29295) =32768 < (n^3-9n^2+14n)/6,$$ 
whence $\ell \leq 2$ by Proposition \ref{PBound12}(i).  The case $m=5$ is treated similarly, using the bound 
$\dim V \leq 1024$ coming from \cite{GAP}.

\smallskip
\noindent
{\sf Claim 1:} $\ell \leq 2$.

\noindent By Claim 1, we may assume that $\ell \leq 5$, so we can take $k=4$ in Lemma \ref{L270219_5} to get 
$$\dim V \leq \frac{[G:P_4]}{[R_4:P_4]}\bmax_p(R_4/P_4)=\frac{[G:P_4]}{[R_4:P_4]}\bmax_p(SL_4(2)) < \frac{(2^m-1)^4}{20160} \cdot 64 = \frac{n^4}{315}.$$
If $\ell = 4$ or $5$, then $\dim V \geq \min\{ (n-9)^4/24, (n-12)^5/120 \}$ by Theorem \ref{TBound1}, a contradiction since $n \geq 31$. So 
$\ell \leq 3$. Taking $k=3$ in Lemma~\ref{L270219_5}, we get 
$$\dim V \leq  \frac{[G:P_3]}{[R_3:P_3]}\bmax_p(SL_3(2)) < \frac{(2^m-1)^3}{21} = \frac{n^3}{21} < \frac{n^3-9n^2+14n}{6},$$
since $n \geq 31$. By Proposition \ref{PBound12}(i), this implies   that $\ell \leq 2$.

\smallskip
Now we consider the case $\ell = 2$. If $\la = (n-2,1^2)$ then  $p=3$. Using Lemma \ref{LL2}(iii) one can show that 
$D^\la = \wedge^2(D^{(n-1,1)})$. Thus $SL_m(2)$ admits a non-trivial (irreducible) module $V\da_G$ whose exterior square is irreducible.
This is impossible by \cite[Proposition 3.4]{MM}. (An alternative  argument is to note that $G$ is not $3$-homogeneous and then apply 
\cite[Theorem A]{KMT1}.)

Finally, let $\la = (n-2,2)$.
By Lemma~\ref{L270219_3}, $P_2\leq S_{n-3}$, and by Lemma \ref{LSub22} we have $V^{P_2} \neq 0$, so by Lemma~\ref{L270219_5}, we obtain
$$\dim V \leq \frac{[G:P_2]}{[R_2:P_2]}\bmax(SL_2(2)) < \frac{2(2^m-1)^2}{6} = \frac{n^2}{3} < \frac{(n^2-5n+2)}{2},$$
since $n \geq 31$. This contradicts Lemma \ref{Lr1}.
\end{proof}

\begin{Lemma}\label{PRestSL3}
Let $p = 2$ or $3$, $\la\in\Par_p(n)$, $m \geq 4$, and $q=3$. If $ D^\la\da_G$ is irreducible then $\la \in \L^{(1)}(n)$. 
\end{Lemma}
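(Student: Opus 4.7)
The plan is to mirror the proof of Lemma \ref{PRestSL2}, with $q = 3$ in place of $q = 2$. By Lemma \ref{L270219}, $G \leq PGL_m(3) \rtimes \CCC_1 = PGL_m(3)$. Set $V := D^\la$, assume $V\da_G$ is irreducible, and (replacing $\la$ by $\la^\Mull$ if necessary) write $\la_1 = n - \ell$. The goal is $\ell \leq 1$. Throughout, $n = (3^m-1)/2 \geq 40$, and $n \equiv 1 \pmod 3$, while $n \pmod 4$ alternates between $0$ and $1$ with the parity of $m$.

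First I would bound $\ell \leq 5$. When $m \geq 6$ we have $n \geq 364$, so Proposition \ref{PBoundSL} gives $\ell \leq 4$ directly (as $q = 3 \leq 5$). When $m \in \{4,5\}$, Proposition \ref{PRedSL}(ii) gives $\dim V \leq n^{(m+1)/2}$, at most $40^{5/2} < 10120$ for $m = 4$ and $121^3 < 1.78 \cdot 10^6$ for $m = 5$. By Theorem \ref{TBound1}, if $\ell \geq 6$, then $\dim V > (n - 15)^6/720$, which exceeds both of these upper bounds.

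Next I would refine to $\ell \leq 2$ by invoking Lemma \ref{L270219_5} with $k = 3$, legitimate because $(3^3-1)/(3-1) = 13 \geq 2\ell$ whenever $\ell \leq 5$. Lemma \ref{TBound3} yields $\bmax_p(R_3/P_3) \leq \bmax(PGL_3(3)) \leq [SL_3(3):B] = 52$, while the identity $\binom{m}{3}_3 = n(n-1)(n-4)/1404$ follows by substituting $3^m - 1 = 2n$, $3^{m-1} - 1 = (2n-2)/3$, and $3^{m-2} - 1 = (2n-8)/9$ into the Gaussian binomial. Together these give
$$\dim V \leq 52 \binom{m}{3}_3 = \frac{n(n-1)(n-4)}{27}.$$
For $\ell = 3$ the lower bound $(n-6)^3/6$ of Theorem \ref{TBound1} exceeds this upper bound at $n = 40$ (ratio $\approx 3.15$), and the ratio only grows with $n$ (tending to $9/2$); for $\ell = 4,5$ the gap is wider. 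Hence $\ell \leq 2$.

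The final step rules out $\ell = 2$. The $p$-regular candidates are $\la = (n-2,2)$ for either $p$, plus $\la = (n-2,1^2)$ when $p = 3$. Applying Lemma \ref{L270219_5} with $k = 2$ (whose hypothesis $(D^\la)^{P_2} \neq 0$ follows from Theorem \ref{TSub} with $L = L_2 = 4 = 2\ell$), and noting $R_2/P_2 \leq PGL_2(3) \cong \SSS_4$ so $\bmax_p(R_2/P_2) \leq 3$, yields $\dim V \leq 3\binom{m}{2}_3 = n(n-1)/4$. But since $3 \nmid n$ and $n \in \{0,1\} \pmod 4$, Lemma \ref{LL2} gives $\dim D^{(n-2,2)} \geq (n^2 - 5n + 4)/2 > n(n-1)/4$ (true for $n > 8$), while Lemma \ref{LL2}(iii) gives $\dim D^{(n-2,1^2)} = (n-1)(n-2)/2 > n(n-1)/4$ (true for $n > 4$). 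Both inequalities contradict $\dim V \leq n(n-1)/4$, forcing $\ell \leq 1$. The main technical verification is arithmetic---checking that the Gaussian-binomial and Bruhat-decomposition upper bounds translate into polynomial-in-$n$ inequalities strictly dominated by the Specht-style lower bounds from Theorem \ref{TBound1}---which goes through comfortably for all $n \geq 40$.
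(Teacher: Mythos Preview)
Your proof follows essentially the same three-stage strategy as the paper's (rough bound on $\ell$, then $k=3$ in Lemma~\ref{L270219_5} to force $\ell\le 2$, then $k=2$ to force $\ell\le 1$), with the pleasant variation that you avoid \cite{GAP} entirely by using Proposition~\ref{PRedSL}(ii) for $m\in\{4,5\}$ and Lemma~\ref{TBound3} for $\bmax_p(R_3/P_3)$; the paper instead reads off $\bmax(G)\le 98010,\,2080$ and $\bmax_p(SL_3(3))\le 27$ from \cite{GAP}, and finishes the last step via Lemma~\ref{Lr1} rather than Lemma~\ref{LL2}.

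There is one genuine slip: you invoke Theorem~\ref{TBound1} for $\ell=3$ to get the lower bound $(n-6)^3/6$, but that theorem is stated only for $\ell\ge 4$. The fix is immediate: Lemma~\ref{LBound} gives $\dim D^\la \ge (n^3-9n^2+14n)/6$ for $\ell=3$, and one checks $(n^3-9n^2+14n)/6 > n(n-1)(n-4)/27$ for $n\ge 9$ (equivalently $21n^2-213n+354>0$). Alternatively, once you have $\dim V \le n(n-1)(n-4)/27 < (n^3-9n^2+14n)/6$, Proposition~\ref{PBound12}(i) directly yields $\la\in\L^{(2)}(n)$, handling $\ell=3,4,5$ simultaneously and making the separate case analysis unnecessary.
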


\begin{proof}
By Lemma~\ref{L270219}, we have 
$PSL_m(3) \unlhd G \leq PGL_m(3)$. 
Set $V:=D^\la$ and suppose $V\da_G$ is irreducible. 
Write $n-\ell=\max(\la_1,\la^\Mull_1)$. Replacing $\la$ by $\la^\Mull$ if necessary, we may assume that $\la_1 = n-\ell$. We need to prove that $\ell\leq 1$. 

If $m \geq 6$, then $n \geq 364$ and so $\ell \leq 4$ by Proposition \ref{PBoundSL}. If $m = 5$, by \cite{GAP}, we have  
$\bmax(G) \leq 98010 < (n^3-9n^2+14n)/6,$ 
hence $\ell \leq 2$ by Proposition \ref{PBound12}(i).  The same argument applies in the case $m=4$ where $\bmax(G) \leq 2080$.
Thus, we have $\ell \leq 4$ in all cases. 

By \cite{GAP}, $\bmax_p(SL_3(3))\leq 27$. So by Lemma~\ref{L270219_5}, we obtain 
$$\dim V  \leq \frac{27(3^m-1)(3^{m-1}-1)(3^{m-2}-1)}{(3^3-1)(3^2-1)(3-1)} 
  <  \frac{(3^m-1)^3}{416} = \frac{n^3}{52} < \frac{n^3-9n^2+14n}{6},$$
since $n \geq 40$. This in turn implies by Proposition \ref{PBound12}(i) that $\ell \leq 2$. We can take $k=2$ in 
Lemma~\ref{L270219_5}, and, using $\bmax_p(R_2/P_2) = \bmax_p(PGL_2(3))=\bmax_p(\SSS_4) \leq 3$, 
 to get 
$$\dim V \leq \frac{3(3^m-1)(3^{m-1}-1)}{(3^2-1)(3-1)} < \frac{n^2}{4} < \frac{(n^2-5n+2)}{2},$$
and so $\ell = 1$ by Lemma \ref{Lr1}.
\end{proof}

\begin{Lemma}\label{PRestSL4}
Let $p = 2$ or $3$, $\la\in\Par_p(n)$, $m \geq 4$, and $q=4$. If $D^\la\da_G$ is irreducible then $\la \in \L^{(1)}(n)$.
\end{Lemma}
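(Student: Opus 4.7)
The proof will closely follow the pattern of Lemmas~\ref{PRestSL2} and~\ref{PRestSL3}. Set $V := D^\la$ and write $n-\ell = \max(\la_1, \la_1^{\Mull})$; replacing $\la$ by $\la^\Mull$ if necessary, we may assume $\la_1 = n - \ell$. By Lemma~\ref{L270219}, $G \leq PGL_m(4) \rtimes \CCC_2$, and the action of $G$ on $\PP(W)$ gives the parabolic chain $(P_k, R_k)$ of Section~\ref{SLG1}. The plan is to first bound $\ell$ from above, then apply Lemma~\ref{L270219_5} with progressively smaller values of $k$ to force $\ell \leq 1$.

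\textbf{Step 1 (initial cap on $\ell$).} If $m \geq 5$, then $n = (4^m-1)/3 \geq 341 \geq 324$, so Proposition~\ref{PBoundSL} applied with $q = 4$ yields $\ell \leq 4$. If $m = 4$, then $n = 85$; since $|G| \leq |PGL_4(4) \rtimes \CCC_2|$, Proposition~\ref{PRedSL}(i) gives $\dim V \leq |G|^{1/2} < 45000$, which is smaller than $(n^3 - 9n^2 + 14n)/6 = 91715$, so Proposition~\ref{PBound12}(i) already forces $\ell \leq 2$.

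\textbf{Step 2 (reducing $\ell \leq 4$ to $\ell \leq 2$ for $m \geq 5$).} Take $k = 3$ in Lemma~\ref{L270219_5}; the hypothesis $(4^3-1)/(4-1) = 21 \geq 2\ell$ is satisfied. By \eqref{E280219_3}, $R_3/P_3 \hookrightarrow PGL_3(4) \rtimes \CCC_2$, and by Lemma~\ref{TBound3} applied to $SL_3(4)$ together with the standard Clifford bounds going from $SL_3(4)$ up to $PGL_3(4) \rtimes \CCC_2$, we obtain $\bmax_p(R_3/P_3) \leq 210$. Thus
\[
\dim V \leq 210 \cdot \frac{(4^m-1)(4^{m-1}-1)(4^{m-2}-1)}{(4-1)(4^2-1)(4^3-1)} < 210 \cdot \frac{4^{3m-3}}{2835} < \frac{n^3 - 9n^2 + 14n}{6},
\]
the last inequality holding comfortably for $n \geq 341$ since the left side is of order $n^3/13$ while the right is of order $n^3/6$. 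Proposition~\ref{PBound12}(i) then gives $\ell \leq 2$.

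\textbf{Step 3 (reducing $\ell \leq 2$ to $\ell \leq 1$).} Take $k = 2$ in Lemma~\ref{L270219_5}; the hypothesis $(4^2-1)/(4-1) = 5 \geq 2\ell = 4$ is met. Since $R_2/P_2 \hookrightarrow PGL_2(4) \rtimes \CCC_2 \cong \SSS_5$, we have $\bmax_p(R_2/P_2) \leq 6$, and so
\[
\dim V \leq 6 \cdot \frac{(4^m-1)(4^{m-1}-1)}{(4-1)(4^2-1)} < \frac{6 \cdot 4^{2m-1}}{45} < \frac{n^2 - 5n + 2}{2}
\]
for all $n \geq 85$. Since any irreducible constituent $E^\la_{(\pm)}$ of $V\dar_{\AAA_n}$ has dimension at most $\dim V$, Lemma~\ref{Lr1} gives $\la \in \L^{(1)}(n)$, completing the proof.

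The main obstacle is purely arithmetic: verifying that the chain of dimension inequalities holds uniformly in $m$. Unlike Lemma~\ref{PRestSL2} for $q = 2$, no separate treatment of $\la = (n-2,1^2)$ via exterior squares is needed, because the $k=2$ reduction already succeeds; and unlike Lemma~\ref{PRestSL3}, the initial cap on $\ell$ for small $m$ comes directly from $|G|^{1/2}$ rather than requiring \cite{GAP} input for individual groups. The key qualitative ingredient remains the parabolic structure of $PSL_m(q)$ paired with the lower bounds of Lemma~\ref{Lr1} and Proposition~\ref{PBound12}(i).
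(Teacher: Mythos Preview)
Your argument follows the same two-stage parabolic descent ($k=3$, then $k=2$) via Lemma~\ref{L270219_5} as the paper, and is essentially correct. One minor slip in Step~2: Lemma~\ref{TBound3} as stated gives $\bmax(SL_3(4)) \leq 105$, but Clifford theory from $PSL_3(4)$ up to $PGL_3(4)\rtimes\CCC_2$ carries a factor of $6$, not $2$ --- you have silently dropped the index-$3$ step $[PGL_3(4):PSL_3(4)]$ --- so the bound obtainable this way is $630$, not $210$. This does not actually break the proof: replace your crude intermediate $4^{3m-3}$ by the sharper $(4^m-1)(4^{m-1}-1)(4^{m-2}-1) < (4^m-1)^3/64 = 27n^3/64$, and even with $630$ one gets $\dim V < 3n^3/32 < (n^3-9n^2+14n)/6$ for $n\geq 85$. (The paper instead reads off $\bmax_p(PGL_3(4))=64$ from \cite{GAP}, obtaining the constant $128$.) Your use of $|G|^{1/2}$ for the $m=4$ base case, in place of the paper's \cite{GAP} lookup, is a harmless simplification.
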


\begin{proof}
By Lemma~\ref{L270219}, we have 
$PSL_m(4) \unlhd G \leq PGL_m(4) \rtimes \CCC_2$. 
Set $V:=D^\la$ and suppose $V\da_G$ is irreducible. 
Write $n-\ell=\max(\la_1,\la^\Mull_1)$. Replacing $\la$ by $\la^\Mull$ if necessary, we may assume that $\la_1 = n-\ell$. We need to prove that $\ell\leq 1$. 

If $m \geq 5$, then $n \geq 341$ and so $\ell \leq 4$ by Proposition \ref{PBoundSL}. If $m = 4$ then by \cite{GAP},  we have $\dim V \leq 2 \cdot 7140 < (n^3-9n^2+14n)/6,$ 
whence $\ell \leq 2$ by Proposition \ref{PBound12}(i).  
Thus we always have $\ell \leq 4$, and we can take $k=3$ in Lemma~\ref{L270219_5}. Note using (\ref{E280219_3}) 
that 
$R_3/P_3 \leq PGL_3(4) \rtimes \CCC_2$, so $\bmax_p(R_3/P_3)\leq2\bmax_p(PGL_3(4))=128$ by \cite{GAP}, and by Lemma~\ref{L270219_5},
$$
\dim V  \leq \frac{128(4^m-1)(4^{m-1}-1)(4^{m-2}-1)}{(4^3-1)(4^2-1)(4-1)} 
   <  \frac{2 (4^m-1)^3}{2835} = \frac{2n^3}{105} < \frac{n^3-9n^2+14n}{6}$$
since $n \geq 85$. By Proposition \ref{PBound12}(i), we have $\ell \leq 2$. So we can take $k=2$ in Lemma~\ref{L270219_5}. Note that $R_3/P_3 \leq \SSS_5$ as $\FF_4^2$ contains $5$ lines, whence $\bmax_p(R_3/P_3) \leq \bmax_p(\SSS_5) \leq 6$, and by Lemma~\ref{L270219_5}, 
$$\dim V \leq \frac{6(4^m-1)(4^{m-1}-1)}{(4^2-1)(4-1)} < \frac{3n^2}{10} < \frac{(n^2-5n+2)}{2}.$$
Now $\ell = 1$ by Lemma \ref{Lr1}.
\end{proof}

\begin{Lemma}\label{PRestSL5}
Let $p = 2$ or $3$, $\la\in\Par_p(n)$, $m \geq 4$, and $q\geq 5$. If $D^\la\da_G$ is irreducible then $\la \in \L^{(1)}(n)$.
\end{Lemma}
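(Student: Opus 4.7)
The plan is to follow the same three-step pattern used in Lemmas \ref{PRestSL2}, \ref{PRestSL3}, and \ref{PRestSL4}: first reduce to a small value of $\ell$ where $n-\ell=\max(\la_1,\la_1^\Mull)$, then iteratively tighten the bound on $\ell$ by combining Lemma \ref{L270219_5} with the dimension lower bounds from Proposition \ref{PBound12} and Lemma \ref{Lr1}. Set $V := D^\la$ and, replacing $\la$ by $\la^\Mull$ if necessary, assume $\la_1 = n-\ell$. By Lemma \ref{L270219} we have $PSL_m(q) \unlhd G \leq PGL_m(q) \rtimes \CCC_f$ where $q = r^f$.

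\textbf{Step 1 (reduction to $\ell \leq 4$).} When $n \geq 324$, i.e.\ when $m \geq 5$ or ($m=4$ and $q \geq 7$), Proposition \ref{PBoundSL} directly gives $\ell \leq 3$. The remaining case is $m=4$, $q=5$, so $n=156$. Here Proposition \ref{PRedSL}(ii) yields $\dim V \leq n^{5/2}$, and one checks that for $\ell \geq 5$ the lower bound $\dim V \geq (n+3-3\ell)^\ell/\ell!$ from Theorem \ref{TBound1} already exceeds $n^{5/2}$, so $\ell \leq 4$ in this case too.

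\textbf{Step 2 (reduction to $\ell \leq 2$).} Since $\ell \leq 4$ and $(q^3-1)/(q-1) \geq 31 > 8 \geq 2\ell$, we may take $k=3$ in Lemma \ref{L270219_5}. By \eqref{E280219_3} we have $R_3/P_3 \leq PGL_3(q) \rtimes \CCC_f$, and Lemma \ref{TBound3} together with $f \leq q-1$ gives $\bmax_p(R_3/P_3) \leq f\cdot (q^2+q+1)(q+1)$. Plugging this into Lemma \ref{L270219_5} and using \eqref{E280219}, one gets
\[
\dim V \leq f(q^2+q+1)(q+1)\cdot\frac{(q^m-1)(q^{m-1}-1)(q^{m-2}-1)}{(q^3-1)(q^2-1)(q-1)} < \frac{n^3-9n^2+14n}{6}
\]
for all $m \geq 4$ and $q \geq 5$ (the critical check being $m=4$, $q=5$, where one also uses $f=1$). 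By Proposition \ref{PBound12}(i), we conclude $\ell \leq 2$.

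\textbf{Step 3 (reduction to $\ell \leq 1$).} Taking $k=2$ in Lemma \ref{L270219_5}, using $R_2/P_2 \leq PGL_2(q) \rtimes \CCC_f$ and $\bmax_p(PGL_2(q) \rtimes \CCC_f) \leq f(q+1)$, we obtain
\[
\dim V \leq f(q+1)\cdot\frac{(q^m-1)(q^{m-1}-1)}{(q^2-1)(q-1)} < \frac{n^2-5n+2}{2}
\]
for $m \geq 4$ and $q \geq 5$. By Lemma \ref{Lr1} this forces $\la \in \L^{(1)}(n)$, completing the proof.

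The main obstacle will be the boundary case $q=5$, $m=4$ (where $n=156$ lies below the threshold of Proposition \ref{PBoundSL}) and more generally making sure the polynomial estimates in Steps 2 and 3 really do beat the target bounds uniformly in $m$ and $q \geq 5$; the quotients $(q^{m-i}-1)/(q^{i+1}-1)$ are close to $q^{m-2i-1}$, so one has to track these fractions carefully (as in the proofs of Lemmas \ref{PRestSL3} and \ref{PRestSL4}) to ensure the comparison with $n^3/6$ and $n^2/2$ goes through in all relevant ranges.
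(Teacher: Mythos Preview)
Your overall strategy is sound and follows the pattern of Lemmas~\ref{PRestSL2}--\ref{PRestSL4}, but it differs from the paper's route in one substantive way. The paper avoids your Step~2 entirely: rather than going $\ell\leq 4\Rightarrow\ell\leq 2$ via $k=3$, it establishes $\ell\leq 3$ directly. For $m=4$ it uses the crude bound $\dim V\leq |G|^{1/2}<q^8/\sqrt{2.6}<(n^3-9n^2+14n)/6$ (exploiting $f<q/2.6$) to get $\ell\leq 2$ immediately from Proposition~\ref{PBound12}(i); for $m\geq 5$ it uses Proposition~\ref{PBoundSL} to get $\ell\leq 4$, then rules out $\ell=4$ by comparing $(n-9)^4/24$ from Theorem~\ref{TBound1} against either $|G|^{1/2}$ (when $m=5$) or the $k=3$ bound of Proposition~\ref{PRedSL}(iii) (when $m\geq 6$). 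After that only your Step~3 is needed. Your three-step variant is perfectly legitimate and more uniform in $m$; the paper's two-step version trades uniformity for slightly less bookkeeping.

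Two small issues in your write-up deserve attention. In Step~1 for $(m,q)=(4,5)$, you cannot invoke Theorem~\ref{TBound1} for ``all $\ell\geq 5$'' without first bounding $\ell$ from above to ensure the hypothesis $n\geq p(\delta_p+\ell-2)$; a preliminary appeal to Theorem~\ref{TBound2} (yielding $\ell\leq 5\log_2 n<37$) fixes this. In Step~2, your citation of Lemma~\ref{TBound3} is loose: that lemma is stated only for $SL_m(q)$ and $Sp_{2m}(q)$, not $PGL_3(q)$. You can repair this either by noting that every irreducible of $PGL_3(q)$ inflates to one of $GL_3(q)$ (for which the same Borel-index bound holds), or by following the paper's device of bounding $\bmax_p(PGL_3(q))\leq \gcd(3,q-1)\cdot\bmax(SL_3(q))$; since $\gcd(3,q-1)=1$ for $q=5$ (the only case where your Step~2 is genuinely needed), the extra factor causes no trouble.
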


\begin{proof}
By Lemma~\ref{L270219}, we have 
$PSL_m(q) \unlhd G \leq PGL_m(q) \rtimes \CCC_f$. Note that $f < q/2.6$ as $q \geq 5$. 
Set $V:=D^\la$ and suppose $V\da_G$ is irreducible. 
Write $n-\ell=\max(\la_1,\la^\Mull_1)$. Replacing $\la$ by $\la^\Mull$ if necessary, we may assume that $\la_1 = n-\ell$. We need to prove that $\ell\leq 1$. 

We claim that $\ell\leq 3$. If $m = 4$ then 
$|G| \leq f \cdot |PGL_4(q)| < q^{15}f  < q^{16}/2.6,$
and
$$\dim V \leq |G|^{1/2} < \frac{q^8}{\sqrt{2.6}} < \frac{q^9}{8} < \frac{n^3}{8} <  \frac{n^3-9n^2+14n}{6},$$
hence $\ell \leq 2$ by Proposition \ref{PBound12}(i). Let  
$m \geq 5$. Then $n \geq 781$, and we have  $\ell \leq 4$ by Proposition \ref{PBoundSL}, so we may assume that $\ell=4$. 
We show that $\dim V  <  \frac{(n-9)^4}{24},$
which contradicts Theorem \ref{TBound1}.
If $m = 5$ then 
$$\dim V \leq \sqrt{|G|} \leq \sqrt{f \cdot |PGL_5(q)|}<
\sqrt{q^{24}f}< \sqrt{q^{25}/2.6}
< \frac{q^{12.5}}{\sqrt{2.6}} < \frac{n^{25/8}}{\sqrt{2.6}} <  \frac{(n-9)^4}{24}. $$
If $m \geq 6$, Proposition \ref{PRedSL}(iii) with $k = 3$ yields
$$\dim V < q^{3m} \leq q^{3.6(m-1)} < n^{3.6} <  \frac{(n-9)^4}{24}.$$

As $\ell\leq 3$, we can take $k=2$ in Lemma~\ref{L270219}. 
Note using (\ref{E280219_3}) 
that 
$R_2/P_2 \leq PGL_2(q) \rtimes \CCC_f$, so 
$$\bmax_p(R_2/P_2)\leq f\bmax_p(PGL_2(q))\leq f(q+1),$$ 
and by Lemma~\ref{L270219_5},
\begin{align*}
\dim V &\leq f(q+1)\frac{(q^m-1)(q^{m-1}-1)}{(q^2-1)(q-1)}  = \frac{f(q^m-1)(q^{m-1}-1)}{(q-1)^2} 
\\
&<  \frac{n^2f}{q} < \frac{n^2}{2.6} < \frac{n^2-5n+2}{2}
\end{align*}
since $n \geq 156$. We conclude that $\ell=1$ by by Lemma \ref{Lr1}.
\end{proof}

\begin{Lemma}\label{LSL23}
Let $p = 2$ or $3$, $\la\in\Par_p(n)$, and $(m,q) = (3,q \geq 5)$ or $(2, q \geq 11)$. If $D^\la\da_G$ is irreducible then $\la \in \L^{(1)}(n)$.
\end{Lemma}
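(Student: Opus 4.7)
The plan is to follow the strategy established in Lemmas \ref{PRestSL2}--\ref{PRestSL5}, combining the fact that $G \leq PGL_m(q) \rtimes \CCC_f$ (Lemma \ref{L270219}) with Lemma \ref{TBound3} applied to the Borel subgroup $B$ of $PGL_m(q)$, and then using Lemmas \ref{Lr1} and \ref{L270219_5} to progressively reduce $\ell$, where $n-\ell = \max(\la_1,\la_1^\Mull)$ (WLOG $\la_1 = n-\ell$).

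First I would handle the case $(m,q) = (3,q\geq 5)$, in which $n = q^2+q+1\geq 31$. Since $[PGL_3(q):B] = (q+1)(q^2+q+1) = n(q+1)$, Lemma \ref{TBound3} yields $\dim V \leq fn(q+1)$. A direct check (using $f\leq \log_2 q$, with the actual value $f=1$ at the tight boundary $q=5$) shows $fn(q+1) < (n^3-9n^2+14n)/6$ for all $q\geq 5$, so $\la \in \L^{(2)}(n)$ by Proposition \ref{PBound12}(i), and in particular $\ell \leq 2$. If $\ell = 2$, then $L_2 = q+1 \geq 4 = 2\ell$, and Lemma \ref{L270219_5} applies with $k=2$: noting from \eqref{E280219_3} that $R_2/P_2 \leq PGL_2(q) \rtimes \CCC_f$ so that $\bmax_p(R_2/P_2) \leq f(q+1)$, and using $[G:R_2] = (q^3-1)/(q-1) = n$, we get
\[
\dim V \leq nf(q+1).
\]
One verifies $nf(q+1) < (n^2-5n+2)/2$ for $q \geq 5$, whence Lemma \ref{Lr1} forces $\ell \leq 1$, as desired.

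For $(m,q) = (2,q\geq 11)$, we have $n = q+1 \geq 12$ and $\bmax_p(PGL_2(q)) \leq q+1 = n$, hence $\dim V \leq fn$. Checking $fn < (n^3-9n^2+14n)/6$ for all $q \geq 11$ (easy since $f = 1$ when $q$ is prime and the gap grows rapidly when $f \geq 2$) gives $\la \in \L^{(2)}(n)$, so $\ell \leq 2$. Unlike the $m = 3$ case, Lemma \ref{L270219_5} cannot be further invoked here (there is no admissible $k$ with $1 \leq k \leq m-1 = 1$ satisfying $L_k \geq 2\ell$), so I would push the bound $\dim V \leq fn$ directly: one checks that $fn < (n^2-5n+2)/2$ for every prime power $q \geq 11$, which forces $\ell \leq 1$ via Lemma \ref{Lr1}.

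The main obstacle is purely arithmetic: verifying the two chains of inequalities at the boundary prime powers, in particular $q = 5$ for $m = 3$ and $q = 11$ for $m = 2$, where the margin between $f(q+1)$ (or $f(q+1)n$) and $(n^2-5n+2)/2$ is relatively small. These verifications split naturally according to whether $q$ is prime (so $f=1$) or a proper prime power (so $q$ is much larger than $f$, and the estimate $f \leq \log_2 q$ is very wasteful but still sufficient). No new ideas beyond the preceding lemmas should be needed.
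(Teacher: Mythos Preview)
Your approach for $m=3$ is correct and essentially matches the paper's, though you take an unnecessary detour: the bound $\dim V \leq fn(q+1)$ already satisfies $fn(q+1) < (n^2-5n+2)/2$ for $q \geq 5$ (since $f \leq 3q/8$), so you can invoke Lemma~\ref{Lr1} immediately without passing through Proposition~\ref{PBound12}(i) and Lemma~\ref{L270219_5}. The latter step is harmless but redundant, as it reproduces the same bound you already had.

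There is, however, a genuine gap in your $m=2$ argument. Lemma~\ref{Lr1} requires $n \geq 17$, but for $q = 11$ and $q = 13$ you have $n = 12$ and $n = 14$ respectively. (Likewise, Proposition~\ref{PBound12}(i) requires $n \geq 13$, or $n \geq 23$ when $p=2$, so your intermediate step also fails for these small $q$.) Your inequality $fn < (n^2-5n+2)/2$ is numerically true at $q=11,13$, but the implication ``$\dim E^\mu_{(\pm)} < (n^2-5n+2)/2 \Rightarrow \mu \in \L^{(1)}(n)$'' is not available for these $n$. The paper handles $q=11,13$ separately: it observes that $\dim V \leq \bmax(G) = q+1 = n$ and then checks directly, using the modular character tables in \cite{GAP}, that any irreducible $\F H$-module of dimension $\leq n$ has $\la \in \L^{(1)}(n)$. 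You need some such ad hoc verification for these two values of $q$.
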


\begin{proof}
By Lemma~\ref{L270219}, we have 
$PSL_m(q) \unlhd G \leq PGL_m(q) \rtimes \CCC_f$. Note that $f \leq 3q/8$ as $q \geq 5$. 
Set $V:=D^\la$ and suppose $V\da_G$ is irreducible. 
Write $n-\ell=\max(\la_1,\la^\Mull_1)$. Replacing $\la$ by $\la^\Mull$ if necessary, we may assume that $\la_1 = n-\ell$. We need to prove that $\ell\leq 1$.

If $m=3$ then $n=q^2+q+1$, and 
$$\dim V \leq \bmax(G) \leq f\bmax(PGL_3(q)) \leq f(q+1)(q^2+q+1) < 3n^2/8 < (n^2-5n+2)/2 ,$$
hence $\ell = 1$ by Lemma \ref{Lr1}.

Let $m=2$, so $n=q+1$. If $q \geq 16$, we have that
$$\dim V \leq \bmax(G) \leq f\bmax(PGL_2(q)) \leq f(q+1) < 3n(n-1)/8 < (n^2-5n+2)/2 ,$$
hence $\ell = 1$ by Lemma \ref{Lr1}. If $q=13$ or $11$, then $\dim V \leq \bmax(G) = q+1=n$, and we 
conclude that $\ell = 1$ using \cite{GAP}.
\end{proof}

Now we can prove the main result of this section:

\begin{Theorem}\label{TMainSL}
Let $p=2$ or $3$, and $\la\in \Par_p(n)$ such that $\dim D^\la>1$. 
Suppose that $G < \SSS_n$ is a doubly transitive subgroup with $S = \soc(G) = PSL_m(q)$ acting on $n=(q^m-1)/(q-1)$ $1$-subspaces of 
$\F_q^m$,  and either $m \geq 3$, or $m = 2$ and $q \geq 4$. Then $D^\la\da_G$ is irreducible if and only if one of the following holds: 

\begin{enumerate}[\rm(i)]
\item $\la \in \L^{(1)}(n)$. Furthermore, $p \nmid q$ if $m \geq 3$, and $G \not\leq P\Sigma L_2(q)$ if $m=p=2 \nmid q$.
\item $m=2$, and one of {\rm (ii), (iii), (v)} of Proposition \ref{PSmall} occurs.
\end{enumerate}
\end{Theorem}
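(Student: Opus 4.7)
The plan is to assemble the reduction results of Section~\ref{SLG1} with the small-case analysis from Section~\ref{SOD} into a case distinction in $(m,q)$, together with Mortimer's theorem (Table II).

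First I would dispose of the ``if'' direction. In case (i), the assumption $\la\in\L^{(1)}(n)$ combined with $\dim D^\la>1$ means $\la$ or $\la^\Mull$ equals $(n-1,1)$, so $D^\la\in\llbracket D^{(n-1,1)}\rrbracket$; the conditions on $(G,n,p)$ listed in (i) are then exactly the rows of Table II whose socle is $PSL_m(q)$ acting on $(q^m-1)/(q-1)$ points, and Mortimer's classification yields the irreducibility. In case (ii), the irreducibility is part of the statement of Proposition~\ref{PSmall}.

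For the ``only if'' direction I would partition the pairs $(m,q)$ allowed by the hypotheses into a ``generic'' family and a ``small'' family. The generic pairs --- $m\geq 5$ with $q=2$ (Lemma~\ref{PRestSL2}), $m\geq 4$ with $q=3$, $4$, or $q\geq 5$ (Lemmas~\ref{PRestSL3}--\ref{PRestSL5}), and $(m,q)=(3,q\geq 5)$ or $(2,q\geq 11)$ (Lemma~\ref{LSL23}) --- have already been handled: each cited lemma directly shows that irreducibility of $D^\la\da_G$ forces $\la\in\L^{(1)}(n)$. What remains is the small family $(m,q)\in\{(4,2)\}\cup\{(3,q):q\in\{2,3,4\}\}\cup\{(2,q):q\in\{4,5,7,8,9\}\}$; these correspond precisely to the socles listed in hypotheses (a)-(d) of Proposition~\ref{PSmall}, via the isomorphisms $PSL_2(4)\cong PSL_2(5)\cong\AAA_5$, $PSL_2(9)\cong\AAA_6$, $PSL_3(2)\cong PSL_2(7)$, and $SL_4(2)\cong\AAA_8$. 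Proposition~\ref{PSmall} then leaves two possibilities: either $\la\in\L^{(1)}(n)$, or one of its exceptional cases (ii)--(v) occurs. Case (iv) of Proposition~\ref{PSmall} concerns an irreducible $\F\AAA_n$-summand that does not extend to $\SSS_n$, hence does not yield an irreducible restriction of an $\SSS_n$-module $D^\la$; the remaining exceptions (ii), (iii), (v) all have $m=2$ and give precisely conclusion (ii) of the theorem.

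In every branch where we reach $\la\in\L^{(1)}(n)$, Mortimer's theorem (Table II) recovers the precise conditions on $(G,n,p)$ listed in conclusion (i), finishing the argument. The main obstacle is purely bookkeeping: verifying that the generic and small families together exhaust the pairs $(m,q)$ allowed by the hypothesis ``$m\geq 3$, or $m=2$ and $q\geq 4$,'' and translating each exceptional case of Proposition~\ref{PSmall} into the language of the theorem. Since all the substantial dimension-versus-order estimates have already been carried out in Propositions~\ref{PRedSL}, \ref{PBoundSL} and the reduction lemmas, no fresh quantitative input is required at this assembly step.
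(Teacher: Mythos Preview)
Your proposal is correct and follows essentially the same route as the paper's own proof: split $(m,q)$ into the ``small'' pairs covered by Proposition~\ref{PSmall} and the remaining ``generic'' pairs covered by Lemmas~\ref{PRestSL2}--\ref{PRestSL5} and \ref{LSL23}, then invoke Mortimer for the $\ell=1$ endgame. Your write-up is in fact more explicit than the paper's, spelling out the ``if'' direction, the exclusion of Proposition~\ref{PSmall}(iv) (which needs $H=\AAA_9$), and the observation that the surviving exceptions (ii), (iii), (v) all have $m=2$.
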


\begin{proof}
Define $\ell$ from $n-\ell=\max(\la_1,\la^\Mull_1)$. Then $\ell\geq 1$. Recall the notation (\ref{En-lmu}). 
Replacing $\la$ by $\la^\Mull$ if necessary,
we may assume that $\la = (n-\ell,\mu)$ for a partition $\mu$ of $\ell$.
If $(m,q)$ are as listed in Proposition \ref{PSmall}, then we are done. Otherwise we apply Lemmas \ref{PRestSL2}--\ref{PRestSL5}
when $m \geq 4$ and Lemma \ref{LSL23} when $2 \leq m \leq 3$ to conclude that $\ell = 1$, in which case  
the theorem follows from the main result of \cite{Mortimer}.
\end{proof}

\section{Doubly transitive groups $Sp_{2m}(2)$}\label{SLG3}

Throughout the section: $\de\in\{0,1\}$ , $m \geq 3$, $W$ is a $2m$-dimensional vector space over $\F_2$ with symplectic form $(\cdot,\cdot)$ and symplectic basis $(e_1,\dots,e_m,f_1,\dots,f_m)$, 
$\Omega^\de$ is the set of the quadratic forms of Witt defect $\de$ on $W$ associated with $(\cdot,\cdot)$, 
$$n =n(\de):=|\Om^\de|= 2^{m-1}(2^m + (-1)^\de),$$ 
and  $G=Sp(W)\cong Sp_{2m}(2)$ is embedded into $\SSS_n$ via its doubly transitive action on $\Om^\de$. 
For $1\leq k\leq m$ we put $W_k:=\langle e_1, \ldots ,e_k \rangle_{\F_2}$.

\subsection{Bounding $D^\la$ for $Sp_{2m}(2)$}
We follow \cite[\S7.7]{DM} and \cite[\S5]{BK}. Let $\Om$ be the set of all quadratic forms on $W$ which satisfy $Q(v+w)-Q(v)-Q(w)=(v,w)$ for all $v,w\in W$. The group $G$ acts on $\Om$ via $g\cdot Q(w)=Q(g^{-1}w)$ for $g\in G, Q\in \Om, w\in W$. Let 
$Q_0\in\Om$ be the quadratic form defined by 
$Q_0\big(\sum_{i=1}^{m}(a_ie_i+b_if_i)\big)=\sum_{i=1}^m a_ib_i. 
$ 
Then $\Om=\{Q^v\mid v\in W\}$, where $Q^v(-):=Q_0(-)+(v,-)$. 
For $\de\in\{0,1\}$, we set 
\begin{equation}\label{E020319}
\Om^\de=\{Q^v\mid Q_0(v)=\de\}.
\end{equation}
By \cite[Theorem~7.7A]{DM}, $\Om^0$ and $\Om^1$ are the $G$-orbits on $\Om$, and the $G$-action on both of them is doubly transitive. Note that $Q_0=Q^0\in \Om^0$, also fix $Q_1:=Q^{e_m+f_m}\in\Om^1$. 

Let  $1\leq k\leq m-1$. We define certain subgroups $P^\de_k \unlhd R_k^\de\leq G$. First, let
\begin{equation}\label{eq-PSp2}
P^\de_k := \operatorname{Stab}_G(Q_\de,e_1, \ldots,e_k) = \operatorname{Stab}_{O(Q_\de)}(e_1, \ldots,e_k)
\end{equation}  
be the subgroup of $G$ that fixes $Q_\de$ and each of $k$ vectors $e_1, \ldots, e_k$. Also set 
\begin{equation}\label{eq-PSp4}
R_k^\de := \operatorname{Stab}_{O(Q_\de)}(W_k). 
\end{equation}
Note that $P_k^\de \unlhd R_k^\de$ and 
\begin{equation}\label{eq-PSp4N}
R_k^\de/P_k^\de \cong SL_k(2). 
\end{equation}

\begin{Lemma} \label{L020319} 
Let  $1\leq k\leq m-1$. Then $P^\de_k$ fixes $2^k$ quadratic forms in $\Om^\de$, so $P_k^\de$ is contained in a natural subgroup $\SSS_{n-2^k}$ in $\SSS_n$. 
\end{Lemma}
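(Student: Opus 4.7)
The goal is to show that $P_k^\de$ has exactly $2^k$ fixed points on $\Om^\de$; the second assertion then follows at once, since $P_k^\de$ acts trivially on these fixed points and therefore embeds into the symmetric group on the remaining $n-2^k$ elements.

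The first step is to parametrize the fixed points algebraically. Writing an arbitrary element of $\Om$ as $Q^{v_\de + u} = Q_\de + (u,\cdot)$ for $u \in W$, the identity $Q_0(v_\de + u) = \de + Q_\de(u)$ shows that $Q^{v_\de + u} \in \Om^\de$ if and only if $Q_\de(u) = 0$. Next, any $g \in O(Q_\de) \leq Sp(W)$ satisfies $Q_\de(g^{-1}x) = Q_\de(x)$ and $(u, g^{-1}x) = (gu, x)$, so a short calculation gives $g \cdot Q^{v_\de + u} = Q^{v_\de + gu}$; in particular, for $g \in P_k^\de$, $g$ fixes $Q^{v_\de + u}$ if and only if $gu = u$. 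Therefore the set of fixed points of $P_k^\de$ on $\Om^\de$ is in bijection with $\{u \in W^{P_k^\de} : Q_\de(u) = 0\}$.

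The main step will be to prove $W^{P_k^\de} = W_k$. The inclusion $W_k \subseteq W^{P_k^\de}$ is immediate from the definition of $P_k^\de$. For the reverse inclusion, decompose $W = W_k \oplus U \oplus W_k^*$ where $U := \langle e_{k+1}, \ldots, e_m, f_{k+1}, \ldots, f_m\rangle$ and $W_k^* := \langle f_1, \ldots, f_k\rangle$, noting that $W_k^\perp = W_k \oplus U$ and that $(\cdot,\cdot)|_U$ is nondegenerate. For each $1 \leq i \leq k$ and $u \in U$, the Eichler transformation
$$E_{i,u}(x) := x + (u, x)\, e_i + (e_i, x)\, u + Q_\de(u)\, (e_i, x)\, e_i$$
lies in $O(Q_\de)$ and fixes each $e_l$ with $l \leq k$, hence belongs to $P_k^\de$. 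Writing $w = w_1 + u' + w_2 \in W_k \oplus U \oplus W_k^*$ with $w_2 = \sum_{j \leq k} c_j f_j$, the equation $E_{i,u}(w) = w$ reduces to
$$\bigl((u, u') + Q_\de(u)\, c_i\bigr)\, e_i + c_i\, u = 0.$$
Since $e_i \in W_k$ and any nonzero $u \in U$ are linearly independent, this forces $c_i = 0$ and $(u, u') = 0$. Letting $i$ range over $\{1, \ldots, k\}$ gives $w_2 = 0$; letting $u$ range over $U$, together with nondegeneracy of $(\cdot,\cdot)|_U$, forces $u' = 0$. Hence $w \in W_k$.

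Finally, $W_k$ is totally $Q_\de$-singular: each $e_l$ with $l \leq k \leq m-1$ satisfies $Q_\de(e_l) = Q_0(e_l) + (v_\de, e_l) = 0$, and $W_k$ is already totally isotropic for $(\cdot,\cdot)$. Consequently every $u \in W_k$ is $Q_\de$-singular and contributes a fixed point, giving exactly $|W_k| = 2^k$ fixed points on $\Om^\de$. The hard part is the construction and verification of enough Eichler transformations in $P_k^\de$ to cut $W^{P_k^\de}$ down to $W_k$; once that is in hand, the rest of the argument is routine bookkeeping on the affine-space structure of $\Om^\de$ and the induced $O(Q_\de)$-action.
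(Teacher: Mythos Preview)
Your proof is correct, but it does substantially more work than is needed for the lemma or its applications. The paper's own proof is one line: it simply exhibits the $2^k$ forms $\{Q^{v+\de(e_m+f_m)}\mid v \in W_k\}$, observes they lie in $\Om^\de$, and notes that $P_k^\de$ fixes each one. That already gives the containment $P_k^\de\leq \SSS_{n-2^k}$, which is all that is ever used downstream (in Lemmas~\ref{L020319_2} and~\ref{L050319}).

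You instead prove the sharper statement that $P_k^\de$ has \emph{exactly} $2^k$ fixed points on $\Om^\de$, by computing $W^{P_k^\de}=W_k$ via Eichler transformations. This is a genuine strengthening and the argument is clean: the parametrization of the $O(Q_\de)$-action on $\Om^\de$ as $g\cdot Q^{v_\de+u}=Q^{v_\de+gu}$, the construction of enough $E_{i,u}\in P_k^\de$ to force any fixed vector into $W_k$, and the check that $W_k$ is totally $Q_\de$-singular are all correct. The trade-off is clear: the paper's approach is much shorter and suffices for everything that follows, while yours pins down the fixed-point set precisely at the cost of a page of linear algebra that the paper never needs.
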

\begin{proof}
Note that $P^\de_k$ fixes each of the $2^k$ forms $\{Q^{v+\de(e_m+f_m)}\mid v \in W_k\}$ in $\Om^\de$.
\end{proof}

\begin{Lemma} \label{L020319_2} 
Let  $\la = (n-\ell, \mu) \in\Par_p(n)$ for a partition $\mu$ of $\ell$. For an integer $1 \leq k \leq m-1$ such that $2^{k-1} \geq \ell$, we have $(D^\la)^{P_k^\de}\neq 0$. In particular, if $D^\la\da_G$ is irreducible then $\dim D^\la\leq [G:P_k^\de]$. 
\end{Lemma}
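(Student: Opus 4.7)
The plan is to reduce the statement to Theorem~\ref{TSub} via Lemma~\ref{L020319}, then use Frobenius reciprocity.

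First, by Lemma~\ref{L020319}, the subgroup $P_k^\de$ fixes $2^k$ quadratic forms in $\Om^\de$, so $P_k^\de \leq \SSS_{n-2^k}$ inside $\SSS_n$ (the natural Young subgroup permuting the remaining $n-2^k$ points). Consequently
\[
(D^\la)^{P_k^\de} \supseteq (D^\la)^{\SSS_{n-2^k}},
\]
so it suffices to show that $D^\la\da_{\SSS_{n-2^k}}$ contains a trivial submodule.

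For this I would apply Theorem~\ref{TSub} with $L := 2^k$. The hypothesis $2\ell \leq L$ required there becomes exactly $\ell \leq 2^{k-1}$, which is what we are assuming. The condition $L < n$ follows from $k \leq m-1$ and $n = 2^{m-1}(2^m + (-1)^\de) \geq 2^{2m-2}$. Theorem~\ref{TSub} then yields a nonzero trivial submodule of $D^\la\da_{\SSS_{n-L}}$, which automatically gives $(D^\la)^{P_k^\de} \neq 0$.

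For the second assertion, observe that Frobenius reciprocity gives
\[
(D^\la)^{P_k^\de} \;\cong\; \Hom_{P_k^\de}(\F_{P_k^\de},D^\la\da_{P_k^\de}) \;\cong\; \Hom_G\bigl(\ind_{P_k^\de}^G \F_{P_k^\de},\, D^\la\da_G\bigr).
\]
Since the left-hand side is nonzero, $D^\la\da_G$ is a quotient of $\ind_{P_k^\de}^G \F_{P_k^\de}$; if $D^\la\da_G$ is irreducible, then
\[
\dim D^\la \;=\; \dim(D^\la\da_G) \;\leq\; \dim \ind_{P_k^\de}^G \F_{P_k^\de} \;=\; [G:P_k^\de],
\]
as claimed. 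There is no real obstacle here; the content of the lemma is essentially the combinatorial fact that $P_k^\de$ has a large orbit of fixed points on $\Om^\de$, combined with the general branching result Theorem~\ref{TSub} whose heavy lifting has already been done.
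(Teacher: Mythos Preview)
Your proof is correct and follows the same approach as the paper: invoke Lemma~\ref{L020319} to place $P_k^\de$ inside $\SSS_{n-2^k}$, apply Theorem~\ref{TSub} with $L=2^k$, and finish with Frobenius reciprocity. The paper's proof is just the two-line summary of exactly this argument.
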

\begin{proof}
The first statement follows from Lemma~\ref{L020319} and Theorem \ref{TSub}. The second one then follows from the Frobenius Reciprocity. 
\end{proof}

\begin{Lemma} \label{L050319} 
Let $\la=(n-\ell,\mu)\in\Par_p(n)$ with $\mu \vdash \ell$ and $D^\la\da_G$ be irreducible. For an integer $1 \leq k \leq m-1$ such that $(D^\la)^{P_k^\de}\neq 0$, we have 
$$
\dim D^\la \leq [G:R^\de_k]\bmax_p(SL_k(2))= 
\frac{[G:P^\de_k]}{|SL_k(2)|}\bmax_p(SL_k(2)). 
$$
The assumption $(D^\la)^{P^\de_k}\neq 0$ is guaranteed if 
$2^{k-1} \geq \ell$.  
\end{Lemma}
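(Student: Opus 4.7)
The proof should follow the same template as Lemma \ref{L270219_5}, mutatis mutandis, since the setup here is entirely parallel: we have a normal subgroup $P_k^\de \unlhd R_k^\de \leq G$ with known quotient $R_k^\de/P_k^\de \cong SL_k(2)$ by \eqref{eq-PSp4N}, and a pointwise stabilizer $P_k^\de$ that has a large fixed set in $\Omega^\de$ (Lemma \ref{L020319}). The route is: produce a nonzero $R_k^\de$-submodule inside $(D^\la)^{P_k^\de}$, bound its dimension via $\bmax_p(SL_k(2))$, and then push up to $G$ via Frobenius reciprocity.

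In detail, I would argue as follows. Since $P_k^\de \unlhd R_k^\de$, the subspace $(D^\la)^{P_k^\de}$ is stable under $R_k^\de$ and carries a module structure which factors through $R_k^\de/P_k^\de \cong SL_k(2)$. By hypothesis this space is nonzero, so it contains a simple $R_k^\de$-submodule $X$ with
\[
\dim X \leq \bmax_p(R_k^\de/P_k^\de) = \bmax_p(SL_k(2)).
\]
Since $D^\la\da_G$ is irreducible, the inclusion $X \hookrightarrow D^\la\da_{R_k^\de}$ and Frobenius reciprocity yield a nonzero, hence surjective, homomorphism $\ind_{R_k^\de}^G X \twoheadrightarrow D^\la$. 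Therefore
\[
\dim D^\la \;\leq\; [G:R_k^\de]\dim X \;\leq\; [G:R_k^\de]\,\bmax_p(SL_k(2)),
\]
and the equality $[G:R_k^\de] = [G:P_k^\de]/|SL_k(2)|$ is immediate from \eqref{eq-PSp4N}. The final clause that $(D^\la)^{P_k^\de}\neq 0$ whenever $2^{k-1}\geq \ell$ is exactly Lemma \ref{L020319_2} (which itself combines Lemma \ref{L020319} with Theorem \ref{TSub} applied to $L = 2^k \geq 2\ell$).

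There is no real obstacle here; this is a short, formal consequence of the structural results already established (the fixed-point count in Lemma \ref{L020319}, the identification of the quotient in \eqref{eq-PSp4N}, and the invariants criterion of Theorem \ref{TSub}). The only thing worth double-checking is the compatibility of the two normalizations of the inequality $2^{k-1}\geq \ell$ appearing in Lemmas \ref{L020319_2} and \ref{L050319}; both come from the requirement that $P_k^\de$ sits inside $\SSS_{n - L}$ with $L \geq 2\ell$, and $L = 2^k$ gives exactly this bound.
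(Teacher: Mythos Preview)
Your proof is correct and follows essentially the same route as the paper's: find a simple $R_k^\de$-submodule $X$ inside the nonzero invariant space $(D^\la)^{P_k^\de}$, bound $\dim X$ by $\bmax_p(R_k^\de/P_k^\de)=\bmax_p(SL_k(2))$ using \eqref{eq-PSp4N}, and apply Frobenius reciprocity to obtain $\dim D^\la \leq [G:R_k^\de]\dim X$; the final clause is exactly Lemma~\ref{L020319_2}.
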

\begin{proof}
Note that $R_k^\de$ acts on $(D^\la)^{P^\de_k}$ and the $R^\de_k$-module $(D^\la)^{P^\de_k}$ contains a simple submodule 
$X$ of dimension at most $\bmax_p(R_k/P_k)$. By the Frobenius reciprocity, we have
$$\dim D^\la \leq [G:R^\de_k]\dim X \leq [G:R^\de_k]\bmax_p(R_k/P_k)=\frac{[G:P^\de_k]}{[R^\de_k:P^\de_k]}\bmax_p(R^\de_k/P^\de_k),$$
and it remains to use (\ref{eq-PSp4N}) and Lemma~\ref{L020319_2}. 
\end{proof}

\begin{Lemma} \label{L070519} 
For $1\leq k\leq m-1$, we have 
$$
[G:P_k^\de] \,=\, 2^{m-1+k(k-1)/2}  (2^{m-k}+(-1)^\de) \prod^m_{i=m-k+1}(2^{2i}-1)\, <\, 2^{(4m-k)(k+1)/2}.
$$ 
\end{Lemma}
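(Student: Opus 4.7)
I would compute $|P_k^\de|$ directly from the parabolic structure of $R_k^\de$ and then form $[G:P_k^\de] = |G|/|P_k^\de|$ using $|G| = 2^{m^2}\prod_{i=1}^m(2^{2i}-1)$. The key geometric input, drawn from (7.3)--(7.4), is that $P_k^\de$ is the kernel of the natural action on $W_k$ of $R_k^\de = \operatorname{Stab}_{O(Q_\de)}(W_k)$, and that $R_k^\de$ is the maximal parabolic of $O(Q_\de)$ stabilizing the totally singular $k$-subspace $W_k$. Standard parabolic theory for orthogonal groups then yields a Levi decomposition $R_k^\de = U \rtimes (GL_k(2) \times O^{(-1)^\de}_{2(m-k)}(2))$, whose unipotent radical $U$ has order $2^{k(k-1)/2 + 2k(m-k)}$. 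Combining this with (7.4), which identifies the $GL_k(2)$-quotient (and $SL_k(2)=GL_k(2)$ over $\F_2$), gives
$$|P_k^\de| = 2^{k(k-1)/2 + 2k(m-k)} \cdot |O^{(-1)^\de}_{2(m-k)}(2)|.$$

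Next, I would substitute the classical order
$$|O^\epsilon_{2j}(2)| = 2\cdot 2^{j(j-1)}(2^j - \epsilon)\prod_{i=1}^{j-1}(2^{2i}-1)$$
with $\epsilon = (-1)^\de$ and $j = m-k$, and form $|G|/|P_k^\de|$. An elementary calculation shows that the powers of $2$ collect to an exponent of $m-1 + k(k-1)/2$, while the remaining odd parts reduce to
$$\frac{\prod_{i=m-k}^m(2^{2i}-1)}{2^{m-k}-(-1)^\de}.$$
The factorization $2^{2(m-k)}-1 = (2^{m-k}-(-1)^\de)(2^{m-k}+(-1)^\de)$ then cancels the denominator against the $i=m-k$ factor of the numerator, leaving precisely $(2^{m-k}+(-1)^\de)\prod_{i=m-k+1}^m(2^{2i}-1)$. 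This is the claimed equality.

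For the strict inequality, I would apply the crude bounds $2^{m-k}+(-1)^\de \leq 2^{m-k+1}$ and $2^{2i}-1 < 2^{2i}$ for each factor in the product. The resulting exponent $(m-1) + k(k-1)/2 + (m-k+1) + k(2m-k+1)$ simplifies by elementary algebra to $(k+1)(4m-k)/2$, and the strictness is supplied by any one of the factors in the product. The only conceptual ingredient is the parabolic decomposition of $R_k^\de$, which is classical; everything else is bookkeeping with products and powers of $2$, and I do not foresee any serious obstacle.
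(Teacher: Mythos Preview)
Your proposal is correct and uses essentially the same structural input as the paper: both rely on the Levi decomposition $R_k^\de = U \rtimes (GL_k(2) \times O^{(-1)^\de}_{2(m-k)}(2))$ of the parabolic stabilizer of the totally singular subspace $W_k$ in $O(Q_\de)$. The only organizational difference is that the paper computes the index in stages as $[G:O(Q_\de)]\cdot[O(Q_\de):R_k^\de]\cdot[R_k^\de:P_k^\de]$ (noting that $[O(Q_\de):R_k^\de]$ is the $2'$-part of $[O(Q_\de):L]$), whereas you compute $|P_k^\de|$ directly and divide; the arithmetic and the final inequality argument are equivalent.
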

\begin{proof}
Let $\eps:=+$ if $\de=0$ and $\eps:=-$ if $\de=1$. We have 
$$\begin{aligned}\dim V & \leq [G:P_k^\de] = [G:O(Q_\de)] \,  [O(Q_\de):P_k^\de]\\
   & = 2^{m-1}(2^m+(-1)^\de) \, [O^\eps_{2m}(2):(GL_k(2) \times O^\eps_{2m-2k}(2))]_{2'} \, |GL_k(2)|\\ 
   & = 2^{m-1}(2^m+(-1)^\de) (2^m-(-1)^\de)(2^{m-k}+(-1)^\de)  \hspace{-1mm}\prod^{m-1}_{i=m-k+1}(2^{2i}-1) \cdot 2^{k(k-1)/2}\\
   & = 2^{m-1+k(k-1)/2}  (2^{m-k}+(-1)^\de) \prod^m_{i=m-k+1}(2^{2i}-1) < 2^{(4m-k)(k+1)/2},\end{aligned}$$  
 as required. 
\end{proof}

\begin{Proposition}\label{PRedSp}
Let $V$ be an irreducible $\F G$-module.  
\begin{enumerate}[\rm(i)]
\item If $m \geq 4$ then $\dim V < (\sqrt{3}/2) \cdot 2^{m(m+1/2)} < n^{\frac{1}{4}\log_2 n +1.52}$.
\item Suppose that $V = D^\la\da_G$ for $\la = (n-\ell, \ldots) \in\Par_p(n)$, and that  
there exists an integer $1 \leq k \leq m-1$ such that $2^{k-1} \geq \ell$. Then 
$\dim V < 2^{(4m-k)(k+1)/2}.$
\end{enumerate}

\end{Proposition}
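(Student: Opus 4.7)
The plan is as follows. For part (i), I would first invoke the general bound $\dim V \leq |G|^{1/2}$, valid for any irreducible $\F G$-module of a finite group in any characteristic (this follows from Wedderburn applied to $\F G/\mathrm{Jac}(\F G)$, which gives $\sum_{\phi \in \IBr_p(G)} \phi(1)^2 \leq |G|$). The same bound is used in the proof of Proposition~\ref{PRedSL}(i). Then I would compute explicitly
$$
|G| = |Sp_{2m}(2)| = 2^{m^2}\prod_{i=1}^m (2^{2i}-1) = 2^{2m^2+m} \prod_{i=1}^m (1-2^{-2i}) \leq 2^{2m^2+m}\cdot \tfrac{3}{4},
$$
where the final inequality uses only the $i=1$ factor $1-2^{-2} = 3/4$ of the product (all later factors being $\leq 1$). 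Taking square roots gives $\dim V \leq (\sqrt{3}/2) \cdot 2^{m(m+1/2)}$, which is the first inequality of (i).

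For the second inequality of (i), set $L := \log_2 n$ and take $\log_2$ of both sides, so the claim becomes
$$
m^2 + \tfrac{m}{2} + \log_2(\sqrt{3}/2) \leq \tfrac{L^2}{4} + 1.52\, L,
$$
with $\log_2(\sqrt{3}/2) \approx -0.207$. Since $n \geq 2^{m-1}(2^m - 1)$ (the case $\de = 1$), for $m \geq 4$ one has $L \geq 2m - 1 + \log_2(1-2^{-m}) \geq 2m - 1.1$. Substituting this lower bound for $L$ gives $L^2/4 + 1.52 L \geq m^2 + 1.94m - 1.4$, which exceeds $m^2 + 0.5m - 0.21$ whenever $1.44\, m \geq 1.2$, i.e.\ for all $m \geq 1$; in particular for $m \geq 4$.

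Part (ii) is a direct combination of the two lemmas already proved. The hypothesis $2^{k-1} \geq \ell$ forces $(D^\la)^{P_k^\de} \neq 0$ by Lemma~\ref{L020319_2}, and since $V = D^\la\da_G$ is irreducible, Frobenius reciprocity (which is the second conclusion of that lemma) gives $\dim V \leq [G:P_k^\de]$. Then Lemma~\ref{L070519} supplies the bound $[G:P_k^\de] < 2^{(4m-k)(k+1)/2}$.

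The main (and only) delicate point is the numerical verification of the second inequality of (i), where the constants $\sqrt{3}/2$ and $1.52$ have been chosen exactly so that the argument works uniformly from $m=4$ onward; once $L \geq 2m-1.1$ is in hand, the remaining comparison is a routine estimate of polynomials in $m$. Everything else reduces to invoking earlier results.
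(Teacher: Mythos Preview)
Your proof is correct and follows essentially the same approach as the paper. Both arguments use $\dim V \leq |G|^{1/2}$ together with $|Sp_{2m}(2)| < (3/4)\cdot 2^{2m^2+m}$ for the first inequality of (i), and both derive the second inequality of (i) from the relation $\log_2 n \approx 2m-1$; the paper substitutes $m < 1 + \tfrac{1}{2}\log_2 n$ directly into the exponent and simplifies to $\sqrt{6}\cdot n^{\frac{1}{4}\log_2 n + 5/4}$, whereas you take logs and substitute $L \geq 2m-1.1$, but this is only a cosmetic difference in the arithmetic. Part (ii) is handled identically by invoking Lemmas~\ref{L020319_2} and~\ref{L070519}. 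One tiny point: the statement asserts a \emph{strict} inequality in (i), so you should note that the product $\prod_{i\geq 1}(1-2^{-2i})$ is strictly less than $3/4$ for $m \geq 2$ (the $i=2$ factor already being $<1$).
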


\begin{proof}
(i) 
Note that $n > 2^{2m-2}$, so 
$m < \frac{1}{2}\log_2 n+1$. As $m\geq 4$, we have $n \geq 28$,  and
$$\begin{aligned}
\dim V\leq |G|^{1/2}&<(3 \cdot 2^{m(2m+1)-2})^{1/2}
=
(\sqrt{3}/2) \cdot 2^{m(m+1/2)} 
\\& < (\sqrt{3}/2) \cdot 2^{(1+\frac{1}{2}\log_2 n)(\frac{3}{2}+\frac{1}{2}\log_2 n)}
 = \sqrt{6} \cdot 2^{(\log_2 n) \cdot (\frac{1}{4}\log_2 n+\frac{5}{4})}\\
    & =  \sqrt{6} \cdot n^{\frac{1}{4}\log_2 n+\frac{5}{4}} < n^{\frac{1}{4}\log_2 n +1.52}.\end{aligned}$$

\smallskip
(ii) 
Follows from Lemmas~\ref{L020319_2} and \ref{L070519}.
\end{proof}

\begin{Proposition}\label{PBoundSp}
Let $m \geq 3$, $p = 2$ or $3$, $\la\in\Par_p(n)$ such that $D^\la\da_G$ is irreducible. Determine $\ell$ from $n-\ell=\max(\la_1,\la^\Mull_1)$. Then $\ell \leq 2$ if $m=3,4$, and $\ell \leq 3$ if $m \geq 5$. 
\end{Proposition}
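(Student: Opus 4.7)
My plan is to split the proof on whether $m\in\{3,4\}$ or $m\geq 5$ and, in the second case, to follow the strategy used in the proof of Proposition~\ref{PBoundSL}. Throughout, irreducibility of $D^\la\da_G$ gives $\dim D^\la \leq \bmax_p(G)\leq |G|^{1/2}$, and I may assume $\la=(n-\ell,\mu)$ with $\mu\vdash\ell$ (replacing $\la$ by $\la^\Mull$ if necessary).

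For $m\in\{3,4\}$ I argue by direct computation. Using $|Sp_6(2)|=1{,}451{,}520$ and $|Sp_8(2)|=47{,}377{,}612{,}800$ gives $|G|^{1/2}<1205$ for $m=3$ and $|G|^{1/2}<2.18\cdot 10^5$ for $m=4$. Since $n\in\{28,36\}$ if $m=3$ and $n\in\{120,136\}$ if $m=4$, all four values of $n$ exceed $23$, so Proposition~\ref{PBound12}(i) applies in both characteristics. A numerical check shows that $|G|^{1/2}<(n^3-9n^2+14n)/6$ in each case (the right-hand side equals $2548$, $5916$, $266680$, $391816$ respectively). Hence $\la\in\L^{(2)}(n)$, so $\ell\leq 2$.

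Now assume $m\geq 5$; I want $\ell\leq 3$, so I argue by contradiction, assuming $\ell\geq 4$. Since $n\geq 496$, Proposition~\ref{PRedSp}(i) gives $\dim D^\la < n^{\frac{1}{4}\log_2 n+1.52}\leq n^{\frac{1}{2}\log_2 n+1}$, and Proposition~\ref{PDim1} yields
\[
\ell\leq 0.7\log_2 n + 1.4 \leq 1.4m + 1.4.
\]
I then let $k=k(\ell)$ be the minimum integer with $2^{k-1}\geq\ell$ (so $k=3$ if $\ell=4$, $k=4$ if $5\leq\ell\leq 8$, and generally $k=\lceil\log_2\ell\rceil+1$). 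A short check shows $k\leq m-1$ under the bound above on $\ell$, so Lemma~\ref{L050319} combined with Lemma~\ref{L070519} yields
\[
\dim D^\la \,\leq\, 2^{(4m-k)(k+1)/2}\cdot \frac{\bmax_p(SL_k(2))}{|SL_k(2)|},
\]
while Theorem~\ref{TBound1} gives $\dim D^\la\geq C^p_\ell(n)\geq (n-p\ell)^\ell/\ell!$.

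Using $n\geq 2^{2m-1}(1-2^{-m})$ and $p\ell/n<1/20$ for $m\geq 5$ and $p\leq 3$, I get $\log_2(n-p\ell)\geq 2m-1.13$, and taking logarithms reduces the desired contradiction to the single inequality
\[
2m(\ell-k-1) \,>\, 1.13\,\ell+\log_2\ell! - \tfrac{k(k+1)}{2} + \log_2\bmax_p(SL_k(2)) - \log_2|SL_k(2)|,
\]
which I verify case by case for $\ell\geq 4$ using the exact values $|SL_3(2)|=168$, $\bmax_p(SL_3(2))\leq 8$, $|SL_4(2)|=20160$, $\bmax_p(SL_4(2))\leq 70$, and the bound $\bmax_p(SL_k(2))\leq |SL_k(2)|^{1/2}$ for $k\geq 5$. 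The main obstacle is the boundary cases $\ell=4$ (with $k=3$) and $\ell=5$ (with $k=4$), where $\ell-k-1=0$ kills the leading $2m$-contribution on the left and the inequality holds only by a small margin of about $1.3$ or $5.7$ in $\log_2$; it is precisely in these cases that one must use the refined form of Lemma~\ref{L050319}, i.e., the factor $\bmax_p(SL_k(2))/|SL_k(2)|$ with its exact value, rather than the weaker bound $\dim D^\la\leq [G:P_k^\de]$ provided by Proposition~\ref{PRedSp}(ii).
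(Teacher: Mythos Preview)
Your proof is correct and follows the same overall strategy as the paper's: bound $\ell$ via Proposition~\ref{PDim1}, then play the lower bound from Theorem~\ref{TBound1} against the upper bound coming from Lemmas~\ref{L050319} and~\ref{L070519}. Two differences are worth noting. For $m\in\{3,4\}$ you use the elementary estimate $\dim D^\la\leq|G|^{1/2}$ in place of the paper's {\sc GAP} values for $\bmax_p(Sp_{2m}(2))$; this is cleaner and still suffices. For $m\geq5$ you package everything into a single logarithmic inequality using the refined form of Lemma~\ref{L050319} (with the factor $\bmax_p(SL_k(2))/|SL_k(2)|$) uniformly in $\ell$, whereas the paper first sharpens the bound on $\ell$ to roughly $0.33\log_2 n$ via the stronger exponent in Proposition~\ref{PRedSp}(i) and then breaks into the ranges $\ell=4$, $\ell=5$, $6\leq\ell\leq8$, $9\leq\ell\leq13$, $\ell\geq14$, using only the cruder Proposition~\ref{PRedSp}(ii) for $\ell\geq6$.

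One small caution: your phrase ``verify case by case for $\ell\geq4$'' conceals infinitely many cases, since $\ell$ may grow with $m$. You have handled the delicate boundary cases $\ell=4,5$ (where $\ell-k-1=0$) explicitly, but for $\ell\geq6$ you should add a line observing that the left side $2m(\ell-k-1)\geq \tfrac{10}{7}(\ell-1.4)(\ell-\lceil\log_2\ell\rceil-2)$ grows quadratically in $\ell$ while the right side grows only like $\ell\log_2\ell$, so a finite check for small $\ell$ together with this asymptotic suffices.
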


\begin{proof}
Replacing $\la$ by $\la^\Mull$ if necessary, we may assume that $\la = (n-\ell, \mu)$ for a partition $\mu$ of $\ell$. 
Let $m = 4$. By \cite{GAP}, $\bmax_p(Sp_8(2))\leq 2^{16}$. So 
$$\dim D^\la \leq 2^{16} < (n^3-9n^2+14)/6,$$ 
hence $\ell \leq 2$ by Proposition \ref{PBound12}(i). The same argument applies to the case $m=3$. So we may assume that $m\geq 5$, hence $n \geq 496$, and $\ell\geq 4$. 

By Proposition \ref{PRedSp}(i), we have
that $\dim V < n^{L(n)}$ with 
$$L(n) := \frac{1}{4}\log_2 n + 1.52 < \frac{1}{2}\log_2 n+1.$$
So by Proposition \ref{PDim1}, we get 
\begin{equation}\label{eq-PB21}
  \ell \leq L'(n):=0.7\log_2 n +1.4 < n/p+2-\de_p.
\end{equation}  
Now Theorem \ref{TBound1} applies to give $\dim D^\la\geq C^p_\ell(n)$, hence 
\begin{equation}\label{E050319}
n^{L(n)}>\dim D^\la\geq C^p_\ell(n)=\frac{1}{\ell!}\prod_{i=0}^{\ell-1}(n-(\de_p+i)p).
\end{equation}

Assume that $\ell = 4$. By Lemmas~\ref{L050319} and \ref{L070519} with $k = 3$, 
we have
$$\dim V \leq \frac{[G:P_3^\de]}{|SL_3(2)|}\bmax_p(SL_3(2)) < \frac{2^{8m-6}}{168} \cdot 8 = \frac{2^{8m-6}}{21}.$$
On the other hand, (\ref{E050319}) implies 
$$\dim V \geq \frac{(n-9)^4}{24} \geq \frac{(2^{m-1}(2^m-1)-9)^4}{24} > \frac{2^{8m-6}}{21}$$
as $m \geq 5$, a contradiction. So we may assume that $\ell\geq 5$.

Assume that $\ell = 5$. By Lemma~\ref{L050319} with $k = 4$, 
we have 
$$\dim V \leq \frac{[G:P_4^\de]}{|SL_4(2)|}\bmax_p(SL_4(2))  < \frac{n^5}{20160} \cdot 64 = \frac{n^5}{315},$$
where we have used Lemma~\ref{L070519} to get $[G:P_4^\de] < 2^{10m-10} < n^5$. 
On the other hand, (\ref{E050319}) implies 
$$\dim V \geq \frac{(n-12)^5}{120} > \frac{n^5}{315}$$
as $n \geq 496$, a contradiction.


Now we may assume that $\ell\geq 6$. In particular, $\ell!<(\ell/2)^\ell$, and by (\ref{E050319}), we get 
\begin{equation}\label{E020319_6}
n^{L(n)}>\dim D^\la >\frac{(n+3-3\ell)^\ell}{\ell!}>\left(\frac{2(n+3)}{\ell}-6\right)^\ell.
\end{equation}
If $\ell \geq 1.3L(n)$, then 
$$
n^{L(n)}>\biggl( \frac{2(n+3)}{\ell}-6 \biggr)^{1.3L(n)}
$$ and so, since $n \geq 496$,  
$$\ell \geq \frac{2(n+3)}{n^{1/1.3}+6} > L'(n),$$
contradicting \eqref{eq-PB21}. So  
\begin{equation}\label{eq-PB22}
  \ell < 1.3L(n) < 0.33\log_2 n + 2.
\end{equation}  
Now $\ell \geq 6$ implies $m \geq 7$ and $n \geq 8128$. 
In this case, \eqref{eq-PB22} implies that 
\begin{equation}\label{eq-PB23}
  \ell < \sqrt{n}/16.
\end{equation}
As $n < 2^{2m}$, we get $\ell < 2^{m-4}$ and so for 
\begin{equation}\label{eq-PB24}
  k:= \lceil \log_2 \ell \rceil +1 
\end{equation}  
we have  $1 < k < m-2$ and $2^{k-1} \geq \ell$. By Proposition \ref{PRedSp}(ii), we now conclude that 
\begin{equation}\label{E020319_5}
\dim D^\la < 2^{(4m-k)(k+1)/2}.
\end{equation}

If $\ell \geq 14$ then \eqref{eq-PB24} implies that $k+1 < \log_2 \ell+3 < \ell/2$.
As $n > 2^{2m-2}$ and $k \geq 4$, we then have from (\ref{E020319_5}) that 
$$\dim D^\la <  2^{(2m-2)(k+1)} < n^{k+1} < n^{\ell/2}.$$
On the other hand, using \eqref{eq-PB23} and (\ref{E020319_6}),  we obtain
$$\dim D^\la > \biggl( \frac{2(n+3)}{\ell}-6 \biggr)^\ell > (32\sqrt{n}-6)^\ell > n^{\ell/2},$$
a contradiction.

If $9 \leq \ell \leq 13$ then $k=5$ by \eqref{eq-PB24}. Using  
(\ref{E020319_6}) and (\ref{E020319_5}) we get
$$\frac{(n-36)^9}{13!} < \dim D^\la < 2^{12m-15} < n^6,$$
which is a contradiction since $n \geq 8128$.

If $6 \leq \ell \leq 8$ then $k=4$ by \eqref{eq-PB24}. Again using  
(\ref{E020319_6}) and (\ref{E020319_5}), we obtain
$$\min\left\{\frac{(n-15)^6}{6!},\frac{(n-21)^7}{8!} \right\} < \dim V < 2^{10m-10} < n^5,$$
a contradiction. The proof of the claim is complete. 
\end{proof}

\subsection{Ruling out the remaining $D^\la$ for $Sp_{2m}(2)$}


\begin{Proposition}\label{PSp233}
Let $p=3$, $n\geq 28$, and $\la=(n-a,a)$ with $a=2$ or $3$. 
Then $D^\la\da_G$ is reducible.
\end{Proposition}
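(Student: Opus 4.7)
The plan is to argue by contradiction, supposing $D^\lambda\da_G$ is irreducible, where $\lambda = (n-a,a)$ with $a\in\{2,3\}$, $p=3$, and $G\cong Sp_{2m}(2)$ with $m\geq 3$.

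A preliminary arithmetic step: using $2^m\equiv(-1)^m\pmod 3$ and a case check on the parity of $m$ and the value of $\delta\in\{0,1\}$, one verifies that $n=2^{m-1}(2^m+(-1)^\delta)\not\equiv 2\pmod 3$ for all admissible $m$ and $\delta$. This places us in the setting of Lemma~\ref{LL2}(ii), which yields $\dim D^{(n-2,2)}\in\{(n^2-3n)/2,(n^2-3n-2)/2\}$, and of Lemma~\ref{LSub22}, which guarantees $(D^{(n-2,2)})^{\SSS_{n-3}}\neq 0$. Combined with Lemma~\ref{L020319} (giving $P_2^\delta\leq\SSS_{n-4}\leq\SSS_{n-3}$), this forces $(D^{(n-2,2)})^{P_2^\delta}\neq 0$. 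For $\lambda=(n-3,3)$ and $m\geq 4$, Theorem~\ref{TSub} applied with $\ell=3,L=8$ together with Lemma~\ref{L020319} yields $(D^{(n-3,3)})^{P_3^\delta}\neq 0$ by the same reasoning.

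The core of the argument applies Lemma~\ref{L050319} with $k=a$, using $\bmax_3(SL_2(2))=\bmax_3(\SSS_3)=1$ (for $a=2$) and $\bmax_3(SL_3(2))=\bmax_3(PSL_2(7))=7$ (for $a=3$), to obtain explicit index bounds on $\dim D^\lambda$. These are compared with the exact dimensions from Lemma~\ref{LL2}(ii) and (for $a=3$) the James decomposition numbers. Since the crude inequality $\dim D^\lambda\leq[G:R_k^\delta]\cdot\bmax_3(SL_k(2))$ is asymptotically too weak (the right-hand side grows faster in $m$ than $\dim D^\lambda$), the argument must be refined: one computes a lower bound for $\dim(D^\lambda)^{P_k^\delta}$ by iterated branching through the chain $\SSS_n>\SSS_{n-1}>\cdots>\SSS_{n-2^k}$, using Lemma~\ref{L2} and the branching rules of \cite{KBrII}, and counts the trivial composition factors in $D^\lambda\da_{\SSS_{n-2^k}}$. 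Combined with Frobenius reciprocity, the resulting multiplicity, times $\dim D^\lambda$, cannot exceed $[G:P_k^\delta]$ from Lemma~\ref{L070519}, yielding an inequality which is violated.

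The principal obstacle is precisely this sharpness issue: the generic index bound is insufficient for large $m$, so the branching computation must be pushed deep enough to extract enough invariants to force the contradiction, which requires careful tracking of the normal-node structure of $(n-2,2)$ and $(n-3,3)$ in characteristic $3$ (distinguishing the subcases $n\equiv 0\pmod 3$ and $n\equiv 1\pmod 3$). A secondary, more computational obstacle arises in the small cases $m=3$ for $\lambda=(n-2,2)$ (where $k=2$ is the largest admissible parameter) and $m=3,4$ for $\lambda=(n-3,3)$; these are handled by direct inspection of the $3$-Brauer character tables of $Sp_6(2)$ and $Sp_8(2)$ available in \cite{GAP}, verifying that no irreducible $3$-modular $G$-module has the required dimension and conjugacy-class values to match the putative restriction $D^\lambda\da_G$.
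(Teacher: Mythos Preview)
Your proposed refinement does not close the gap. You correctly observe that the raw bound from Lemma~\ref{L050319},
\[
\dim D^\la \leq [G:R_k^\de]\cdot\bmax_3(SL_k(2)),
\]
is too weak: for $k=a$ the right-hand side is of order $2^{(2a+2)m}$ while $\dim D^\la$ is only of order $n^a\approx 2^{2am}$. Your fix is to produce many $P_k^\de$-invariants, so that $d\cdot\dim D^\la>[G:P_k^\de]$ with $d:=\dim(D^\la)^{P_k^\de}$. But the route you propose---bounding $d$ from below by the number of trivial composition factors (or trivial submodules) of $D^\la\da_{\SSS_{n-2^k}}$---can only ever produce a constant independent of $m$. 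Indeed $D^\la\subseteq M^\la$, and $(M^{(n-a,a)})^{\SSS_{n-2^k}}$ has dimension equal to the number of $\SSS_{n-2^k}$-orbits on $a$-element subsets of $\{1,\dots,n\}$, which is at most $\binom{2^k+a}{a}$; the composition-factor multiplicity of the trivial module is bounded similarly. On the other hand, the inequality $d\cdot\dim D^\la>[G:P_k^\de]$ requires $d$ of order $[G:P_k^\de]/\dim D^\la\approx 2^{2m}\approx n$, which grows without bound. So for large $m$ your argument cannot produce the contradiction, no matter how carefully the branching is tracked.

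The paper's proof is entirely different and does not go through dimension bounds at all. It uses the permutation-module machinery of \cite{KMT1} and \cite{M}: one shows (via the orbit counts $i_k(G)$ on $k$-subsets from \cite[Lemmas~5.11,~5.12]{BK}) that there is a homomorphism $\psi:\I(G)=\ind_G^{\SSS_n}\F\to M_3$ (or $M_4$) whose image hits $D_3$ (resp.\ $D_4$), and separately (via \cite[Lemmas~6.8,~6.12]{M}) a homomorphism $\zeta:M_3\to\End_\F(D^\la)$ (or from $M_4$) with the same property. Composing and invoking \cite[Lemma~2.18]{KMT1} then forces $D^\la\da_G$ to be reducible. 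The case $\la=(n-2,2)$ with $n\equiv 0\pmod 3$ is handled beforehand by Lemma~\ref{l4}, which reduces modulo $p$ from Saxl's characteristic-zero theorem. None of this machinery appears in your outline, and it is precisely what makes the argument work uniformly in $m$.
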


\begin{proof}
Note that $n=2^{m-1}(2^m + (-1)^\de)\not\equiv{2}\pmod{3}$. 
In particular, by Lemma~\ref{l4}, if $\la=(n-2,2)$ we may assume that $n\equiv 1\pmod{3}$. We will use the following notation from \cite{KMT1}: 
\begin{align*}
&S_k:=S^{(n-k,k)},\ M_k:=M^{(n-k,k)},\ i_k(G):=\dim M_k^G,
\\
&\EE(\la):=\End_\F(D^\la),\ \I(G):=\ind_G^{\SSS_n}\F_G.
\end{align*}

If $\la=(n-2,2)$ and $n\equiv 1\pmod{3}$, or $\la=(n-3,3)$ and $n\equiv 0\pmod{3}$, then by \cite[Lemma 6.8]{M}, there exists a homomorphism $\zeta:M_3\to\EE(\la)$ with $[\im\zeta:D_3]\neq 0$. If $\la=(n-3,3)$ and $n\equiv 1\pmod{3}$, then  by \cite[Lemma 6.12]{M}, there exists a homomorphism $\zeta:M_3\to\EE(\la)$ with $[\im\zeta:D_3]\neq 0$ or there exists a homomorphism $\zeta':M_4\to\EE(\la)$ with $[\im\zeta':D_4]\neq 0$.

From \cite[Corollary 2.31]{KMT1}, $(S_1^*)^G=0$. Further if $n\equiv 1\pmod{3}$ then by \cite[Corollary 7.5]{KMT1}, we have $(S_2^*)^G=0$. By \cite[Lemmas 5.11, 5.12]{BK}, we have  $i_2(G)=1$, $i_3(G)=2$ and $i_4(G)>2$. It then follows by \cite[Lemmas 3.3, 3.4]{KMT1} that there exists a homomorphism $\psi:\I(G)\to M_3$ with $[\im\psi:D_3]\neq 0$. If $n\equiv 1\pmod{3}$ then by \cite[Lemma 3.5]{M} there exists a homomorphism $\psi':\I(G)\to M_4$ with $[\im\psi':D_4]\neq 0$.

Therefore, $[\im(\zeta\circ\psi):D_3]\neq 0$ or $[\im(\zeta'\circ\psi'):D_4]\neq 0$. The proposition then follows from \cite[Lemma 2.18]{KMT1}.
\end{proof}

\begin{Lemma}\label{LSp232}
Let $p=2$, $n\geq 28$, and $\la=(n-a,a)$ with $a=2$ or $3$. Then $D^\la\da_G$ is reducible.
\end{Lemma}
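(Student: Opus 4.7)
The plan is to follow the strategy of Proposition~\ref{PSp233} step by step, substituting at each stage the characteristic~$2$ counterpart of the result invoked there. I first note that since $n=2^{m-1}(2^m+(-1)^\de)$ with $m\geq 3$, we have $4\mid n$; in particular we are in a single congruence class modulo~$4$, and by Lemma~\ref{LL2}(i), $\dim D^{(n-2,2)}=(n^2-5n+4)/2$, while both $(n-2,2)$ and $(n-3,3)$ are $2$-regular.

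The first step is to produce, for a suitable $k\in\{3,4\}$, a homomorphism $\zeta\colon M_k\to\EE(\la)$ with $[\im\zeta:D^{(n-k,k)}]\neq 0$, playing the role of the output of \cite[Lemmas 6.8, 6.12]{M}. Such a $\zeta$ corresponds, by Frobenius reciprocity, to an $\SSS_{n-k,k}$-fixed vector in $\EE(\la)=\End_\F(D^\la)$; equivalently, it is a map coming from an $\SSS_n$-equivariant pairing $M_k\otimes D^\la\to D^\la$. Filtering $M_k$ by Specht modules and using the $p=2$ composition series of the $S_j$ for $n\equiv 0\pmod 4$ recorded in \cite[\S 5--7]{KMT1}, I identify the relevant subquotient and verify that $D^{(n-k,k)}$ genuinely appears in $\im\zeta$: for $\la=(n-2,2)$ I expect $k=3$ to suffice, and for $\la=(n-3,3)$ one of $k=3$ or $k=4$, depending on which yields a nonvanishing composition multiplicity.

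The second step is to produce a homomorphism $\psi\colon\I(G)\to M_k$ with $[\im\psi:D^{(n-k,k)}]\neq 0$. The orbit-counting identities $i_2(G)=1$, $i_3(G)=2$, $i_4(G)>2$ established in \cite[Lemmas 5.11, 5.12]{BK} depend only on the permutation action of $G=Sp_{2m}(2)$ on $\Om^\de$, so they remain valid in characteristic~$2$. Together with the vanishing $(S_1^*)^G=0$ from \cite[Corollary 2.31]{KMT1}, and, if the case $k=4$ is required, $(S_2^*)^G=0$ from \cite[Corollary 7.5]{KMT1}, the machinery of \cite[Lemmas 3.3, 3.4]{KMT1} (and \cite[Lemma 3.5]{M} when $k=4$) yields the desired $\psi$. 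Composing, $\zeta\circ\psi\colon\I(G)\to\EE(\la)$ has $D^{(n-k,k)}$ as a composition factor of its image, so \cite[Lemma 2.18]{KMT1} forces $D^\la\da_G$ to be reducible.

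The hard part will be the first step. In characteristic~$2$ the Specht filtration of $M_k$ has a richer pattern of extensions than in characteristic~$3$, so identifying the image of the $\SSS_{n-k,k}$-fixed vector of $\EE(\la)$ and certifying that $D^{(n-k,k)}$ is a composition factor of $\im\zeta$ is more delicate than the direct appeal to \cite[Lemma 6.8]{M} that sufficed in Proposition~\ref{PSp233}; the uniform congruence $n\equiv 0\pmod 4$ at least confines the case analysis to a single congruence class for each of $\la\in\{(n-2,2),(n-3,3)\}$, so the required verification reduces to two explicit Specht-composition computations in characteristic~$2$.
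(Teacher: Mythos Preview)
Your proposal is not a proof but a plan, and the hard part you flag is genuinely hard: the existence of the map $\zeta\colon M_k\to\EE(\la)$ with $[\im\zeta:D^{(n-k,k)}]\neq 0$ is precisely what \cite[Lemmas 6.8, 6.12]{M} establish in characteristic~$3$, and you do not carry out the characteristic~$2$ analogue. You say you ``expect $k=3$ to suffice'' for $\la=(n-2,2)$, but with $4\mid n$ the submodule lattice of $M_3$ and the structure of $\EE(D^{(n-2,2)})$ are considerably more intricate in characteristic~$2$ than in characteristic~$3$, and verifying that the relevant composition factor survives in $\im\zeta$ is a nontrivial computation you have not done. Your reliance on \cite[Corollary 7.5]{KMT1} for $(S_2^*)^G=0$ is also unchecked: in the $p=3$ proof that corollary is invoked only under the hypothesis $n\equiv 1\pmod p$, whereas here $p\mid n$.

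The paper sidesteps all of this with a completely different and much more elementary argument. It observes that $D^\la$ is a subquotient of $\Sym^a(U)$ for the permutation module $U$, and uses the fact (from \cite[p.~10]{Mortimer}) that as an $\F G$-module $U$ has a subquotient $B$ of dimension $2m+1\geq 7$. Writing $U=A+B$ in the Grothendieck group and expanding
\[
\Sym^a(U)=\sum_{i=0}^a \Sym^{a-i}(A)\otimes\Sym^i(B),
\]
one checks by direct dimension count (using $\dim A\geq 4\dim B$ and the lower bounds on $\dim D^\la$ from Lemma~\ref{Lr1} and Proposition~\ref{PBound12}(i)) that every summand has dimension strictly less than $\dim D^\la$, so no summand can contain $D^\la\da_G$ as an irreducible constituent of full dimension. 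This avoids the $\EE(\la)$ machinery entirely.
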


\begin{proof}
Assume the contrary. Note that $D^\la$ is a subquotient of the $\F\SSS_n$-module 
$\Sym^a(U)$, where $U$ denotes the $\F\SSS_n$-permutation module on the set $\Om^\de$ of cardinality $n=2^{m-1}(2^m + (-1)^\de$.
It is shown on \cite[p. 10]{Mortimer} that the $\F G$-module $U$ contains a subquotient $B$ of dimension $2m+1 \geq 7$. Thus, in the 
Grothendieck group of $\F G$-modules we can write $U = A +B$ for a $\F G$-module $A$ of dimension $n-(2m+1)$. Note that 
$\dim A \geq 4(\dim B)$ 
since $m \geq 3$. This implies that, in the following decomposition in the Grothendieck group
\begin{equation}\label{eq-PR31}
  \Sym^a(U) = \sum^a_{i=0}\Sym^{a-i}(A) \otimes \Sym^i(B),
\end{equation}  
the summand $\Sym^a(A)$ has the largest dimension. Now, by Proposition \ref{PBound12}(i) we have
$$\dim \Sym^3(A) \leq (n-5)(n-6)(n-7)/6 < (n^3-9n^2+14n)/6 \leq \dim D^{(n-3,3)}$$
as $n \geq 28$. Thus, when $a=3$ every summand in \eqref{eq-PR31} has dimension less than $\dim V$, and so $V\da_G$ cannot be irreducible.
Likewise, 
$$\dim \Sym^2(A) \leq (n-6)(n-7)/2 < (n^2-5n+2)/2 \leq \dim D^{(n-2,2)}$$
by Lemma \ref{Lr1}. Thus, when $a=2$ every summand in \eqref{eq-PR31} has dimension less than $\dim V$, and so $V\da_G$ cannot be irreducible.
\end{proof}



We now prove the main result of the section:

\begin{Theorem}\label{TMainSp}
Let $m \geq 3$, $\de=0$ or $1$, and let $G = Sp_{2m}(2) < \SSS_n$ with 
$G=Sp(W)$ acting on the $n = 2^{m-1}(2^m+(-1)^\de)$ quadratic forms of Witt defect $\de$ on the symplectic space $W:= \F_2^{2m}$. 
Let $p = 2$ or $3$, and let $\la\in\Par_p(n)$ be such that $\dim D^\la >1$.  Then $D^\la\da_G$ is irreducible if and only 
if $p=3$ and $\la=(n-1,1)$ or $(n-1,1)^\Mull$. 
\end{Theorem}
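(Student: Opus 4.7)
The plan is to first dispatch the ``if'' direction from Table II. That table records $Sp_{2m}(2)$ as a doubly transitive subgroup of $\SSS_n$ for $n=2^{m-1}(2^m+(-1)^\delta)$, over which $D^{(n-1,1)}\da_G$ is irreducible precisely when $p\neq 2$. Since irreducibility is preserved by the Mullineux involution (tensoring with $\sgn$), the case $p=3$ and $\lambda\in\{(n-1,1),(n-1,1)^\Mull\}$ yields the stated exception.

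For the ``only if'' direction, suppose $D^\lambda\dar_G$ is irreducible with $\dim D^\lambda>1$. Tensoring with $\sgn$ if necessary, write $\lambda=(n-\ell,\mu)$ for some $\mu\vdash \ell$. By Proposition~\ref{PBoundSp}, we have $\ell\leq 2$ when $m\in\{3,4\}$ and $\ell\leq 3$ when $m\geq 5$. The case $\ell=0$ is excluded by $\dim D^\lambda>1$. For $\ell=1$ we get $\lambda=(n-1,1)$; since $n$ is always even, Table II forces $p=3$, which is exactly the stated exception.

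It remains to rule out all $\lambda=(n-\ell,\mu)$ with $2\leq \ell\leq 3$. The ``hook-plus-one-row'' shapes $\mu=(2)$ and $\mu=(3)$, i.e.\ $\lambda=(n-2,2)$ and $\lambda=(n-3,3)$, are handled directly: Proposition~\ref{PSp233} for $p=3$ and Lemma~\ref{LSp232} for $p=2$ produce explicit non-scalar $\F G$-endomorphisms of $D^\lambda$, hence reducibility. For the remaining shapes $(1^2)$, $(2,1)$, $(1^3)$, which are 3-regular (and for $(2,1)$ also 2-regular), I will apply Lemma~\ref{L050319} with $k$ chosen minimal subject to $2^{k-1}\geq\ell$, together with the estimate for $[G:P_k^\delta]$ provided by Lemma~\ref{L070519} and the easy upper bound on $\bmax_p(SL_k(2))$, and compare this to the lower bound on $\dim D^\lambda$ from Lemma~\ref{LBound} (and the precise dimensions in Lemma~\ref{LL2}(iii) when $\mu=(1^2)$). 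Where the purely numerical comparison is too weak, I will invoke the structural identification $D^{(n-2,1^2)}\cong\wedge^2 D^{(n-1,1)}$ (valid for $p=3$ by dimension count and standard construction) and use the fact that $Sp_{2m}(2)$ with $m\geq 3$ does not appear in the classification of irreducible modules with irreducible exterior square (cf.\ \cite[Proposition 3.4]{MM}), mirroring the argument used in the proof of Lemma~\ref{PRestSL2}.

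The main obstacle is precisely the case $\ell=2$, $\mu=(1^2)$, $p=3$ (and analogously $\ell=3$, $\mu\in\{(2,1),(1^3)\}$), because the bound $\dim D^\lambda\leq[G:P_k^\delta]\bmax_p(SL_k(2))/|SL_k(2)|$ with the smallest admissible $k$ gives a ceiling of order roughly $2^{6m-4}/3$ against a target of order $n^2/2\sim 2^{4m-3}$, which is not sharp enough. This is what forces the detour through the exterior-square structural result (and, where relevant, more refined choices of $k$ combined with the exact dimension formulas from Lemmas~\ref{LL2} and \ref{LBound}) to conclude reducibility in every residual shape, completing the classification.
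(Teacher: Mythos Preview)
Your proposal is on track through $\ell\le 1$ and the two-row cases $\lambda=(n-2,2)$, $(n-3,3)$, but it has a genuine gap at $\lambda=(n-3,2,1)$. First a minor point: $(1^3)$ is neither $2$-regular nor $3$-regular, so that shape never arises; after the two-row cases the only residual shapes are $\mu=(1^2)$ (for $p=3$ only) and $\mu=(2,1)$ (for both $p=2,3$).

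For $\mu=(2,1)$ your proposed numerical comparison via Lemma~\ref{L050319} cannot succeed. With $\ell=3$ the condition $2^{k-1}\ge\ell$ forces $k\ge 3$, and for $k=3$ Lemma~\ref{L070519} gives only $[G:P_3^\delta]<2^{8m-6}$, so the resulting bound $\dim D^\lambda<2^{8m-6}\cdot\bmax_p(SL_3(2))/|SL_3(2)|$ is of order $2^{8m}$, whereas the lower bound from Lemma~\ref{LBound} is of order $n^3\sim 2^{6m-3}$; the ratio diverges as $m$ grows, and taking larger $k$ only makes the upper bound worse. The exterior-square identification $D^{(n-2,1^2)}\cong\wedge^2 D^{(n-1,1)}$ that you cite as a fallback is specific to $\mu=(1^2)$ and says nothing about $(n-3,2,1)$, so you are left with no argument for that shape.

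The paper closes this gap by a different route: it invokes \cite[Theorem~A]{KMT1}, which ties irreducibility of $D^\lambda\da_G$ for these particular $\lambda$ to a homogeneity condition on $G$, and then observes via \cite[Lemma~5.11]{BK} that $Sp_{2m}(2)$ has exactly two orbits on $3$-element subsets of $\Omega^\delta$, hence is not $3$-homogeneous. This single structural fact disposes of both $\lambda=(n-3,2,1)$ (for $p=2,3$) and $\lambda=(n-2,1^2)$ (for $p=3$) simultaneously, with no dimension estimate needed. (One also checks, using \cite[Lemma~2.2]{bkz}, that the Mullineux conjugates of these partitions have three parts, so the hypotheses of \cite[Theorem~A]{KMT1} are satisfied.) Your exterior-square route for $(n-2,1^2)$ is a reasonable alternative for that one shape---the paper itself mentions it in the $SL_m(2)$ setting---but you still need the non-$3$-homogeneity argument, or something equivalent, for $(n-3,2,1)$.
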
 

\begin{proof}
Assume that $D^\la\da_G$ is irreducible. By Proposition \ref{PBoundSp}, we may assume that $\la = (n-\ell,\mu)$ with $\ell \leq 3$ and 
$\mu \vdash \ell$.
Next, by Proposition \ref{PSp233} and Lemma \ref{LSp232}, $\la \neq (n-2,2)$, $(n-3,3)$.  

The cases where $\la = (n-3,2,1)$, or $p = 3$ and $\la = (n-2,1^2)$, when $G = Sp_{2m}(2) < \SSS_n$ with $m \geq 3$, are ruled out by 
\cite[Theorem A]{KMT1}. Indeed, it was shown in \cite[Lemma 5.11]{BK} that $G$ has (exactly) two orbits on the set of $3$-element subsets 
of $\Om^\eps$, and so $G$ is not $3$-homogeneous. Also by \cite[Lemma 2.2]{bkz} we have that if $p=3$ and $n\geq 10$ then $(n-3,2,1)^\Mull=((n-3)^\Mull,3)$ and $(n-2,1^2)^\Mull=((n-2)^\Mull,2)$, so that in either case $h(\la^\Mull)=3$. 

This leaves only one possibility $\la = (n-1,1)$.
Now we apply the main result of \cite{Mortimer} to see that $D^{(n-1,1)}$ is irreducible over $G$ if and only if $p=3$.
\end{proof}

\section{Proofs of Main Theorems}
\label{SMain}

\subsection{Proof of Theorem A}
For $p\geq 5$ this is \cite[Main Theorem]{BK} (and Remark \ref{R180319_2}). 
So we may assume that $p=2$ or $3$. 
Since the case $(p,\la)=(2,\be_n)$ is excluded, by  \cite[Theorems A, B]{KMT1}, we may assume that one of the following happens:

\begin{enumerate}
\item[{\sf (1)}] $p=2$, $n\equiv 2\pmod{4}$, $\la=(n-1,1)$, and $G\leq\SSS_{n/2}\wr\SSS_2$ is as in \cite[Theorem~B]{KMT1};
\item[{\sf (2)}] $G$ is $2$-transitive on $\{1,\dots,n\}$;
\item[{\sf (3)}] $G\leq \SSS_{n-1}$ and $\la$ is JS.
\end{enumerate}

Since {\sf (1)} is  Theorem~A(iii), we assume from now on that this case does not occur.

Suppose we are in the case {\sf (2)}. 
If $\la\in\L^{(1)}(n)$ then by \cite{Mortimer} and the remarks preceding Table II, we arrive at Theorem~A(ii). If $G=\AAA_n$, then, by definition of $\Parinv_p(n)$, we arrive at Theorem~A(i). So we may assume that $G\neq \AAA_n$ and $\la\not\in\L^{(1)}(n)$. 
By the classification of $2$-transitive groups  \cite{Cameron}, we are in one of the following situations:
\begin{enumerate}
\item[\rm (A)] $\soc(G)$ is an elementary abelian subgroup;

\item [\rm (B)] $\soc(G) \cong PSL_m(q)$ (is non-abelian simple)  acting on $n = (q^m-1)/(q-1)$ $1$-dimensional subspaces of $ \F_q^m$;

\item [\rm (C)] $G\cong Sp_{2m}(2)$, $m \geq 3$, acting on $n = 2^{m-1}(2^m + (-1)^\de)$ quadratic forms on $\F_2^{2m}$ of the given Witt defect $\de \in \{0,1\}$; 

\item [\rm (D)] $G$ is any of the other doubly transitive subgroups (which we call small).
\end{enumerate}
We now apply Theorems~\ref{TAGL}, \ref{TMainSL}, \ref{TMainSp}, and \ref{T2small} for the cases (A), (B), (C) and (D), respectively. 

Suppose we are in the case {\sf (3)}. If $n=5$ then $\la$ is JS only if $\la=(5)$ and $p=2$, or $\la\in\{(5),(3,2)\}$ and $p=3$, and in either case we have $\dim D^\la=1$. So we may assume that $n\geq 6$. By \cite{k2}, we have $D^\la\da_{\SSS_{n-1}}\cong D^\mu$, where $\mu$ is obtained from $\la$ by removing the top removable node of $\la$. 
If $G=\SSS_{n-1}$ we arrive at Theorem~A(v). Now we may assume that $G<\SSS_{n-1}$. 

We now apply \cite[Theorems A, B]{KMT1} again with $n-1$ in place of $n$ and $\mu$ in place of $\la$ to arrive to the cases {\sf (1'),(2'),(3')} parallel to the cases {\sf (1),(2),(3)} above. 
For example, by \cite[Theorems 3.3, 3.6]{KS2Tran}, $\mu$ is not JS, so {\sf (3')} is excluded. The case {\sf (1')} is also excluded, since $\mu=(n-2,1)$ implies $\la=(n-1,1)$, but $n-1\equiv 2\pmod{4}$ implies that $n$ is odd, and so $\la$ is not JS. Thus, we are in the case {\sf (2')}, i.e. $G$ is $2$-transitive on $\{1,2,\dots,n-1\}$.  

Suppose $G=\AAA_{n-1}$. Then $D^\la\da_{\AAA_{n-1}}\cong D^\mu\da_{\AAA_{n-1}}$ is irreducible if and only if $\mu\not\in\Parinv_p(n-1)$. If $p=2$, since $\la\neq \be_n$ is JS, it can be easily seen from Lemma~\ref{LBenson} that $\mu\not\in\Parinv_2(n-1)$ if and only if $\la\not\in\Parinv_2(n)$. If $p\not=2$, since $\la$ is JS, we have from \cite[Theorem 5.10]{BO2} that $\mu\not\in\Parinv_p(n-1)$ if and only if $\la\not\in\Parinv_p(n)$. So $D^\la\da_{\AAA_{n-1}}$ is irreducible if and only if $\la$ is JS and $\la\not\in\Parinv_p(n)$. We have arrived at Theorem~A(vi).

Assume finally that $\AAA_{n-1}\neq G<\SSS_{n-1}$. As $n\geq 6$, passing from $\la$ to $\la^\Mull$ if necessary, we may assume  by Theorems~\ref{TAGL}, \ref{TMainSL}, \ref{TMainSp}, and \ref{T2small} that $\mu=(n-2,1)$, $(n-3,2)$ or $(n-3,1^2)$ (the last partition only for $p=3$). Since $\mu$ is obtained from $\la$ by removing the top removable node it follows that $\la=(n-1,1)$, $(n-2,2)$ or $(n-2,1^2)$ respectively. Note that $(n-1,1)$ and $(n-2,1^2)$ are JS if and only if $n\equiv 0\pmod{p}$, while $(n-2,2)$ is JS if and only if $n\equiv 2\pmod{p}$. The result then easily follows in this case by checking when 
the required congruences modulo $p$ hold
and when $D^\mu\da_G$ is irreducible using Theorems~\ref{TAGL}, \ref{TMainSL}, \ref{TMainSp}, and \ref{T2small}.


\subsection{Proof of Theorem A$'$}
For $p\geq 5$ this is \cite[Main Theorem]{KS}. So we may assume that $p=2$ or $3$. If $V$ lifts to $\SSS_n$, we arrive at Theorem~A$'$(i). Otherwise $\la\in\Parinv_p(n)$. From Lemma~\ref{L4.3} it then follows that $\la_1\leq (n+4)/2$. By  \cite[Theorem A]{KMT3}, we are in one of the following situations:

\begin{enumerate}
\item[{\sf (1)}] $G$ is primitive on $\{1,2,\dots,n\}$;

\item [{\sf (2)}] $G\leq\AAA_{n-1}$ and either $\la$ is JS or $\la$ has exactly two normal nodes, both of residue different from $0$. 

\item [{\sf (3)}] $G\leq\AAA_{n-2,2}\cong\SSS_{n-2}$ and $\la$ is JS.
\end{enumerate}

Suppose we are in the case {\sf (1)}. By Theorem \ref{ext-spin}, we see that either $G$ is an affine group, which is subsequently ruled out by Theorem \ref{TAGL}, or 
$G$ is a Mathieu group, in which case one can apply Theorem \ref{T2small} to arrive at the case (A1) from Table IV, or else the case (A3) from Table IV occurs.

Consider the case {\sf (2)}. Suppose first that $\la$ is JS. Then  $E^\la_\pm\da_{\AAA_{n-1}}\cong E^\pi_\pm$ for some $\pi\in\Parinv_p(n-1)$. As $\pi$ can not  be JS, applying  \cite[Theorem A]{KMT3} to $n-1$ instead of $n$, we deduce that either $G$ is a subgroup of $\AAA_{n-1}$ primitive on $\{1,2,\dots,n-1\}$, or $G\leq\AAA_{n-2}$. The former case is considered as in the case {\sf (1)} using Theorems  \ref{ext-spin}, \ref{T2small} and \ref{TAGL}.  The case $G\leq\AAA_{n-2}$ is subsumed by the case  {\sf (3)} to be considered below. 

Suppose now that $\la$ has exactly two normal nodes both of residue different from 0. From \cite[Proposition 3.8]{KS} or the proofs of \cite[Theorems B, 5.3]{KMT3} we have that $D^\la\da_{\AAA_{n-1}}\cong E^\nu$ with $\nu\in\Par_p(n-1)\setminus\Parinv_p(n-1)$ obtained by removing a good node from $\la$. If $p=3$ we also have that $\nu^\Mull$ is obtained from $\la$ by removing a good node. In particular $\nu_1,\nu_1^\Mull\leq (n+4)/2<(n-1)-2$ if $n\geq 11$, so by Theorem A we have that $G\in\{\AAA_{n-2},\AAA_{n-1}\}$. Using \cite[Theorem A]{KMT3}, we arrive at Theorem~A$'$(ii)(a). If $n\leq 10$ and $p=2$, $\la=(4,3,1)$ and $\nu=(4,2,1)$, in which case we can conclude as above. If $n\leq 10$ and $p=3$, $\la=(3,1^2)$ and $\nu^{(\Mull)}=(3,1)$. In this case $E^\la_\pm\cong E^{(4,1^2)}_\pm\da_{\AAA_5}$, which will be considered below when covering case {\sf (3)}.

Consider the case {\sf (3)}. Using the isomorphism $\AAA_{n-2,2}\cong\SSS_{n-2}$, by \cite[Theorem 5.4]{KMT3} and \cite[Theorem 3.6]{KS}, we can write  $E^\la_\pm\da_{\AAA_{n-2,2}}\cong D^\mu$ where $\mu\in\Par_p(n-2)\setminus\Parinv_p(n-2)$ is obtained from $\la$ by removing two good nodes. If $p=3$ we also have that $\mu^\Mull$ is obtained from $\la$ by removing two good nodes. In particular $\mu_1,\mu_1^\Mull\leq (n+4)/2<(n-2)-2$ if $n\geq 13$. In this case it follows from Theorem A that $G\in\{\AAA_{n-2},\AAA_{n-3}\}$. The case $G=\AAA_{n-3}$ can be excluded, since $E^\la_\pm\da_{\AAA_{n-3,3}}$ is not irreducible by \cite[Theorem A]{KMT3}. So $G\in\{\AAA_{n-2},\AAA_{n-2,2}\}$, in which case $E^\la_\pm\da_G$ is irreducible by \cite[Theorem C]{KMT3}, and we arrive at Theorem~A$'$(ii)(b). If $n\leq 12$ then $p=2$, $\la=(5,3,1)$ and $\mu=(4,2,1)$ or $p=3$ and $(\la,\mu^{(\Mull)})\in\{((4,1^2),(3,1)),((7,3,2),(5,3,2))\}$. If $p=2$ and $\la=(5,3,1)$ or $p=3$ and $\la=(7,3,
 2)$ we can conclude as above. If $p=3$ and $\la=(4,1^2)$ then $E^\la_\pm\da_{\AAA_{4,2}}\cong D^{(3,1)^{(\Mull)}}$. Since $\dim E^\la_\pm=3$, we have that $E^\la_\pm\da_G$ is reducible if $G$ is abelian or a 2-group. Further $E^\la_\pm\da_{\AAA_{3,2}}\cong D^{(3,1)^{(\Mull)}}\da_{\SSS_3}$ is reducible, under the identification of $\AAA_{3,2}\cong \SSS_3$. Considering the submodule structure of $\SSS_4$ it then follows that $G\in\{\AAA_{4,2},\AAA_4\}$ and so we can again conclude by \cite[Theorem C]{KMT3}.



\subsection{Proof of Theorem B}
\medskip
For the `if' direction, by Theorem~A, the cases listed in Theorem B do give rise to irreducible restrictions $D^\la\da_G$. 

For the `only-if' direction, assume that 
$D^\la\da_G$ is irreducible. By Schur's Lemma, $\ZB(G)$ acts 
on $D^\la$ via scalars, and so $\ZB(G) \leq \ZB(\SSS_n) = 1$ as $\SSS_n$ acts faithfully on $D^\la$. Thus $G$ is in fact almost simple, i.e. 
$S \unlhd G \leq \Aut(S)$ for a non-abelian simple group $S$. 
Inspecting the list of exceptions in Theorem A for almost simple groups, we conclude that it is enough to show that such a group cannot occur in the case (iii) of Theorem~A. 

So assume for a contradiction that $G$ is almost simple with socle $S$ and satisfies the conditions described in Remark~\ref{R180319}. Recall that $B=\SSS_{n/2,n/2}$ is the base subgroup of $\SSS_{n/2} \wr \SSS_2$. 
As $G$ is almost simple, we have $S \unlhd G \cap B$, and 
\begin{equation}\label{eq-out}
  2 \mbox{ divides }|\Out(S)|.
\end{equation} 
Let $\pi_1$ (resp. $\pi_2$) denote the permutation representations of {\it odd} degree $n/2$ of $G \cap B$, induced by the projection of $B$
onto the first (resp. second) factor $\SSS_{n/2}$ of $B$. By assumption, $\pi_i(G \cap B)$ is $2$-transitive,
but the homomorphisms 
$$G \cap B \stackrel{\pi_i}{\longrightarrow} \SSS_{n/2} \to GL(D^{(n/2-1,1)})$$ 
for $i=1,2$ give rise to non-isomorphic irreducible representations (of degree $n/2-1$). This implies that 
\begin{equation}\label{eq-tworep}
  \pi_1\mbox{ and }\pi_2\mbox{ induce two distinct 2-transitive permutation characters of }G \cap B.
\end{equation}

We also note that both $\pi_1$ and $\pi_2$ are faithful. Indeed, if $\Ker(\pi_i) \neq 1$ for some $i$, then $\Ker(\pi_i) \geq \soc(G) = S$. 
Since $G$ interchanges $\pi_1$ and $\pi_2$, it follows that $S \leq \Ker(\pi_{3-i})$, whence $S$ acts trivially on $\{1,2, \ldots ,n\}$, a contradiction.

Now we can go over the list of $2$-transitive permutation groups of odd degree $n/2$ with socle $S$, 
e.g. in \cite[Table I]{Mortimer}. Then \eqref{eq-out} rules out the cases $S = M_{11}$, $M_{23}$, and $^2 B_2(q)$. If
$(S,n/2) = (\AAA_m,m \geq 5)$, then, since $|\Out(S)|=2$, we must have that $G \cong \SSS_m$ and $G \cap B = \AAA_m$, which has 
a unique $2$-transitive permutation character of degree $m$, violating \eqref{eq-tworep}. Likewise, if $(S,n/2) = (PSL_2(11),11)$ or 
$(\AAA_7,15)$, then again $|\Out(S)| = 2$, and $G \cap B = S$ has a unique $2$-transitive permutation character of degree $n/2$, a contradiction.

Consider the cases $(S,n/2) = (PSL_2(q),q+1)$ or $(PSU_3(q),q^3+1)$. In these cases, $2|q$ as $n/2$ is odd. If $S_1$ and $G_1$ denote the stabilizer of 
$1$ in $S$, respectively in $G \cap B$, then it is easy to see that $S_1 = \NB_S(Q)$ and $Q = \OB_2(S_1) \unlhd G_1$. As 
$$[G \cap B:G_1] = n/2=[S:S_1],$$ 
by Frattini argument we have $G_1 = \NB_{G_1}(Q)$. The same argument also applies to the stabilizer of $n/2+1$ in $G \cap B$. Thus the 
$2$-transitive representations of $G \cap B$ induced by $\pi_1$ and $\pi_2$ are in fact $G \cap B$-conjugate and so have the same character,
contradicting \eqref{eq-tworep}. 

Finally, consider the case $(S,n/2) = (PSL_d(q),(q^d-1)/(q-1))$ with $d \geq 3$. In this case, the $2$-transitive permutation action $\pi_i(S)$ extends
to $P\Gamma L_d(q)$, but not to the entire $\Aut(S) \cong P\Gamma L_d(q) \rtimes \CCC_2$ (where $\CCC_2$ is generated by the 
inverse-transpose automorphism $\tau$). 
As $\pi_i\da_S$ extends to $G \cap B$, $G \cap B \leq P\Gamma L_d(q)$. We may assume that the stabilizer $S_1$ of 
$1$ in $S$ is the stabilizer of a fixed one-dimensional subspace in the natural module $\FF_q^d$ for $SL_d(q)$. Then 
$Q:= \OB_r(S_1)$ is an elementary abelian $r$-subgroup of order $q^{d-1}$, if $r$ is the prime dividing $q$. Note that $P\Gamma L_d(q)$ preserves the 
$S$-conjugacy classes of $Q$, and so 
$$[G \cap B:\NB_{G \cap B}(Q)] = [S:\NB_S(Q)].$$
Arguing as in the previous case, we obtain that the stabilizer $G_1$ of $1$ in $G \cap B$ is precisely $\NB_{G \cap B}(Q)$. Thus the representation
$\pi_1$ of $G \cap B$ is uniquely determined once we fix (the $S$-conjugacy class of) $Q$, whence it must be the restriction to $G \cap B$ of the usual 
action of $P\Gamma L_d(q)$ on $1$-spaces of $\FF_q^d$, with character say $\psi$. Clearly, 
$\psi(g)$ is the number of $g$-invariant $1$-spaces on $\FF_q^d$ for all $g \in P\Gamma L_d(q)$. Note that $S$ has only one more $2$-transitive
representation that is not equivalent to $\pi_1\da_S$, namely the one on hyperplanes of $\FF_q^d$, which extends to the usual action of $P\Gamma L_d(q)$ on hyperplanes of $\FF_q^d$, with character say $\psi'$. Again,
$\psi'(g)$ is the number of $g$-invariant hyperplanes on $\FF_q^d$ for all $g \in P\Gamma L_d(q)$. Now, $\psi' = \psi^\tau$, and $\psi$ is $\tau$-invariant
by the proof of \cite[Lemma 6.2]{T}. It follows that $\psi' = \psi$. As the $2$-transitive permutation character of $G \cap B$ induced by $\pi_2$ is either 
$\psi\da_{G \cap B}$ or $\psi'_{G \cap B}$, we see that $\pi_1$ and $\pi_2$ induce the same permutation character, again violating \eqref{eq-tworep}.

\subsection{Proof of Theorem B$'$}
Inspect the list of exceptions in Theorem~A$'$ for almost simple groups.

\subsection{Proof of Theorem C}
The first statement of the theorem and the `if' part of the second statement is \cite[Theorem C]{KMT1}, but see Remark~\ref{R180319_2}. For the `only-if' part of the second statement, in view of part (iii) of the first statement, we may assume that $G$ is not primitive. By \cite[Table III]{Wales}, $D^{\be_n}$ is obtained by reducing modulo $2$ a basic spin representation $B_\C$ of $\hat\SSS_n$. So $B_\C\da_G$ is irreducible. By \cite[Theorem C]{KT} we have that $$G\in\{\SSS_{n-1},\AAA_{n-1},\SSS_{n-2},\AAA_{n-2,2}\}.$$ 
If $D^{\be_n}\da_{\AAA_{n-1}}$ is irreducible then $D^{\be_n}\da_{\AAA_n}$ 
and $D^{\be_{n-1}}\da_{\AAA_{n-1}}$ must be irreducible, which is impossible. The cases $G=\SSS_{n-2}$ and $\AAA_{n-2,2}$ can be also ruled out, since by part (i) of the first statement of the theorem, we have that $D^\la\da_{\SSS_{n-2,2}}$ is reducible. 
Finally, if $G=\SSS_{n-1}$ we apply part (i) of the first statement of the theorem  to arrive to part (1) of the second statement.

\subsection{Proof of Theorem C$'$}
For the first statement of the theorem, taking into account  \cite[Propositions 6.3, 6.6, 6.7]{KMT3}, which deal with irreducible restrictions of basic spin modules to the subgroups of the from $\AAA_n\cap(\SSS_{n-k}\times\SSS_k)$ and $\AAA_n\cap(\SSS_{a}\wr\SSS_b)$,  we may assume that $G$ is primitive. If $n\equiv 2\pmod{4}$ then $\be_n\not\in\Parinv_2(n)$, so in this case the first statement follows from Theorem C. So we may also assume that $n\not\equiv 2\pmod{4}$, in which case $\be_n\in\Parinv_2(n)$. By Theorems \ref{ext-spin}, \ref{T2small} and \ref{TAGL}, if $E^{\be_n}_{\pm}\da_G$ is irreducible then we are in one of the exceptional cases (A7)-(A12) listed in Theorem C$'$(iii).
Conversely, the cases (A11),(A12) give rise to examples by Theorem \ref{T2small}; the cases (A7),(A8) occur by Theorem  \ref{TAGL}; the case (A9) occurs by Theorem \ref{ext-spin} (and the case (A10) is covered by Theorem C since in this case $n\equiv 2\pmod{4}$).

For the second statement, the `if' part follows from the first statement, and \cite[Proposition 6.3]{KMT3}. 

We finally prove the `only-if' part of the second statement. 
In view of Theorem C, we may assume that $\be_n\in\Parinv_2(n)$, i.e. $n\not\equiv 2\pmod{4}$. As in the proof of 
Theorem B, we have that $S \unlhd G \leq \Aut(S)$ for a non-abelian simple group $S$. By the case (iii) of the first statement, we may also assume that $G$ is not primitive. 

Since $\dim V=2^{\lfloor(n-1)/2\rfloor-1} \geq 2^{(n-4)/2}$, we have that $|\Aut(S)| \geq |G| \geq 2^{n-4}$. Now we apply \cite[Proposition 6.1]{KlW} and consider the possible cases for $G$ listed there. If we are in one of the cases listed in
Proposition \ref{PSmall2}, then we arrive at the exceptional cases covered in part (1). So we may 
assume that $S = \AAA_m$, with $m \geq 7$ and each orbit of $S$ on $\Omega:=\{1,2, \ldots,n\}$ having length $1$ or $m$. 

Let $\Om_1,\dots,\Om_a$  be the $S$-orbits of length $m$ so that $S$ fixes $b:=n-am$ points in 
$$\Omega':=\Om\setminus(\Om_1\cup\dots\cup \Om_a).$$
Let $\pi_i$ denote
the permutation action of $S$ on $\Omega_i$, and also of $G$ on $\Omega_i$ in the case $G$ stabilizes $\Omega_i$. Let 
$\SSS(\Omega_i) \cong \SSS_m$ and $\AAA(\Omega_i) \cong \AAA_m$ denote the natural subgroups of $\SSS_n$ that act only
on $\Omega_i$.

Restricting $V$ to $\prod^a_{i=1}\AAA(\Omega_i)$, 
we see that $V\da_S$ contains 
a submodule 
$$U:=V_1 \otimes V_2 \otimes \ldots \otimes V_a \otimes X,$$ 
where $V_1,\dots,V_a$ are basic spin modules of $S$. If $a \geq 3$, then, as $S$ has 
at most two non-isomorphic basic spin modules, we may assume $V_1 \cong V_2$ and note that $\dim V_i \geq 2$.
The same holds if $a=2$ and $S$ has a unique basic spin module, i.e. $m\equiv 2\pmod{4}$. Thus in either case $U$ has 
a proper submodule 
$\Sym^2(V_1) \otimes V_3 \otimes \ldots \otimes V_a \otimes X$ 
of dimension greater than $(\dim U)/2$. Hence $V\da_S$ has a nonzero subquotient of
dimension less than $(\dim V)/2$, contradicting to the irreducibility of $G$ on $V$. We deduce that $a \leq 2$, and if $a=2$ then 
$S$ has two basic spin modules, i.e. $a=2$ implies 
$m\not\equiv 2\pmod{4}$. 

Let $c$ denote the number of $G$-orbits on $\Omega$. Since $|G/S| \leq 2$ we have that 
$$c \geq \lfloor (a+1)/2 \rfloor + \lfloor (b+1)/2 \rfloor.$$
By \cite[Proposition 6.3]{KMT3}, $c \leq 3$, which implies that $b \leq 4$. If $b=4$, then $G$ must have two orbits of length 
$2$ on the set $\Omega'$ of $S$-fixed points, contradicting \cite[Proposition 6.3]{KMT3}. Suppose $b=3$. Then $G$ must have two orbits of length
$2$ and $1$ on $\Omega'$, so \cite[Proposition 6.3(1)]{KMT3} implies that $4\mid n$, $c=3$, $m=n-3$, and also 
$G  = \langle \AAA_m, h \rangle \cong \SSS_m$. Since $h$ does not centralize $S$, $h$ must act non-trivially, in fact as an odd permutation on $\Omega_1$.
As $G$ has three orbits of length $n-3$, $2$, and $1$ on $\Omega$, we see that $G = \AAA_{n-3,2,1}$. 

Assume now that $b=2$ and $G$ fixes the two points of $\Omega'$. Then $c=3$, and so $4|n$ by 
\cite[Proposition 6.3(1)]{KMT3}. If $m=n-2$, we have arrived at the second case of Theorem C$'$(1)(a). 
As $4|n$, the restriction of $V$ to $\AAA(\Omega_1 \cup \Omega_2) \cong \AAA_{n-2}$ is $E^{\be_{n-2}}$, which extends to 
$D^{\be_{n-2}}$. Hence $V\da_S$ contains a subquotient 
$$D^{\be_{n-2}}\da_{\pi_1(S) \times \pi_2(S)} \cong D^{\be_m}\da_{\pi_1(S)} \otimes D^{\be_m}\da_{\pi_2(S)}.$$
Note that all embedding $\AAA_m \to \SSS_m$ are $\SSS_m$-conjugate. It follows that 
$$X := D^{\be_m}\da_{\pi_1(S)} \cong D^{\be_m}\da_{\pi_2(S)}$$
as $\F S$-modules, of dimension $e \geq 4$. Hence $V\da_S$ contains subquotients $\Sym^2(X)$ and $\wedge^2(X)$ of distinct dimensions,
contradicting the irreducibility of $G$ on $V$.

Consider the case $b=2$ and $\Omega'$ forms a $G$-orbit of length $2$. Recall that $a \leq 2$. Now if $c=3$, then
$4|n$ by \cite[Proposition 6.3(1)]{KMT3}, $n=2m+2$, $G \cong \SSS_m$, and we can repeat the above argument with $\Sym^2/\wedge^2(X)$ 
to reach a contradiction. Suppose $c=a=2$. Then $G$ has orbits of length $2$ and $n-2=2m$ on $\Omega$, whence $4|n$ as $n\not\equiv 2\pmod{4}$ by assumption. Then we can again repeat the above argument with 
$\Sym^2/\wedge^2(X)$. So we must have $a=1$, $n = m+2$, $G = \langle S,h \rangle \SSS_m$. Again, since $[h,S] \neq 1$, we must have 
that $h$ acts non-trivially on $\Omega_1$ (and on $\Omega'$), and so $G = \AAA_{n-2,2}$, and we have arrived at the case (1)(b) of Theorem C$'$.

Now assume that $b=1$. As $G \leq \AAA_{n-1}$ is irreducible on $V$, by \cite[Proposition 6.3]{KMT3} we have 
$n \equiv 0,3 \pmod{4}$. If $c=3$, then, since $a \leq 2$, we have that $G$ has three orbits of length $m$, $m$, and $1$ on $\Omega$, but this 
contradicts \cite[Proposition 6.3(1)]{KMT3}. Suppose $c=a=2$, so that $n=2m+1 \equiv 3 \pmod{4}$. In this case, the restriction of 
$V$ to $\AAA(\Omega_1 \cup \Omega_2) \cong \AAA_{n-1}$ is $E^{\be_{n-1}}$, which extends to 
$D^{\be_{n-1}}$. Now we can repeat the argument with $\Sym^2/\wedge^2(X)$ to reach a contradiction. Thus $a=1$, $n = m+1$, 
$G = \AAA_m$, and we and we have arrived at the case (1)(c) of Theorem C$'$.

Finally, we consider the case $b=0$. As $n > m$ and $a \leq 2$, we must have that $a=2$, $n = 2m \equiv 0 \pmod{4}$. By \cite[Proposition 6.3]{KMT3}
for $c=2$ (where $G \leq \AAA_{m,m}$) and \cite[Proposition 6.6]{KMT3} for $c=1$ (where $G \leq \SSS_{m}\wr\SSS_2$), we have $m \equiv 2 \pmod 4$, which contradicts what was proved above.

\end{document}